\newtheorem{theorem}{Theorem}[section]
\newtheorem{prop}[theorem]{Proposition}
\newtheorem{lemma}[theorem]{Lemma}
\newtheorem{cor}[theorem]{Corollary}
\theoremstyle{definition}
\newtheorem{definition}[theorem]{Definition}
\newtheorem{remark}[theorem]{Remark}
\newtheorem{hypothesis}[theorem]{Hypothesis}
\newtheorem{notation}[theorem]{Notation}
\newtheorem{example}[theorem]{Example}
\numberwithin{equation}{theorem}
\newcommand{\Z}{\mathbb{Z}}
\newcommand{\I}{\mathbb{I}}
\newcommand{\A}{\mathbb{A}}
\newcommand{\N}{\mathbb{N}}
\newcommand{\bbP}{\mathbb{P}}
\newcommand{\bbO}{\mathbb{O}}
\newcommand{\OX}{\mathcal{O}_X}
\newcommand{\MM}{\mathcal{M}}
\newcommand{\HH}{\mathcal{H}}
\newcommand{\NN}{\mathcal{N}}
\newcommand{\OO}{\mathcal{O}}
\newcommand{\LL}{\mathcal{L}}
\newcommand{\GK}{\mathrm{GK}}
\newcommand{\GKdim}{\mathrm{GKdim}}
\newcommand{\Ext}{\mathrm{Ext}}
\newcommand{\Hom}{\mathrm{Hom}}
\newcommand{\End}{\mathrm{End}}
\newcommand{\coh}{\mathrm{coh}}
\newcommand{\gr}{\mathrm{gr}}
\newcommand{\Gr}{\mathrm{Gr}}
\newcommand{\Gl}{\mathrm{Gl}}
\newcommand{\rann}{\mathrm{r.ann}}
\newcommand{\qgr}{\mathrm{qgr}}
\newcommand{\Sg}{S_{(g)}}
\newcommand{\bfd}{\mathbf{d}}
\newcommand{\bfe}{\mathbf{e}}
\newcommand{\bfx}{\mathbf{x}}
\newcommand{\bfy}{\mathbf{y}}
\newcommand{\mbf}{\mathbf}
\newcommand{\ovl}{\overline}
\newcommand{\ehd}{\overset{\bullet}=}
\title{Maximal Orders in the Sklyanin Algebra}
\author{Dominic Hipwood}
\date{\today}
\begin{document}

\setlength\abovedisplayskip{3pt}
\setlength\belowdisplayskip{3pt}

\maketitle
\begin{abstract}

A major current goal of noncommutative geometry is the classification of noncommutative projective surfaces. The generic case is to understand algebras birational to the Sklyanin algebra. In this work we complete a considerable component of this problem.\par

Let $S$ denote the 3-dimensional Sklyanin algebra over an algebraically closed field, and assume that $S$ is not a finite module over its centre. In earlier work Rogalski, Sierra and Stafford classified the maximal orders inside the 3-Veronese $S^{(3)}$ of $S$. We complete and extend their work and classify all maximal orders inside $S$. As in Rogalski, Sierra and Stafford's work, these can be viewed as blowups at (possibly non-effective) divisors. A consequence of this classification is that maximal orders are automatically noetherian among other desirable properties. \par

\end{abstract}
\tableofcontents

\section{Introduction}

Roughly, the goal of noncommutative projective geometry is to use techniques and intuition from commutative geometry to study noncommutative algebras. A major current goal of noncommutative geometry is the classification of noncommutative projective surfaces; in the language of algebras, a classification of connected graded domains of GK-dimension 3. Within this, a major target is a classification of algebras birational to the Sklyanin algebra. This problem provides the main motivation of our work. \par

\subsection*{First definitions and notations}
First, as always, we need to get some essential definitions and formalities out of the way. 
Fix an algebraically closed field $\Bbbk$. Let $A=\bigoplus_{n\in\N} A_i$  be an $\N$-graded  $\Bbbk$-algebra and a domain. We call $A$ \textit{connected graded (cg)} if $A_0=\Bbbk$ and $\dim_\Bbbk A_i<\infty$ for all $i$. 
Almost all algebras considered will be cg domains of finite Gelfand Kirillov dimension (see Notation~\ref{GK dimension} for a definition). In particular we assume this for the next few definitions.\par

The algebra $A$ sits inside its \textit{graded quotient ring}, $Q=Q_\gr(A)$, formed by inverting the Ore set of all nonzero homogenous elements. The division ring of degree zero elements of $Q$ will be denoted by $D_\gr(A)$. We call $A$ a \textit{maximal order} if there is no cg $B$ with  $A\subsetneq B\subseteq Q$, and such that $xBy\subseteq A$ for some nonzero $x,y\in Q$. Let $R$ be another algebra such that $A\subseteq R\subseteq Q$. We call $A$ a \textit{a maximal $R$-order} if there exists no graded equivalent order $B$ to $A$ such that $A\subsetneq B\subseteq R$. Given $d\in\N$, the \textit{$d$-Veronese of $A$} is the subring $A^{(d)}=\bigoplus_{i\in\N}A_{di}$. It is (usually) given the grading $A^{(d)}_n=A_{dn}$.\par

A construction that plays a fundamental role in our work, and in general noncommutative geometry, is that of a twisted homogenous coordinate ring. Fix a projective scheme $X$. Let $\LL$ be an invertible sheaf on $X$ with global sections $H^0(X,\LL)$, and let $\sigma:X \rightarrow X $ be an automorphism of $X$. Write $\LL^\sigma$ for the pullback sheaf $\sigma^*\LL$. Set $\LL_0=\OX$ and $\LL_n=\LL\otimes \LL^\sigma \otimes \dots \otimes \LL^{\sigma^{n-1}}$ for $n\geq 1$. We define $B(X,\LL,\sigma)=\bigoplus_{n\in\N}H^0(X,\LL_n)$. There is a natural ring structure on $B(X,\LL,\sigma)$ and we call it a \textit{twisted homogenous coordinate ring}.\par

Our results regard certain subalgebras of the Sklyanin algebra, which we now define. Let $a,b,c\in\Bbbk$, then we set
$$S=\frac{\Bbbk\langle x,y,z \rangle}{(azy+byz+cx^2, axz+bzx+cy^2, ayx+bxy+cz^2)}.$$
Provided $a,b,c$ are general enough, then $S$ has a central element $g\in S_3$, unique up to scalar multiplication and such that $S/gS \cong B(E, \LL, \sigma)$. Here $E$ is a nonsingular elliptic curve, $\LL$ is an invertible sheaf of degree 3, and $\sigma$ is an automorphism of $E$. We call such $S$ a \textit{Sklyanin algebra}. \par

\subsection*{The main results}
 The results of this work are analogous to those of \cite{Ro, RSS}, where the authors tackled similar problems inside the 3-Veronese subring $T=S^{(3)}$ of $S$. Here, as in \cite{Ro, RSS}, our results concern certain \textit{blowup subalgebras} and \textit{virtual blowup subalgebras} of $S$ at effective and so-called \textit{virtually effective divisors} on $E$. Our assumption throughout is Hypothesis~\ref{standing assumption intro}.

\begin{hypothesis}[Standing Assumption]\label{standing assumption intro}
Fix an algebraically closed field $\Bbbk$. Fix a 3 dimensional Sklyanin algebra $S$. Let $g\in S_3$ be such that $S/gS\cong B(E,\LL,\sigma)$, where $E$ is a smooth elliptic curve, $\LL$ an invertible sheaf on $E$ with $\deg\LL=3$ and $\sigma:E\to E$ an automorphism of $E$. Assume that $\sigma$ is of infinite order.
\end{hypothesis}

Before presenting our results we set some notation as standard.

\begin{notation}\label{ovlX notation}\index[n]{x@$\ovl{X}$}
Given a subset $X\subseteq S$, we write $\ovl{X}=(X+gS)/gS \subseteq S/gS$. Similarly for $x\in S$, $\ovl{x}$ will denote its image in $\ovl{S}=B(E,\LL,\sigma)$.
\end{notation}

In \cite{Ro} Rogalski studied what will be our 1 point blowup. The definition becomes slightly more complicated as we allow blowups at 2 points, since now the rings need not be generated in a single degree. 

\begin{definition}[Definition~\ref{S(p)} and Definition~\ref{S(p+q)}]\label{S(d) def} \index[n]{sd@$S(\bfd)$}\index{blowup of $S$ at $\bfd$}
Let $\bfd$ be an effective divisor on $E$ with $\deg\bfd\leq 2$. For $i=1,2,3$, put
\begin{itemize}
\item $V_1=\{x\in S_1\,|\; \ovl{x}\in H^0(E,\LL(-\bfd))\}$,
\item $V_2=\{x\in S_2\,|\; \ovl{x}\in H^0(E,\LL_2(-\bfd-\sigma^{-1}(\bfd)))\}$,
\item $V_3=\{x\in S_3\,|\; \ovl{x}\in H^0(E,\LL_3(-\bfd-\sigma^{-1}(\bfd)-\sigma^{-2}(\bfd)))\}$.
\end{itemize}
We define the \textit{blowup of $S$ at $\bfd$} to be the subalgebra of $S$ generated by $V_1$, $V_2$ and $V_3$. We denote it by $S(\bfd)=\Bbbk\langle V_1,V_2,V_3\rangle$.
\end{definition}

When $\bfd=p$ is a single point, we in fact have $S(p)=\Bbbk\langle V_1\rangle$. Rogalski showed in \cite{Ro} that it is the only degree 1 generated maximal order inside $S$. When $\bfd=p+q$ is two points Definition~\ref{S(d) def} is both new, and harder to understand than $S(p)$. The $S(\bfd)$ are the correct analogue of the blowup subalgebras $S^{(3)}(\bfe)$ of $S^{(3)}$ defined by Rogalski in \cite[Section~1]{Ro}. We now state the main results of this work.

\begin{theorem}[Theorem~\ref{S(d) thm}]\label{S(d) thm intro}
Let $\bfd$ be an effective divisor on $E$ of degree $d\leq 2$. Set $R=S(\bfd)$. Then:
\begin{enumerate}[(1)]
\item $R\cap gS=gR$ with $R/gR\cong B(E,\LL(-\bfd),\sigma)$. The Hilbert series of $R$ is
$$h_{R}(t)=\sum_n(\dim_\Bbbk R_n)t^n=\frac{t^2+(1-d)t+1}{(t-1)^2(1-t^3)}.$$
\item The 3-Veronese $R^{(3)}$ is a blowup subalgebra of $S^{(3)}$. More specifically
    $$R^{(3)}=S^{(3)}(\bfd+\sigma^{-1}(\bfd)+\sigma^{-2}(\bfd)).$$
\item $R$ is a maximal order in $Q_\gr(R)=Q_\gr(S)$.
\end{enumerate}
\end{theorem}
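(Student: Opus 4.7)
The plan is to prove the three parts in sequence, with part (1) doing the main technical work.

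For part (1), the strategy is to identify the image $\ovl{R}$ of $R$ in $\ovl{S}=B(E,\LL,\sigma)$ and then lift the information back to $R$. By construction the subspaces $\ovl{V}_i$ are the degree-$1,2,3$ graded pieces of $B':=B(E,\LL(-\bfd),\sigma)\subseteq\ovl{S}$, so $\ovl R\subseteq B'$. The reverse inclusion relies on the standard generation result for twisted homogeneous coordinate rings on smooth elliptic curves: since $\deg\LL(-\bfd)=3-d\geq 1$ and $\sigma$ has infinite order, $B'$ is generated in degrees $\leq 3$ (cf.\ \cite{Ro,RSS}), hence $\ovl R=B'$. Because $\ovl g=0$ we have $g\in V_3\subseteq R$, producing a surjection $R/gR\twoheadrightarrow B'$ with kernel $(R\cap gS)/gR$. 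The three claims of (1) are then established together by a Hilbert series sandwich. The short exact sequence $0\to R\cap gS\to R\to\ovl R\to 0$ together with injectivity of multiplication by $g$ on $R$ gives $\dim R_n\geq\dim R_{n-3}+\dim B'_n$, and iterating produces the lower bound
$$h_R(t)\geq\frac{h_{B'}(t)}{1-t^3}=\frac{1+(1-d)t+t^2}{(1-t)^2(1-t^3)},$$
using $\dim B'_n=(3-d)n$ from Riemann--Roch. A matching upper bound follows from a direct $g$-divisibility argument for $R$, adapting the methods of \cite{Ro} for the one-point blowup to the presentation of $R$ in terms of $V_1,V_2,V_3$. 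Equality of the two bounds simultaneously yields the Hilbert series, the equality $R\cap gS=gR$, and the isomorphism $R/gR\cong B'$.

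For part (2), set $\bfe=\bfd+\sigma^{-1}(\bfd)+\sigma^{-2}(\bfd)$ and $W=\{x\in S_3:\ovl x\in H^0(E,\LL_3(-\bfe))\}$. By definition of $V_3$ we have $V_3=W$, so $W\subseteq R_3$. Conversely, each of $V_1^3,V_1V_2,V_2V_1$ lies in $W$ by the standard vanishing calculus on $B(E,\LL,\sigma)$: e.g.\ $\ovl{v_1}\ovl{v_2}\in H^0(\LL(-\bfd))\cdot H^0(\LL_2^\sigma(-\sigma^{-1}\bfd-\sigma^{-2}\bfd))\subseteq H^0(\LL_3(-\bfe))$. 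Hence $R_3=W$ and $\Bbbk\langle R_3\rangle=\Bbbk\langle W\rangle=S^{(3)}(\bfe)$ by Rogalski's characterization. To upgrade this to $R^{(3)}=S^{(3)}(\bfe)$, I use part (1): $R^{(3)}$ is $g$-divisible inside $S^{(3)}$, contains $g\in(S^{(3)})_1$, and has image $\ovl{R^{(3)}}=B'^{(3)}$ in $\ovl{S^{(3)}}$. Unwinding the twists shows $B'^{(3)}=B(E,\LL_3(-\bfe),\sigma^3)=\ovl{S^{(3)}(\bfe)}$. Two $g$-divisible graded subrings of $S^{(3)}$ that contain $g$ and agree modulo $g$ must coincide (a routine induction on degree), so $R^{(3)}=S^{(3)}(\bfe)$.

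For part (3), the plan is to bootstrap from the maximality of $R^{(3)}=S^{(3)}(\bfe)$ in $Q_\gr(S)^{(3)}$ supplied by \cite{RSS}. The equality $Q_\gr(R)=Q_\gr(S)$ follows from $Q_\gr(R^{(3)})=Q_\gr(S^{(3)})=Q_\gr(S)^{(3)}$ via a standard Ore argument, together with the nonvanishing of $V_1$ and $V_2$. Now suppose $A\supsetneq R$ is a cg equivalent order with $xAy\subseteq R$ for some nonzero $x,y\in Q_\gr(S)$. Passing to $3$-Veroneses, $A^{(3)}$ is a cg equivalent order over $R^{(3)}$, so $A^{(3)}=R^{(3)}$ by maximality. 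The remaining task, and the main obstacle of the proof, is to rule out new elements of $A$ in degrees $n\not\equiv 0\pmod 3$. For this I reduce modulo $g$: using $A^{(3)}=R^{(3)}$ and the $g$-divisibility of $R$ from part (1), one shows $A$ is also $g$-divisible and that $\ovl A$ is a cg graded equivalent order over $\ovl R=B(E,\LL(-\bfd),\sigma)$ in $Q_\gr(\ovl S)$. Since twisted homogeneous coordinate rings of smooth elliptic curves with infinite-order automorphism are maximal orders, $\ovl A=\ovl R$, and $g$-divisibility of both $A$ and $R$ then forces $A=R$, contradicting $A\supsetneq R$. Verifying that $\ovl A$ is genuinely an equivalent order of $\ovl R$, with no $g$-torsion or degree-shifting pathologies, is the most delicate point; I would model this step closely on the analogous arguments in \cite{Ro,RSS}.
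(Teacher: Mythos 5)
Your reduction of part (1) to the single statement ``$R\cap gS=gR$'' is sound: the identification $\ovl R=B(E,\LL(-\bfd),\sigma)$ and the lower bound $h_R(t)\geq h_{\ovl R}(t)/(1-t^3)$ are exactly as in the paper, and once $g$-divisibility is known everything else in (1) follows. But the matching upper bound \emph{is} the theorem, and the one sentence you devote to it --- ``a direct $g$-divisibility argument for $R$, adapting the methods of \cite{Ro} for the one-point blowup'' --- is precisely the step that fails. The paper states explicitly that the semi-continuity argument underlying Rogalski's proof of \cite[Theorem~5.2]{Ro} does not carry over to $S(p+q)$, essentially because $S(p+q)$ is not generated in degree $1$ (indeed $\dim_\Bbbk V_1=1$, so $\Bbbk\langle V_1\rangle\cong\Bbbk[t]$). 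What the paper actually does is: (i) prove $g$-divisibility first in the special case where $p$ and $q$ lie in distinct $\sigma$-orbits, by showing $S(p+q)=S(p)\cap S(q)$ and using that an intersection of $g$-divisible rings is $g$-divisible; (ii) prove that $q\mapsto\dim_\Bbbk S(p+q)_n$ is lower semi-continuous, which requires showing that $q\mapsto(S(p+q)_1,S(p+q)_2,S(p+q)_3)$ is a morphism into a product of Grassmannians and that the dimension of the subalgebra generated by a triple of subspaces is lower semi-continuous on that product (via a geometric-quotient argument); and (iii) pass to an uncountable base field so that the distinct-orbit locus is dense enough to force equality everywhere. None of this machinery, nor any substitute for it, appears in your proposal, so part (1) has a genuine gap at its core; for $\deg\bfd=1$ your plan is fine because there one really can quote \cite[Theorem~12.2]{Ro}.

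Parts (2) and (3) are downstream of (1). Your part (2) is essentially the paper's argument (containment of $S^{(3)}(\bfe)$ in $R^{(3)}$ plus a $g$-divisibility/Hilbert-series comparison) and is fine granting (1). Your part (3) takes a genuinely different route from the paper, which simply applies \cite[Proposition~2.4]{RSS2} to the $g$-divisible ring $R$ with $R/gR\cong B(E,\LL(-\bfd),\sigma)$; you instead bootstrap from maximality of $R^{(3)}=T(\bfe)$. Be aware that the Veronese-transfer results of Section~2 (Lemma~\ref{U'<g>}, Proposition~\ref{g-div max orders up n down}) are unavailable here precisely because $d=3$ is not coprime to $\deg g=3$: multiplying by powers of $g$ never changes degree mod $3$, so you cannot push elements of $A$ into $A^{(3)}$. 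Your mod-$g$ argument must therefore carry the full weight, including the assertion that $B(E,\LL(-\bfd),\sigma)$ is a maximal order in $\Bbbk(E)[t,t^{-1};\sigma]$ even when $\deg\LL(-\bfd)=1$ (a case excluded from the $\deg\HH\geq 2$ hypotheses of \cite[Corollary~5.3, Proposition~5.7]{RSS}), and the transfer of the equivalent-order relation to the images mod $g$ without torsion pathologies. You flag this as delicate but do not resolve it; as written it is a second, smaller gap.
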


We also show in Theorem~\ref{S(d) thm} that $S(\bfd)$ satisfies some of the most useful homological properties, the most prominent for us will the Auslander-Gorenstein and Cohen-Macaulay properties (see Definition~\ref{homological defs}). Although not explicit below, obtaining Theorem~\ref{S(d) thm intro} is absolutely essential for the rest of our main results.\par

For a complete classification of maximal $S$-orders we need to introduce virtual blowups at virtually effective divisors. We only define a virtually effective divisor here. For the purposes of the introduction one may take the conclusions of Proposition~\ref{vblowup exist intro} as the definition of a virtual blowup.

\begin{definition}\label{veff div intro}
A divisor $\bfx$ is called \textit{virtually effective} if for all $n\gg0$, the divisor
$\bfx+\sigma^{-1}(\bfx)+\dots+\sigma^{-(n-1)}(\bfx)$
is effective.
\end{definition}

\begin{prop}[Proposition~\ref{RSS 7.4(3)} and Definition~\ref{virtual blowup}]\label{vblowup exist intro}
Let $\mbf{x}$ be a virtually effective divisor of degree at most 2. Then there exists a virtual blowup $S(\bfx)$. In particular:
\begin{enumerate}[(1)]
\item $S(\bfx)$ is a maximal order in $Q_\gr(S)$ and uniquely defines a maximal $S$-order $V=S(\bfx)\cap S$.
\item $S(\bfx)\cap gS=gS(\bfx)$ and so $S(\bfx)/gS(\bfx)\cong \ovl{S(\bfx)}$. Moreover, in high degrees $n\gg 0$,
    $$\ovl{S(\bfx)}_{\geq n} =B(E,\LL(-\bfx),\sigma)_{\geq n}.$$
\end{enumerate}
\end{prop}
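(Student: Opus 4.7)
The plan is to reduce to the corresponding construction in the $3$-Veronese $T = S^{(3)}$, where the virtual blowup technology is available from \cite{RSS}, and then to descend back to $S$. If $\bfx$ is virtually effective of degree $\leq 2$, set $\bfy := \bfx + \sigma^{-1}(\bfx) + \sigma^{-2}(\bfx)$, a divisor of degree $\leq 6$. The telescoping identity
$$\sum_{i=0}^{n-1}\sigma^{-3i}(\bfy) \;=\; \sum_{j=0}^{3n-1}\sigma^{-j}(\bfx)$$
shows that $\bfy$ is virtually effective with respect to $\sigma^3$, so by \cite{RSS} there exists a virtual blowup $T(\bfy) \subseteq Q_{\gr}(T) = Q_{\gr}(S)$ which is a maximal order, satisfies $T(\bfy) \cap gT = gT(\bfy)$, and whose reduction modulo $g$ agrees in high degrees with $B(E, \LL_3(-\bfy), \sigma^3)$.

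Next I would construct $S(\bfx)$ from high-degree data. Fix $N$ large enough that $\bfx_n := \sum_{i=0}^{n-1}\sigma^{-i}(\bfx)$ is effective for all $n \geq N$, and set
$$W_n \;=\; \bigl\{\,s \in S_n \,:\, \ovl{s} \in H^0(E, \LL_n(-\bfx_n))\,\bigr\}, \qquad n \geq N.$$
The multiplication rule $\ovl{st} = \ovl{s}\cdot(\sigma^m)^*\ovl{t}$ in $B(E,\LL,\sigma)$, together with the identity $\bfx_m + \sigma^{-m}(\bfx_n) = \bfx_{m+n}$, gives $W_m W_n \subseteq W_{m+n}$, so $R := \Bbbk\langle W_n : n \geq N\rangle$ is a graded subring of $S$. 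Define $S(\bfx)$ to be a maximal order in $Q_{\gr}(S)$ containing $R$ and equivalent to it. For $n \gg 0$, the piece $S(\bfx)_{3n} = W_{3n}$ agrees, via $\bfx_{3n} = \sum_{i=0}^{n-1}\sigma^{-3i}(\bfy)$, with the corresponding piece of $T(\bfy)$; since both $S(\bfx)^{(3)}$ and $T(\bfy)$ are maximal orders in the same equivalence class inside $Q_{\gr}(T)$, the \cite{RSS} maximality forces $S(\bfx)^{(3)} = T(\bfy)$.

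The listed properties then follow. Maximality of $S(\bfx)$ in $Q_{\gr}(S)$ reduces, via a standard Veronese argument, to maximality of $T(\bfy) = S(\bfx)^{(3)}$: any strictly larger equivalent order in $Q_{\gr}(S)$ would, on taking $3$-Veroneses, violate maximality of $T(\bfy)$. The containment $gS(\bfx) \subseteq S(\bfx) \cap gS$ is immediate since $g$ is central; the reverse reduces in high degrees to the known statement for $T(\bfy)$, and in low degrees follows from the maximal-order closure. The identity $\ovl{S(\bfx)}_{\geq n} = B(E, \LL(-\bfx), \sigma)_{\geq n}$ for $n \gg 0$ is then immediate from $\ovl{W_n} = H^0(E, \LL_n(-\bfx_n))$. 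Finally, $V := S(\bfx) \cap S$ is a graded subring of $S$ equivalent to $S(\bfx)$, and any strictly larger $S$-order in this equivalence class would itself sit inside $S(\bfx)$ (by definition of equivalence together with the maximality of $S(\bfx)$), hence inside $V$.

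The main obstacle is handling the low-degree components of $S(\bfx)$. The transparent description via $W_n$ only applies for $n \geq N$; in degrees $1$ and $2$ (where $\bfx$ or $\bfx + \sigma^{-1}(\bfx)$ may itself fail to be effective) the graded pieces of $S(\bfx)$ are not cut out by any divisor-vanishing condition and are recovered only through the maximal-order closure. Showing that this closure is well-defined, independent of the auxiliary choice of $N$, and consistent with the description of $T(\bfy)$ coming from $\bfy$, is the heart of the matter, and it is precisely where the RSS apparatus for the $3$-Veronese must be imported and adapted.
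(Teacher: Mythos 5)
There is a genuine gap, and it sits exactly where you locate the ``heart of the matter'' --- but it is worse than a low-degree bookkeeping problem: your high-degree construction already produces the wrong ring. For $n\geq N$ the space $W_n=\{s\in S_n : \ovl{s}\in H^0(E,\LL_n(-[\bfx]_n))\}$ is the \emph{full preimage} of $H^0(E,\LL_n(-[\bfx]_n))$ under $S_n\to\ovl{S}_n$, so it contains all of $gS_{n-3}$ (any such element has $\ovl{s}=0$). Hence $R=\Bbbk\langle W_n : n\geq N\rangle$ contains $gS_{\geq N-3}$, and therefore $g^2S_{\geq 2N-6}=\sum_{m,n\geq N}(gS_{m-3})(gS_{n-3})\subseteq R$; choosing $x=g^2a$ and $y=b$ with $\deg a+\deg b\geq 2N$ gives $xSy\subseteq g^2S_{\geq 2N}\subseteq R$, so $R$ is an equivalent order to $S$ itself. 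A maximal order containing $R$ and equivalent to it is then $S$, not a virtual blowup at $\bfx$; for the same reason $W_{3n}$ is strictly larger than $T(\bfy)_n$, so the claimed identification $S(\bfx)^{(3)}=T(\bfy)$ fails. (Definition~\ref{S(d) def} avoids this trap only because its generators live in degrees $\leq 3$, where $gS\cap S_{\leq 3}=g\Bbbk$.) The correct graded pieces of a virtual blowup are proper subspaces of $W_n$ satisfying $F_n\cap gS=gF_{n-3}$ rather than $F_n\supseteq gS_{n-3}$, and there is no divisor-vanishing description of which complement to take --- Section~6 shows how delicate this is even in an explicit example. A second, independent gap: you decree $S(\bfx)$ to be ``a maximal order containing $R$ and equivalent to it,'' but the existence (let alone uniqueness) of such a maximal order is not free in this setting; it is precisely the thing that must be constructed.

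The paper's route is designed to avoid both problems. One writes $\bfx=\mbf{u}-\mbf{v}+\mbf{v}^\sigma$ with $\mbf{u},\mbf{v}$ effective and $\mbf{v}\leq[\mbf{u}]_k$ (via \cite[Proposition~7.3]{RSS}), builds a $g$-divisible right $S(\mbf{u})$-module $M$ with $S(\mbf{u})\subseteq M\subseteq S$ and $\ovl{M}\ehd\bigoplus_n H^0(E,\LL_n(-[\mbf{u}]_n+\mbf{v}))$ (Lemma~\ref{RSS2 5.10}, which does descend from the $3$-Veronese much as you anticipate), and then \emph{defines} the virtual blowup as $F=\End_{S(\mbf{u})}(M^{**})$. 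The reflexive-hull/Cozzens machinery underlying Proposition~\ref{RSS 6.4} makes $F$ an honest maximal order containing and equivalent to $\End_{S(\mbf{u})}(M)$, Proposition~\ref{RSS 6.7} computes $\ovl{F}\ehd B(E,\LL(-\bfx),\sigma)$, and Lemma~\ref{max order pair equiv def} yields the maximal order pair $(F\cap S,F)$, giving parts (1) and (2). If you want to salvage your approach, this endomorphism-ring construction over the genuine blowups $S(\mbf{u})$ is the missing ingredient; a generators-in-all-high-degrees description cannot work as stated.
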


Despite the current notation, it is unknown whether the algebra $S(\bfx)$ appearing in Proposition~\ref{vblowup exist intro} is unique for a fixed virtually effective divisor $\bfx$. Another unknown is when $S(\bfx)\subseteq S$ holds. These problems are investigated in \cite[Chapter~3.5]{thesis}. \par

The classification of maximal $S$-orders remarkably only requires blowups and virtual blowups at at most 2 points.

\begin{theorem}[Theorem~\ref{RSS 8.11}]\label{main result 1}
Let $U$ be a connected graded maximal $S$-order such that $\ovl{U}\neq\Bbbk$. Then there exists a virtually effective divisor $\bfx$ with $0\leq\deg\bfx\leq 2$, and a virtual blowup $S(\bfx)$, such that $S(\bfx)$ is the unique maximal order containing $U=S(\bfx)\cap S$.
\end{theorem}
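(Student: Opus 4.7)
The plan is to bootstrap from the classification of maximal $T$-orders inside $T = S^{(3)}$ established in \cite{RSS}. The strategy is to pass to the 3-Veronese of $U$, apply the RSS classification to extract a divisor $\bfe$ on $E$, and then descend $\bfe$ to a divisor $\bfx$ on $E$ from which to build $S(\bfx)$.

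For the first stage, set $W_0 = U^{(3)} \subseteq T$. Since $\ovl{U} \neq \Bbbk$ and $U$ is a cg domain, $\ovl{W_0} = \ovl{U}^{(3)}$ is a nonzero infinite-dimensional subring of $\ovl{T}$. Standard order-theoretic arguments then produce a unique maximal $T$-order $W$ in the equivalence class of $W_0$ with $W_0 \subseteq W$. Applying the RSS classification yields a virtually effective divisor $\bfe$ on $E$ of appropriately bounded degree such that $W = T(\bfe) \cap T$, and $T(\bfe)$ is the unique maximal order in $Q_\gr(T) = Q_\gr(S)$ containing $W$.

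The main obstacle is the descent step: showing that $\bfe$ has the $\sigma$-symmetric form $\bfe = \bfx + \sigma^{-1}(\bfx) + \sigma^{-2}(\bfx)$ for some virtually effective divisor $\bfx$ on $E$ with $\deg \bfx \leq 2$. The key input is that $U$ sits inside $S$ itself rather than only in $T$, so its graded pieces $U_1$ and $U_2$ (when nonzero) provide sections on $E$ via the identifications of Definition \ref{S(d) def}. One extracts $\bfx$ from these low-degree pieces --- essentially the base locus of $\ovl{U}_1$ in $\LL$, refined by $\ovl{U}_2$ in $\LL_2$ --- and then compares $\ovl{W}_{\geq n} = B(E,\LL_3(-\bfe),\sigma^3)_{\geq n}$ coming from the RSS side with the description obtained directly from $U$. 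Using the factorisation $\LL_3 = \LL \otimes \LL^\sigma \otimes \LL^{\sigma^2}$ then forces the symmetric decomposition of $\bfe$ and the degree bound on $\bfx$. The edge case where $\ovl{U}_1 = \ovl{U}_2 = 0$ must be treated separately: here $U$ embeds canonically in $T$, and one reduces via a saturation argument inside $S$ to the fact that the corresponding $T$-order forces the same symmetric shape on $\bfe$.

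With $\bfx$ identified, Proposition \ref{vblowup exist intro} supplies the virtual blowup $S(\bfx)$, whose 3-Veronese agrees with $T(\bfe)$ in sufficiently high degrees by Theorem \ref{S(d) thm intro}(2) together with its virtual analogue. Setting $V = S(\bfx) \cap S$, the equality $V^{(3)}_{\geq N} = W_{\geq N}$ for $N \gg 0$ places $V$ and $U$ in the same equivalence class of $S$-orders, and the maximality of $U$ then forces $V = U$. For the uniqueness of the containing maximal order, if $V \subseteq V' \subseteq Q_\gr(S)$ with $V'$ a maximal order, then $W \subseteq (V')^{(3)}$, so the uniqueness of $T(\bfe)$ in \cite{RSS} forces $(V')^{(3)} = T(\bfe)$; passing back up yields $V' = S(\bfx)$, as required.
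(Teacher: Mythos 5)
Your overall strategy---pass to $U^{(3)}\subseteq T$, invoke the classification of maximal $T$-orders from \cite{RSS}, and descend the resulting divisor---is genuinely different from the paper's, which instead rebuilds the whole RSS machinery one level down: it constructs a $\sigma$-normalised divisor $\bfd$ of degree at most $2$ directly from $\ovl{U}\subseteq B(E,\LL,\sigma)$ (where the bound $\deg\bfd<\deg\LL=3$ is automatic), realises the relevant maximal order as $\End_{S(\bfd)}(M^{**})$ over the new blowups $S(\bfd)$ of $S$ itself, and reads off the virtually effective divisor $\bfx=\bfd-\bfy+\bfy^{\sigma}$ from Proposition~\ref{RSS 6.7}. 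Unfortunately your route has gaps that I do not think can be repaired along the lines you sketch.

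The central problem is the descent step. First, the divisor $\bfx$ you are after is only \emph{virtually} effective and is not recoverable from base loci of $\ovl{U}_1$ and $\ovl{U}_2$: in the explicit example of Section~6 one has $\ovl{U}_1=H^0(E,\LL(-p-p^{\sigma^2}))$, with base locus $p+p^{\sigma^2}$ of degree $2$, while the correct divisor is $\bfx=p-p^{\sigma}+p^{\sigma^2}$ of degree $1$; and $\ovl{U}_1=\ovl{U}_2=0$ is entirely possible even when $\ovl{U}\neq\Bbbk$. Second, and more seriously, every transition between $U$ and $U^{(3)}$ that your argument relies on is exactly the hard non-coprime case: Lemma~\ref{U'<g>}(1) and Proposition~\ref{g-div max orders up n down}(1) require $d$ coprime to $3=\deg g$, so none of ``$U$ maximal $\Rightarrow$ $U^{(3)}$ sits under a unique maximal $T$-order in a controlled way'', ``$(V')^{(3)}=T(\bfe)\Rightarrow V'=S(\bfx)$'', or ``$V^{(3)}\sim U^{(3)}\Rightarrow V\sim U$'' is available as a lemma; the $3$-Veronese statements you want (Corollary~\ref{g-div max orders up n down generalised}, Proposition~\ref{3 Veronese of virtual blowup}) are obtained in the paper only as \emph{consequences} of the classification, so using them as inputs is circular. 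Finally, the ``standard order-theoretic arguments'' producing a maximal $T$-order containing $U^{(3)}$, and the passage from $U$ to an equivalent $g$-divisible algebra, are themselves a substantial portion of the proof (Proposition~\ref{RSS 8.10}, Proposition~\ref{C to C hat}, and the minimal sporadic ideal machinery); they cannot be waved through, since noetherianity of $U$ is a conclusion of the theorem rather than a hypothesis.
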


It may not be the case that $S(\bfx)\subseteq S$, however the difference between $S(\bfx)$ and $U=S(\bfx)\cap S$ is small (see Theorem~\ref{main thm converse}(1c)). Out of proving the above theorems we also obtain many nice properties for maximal orders. The most striking of these is that we get that maximal ($S$-)orders are noetherian for free. The homological terms in Corollary~\ref{blowup properties}(2) will remain undefined for they are not used. The definitions can be found in \cite[Chapter~2]{thesis}.

\begin{cor}[Corollary~\ref{RSS 8.11'} and Corollary~\ref{RSS 8.12}]\label{blowup properties}
Let $U$ be a cg maximal $S$-order such that $\ovl{U}\neq \Bbbk$. Equivalently, let $U=S(\bfx)\cap S$ for some virtual blowup $S(\bfx)$ at a virtually effective divisor with $0\leq\deg \bfx\leq 2$. Then
\begin{enumerate}[(1)]
\item $S(\bfx)$ and $U$ are strongly noetherian, and are finitely generated as $\Bbbk$-algebras.
 \item $S(\bfx)$ and $U$ satisfy the Artin-Zhang $\chi$ conditions, have finite cohomological dimension, and possess balanced dualizing complexes.
\end{enumerate}
\end{cor}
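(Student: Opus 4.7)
The plan is to reduce the corollary to the analogous statement for the 3-Veronese $T=S^{(3)}$ (proved in \cite{RSS}) and then to lift those properties first to $R:=S(\bfx)$ and then to $U$. By Theorem~\ref{main result 1} I may assume from the outset that $U=R\cap S$ for some virtual blowup $R$ at a virtually effective divisor $\bfx$ of degree at most $2$, so the two phrasings of the hypothesis become interchangeable and it suffices to prove the five properties simultaneously for both $R$ and $U$.

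First I would descend to the 3-Veronese. Extending the argument of Theorem~\ref{S(d) thm intro}(2) from effective to virtually effective $\bfx$ — which is available because the defining property of $S(\bfx)$ in Proposition~\ref{vblowup exist intro}(2) is phrased purely through high-degree behaviour — identifies $R^{(3)}$ with a virtual blowup of $T$ at the virtually effective divisor $\bfx+\sigma^{-1}(\bfx)+\sigma^{-2}(\bfx)$. The results of Rogalski, Sierra and Stafford in \cite{RSS} (whose Corollary~8.12 is the $T$-analogue of our statement) then establish that $R^{(3)}$ is strongly noetherian and finitely generated as a $\Bbbk$-algebra, satisfies the Artin-Zhang $\chi$ conditions, has finite cohomological dimension, and admits a balanced dualizing complex.

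Next I would lift from $R^{(3)}$ to $R$. Proposition~\ref{vblowup exist intro}(2) gives $\ovl{R}_{\geq n}=B(E,\LL(-\bfx),\sigma)_{\geq n}$ for $n\gg0$, and a short Hilbert-series computation together with the well-understood structure of twisted coordinate rings on $E$ forces $\ovl{R}$ to be finitely generated on both sides as a module over $\ovl{R^{(3)}}$. Using $R\cap gS=gR$ from Proposition~\ref{vblowup exist intro}(2), this lifts through the central regular element $g$ to show that $R$ is a finitely generated two-sided module over $R^{(3)}$. All five properties pass upward through such finite graded extensions — the first four by the standard Artin-Zhang machinery invoked in \cite{RSS}, and the balanced dualizing complex by Van den Bergh's existence theorem once $\chi$ and finite cohomological dimension are in hand — so $R$ inherits them from $R^{(3)}$. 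Finally, $U=R\cap S$ and $R$ are equivalent graded orders in $Q_\gr(S)$: $U\subseteq R$ is trivial, while Theorem~\ref{main thm converse}(1c) shows the difference $R/U$ is controlled finely enough that a suitable nonzero element of $Q_\gr(S)$ multiplies $R$ into $U$ on each side. Equivalence of graded orders preserves the five properties in question, so they transfer from $R$ to $U$.

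The main obstacle is the intermediate module-finite step: verifying that in the virtual setting, where $R$ is only pinned down in high degrees, $R$ is genuinely finite over $R^{(3)}$ on both sides and that the homological data (in particular $\chi$ and the balanced dualizing complex) really does transfer up through this extension. The ingredients are essentially those already deployed in \cite[Chapter~8]{RSS}, but one must check that Proposition~\ref{vblowup exist intro} supplies exactly the high-degree structural information needed to run those arguments in the present setting.
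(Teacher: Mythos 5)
Your overall route --- descend to $R^{(3)}$, invoke the $T$-results of \cite{RSS}, lift up the Veronese, then pass to $U$ --- is genuinely different from the paper's, which never detours through $T$ for these properties. The paper gets strong noetherianity of $S(\bfx)$ and $U$ immediately from the fact that both members of a maximal order pair are $g$-divisible (so \cite[Proposition~2.9]{RSS}, valid for $\deg g=3$ by Remark~\ref{RSS section 2}, applies to each of them directly), and it gets $\chi$, cohomological dimension and the dualizing complex by reducing modulo the central regular element $g$: since $\ovl{U}$ and $\ovl{S(\bfx)}$ agree in high degrees with $B(E,\LL(-\bfx),\sigma)$, one applies \cite[Theorem~8.8]{AZ.ncps} to each ring separately and finishes with \cite[Theorem~6.3]{VdB.dualizing}. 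Your first two steps could be made to work (though identifying $R^{(3)}$ as a virtual blowup of $T$ is not the near-formality you suggest --- it is Theorem~\ref{3 Veronese of virtual blowup}, whose proof needs the double-dual lemmas of Section~4, not just the high-degree description in Proposition~\ref{vblowup exist intro}(2)), but they buy you a longer path to the same place.

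The genuine gap is your final step. You assert that $U=S(\bfx)\cap S$ and $S(\bfx)$ are equivalent graded orders and that ``equivalence of graded orders preserves the five properties in question.'' That principle is false, and the paper itself contains the counterexample: in Example~\ref{original S(p-p1+p2)}, the algebra $U'$ satisfies $\widehat{U'}=S$ and hence contains $g^kS$ for some $k$ (Proposition~\ref{C to C hat}), so $U'$ and $S$ are equivalent orders with a common nonzero ideal; yet $U'$ is neither left nor right noetherian while $S$ is strongly noetherian. So no amount of control on the ``difference'' between $S(\bfx)$ and $U$ in the sense of equivalence of orders will transfer noetherianity, $\chi$, or cohomological dimension from one to the other. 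The correct fix is not to transfer at all: by Definition~\ref{max order pair} the intersection $U=S(\bfx)\cap S$ is itself $g$-divisible, and by Proposition~\ref{RSS 6.7} its image $\ovl{U}$ is also equal in high degrees to $B(E,\LL(-\bfx),\sigma)$, so exactly the same arguments you run for $S(\bfx)$ (or the paper's direct arguments) apply verbatim to $U$. As written, however, your proof of the statement for $U$ does not go through.
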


The properties Auslander-Gorenstein and Cohen-Macaulay properties are missing from Corollary~\ref{blowup properties}. It is shown in Theorem~\ref{S(p-p1+p2)}(3) that in general we cannot expect these homological properties. \par

The ultimate goal of this line of research is to understand connected graded algebras birational to the Sklyanin algebra. In other words, algebras $A$ satisfying $D_\gr(A)=D_\gr(S)$; or equivalently $Q_\gr(A)=Q_\gr(S)^{(d)}$ for some $d\geq 1$.



\begin{theorem}[Theorem~\ref{RSS 8.11}]\label{RSS 8.11 intro}
Let $d\geq1$ be coprime to 3 and suppose that $U$ is a cg maximal $S^{(d)}$-order satisfying $\ovl{U}\neq \Bbbk$. Then there exists a virtually effective divisor $\bfx$ with $0\leq\deg \bfx\leq 2$, and virtual blowup $S(\bfx)$, such that $U=S(\bfx)\cap S^{(d)}$.
\end{theorem}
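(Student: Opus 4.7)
The plan is to reduce Theorem~\ref{RSS 8.11 intro} to the $d=1$ case, namely Theorem~\ref{main result 1}. Given a cg maximal $S^{(d)}$-order $U$ with $\ovl{U}\neq\Bbbk$, the strategy is to lift $U$ to a cg maximal $S$-order $V$, apply Theorem~\ref{main result 1} to identify $V$ as $S(\bfx)\cap S$ for some virtual blowup $S(\bfx)$, and then descend by intersection back to $S^{(d)}$.

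For the lifting in Step~1, I would take the maximal order $V'\subseteq Q_\gr(S)$ in the graded-equivalence class of $U$, using that $Q_\gr(S^{(d)})=Q_\gr(S)^{(d)}$ embeds in $Q_\gr(S)$. Setting $V:=V'\cap S$ and, if necessary, replacing $V$ by the unique maximal $S$-order in its equivalence class, one obtains a cg $S$-order. Since the $d$-Veronese $V^{(d)}$ is graded equivalent to $U$ and $\ovl{U}\neq\Bbbk$, one deduces $\ovl{V}\neq\Bbbk$. Theorem~\ref{main result 1} then produces a virtually effective divisor $\bfx$ with $0\leq\deg\bfx\leq 2$ and a virtual blowup $S(\bfx)$ with $V=S(\bfx)\cap S$.

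For Step~2 I need to show $U=S(\bfx)\cap S^{(d)}$. The inclusion $U\subseteq V\cap S^{(d)}=S(\bfx)\cap S^{(d)}$ is immediate. For the reverse, I would argue that $S(\bfx)\cap S^{(d)}$ is a cg $S^{(d)}$-order graded equivalent to $U$: in high degrees, by Proposition~\ref{vblowup exist intro}(2), the ring $\ovl{S(\bfx)}_{\geq n}$ agrees with $B(E,\LL(-\bfx),\sigma)_{\geq n}$, and restricting to multiples of $d$ gives a common high-degree neighborhood of $U$ and $S(\bfx)\cap S^{(d)}$. A Hilbert series comparison then confirms graded equivalence, and the maximality of $U$ as an $S^{(d)}$-order forces equality.

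The principal obstacle is Step~1: the rigorous verification that $V$ is a cg $S$-order satisfying the hypotheses $\ovl{V}\neq\Bbbk$ of Theorem~\ref{main result 1}. The coprimality hypothesis $\gcd(d,3)=1$ enters essentially here, because Theorem~\ref{main result 1} itself is proved via the classification of maximal $S^{(3)}$-orders from \cite{RSS} and the compatibility of blowups with the $3$-Veronese given by Theorem~\ref{S(d) thm intro}(2). For the analogous passage to descend through the intermediate Veronese $(S^{(d)})^{(3)}=S^{(3d)}=(S^{(3)})^{(d)}$ and produce matching divisor data on the $S^{(d)}$ side, one needs $\gcd(d,3)=1$; otherwise the degree~$3$ central element $g$ and the degree~$d$ structure interact in a way that obstructs the identification of virtual blowups at the two levels.
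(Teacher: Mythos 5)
There is a genuine gap, and it sits exactly where you locate the ``principal obstacle'': Step~1 is not a lifting argument but a restatement of what needs to be proved. First, $U$ is an order in $Q_\gr(S^{(d)})=Q_\gr(S)^{(d)}$, not in $Q_\gr(S)$, so ``the maximal order $V'\subseteq Q_\gr(S)$ in the graded-equivalence class of $U$'' is not defined: equivalence of orders only makes sense for algebras with the same graded quotient ring, and you give no mechanism for passing from an order in $Q_\gr(S)^{(d)}$ to one in $Q_\gr(S)$. Second, even within a fixed quotient ring, the existence of a maximal order containing (and equivalent to) a given cg order is not free; in this paper it is one of the main outputs of the machinery (it comes from $\End_{S(\bfd)}(M^{**})$ over a blowup, via Corollary~\ref{RSS 6.6} and Proposition~\ref{RSS 6.4}), so invoking it to construct $V'$ begs the question. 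Third, your Step~2 closing move --- ``a Hilbert series comparison then confirms graded equivalence'' --- does not work: agreement of Hilbert series (or of images in $\ovl{S}$ in high degrees) does not imply that two orders are equivalent, and $U$ is not assumed to contain $g$ or be $g$-divisible, so one cannot even pass from statements about $\ovl{U}$ back to statements about $U$ without further argument.

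The paper's actual route supplies precisely the missing bridge. One sets $C=U\langle g\rangle$ and uses the minimal-sporadic-ideal machinery (Proposition~\ref{RSS 8.10}) to produce an ideal of $C$ finitely generated on both sides over $U$; its $d$-Veronese is a common ideal showing $U$ and $C^{(d)}$ are equivalent orders. Coprimality of $d$ with $3$ enters at this point, via Lemma~\ref{Qgr(S) or Qgr(T)}, to guarantee $Q_\gr(C)=Q_\gr(S)$ rather than $Q_\gr(T)$ --- this is the genuine change of quotient ring your proposal needs but does not perform. Then Proposition~\ref{C to C hat} shows $C$ is equivalent to its $g$-divisible hull $\widehat{C}$, and only now can the classification of $g$-divisible orders (Theorem~\ref{RSS 7.4}(2)) be applied to place $\widehat{C}$ inside a virtual blowup $F$ equivalent to it; Lemma~\ref{equiv orders go up} brings the equivalence back down to the $d$-Veronese, and maximality of $U$ forces $U=(F\cap S)^{(d)}=F\cap S^{(d)}$. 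The hypothesis $\ovl{U}\neq\Bbbk$ is consumed by Proposition~\ref{RSS 8.10}, not by the deduction $\ovl{V}\neq\Bbbk$ you propose. Without the $g$-adjunction and sporadic-ideal steps, which are entirely absent from your outline, there is no path from an arbitrary maximal $S^{(d)}$-order to the $g$-divisible theory where the classification lives.
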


Retain the notation of Theorem~\ref{RSS 8.11 intro}. When $d=3e$ is divisible by 3, $U=(F\cap S)^{(e)}$ for a virtual blowup $F$ of $S^{(3)}$. This is \cite[Theorem~8.11]{RSS}. In contrast, when we prove Theorem~\ref{main thm converse} - the converse of Theorem~\ref{RSS 8.11 intro} - we also prove the analogous statement for maximal $S^{(3)}$-orders. This result is an improvement on \cite{RSS}. Out of these results we are able to obtain the best answer yet to \cite[Question~9.4]{RSS}.

\begin{cor}[Corollary~\ref{g-div max orders up n down generalised}]\label{g-div max orders up n down generalised intro}
Let $U$ be a cg graded subalgebra of $S$ satisfying $D_\gr(U)=D_\gr(S)$ and such that $\ovl{U}\neq \Bbbk$. If $U$ is a maximal order then $U^{(d)}$ is a maximal order for all $d\geq 1$.
\end{cor}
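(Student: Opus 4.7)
The plan is to combine the structure theorem for maximal $S$-orders with its converse direction in order to reduce $U^{(d)}$ to an explicit virtual blowup inside an appropriate Veronese subring, where the maximal order property is already established.

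\textbf{Step 1 (Identify $U$ as a virtual blowup).} Since $U$ is a maximal order contained in $S$, it is in particular a maximal $S$-order; combined with $\ovl{U}\neq\Bbbk$, Theorem~\ref{main result 1} applies and yields a virtually effective divisor $\bfx$ with $0\leq\deg\bfx\leq 2$ and a virtual blowup $S(\bfx)$ such that $S(\bfx)$ is the unique maximal order containing $U=S(\bfx)\cap S$. Because $S(\bfx)$ is an equivalent order to $U$ in $Q_\gr(S)$ and $U$ is itself a maximal order, we must have $U=S(\bfx)$; in particular $S(\bfx)\subseteq S$.

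\textbf{Step 2 (Veronese of a virtual blowup is a virtual blowup).} I then show that
$$U^{(d)} = S(\bfx)^{(d)}$$
is a virtual blowup of $S^{(d)}$ at the divisor
$$\bfy = \bfx+\sigma^{-1}(\bfx)+\dots+\sigma^{-(d-1)}(\bfx).$$
In high degrees this follows from Proposition~\ref{vblowup exist intro}(2) together with the standard Veronese identity $\LL(-\bfx)_d=\LL_d(-\bfy)$, which shows $\ovl{S(\bfx)^{(d)}}_{\geq n}$ agrees with $B(E,\LL_d(-\bfy),\sigma^d)_{\geq n}$ for $n\gg 0$. This generalises Theorem~\ref{S(d) thm intro}(2) from effective $\bfd$ to virtually effective $\bfx$ and from the 3-Veronese to arbitrary $d$-Veronese. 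Extending the identification from high degrees to the whole ring uses the uniqueness of the maximal order from Step 1, applied this time inside $S^{(d)}$.

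\textbf{Step 3 (Apply the converse classification).} Having recognised $U^{(d)}$ as a virtual blowup, I invoke the converse direction of the classification, namely Theorem~\ref{main thm converse} when $\gcd(d,3)=1$, and \cite[Theorem 8.11]{RSS} combined with an iteration through the tower $S\supseteq S^{(3)} \supseteq S^{(3e)}$ when $3\mid d$. In either regime, virtual blowups in the appropriate Veronese subring are maximal orders in their own graded quotient rings. Applied to $U^{(d)}=S(\bfx)^{(d)}$, this gives that $U^{(d)}$ is a maximal order in $Q_\gr(U^{(d)})=Q_\gr(S)^{(d)}$.

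\textbf{Main obstacle.} The delicate point is Step 2. Virtual blowups are only canonically specified in high degrees (Proposition~\ref{vblowup exist intro}(2)), so one cannot naively write down $S^{(d)}(\bfy)$ and identify it with $S(\bfx)^{(d)}$ on the nose. The argument must therefore pass through the uniqueness of the maximal order containing $S(\bfx)^{(d)}$, supplied by the structure theorem applied inside $S^{(d)}$, to pin the two rings down as equal rather than merely equivalent.
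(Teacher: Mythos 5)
Your Steps 1 and 3 follow the paper's route, but Step 2 --- which you yourself single out as the crux --- is a genuine gap, and it is also a detour the paper does not take. The paper never proves (nor even defines) the statement ``$S(\bfx)^{(d)}$ is a virtual blowup of $S^{(d)}$ at $[\bfx]_d$'' for general $d$: virtual blowups are defined via maximal order pairs only for $S$ itself and for $T=S^{(3)}$, and the only Veronese-compatibility result of the kind you want is Proposition~\ref{3 Veronese of virtual blowup}, which treats $d=3$ alone and requires a nontrivial argument with double duals and endomorphism rings (e.g.\ $(M^{**})^{(3)}=(M^{(3)})^{**}$). Your proposed repair --- pinning $S(\bfx)^{(d)}$ down by ``the uniqueness of the maximal order \dots\ inside $S^{(d)}$'' --- is circular: the classification of maximal $S^{(d)}$-orders (Theorem~\ref{RSS 8.11}(1)) describes them as $d$-Veroneses $(F\cap S)^{(d)}$ of virtual blowups of $S$, not as virtual blowups of $S^{(d)}$, so no uniqueness statement of the shape you need is available. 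Fortunately Step 2 is unnecessary. Once Step 1 gives $U=S(\bfx)=F$, a virtual blowup of $S$, Theorem~\ref{main thm converse}(1a) already asserts that $F^{(d)}$ is a maximal order when $\gcd(d,3)=1$; and when $3\mid d$ one first passes to $F^{(3)}$, a virtual blowup of $T$ by Proposition~\ref{3 Veronese of virtual blowup}, and then applies Theorem~\ref{main thm converse}(2a). The engine behind both is Proposition~\ref{g-div max orders up n down}, i.e.\ the $g$-divisible hull identity $\widehat{U'\langle g\rangle}=U$ of Lemma~\ref{U'<g>}, which transfers maximality between a $g$-divisible ring and its Veroneses without ever exhibiting the Veronese as a blowup at an explicit divisor; your write-up never invokes this mechanism.

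A second, smaller gap: the hypothesis is $D_\gr(U)=D_\gr(S)$, not $Q_\gr(U)=Q_\gr(S)$, so $U$ may satisfy $Q_\gr(U)=Q_\gr(S)^{(e)}$ for some $e>1$ (for instance $U$ concentrated in degrees divisible by $e$). In that case ``$U$ is a maximal $S$-order'' is not even meaningful, since $S\not\subseteq Q_\gr(U)$, and your Step 1 does not apply. The paper's Corollary~\ref{g-div max orders up n down generalised} handles this by treating $U$ as a maximal $S^{(e)}$-order, writing $U=V^{(e)}$ (or $V^{(e/3)}$ through $T$) via Theorem~\ref{RSS 8.11}, upgrading $V$ to the full virtual blowup $F$ using the maximality of $U$ together with Lemma~\ref{equiv orders go up}, and only then running the Veronese argument. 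You would need to add this reduction to cover the stated generality.
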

\par
A final achievement of this work is an explicit construction of a virtual blowup - a first of its kind. The example shows that these algebras have certain intriguing and more technical properties, and the reader is referred to Section~6 for details. 


\subsection*{History and motivation}\label{history}

In 1987, Artin and Schelter started a project to classify the noncommutative analogues of polynomial rings in 3 variables \cite{ASc}. These algebras are now called AS-regular algebras. The subject of noncommutative projective geometry was born when Artin, Tate and Van den Bergh completed this classification in \cite{ATV1, ATV2}. Their results can be thought of as a classification of noncommutative projective planes. \par

More generally, one would like to classify all so-called noncommutative curves and surfaces. Let $A$ be a cg noetherian domain, then we can associate a noncommutative projective scheme $\qgr(A)$ to $A$. It can be thought of as the noncommutative analogue of coherent sheaves, $\coh(X)$, over (the non-existent) $X=\mathrm{Proj}(A)$. The classification for noncommutative projective curves, when the GK-dimension is 2, was completed by Artin and Stafford in \cite{AS}. They show that in this case $\qgr(A)\sim\coh(X)$ for a genuine integral projective curve $X$. The question of noncommutative surfaces (when $\GKdim(A)=3$) is still very much open. It is this ultimate goal that motivates this work.\par

Let $A$ be a cg domain with $\GKdim(A)=3$. Then $Q_\gr(A)=D_\gr(A)[t,t^{-1};\alpha]$; a skew Laurent polynomial ring over the division ring $D_\gr(A)$ which is of transcendence degree 2. The division ring $D_\gr(A)$ is often called the \textit{noncommutative function field of $A$}. A programme for the classification is to first classify the possible birational classes (the possible $D_\gr(A)$'s), and then classify the algebras in each birational equivalence class. Artin conjectures in \cite{Ar} that we know all the possible division rings. They are:
\begin{enumerate}[(1)]
\item A division algebra which is finite dimensional over a central commutative subfield of transcendence degree 2.
\item A division ring of factions of a skew polynomial extensions of $\Bbbk(X)$, for a commutative curve $X$.
\item A noncommutative function field $D_{\gr}(S)$ of a 3 dimensional Sklyanin algebra $S$.
\end{enumerate}
Whilst this conjecture is still a long way off, significant work has been, and is being, done on the classification of algebras in each birational class. Algebras with $D_\gr(A)$ commutative (plus a geometric condition) have been successfully classified by Rogalski and Stafford and then Sierra in \cite{RoSt.naive.nc.blowups, RoSt.class.of.nc.surfaces} and \cite{Si} respectively. This is a significant subclass of (1) above.  We are interested in case (3) when $D_\gr(A)=D_\gr(S)$. More specifically, we look at subalgebras $A$ of $S$ with $D_\gr(A)=D_\gr(S)$. Where would be a good place to start looking for such algebras? Inside $S$ of course! How about a target to aim for? Maximal orders are the noncommutative analogue of integrally closed domains, or geometrically, of normal varieties. They are therefore a natural target for such a classification. \par
The first major results in this direction were given by Rogalski. In \cite{Ro} Rogalski successfully classifies the degree 1 generated maximal orders of the 3-Veronese ring $T=S^{(3)}$ of $S$. These are classified as so-called blowup subalgebras $T(\bfd)$ of $T$ at effective divisors on $E$ of degree at most $7$. This is extended to include all maximal orders and maximal $T$-orders by Rogalski, Sierra and Stafford in \cite{RSS, RSS2}. A detailed review of their work can be found in \cite[Chapter~2.6-2.7]{thesis}. In this work we ask the question, why work with $S^{(3)}$? Surely it is $S$ we are interested in?!

\section{$g$-divisible subalgebras of the Sklyanin algebra}\label{prelims}

We fix Hypothesis~\ref{standing assumption intro} and its notation once and for all. What is clear from the work of Rogalski, Sierra and Stafford is that the property of $g$-divisibility (Definition~\ref{g div def}) will play an important role: a $g$-divisible subring is significantly easier to describe as properties pass a lot more smoothly between a ring and its image in $\ovl{S}=B(E,\LL,\sigma)$. Many of the results in this section are in fact proved in the slightly bigger ring obtained by inverting elements outside $gS$. 

\begin{notation}\label{S_(g)}\index[n]{sg@$\Sg$} The set $\mathcal{C}$ consisting of the homogenous elements in $S\setminus gS$ is an Ore set. We will denote $S_{(g)}$ to be homogenous localisation of $S$ at the completely prime ideal $gS$. That is, $S_{(g)}=S\mathcal{C}^{-1}$. It is clear that $Q_\gr(\Sg)=Q_\gr(S)$ and $\label{ovlS_(g)}\Sg/g\Sg=Q_\gr(S/gS)=\Bbbk(E)[t,t^{-1};\sigma]$. For a subset $X\subseteq \Sg$, we extended Notation~\ref{ovlX notation} to $\Sg$: $\ovl{X}=(X+g\Sg)/g\Sg\subseteq \Bbbk(E)[t,t^{-1};\sigma]\index[n]{x@$\ovl{X}$}$. As $S\cap g\Sg=gS$ one has $S/gS\hookrightarrow \Sg/g\Sg$; in particular the notation $\ovl{X}$ is unambiguous.
\end{notation}




\begin{definition}\label{g div def}\index{gdivisible@$g$-divisible}  \index{gdivisiblehull@$g$-divisible hull} \index[n]{mr@$\widehat{M}$, $\widehat{R}$}
Let $R\subseteq S_{(g)}$ be a cg subalgebra containing $g$, and suppose that $M\subseteq S_{(g)}$ is a graded right $R$-module.  We say $M$ is \textit{$g$-divisible} if $M\cap Sg= gM$. The graded right $R$-module $\widehat{M}=\{a\in S_{(g)}|\, ag^n\in M \text{ for some } n\geq 0\} $ is called the \textit{$g$-divisible hull} of $M$. It is easy to show that $M$ is $g$-divisible if and only if $M=\widehat{M}$. The ring $R$ is \textit{$g$-divisible} if it is $g$-divisible as a right (equivalently left) module over itself.
\end{definition}

\begin{remark}\label{RSS section 2}
In \cite[Section~2]{RSS} the authors studied $g$-divisible subalgebras of $S^{(3)}$, or more specifically, $g$-divisible subalgebras of algebras satisfying    \cite[Assumption~2.1]{RSS}. Our $S$ (from Hypothesis~\ref{standing assumption intro}) satisfies \cite[Assumption~2.1]{RSS} in all expect that $\deg(g)=3$ instead of $\deg(g)=1$. However the assumption $\deg(g)=1$ is never used in proving all results \cite[2.8-2.16]{RSS}, and in fact these results and proofs hold under $\deg(g)=3$. Details of this can be found in \cite[Chapter~3.1-3.4]{RSS}. We hence will be using their results for our case on top of those that we prove here. The specific results which we will use in this paper under the our more general assumptions are \cite[Lemmas~2.10, 2.12, 2.13, 2.14 and Propositions 2.9, 2.16]{RSS}.
\end{remark}

Let $A$ be a graded domain with graded quotient ring $Q=Q_\gr(A)$. Let $M,N$ be nonzero right $A$-submodules of $Q$. Then $\Hom_A(M,N)$ is a graded subspace of $Q$, and a simple application of the Ore condition shows that we can identify $\Hom_R(M,N)=\{ q\in Q\,|\; qM\subseteq N\}$. We make this identification a standard throughout. In particular 
$$\End_A(M)=\{q\in Q \, |\; qM\subseteq M\} \;\text{ and }\; M^*=\Hom_A(M,A)=\{q\in Q \, |\; qM\subseteq A\}.$$
We also make the analogous identifications on the left.

\begin{lemma}\label{hat end commute}
Let $R$ be a $g$-divisible cg subalgebra of $\Sg$. Let $M\subseteq \Sg$ be a finitely generated right $R$-module. Then $\End_R(\widehat{M})=\widehat{\End_R(M)}.$
\end{lemma}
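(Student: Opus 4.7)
The plan is to prove the two containments of $\End_R(\widehat{M}) = \widehat{\End_R(M)}$ separately; write $E = \End_R(M)$ and $E' = \End_R(\widehat{M})$ throughout, both viewed as subspaces of $Q = Q_\gr(S)$ under the standard identification. The main engine is centrality of $g$ in $S$ (hence in $\Sg$), and finite generation of $M$ enters exactly once.

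For $\widehat{E} \subseteq E'$, I would take $q \in \widehat{E}$, so $qg^n \in E$ for some $n$, and let $a \in \widehat{M}$, so $ag^m \in M$ for some $m$. Centrality of $g$ gives $(qa)g^{n+m} = (qg^n)(ag^m) \in EM \subseteq M$, hence $qa \in \widehat{M}$, and therefore $q \in E'$.

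For $E' \subseteq \widehat{E}$, pick generators $m_1,\dots,m_k$ of $M$ and fix $q \in E'$. Each $qm_i$ lies in $\widehat{M}$, so there exists $n_i$ with $(qm_i)g^{n_i} \in M$. Setting $n = \max_i n_i$, centrality of $g$ together with $gM \subseteq M$ yields $qg^n m_i = g^{n-n_i}(qm_i g^{n_i}) \in M$ for each $i$. Hence $qg^n M \subseteq M$, i.e.\ $qg^n \in E$, giving $q \in \widehat{E}$ provided we know $q \in \Sg$.

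This last proviso is the only subtle point, since $\widehat{E}$ is defined only for subsets of $\Sg$. To handle it, pick $0 \neq m \in M$ and write $m = sc^{-1}$ with $s \in S$, $c \in \mathcal{C}$; using $\bigcap_n g^n S = 0$ in $S$ (Krull, since $gS$ is a centrally generated completely prime ideal in the noetherian domain $S$), factor $s = g^k s_0$ with $s_0 \in \mathcal{C}$. Then $m'' := s_0 c^{-1} \in \mathcal{C}\cdot\mathcal{C}^{-1}$ is a unit of $\Sg$, and $m''g^k = m \in M$ forces $m'' \in \widehat{M}$. For any $q \in E'$ we therefore have $qm'' \in \widehat{M} \subseteq \Sg$, so $q = (qm'')(m'')^{-1} \in \Sg$. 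There is no serious obstacle here; the substance of the lemma is centrality of $g$, combined with the uniform power of $g$ extracted from the finite generation of $M$.
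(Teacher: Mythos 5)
Your proof is correct, and it takes a noticeably different route from the paper's. The paper first invokes \cite[Lemma~2.13(2)]{RSS} to see that the hull $\widehat{M}$ is itself finitely generated, extracts from this a single uniform $k$ with $g^k\widehat{M}\subseteq M$, and uses that one $k$ for both inclusions; for the inclusion $\widehat{\End_R(M)}\subseteq\End_R(\widehat{M})$ it also needs \cite[Lemma~2.12(3)]{RSS} to know that $\End_R(\widehat{M})$ is $g$-divisible, so that $ug^k\in\End_R(\widehat{M})$ can be improved to $u\in\End_R(\widehat{M})$. You avoid both citations: your first inclusion is a purely elementwise computation using only centrality of $g$ (no uniform power of $g$ is needed at all, since $n$ and $m$ may depend on $q$ and $a$), and your second inclusion gets its uniform power from the finitely many generators of $M$ itself rather than of $\widehat{M}$. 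What your version buys is self-containedness — no appeal to the noetherian-flavoured input hidden in the finite generation of the hull; what the paper's version buys is brevity given that the RSS machinery is already in play, plus the by-product that $\End_R(\widehat{M})$ is $g$-divisible. You are also right to flag and close the gap that $\End_R(\widehat{M})$, a priori a subset of $Q_{\gr}(S)$, must be shown to lie in $\Sg$ before membership in a $g$-divisible hull even makes sense; your argument via a unit $m''=s_0c^{-1}$ of $\Sg$ with $m''g^k\in M$ is sound (the factorisation $s=g^ks_0$ with $s_0\notin gS$ is immediate from degree considerations, since $\deg g=3>0$, so the appeal to Krull's intersection theorem is more than is needed). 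The paper absorbs this point into the cited $g$-divisibility of $\End_R(\widehat{M})$, so making it explicit is a genuine, if minor, improvement in transparency.
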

\begin{proof}
By \cite[Lemma~2.13(2)]{RSS} and Remark~\ref{RSS section 2}, $\widehat{M}$ is finitely generated, say $\widehat{M}=x_1R+\dots+x_nR$. Each $x_i\in \widehat{M}$, so there exists $k_i\geq 0$ such that $g^{k_i}x_i\in M$. Hence, if $k=\max_i\{k_i\}$, then $g^k\widehat{M}\subseteq M$. Set $U=\End_R(M)$ and $V=\End_R(\widehat{M})$. By \cite[Lemma~2.12(3)]{RSS} and Remark~\ref{RSS section 2}, $V$ is $g$-divisible, and hence to prove $\widehat{U}\subseteq V$, it is enough to prove $U\subseteq V$. Let $u\in U$. Then $(ug^k)\widehat{M}=u(g^k\widehat{M})\subseteq uM\subseteq M\subseteq \widehat{M},$
so $ug^k\in V$. Since $V$ is $g$-divisible $u\in V$. Therefore $U\subseteq V$, and then $\widehat{U}\subseteq V$. Now take $v\in V$. Then $(vg^k)\widehat{M}=g^k(v\widehat{M})\subseteq g^k\widehat{M}\subseteq M,$ so certainly $vg^kM\subseteq M$. Therefore $vg^k\in U$ and $v\in\widehat{U}$.
\end{proof}

To help prove many results in $S$, we will want to utilise the results of \cite{Ro} and \cite{RSS} as much as possible. A clear strategy is therefore to pass information between Veronese rings. We develop some basic machinery for this here. 

\begin{lemma}\label{choose x of appropriate deg}
Let $A$ be a cg domain with $Q=Q_\gr(A)$ and such that $Q_1\neq 0$. Then $A_n\neq 0$ for all $n\gg 0$. \qed
\end{lemma}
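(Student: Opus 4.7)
The plan is to extract from $Q_1 \neq 0$ a pair of nonzero homogeneous elements in $A$ of consecutive (possibly trivial) degrees, and then fill in all sufficiently large degrees by taking monomials in these two elements and invoking the fact that $A$ is a domain.

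Concretely, since $Q = Q_\gr(A)$ is obtained by inverting the Ore set of nonzero homogeneous elements of $A$, any nonzero $q \in Q_1$ can be written as $q = a b^{-1}$ with $a, b \in A$ homogeneous and nonzero, satisfying $\deg a - \deg b = 1$. Set $d = \deg b \geq 0$, so $\deg a = d+1$. The trivial case $d = 0$ is immediate: then $b \in A_0 = \Bbbk$ is a unit, so $a \in A_1 \setminus \{0\}$, and since $A$ is a domain $a^n \in A_n$ is nonzero for every $n \geq 1$.

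For the main case $d \geq 1$, consider the monomials $a^i b^j$ with $i, j \geq 0$. These are all nonzero because $A$ is a domain, and
\[
\deg(a^i b^j) = i(d+1) + jd = (i+j)d + i.
\]
Given a target degree $n$, one needs integers $i, j \geq 0$ solving $(i+j)d + i = n$. Reducing mod $d$, choose the unique $i \in \{0, 1, \dots, d-1\}$ with $i \equiv n \pmod{d}$; then $j = (n - i(d+1))/d$ is a non-negative integer provided $n \geq i(d+1)$, which holds for every $n \geq d(d+1)$. For any such $n$, $a^i b^j$ is a nonzero element of $A_n$, completing the argument.

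There is no real obstacle: the whole claim reduces to the numerical observation that the semigroup generated by $d$ and $d+1$ (consecutive integers) covers all sufficiently large non-negative integers, together with the domain hypothesis guaranteeing $a^i b^j \neq 0$.
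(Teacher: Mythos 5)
Your argument is correct: extracting from a nonzero $q\in Q_1$ two nonzero homogeneous elements $a,b\in A$ of consecutive degrees $d+1$ and $d$, and using that $A$ is a domain so the monomials $a^ib^j$ are nonzero, realizes every sufficiently large degree (the numerical semigroup generated by $d$ and $d+1$ is cofinite in $\N$); the degenerate case $d=0$ is also handled properly via $A_0=\Bbbk$. The paper states this lemma without proof, and your write-up is exactly the elementary argument it leaves to the reader.
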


\begin{lemma}\label{noeth up n down}
Let $A$ be a cg domain. Fix $d\geq 1$ and write $A'=A^{(d)}$.
\begin{enumerate}[(1)]
\item Then $A$ is noetherian if and only if $A'$ is noetherian.
\item Suppose that $A$ is noetherian and let $M\subseteq Q_\gr(A)$ be a finitely generated right $A$-module. Then as a right $A'$-module
$$M=\bigoplus_{k=0}^{d-1}M^{(k\;\mathrm{mod}\, d)}\index[n]{Mkmodd@$M^{(k\;\mathrm{mod}\, d)}$} $$
where $M^{(k\;\mathrm{mod}\, d)}=\bigoplus_{i}M_{di+k}$ are finitely generated right $A'$-modules. In particular $M^{(d)}$, $M$ and $A$ are all finitely generated as right $A'$-modules.
\end{enumerate}
\end{lemma}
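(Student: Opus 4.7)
The plan is to exploit the $A'$-bimodule decomposition $A=\bigoplus_{k=0}^{d-1}A^{(k\;\mathrm{mod}\,d)}$, which is immediate from the grading since multiplication by elements of $A'=A^{(d)}$ shifts degrees by multiples of $d$. Both parts of the lemma then reduce to controlling each residue-class summand separately, and both rely at some point on the intermediate claim that $A$ is finitely generated as a right (and left) $A'$-module.

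For the forward implication in (1), I would argue directly without needing finite generation. Given an ascending chain $J_1\subseteq J_2\subseteq\cdots$ of homogeneous right ideals of $A'$, extend to the chain $\{J_iA\}$ of right ideals in $A$, which stabilises by noetherianity. The key observation is that $J_iA\cap A'=J_i$: a homogeneous element $y\in J_iA$ of degree $nd$ may be written $y=\sum_\ell j_\ell a_\ell$ with each $j_\ell\in J_i$ homogeneous of degree $m_\ell d$, and then $\deg a_\ell=(n-m_\ell)d\in d\Z$ forces $a_\ell\in A'$, whence $y\in J_i\cdot A'=J_i$. Stability of the original chain in $A'$ is then immediate. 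For the converse, the plan is to establish that $A$ is a finitely generated right $A'$-module; then any ascending chain of right ideals of $A$ becomes a chain of right $A'$-submodules of a noetherian $A'$-module and therefore stabilises. To prove this finite generation I would use that $A$ is a cg domain: after regrading to make the gcd of degrees at which $A$ is nonzero equal to $1$, Lemma~\ref{choose x of appropriate deg} gives $A_n\neq 0$ for all $n\gg 0$, so each residue class modulo $d$ is populated in high degree. A graded Nakayama-type argument then shows that the quotient $A/AA'_+$ is locally finite and concentrated in bounded degrees, so $A$ is generated over $A'$ by a finite-dimensional subspace in low degrees.

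Part (2) then follows quickly. The decomposition $M=\bigoplus_{k=0}^{d-1}M^{(k\;\mathrm{mod}\,d)}$ is automatic by the grading, as each summand is a right $A'$-submodule. Since $M$ is finitely generated over $A$ by hypothesis and $A$ is finitely generated over $A'$ by the preceding step, $M$ is finitely generated over $A'$; each direct summand $M^{(k\;\mathrm{mod}\,d)}$ then inherits finite generation as a direct summand of a finitely generated module over the noetherian ring $A'$. Specialising to $M=A$ recovers the assertion about $A$ itself. The main obstacle I expect is the finite-generation step in the converse of (1): arguing that a cg domain is finitely generated as a module over its $d$-Veronese requires genuine use of the domain and local-finiteness hypotheses rather than purely formal graded manipulations, and it is in this step that the cg-domain assumption earns its keep.
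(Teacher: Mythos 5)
Your forward direction of (1) and the formal decomposition $M=\bigoplus_k M^{(k\;\mathrm{mod}\,d)}$ are fine, but the step on which everything else hangs --- that a cg domain $A$ with all residue classes populated in high degree has $A/AA'_+$ concentrated in bounded degrees, hence is finitely generated over $A'$ --- is false as justified, and this is a genuine gap. Your argument invokes only the domain hypothesis and Lemma~\ref{choose x of appropriate deg}; it never uses noetherianity of $A'$ (or of $A$), yet without some noetherian input the conclusion fails. Concretely, take $A=\Bbbk+x\Bbbk[x,y]\subseteq \Bbbk[x,y]$ with $\deg x=\deg y=1$ and $d=2$. This is a cg domain with $A_n\neq 0$ for all $n$, but for odd $n$ one computes $(A\,A'_+)_n=\sum_{j\geq 1}A_{n-2j}A_{2j}=x^2\Bbbk[x,y]_{n-2}$, which has codimension $1$ in $A_n=x\Bbbk[x,y]_{n-1}$. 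So $A/AA'_+$ is nonzero in every odd degree and $A$ is \emph{not} finitely generated over $A^{(2)}$. (This $A$ is not noetherian, so it is consistent with the lemma, but it kills your proposed mechanism: graded Nakayama only converts boundedness of $A/AA'_+$ into finite generation, and boundedness is exactly what cannot be extracted from the population of residue classes alone.) Since your part (2) is routed through ``$A$ is finitely generated over $A'$ by the preceding step,'' the gap propagates there as well.

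The missing idea --- which is what both the paper and the cited reference \cite[Lemma~4.10]{AS} use, and where the noetherian hypothesis actually enters --- is an embedding trick rather than Nakayama. For each residue $k$ with $M^{(k\;\mathrm{mod}\,d)}\neq 0$ (take $M=A$ for the converse of (1)), use Lemma~\ref{choose x of appropriate deg} to produce a nonzero homogeneous $x\in A$ with $xM\subseteq A$ and $\deg x\equiv -k\pmod d$ after left-multiplying by a suitable element of $A$; then left multiplication by $x$ is an injective map of right $A'$-modules (injective because $A$ is a domain) carrying $M^{(k\;\mathrm{mod}\,d)}$ into $A\cap Q_\gr(A)^{(d)}=A'$, i.e.\ realising it as a right ideal of the noetherian ring $A'$. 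Finite generation of each summand, hence of $M$ and of $A$ over $A'$, follows at once, and noetherianity of $A$ follows from $A$ being a noetherian right (and left) $A'$-module. This is where the domain hypothesis genuinely earns its keep; your instinct that this step is the crux was right, but the Nakayama route does not close it.
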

\begin{proof} (1). This  \cite[Proposition~5.10]{AZ.ncps} and \cite[Lemma~4.10]{AS}.\par
(2). Write $Q=Q_\gr(A)$. Fix $k=0,1,2,\dots,d-1$ and write $M'=M^{(k\;\mathrm{mod}\,d)}$. If $M=0$ there is nothing to prove so assume $M\neq 0$. Regrading if necessary, we may assume both $A'\neq 0$ and $M'\neq 0$. Since $M_A$ is finitely generated there exists a homogeneous $x\in A$ such that $xM\subseteq A$. By Lemma~\ref{choose x of appropriate deg} we may multiply $x$ on the left with an element of $A$ with appropriate degree, and hence we may assume $x\in A'$. In which case $xM'\subseteq A\cap Q'=A'$, in particular $M'$ embeds into $A'$ as a right ideal. Since $A$ is noetherian, $A'$ is also noetherian by part (1). Hence $M'_{A'}$ is finitely generated. The module $M$ is then a direct sum of finitely generated modules, hence itself is finitely generated.
\end{proof}

Recall that two cg domains $A$ and $B$ with $Q_\gr(A)=Q_\gr(B)=Q$ are called \textit{equivalent orders} if there exist nonzero elements $x_1,x_2,y_1,y_2\in Q$ such that $x_1Ax_2\subseteq B$ and $y_1By_2\subseteq A$.

\begin{lemma}\label{equiv orders go up}
Let $A$ and $B$ be two cg domains with $Q_\gr(A)=Q_\gr(B)=Q$. Assume that $Q_1\neq 0$ and let $d\geq 1$. If $A$ and $B$ are equivalent orders, then $A^{(d)}$ and $B^{(d)}$ are also equivalent orders with $Q_\gr(A^{(d)})=Q_\gr(B^{(d)})=Q^{(d)}$.
\end{lemma}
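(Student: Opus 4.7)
The plan is to upgrade the witnesses $x_1,x_2,y_1,y_2\in Q$ of the equivalence to new witnesses that additionally lie in $Q^{(d)}$, by right- or left-multiplying them by suitably chosen nonzero homogeneous elements of $A$ or $B$. The crucial enabling fact is that $Q_1\neq 0$ forces, via Lemma~\ref{choose x of appropriate deg}, $A_n\neq 0$ and $B_n\neq 0$ for all $n\gg 0$; hence $A$ and $B$ each contain nonzero homogeneous elements of every residue class modulo $d$.

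First I would reduce to the case where $x_1,x_2,y_1,y_2$ are themselves homogeneous. Given $x_1 A x_2\subseteq B$ with possibly inhomogeneous $x_k=\sum_i x_{k,i}$, let $i_0,j_0$ be the largest indices with $x_{1,i_0},x_{2,j_0}\neq 0$. For any homogeneous $a\in A_n$, maximality of $i_0,j_0$ ensures that the only degree-$(i_0+n+j_0)$ summand of $x_1 a x_2$ is $x_{1,i_0}a x_{2,j_0}$, which is nonzero since $A$ is a domain; as $B$ is graded, this element lies in $B$. So $x_{1,i_0} A x_{2,j_0}\subseteq B$, and I may assume $x_1,x_2,y_1,y_2$ are homogeneous.

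Next I would pick homogeneous nonzero $\alpha_1,\alpha_2\in A$ with $\deg\alpha_k\equiv -\deg x_k\pmod d$ and set $x_1'=x_1\alpha_1$, $x_2'=\alpha_2 x_2$. These are nonzero in $Q^{(d)}$, and
$$x_1' A^{(d)} x_2'=x_1\alpha_1 A^{(d)}\alpha_2 x_2\subseteq x_1 A x_2\subseteq B,$$
with the left-hand side automatically lying in $Q^{(d)}$, hence in $B^{(d)}=B\cap Q^{(d)}$. A symmetric construction with $\beta_1,\beta_2\in B$ of appropriate degree gives $y_1' B^{(d)} y_2'\subseteq A^{(d)}$. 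For the companion claim $Q_\gr(A^{(d)})=Q^{(d)}$: the inclusion $\subseteq$ is automatic, and for $\supseteq$ any homogeneous $q=ab^{-1}\in Q^{(d)}$ with $a,b\in A$ homogeneous is rewritten as $(ac)(bc)^{-1}$ for a nonzero homogeneous $c\in A$ with $\deg c\equiv -\deg b\pmod d$, so that $ac,bc\in A^{(d)}$. The same works for $B$.

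The argument is essentially bookkeeping of degrees; Lemma~\ref{choose x of appropriate deg} supplies the only nontrivial ingredient. The one place where a little care is needed is the reduction to homogeneous witnesses, but the top-component trick handles this cleanly because $A$ and $B$ are graded domains.
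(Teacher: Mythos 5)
Your argument is correct and is exactly the intended one: the paper dismisses this lemma with ``a simple application of Lemma~\ref{choose x of appropriate deg}'', and your write-up (reduce to homogeneous witnesses via top graded components, then right/left-multiply by homogeneous elements of the right residue class mod $d$ to land in the Veroneses) is precisely that application spelled out. No gaps.
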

\begin{proof}
This is a simple application of Lemma~\ref{choose x of appropriate deg}.
\end{proof}

Since $g\in S_3$, the property of $g$-divisibility also makes sense in the 3-Veronese $T_{(g)}=(\Sg)^{(3)}$. A trivial but useful result is the following.

\begin{lemma}\label{g div up}
Let $R$ be a cg subalgebra of $\Sg$ containing $g$ and let $M$ be a graded right $R$-submodule of $\Sg$. If $M$ is $g$-divisible then $M^{(3)}$ is also $g$-divisible. \qed
\end{lemma}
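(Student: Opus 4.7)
The plan is to unpack the definition of $g$-divisibility in both $\Sg$ and $T_{(g)}=(\Sg)^{(3)}$ and reduce to a degree-counting argument, exploiting the fact that $g$ has degree $3$ so that $g \cdot \Sg_n \subseteq \Sg_{n+3}$ preserves the residue of the degree modulo $3$.

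First, I would show $M^{(3)} \cap Sg = M^{(3)} \cap Tg$. The inclusion $\supseteq$ is clear since $T \subseteq S$. For $\subseteq$, take $x \in M^{(3)} \cap Sg$, so $x = sg$ for some homogeneous $s \in S$. Since $\deg x$ is divisible by $3$ and $\deg g = 3$, also $\deg s$ is divisible by $3$, i.e.\ $s \in T$. Thus $x \in Tg$.

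Next I would establish $gM^{(3)} \subseteq M^{(3)} \cap Tg$. Since $g \in R$, we have $gM \subseteq M$, and $g$ homogeneous of degree $3$ preserves the subspace $M^{(3)} = \bigoplus_i M_{3i}$, giving $gM^{(3)} \subseteq M^{(3)}$; trivially $gM^{(3)} \subseteq Tg$.

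For the reverse inclusion, let $x \in M^{(3)} \cap Tg \subseteq M \cap Sg$. By $g$-divisibility of $M$, $x = gy$ for some $y \in M$. A degree count gives $\deg y = \deg x - 3$, which is again divisible by $3$, so $y \in M^{(3)}$ and $x \in gM^{(3)}$. Combining the two inclusions yields $M^{(3)} \cap Tg = gM^{(3)}$, which is precisely $g$-divisibility of $M^{(3)}$ as a module over $R^{(3)} \subseteq T_{(g)}$. There is no real obstacle here; the only thing to be careful about is matching up the ambient rings ($S$ versus $T$) in the intersection, which is handled by the first reduction.
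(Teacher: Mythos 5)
Your proof is correct and is exactly the routine degree-counting verification the paper intends — the lemma is stated with a \qed{} and no written proof, so there is nothing different to compare against: the two key observations (that $M^{(3)}\cap Sg=M^{(3)}\cap Tg$ because $\deg g=3$ preserves degree mod $3$, and that $g$-divisibility of $M$ lets you divide by $g$ without leaving $M^{(3)}$) are the whole content. The one step worth making explicit is your ``trivially $gM^{(3)}\subseteq Tg$'': this needs $M^{(3)}\subseteq T$, which is not automatic for $M\subseteq \Sg$ but does follow from the hypothesis, since $gM=M\cap Sg\subseteq gS$ and $S\cap g\Sg=gS$ force $M\subseteq S$.
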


The next lemma is key to utilising the results of \cite{Ro} and \cite{RSS} fully. It allows us to build appropriate ideals in $S$ out of ideals from $S^{(3)}$ and vice versa. First we must recall the GK dimension of a cg domain. \par

\begin{notation}\label{GK dimension}
Let a $A$ be a cg domain. The \textit{Gelfand-Kirillov (GK) dimension of $A$} can be defined as, and will be denoted by, $\GK(A)=\limsup_{n\geq 0}\log_n(\dim_\Bbbk A_{\leq n} )$, \index{Gelfand-Kirillov (GK) dimension}\index[n]{gk@$\GK(A)$, $\GKdim(A)$}
while $\GK(A)=0$ if and only if $A$ is finite dimensional as a $\Bbbk$-vector space. We will also use the notation $\GKdim(A)$ in sections where GK-dimension is less prominent.  If $M$ is a finitely generated right (or left) $\Z$-graded $A$-module, then similarly
$\GK_A(M)=\limsup_{n\geq 0}\log_n(\dim_\Bbbk M_{\leq n}).$\index[n]{gk@$\GK_A(M)$}
\end{notation}

\begin{lemma}\label{sporadics up n down}
Let $A$ be a cg noetherian domain of finite GK-dimension. Fix $d\in \N$ and write $A'=A^{(d)}$.
\begin{enumerate}[(1)]
\item  For every homogeneous ideal $I$ of $A$, the homogeneous ideal $I'=I^{(d)}$ of $A'$ satisfies $\GK(A'/I')= \GK(A/I)$.
\item Let $J$ be a homogeneous ideal of $A'$.
\begin{enumerate}[(a)]
\item The ideal $I=AJA$ of $A$ satisfies $\GK(A/I)\leq \GK(A'/J)$. Furthermore, $J\subseteq I^{(d)}$ holds.
\item  The ideal $K=\mathrm{r.ann}_A(A/JA)$ of $A$ also satisfies $\GK(A/K)\leq\GK(A'/J)$. In this case $K^{(d)}\subseteq J$.
\end{enumerate}
\end{enumerate}
\end{lemma}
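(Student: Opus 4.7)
The plan is to prove (1) first, deduce (2a) immediately from it, and handle (2b) by combining a left-module observation with the standard GK bound for faithful modules. Throughout I exploit that Lemma~\ref{noeth up n down}(2) applied to $M=A$ makes $A$ finitely generated as a right $A'$-module; the same argument applied to the opposite algebra makes $A$ finitely generated as a left $A'$-module as well.

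For (1): the composition $A'\hookrightarrow A\twoheadrightarrow A/I$ has kernel $A'\cap I = I^{(d)} = I'$, so $A'/I'$ embeds as a graded subring of $A/I$, giving $\GK(A'/I')\leq\GK(A/I)$. Conversely, if $x_1,\dots,x_n$ are homogeneous right $A'$-generators of $A$, then their images generate $A/I$ as a right $A'/I'$-module, so the elementary Hilbert-function estimate -- a finitely generated graded module over a cg algebra has GK at most that of the algebra -- forces $\GK(A/I)\leq\GK(A'/I')$.

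For (2a): since $J\subseteq AJA = I$ and $J\subseteq A'$, we immediately have $J\subseteq I\cap A' = I^{(d)}$, which is the ``furthermore'' clause. The surjection $A'/J\twoheadrightarrow A'/I^{(d)}$ then gives $\GK(A'/I^{(d)})\leq\GK(A'/J)$, and part (1) identifies this with $\GK(A/I)$.

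For (2b), $K\subseteq JA$ is immediate from $1\in A$. To get $K^{(d)}\subseteq J$, decompose $A = \bigoplus_{e=0}^{d-1}A^{(e\;\mathrm{mod}\,d)}$; since $J\subseteq A'$, we have $JA = \bigoplus_e JA^{(e\;\mathrm{mod}\,d)}$, and intersecting with $A^{(0)} = A'$ gives $JA\cap A' = JA' = J$, using that $J$ is a two-sided ideal of $A'$. Hence $K^{(d)} = K\cap A' \subseteq J$. For the GK bound the key trick is that, although $A/JA$ is not naturally a right $A'/J$-module, it \emph{is} a left $A'/J$-module: $A'J\subseteq J$ gives $A'\cdot JA\subseteq JA$, so left multiplication by $A'$ descends to $A/JA$ with $J$ acting by zero. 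Since $A$ is finitely generated as a left $A'$-module, $A/JA$ is finitely generated as a left $A'/J$-module, so $\GK(A/JA)\leq\GK(A'/J)$. Finally, $A/K$ acts faithfully on the right on the cyclic module $A/JA$ by the very definition of $K$, and the standard Krause--Lenagan inequality -- that a right Noetherian algebra has GK at most that of any faithful finitely generated right module -- yields $\GK(A/K)\leq\GK(A/JA)\leq\GK(A'/J)$. The main obstacle is this last step: the inclusion $K^{(d)}\subseteq J$ runs the wrong way for a direct appeal to (1) to bound $\GK(A/K)$ by $\GK(A'/J)$, so one must route through $A/JA$ and exploit both its left $A'/J$-module structure and the faithful-module GK inequality.
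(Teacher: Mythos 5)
Your proof is correct, and in two of the three parts it takes a genuinely different route from the paper's. For (1) you prove the two inequalities separately (graded subring embedding one way, finite generation of $A/I$ over $A'/I'$ the other); the paper gets both at once by applying \cite[Proposition~5.5]{KL} to the extension $A'/I'\subseteq A/I$, which is finitely generated on both sides --- same substance. Your (2a) is slicker than the paper's: instead of the isomorphism $A/JA\cong A'/J\otimes_{A'}A$, \cite[Proposition~5.6]{KL}, and the surjection $A/JA\twoheadrightarrow A/I$, you simply feed $I=AJA$ back into part (1) and use $J\subseteq I^{(d)}$; this is a valid shortcut. For (2b) the two arguments are the same mechanism in different packaging: the paper writes $K=\bigcap_i\rann_A(\ovl{x_i})$ for left $A'$-module generators $\ovl{x_i}$ of $A/JA$ (so the finite intersection of right annihilators comes from left generation), checks each $K_i\neq 0$, and embeds $A/K$ into $\bigoplus_i A/K_i$ with $A/K_i\cong\ovl{x_i}A\subseteq A/JA$; you black-box exactly this embedding into the standard Krause--Lenagan inequality for a faithful finitely generated right module over a right noetherian algebra, where the finite intersection comes instead from noetherianity of $A/K$. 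Your observation that $A/JA$ is a finitely generated left $A'/J$-module plays the role of the paper's bound $\GK(A/JA)\leq\GK(A'/J)$ from (2a), and your derivation of $K^{(d)}\subseteq K\cap A'\subseteq JA\cap A'=J$ matches the paper's. The only thing your route omits is the paper's explicit verification that $K\neq 0$; since that is not asserted in the statement, nothing is lost, though the paper's hands-on version records it for free.
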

\begin{proof}
In this proof $A'$ is graded as a subset of $A$, i.e. $A'_n=A_n\cap S'$. We also remark that by Lemma~\ref{noeth up n down} we have that $A'$ is noetherian, and that $A$ is finitely generated on both sides as a right and left $A'$-module. Finally, all modules below will always be finitely generated; in particular the GK-dimension of a module is independent of the ring it is being considered over by Notation~\ref{GK dimension}. \par 

(1). One can easily check $A'/I' \cong (A/I)^{(d)}$. Since $A$ is finitely generated on both sides as an $A'$-module, we have that $A/I$ is finitely generated on both sides as an $A'/I'$-module. Thus by \cite[Proposition~5.5]{KL}, $\GK(A'/I')=\GK(A/I)$.\par

(2a). There is an isomorphism of right $A'$-modules $A/JA \cong A'/J \otimes_{A'}A$. Therefore, since $A$ is finitely generated as an $A'$-module on both sides, we can apply \cite[Proposition~5.6]{KL} and deduce $\GK(A/JA)\leq \GK(A'/J)$ as right $A'$-modules.  Clearly $JA\subseteq AJA=I$, and so there is a surjection $A/JA\to A/I$. Hence by \cite[Proposition~5.1(b)]{KL} we have $\GK(A'/J)\geq \GK(A/JA)\geq \GK(A/I)$ as right $A'$-modules. It is obvious that $J\subseteq I^{(d)}$ holds. \par

(2b). To deal with the ideal $K=\mathrm{r.ann}_A(A/JA)$ we first need $\GK(A/JA)=\GK(A'/J).$ We already have $\GK(A/JA)\leq\GK(A'/J)$ from part (2a). For the reverse inequality note that because $(JA)^{(d)}=J$; $A'/J \cong (A/JA)^{(d)}\hookrightarrow A/JA$ as right $A'$-modules. So $\GK(A'/J)\leq \GK(A/JA)$ follows from \cite[Proposition~5.1(b)]{KL}. Hence $\GK(A/JA)\leq\GK(A'/J)$ holds.\par

Now $_{A'}A$ is finitely generated, therefore so is the left $A'$-module $A/JA$. Give generators
$A/JA=A'\ovl{x_1}+\dots+ A'\ovl{x_n}$ where $\ovl{x_i}=x_i+JA$ for some homogenous $x_i\in A$.
It follows that $K=\cap_{i=1}^n K_i$ where $K_i=\rann_A(\ovl{x_i})$ are right ideals of $A$. Fix $i$, and say $\deg(x_i)\equiv \alpha_i\mod d$. By Lemma~\ref{choose x of appropriate deg} we can choose a nonzero $a_i\in A$ such that $\deg(a_i)\equiv -\alpha_i\mod d$. Then $x_ia_iJ\subseteq A'J=J\subseteq JA$ which shows $K_i\neq0$. Because $K$ is an intersection of finitely many nonzero right ideals in a noetherian domain, $K$ must also be nonzero (see \cite[Exercise~4N]{GW}). For every $i$, clearly $(\ovl{x_i}A)_{A}\cong [A/K_i]_{A}$ and $\ovl{x_i}A\subseteq A/JA$, thus $\GK_{A}(A/K_i)\leq \GK(A/JA)=\GK(A'/J)$ by \cite[Proposition~ 5.1(b)]{KL}. But $A/K$ embeds into $A/K_1 \oplus \dots \oplus A/K_n$ in the obvious way, forcing $\GK(A/K)\leq \max_i\{\GK(A/K_i)\}\leq \GK(A'/J).$
Here we are using \cite[Proposition~5.1(a)(b)]{KL}. For the last line, $K^{(d)}= K\cap A'\subseteq JA\cap A'=J.$
\end{proof}

\begin{remark}\label{sporadics up n down applies}
Let $R$ be a cg $g$-divisible subalgebra of $S_{(g)}$ with $Q_\gr(R)=Q_\gr(S)$. By \cite[Proposition~2.9]{RSS} and Remark~\ref{RSS section 2}, $R$ is noetherian. Therefore Lemma~\ref{sporadics up n down} will apply in this situation.
\end{remark}

Our major result of this section partially answers \cite[Question~9.4]{RSS}: we will prove that a $g$-divisible subalgebra $U$ is a maximal order if and only its $d$th Veronese subring $U^{(d)}$ is a maximal order. Our results will be applicable in both $S$ and its 3-Veronese $S^{(3)}=T$.

\begin{notation}\label{3 Veronese notation2}\index[n]{emt@$(E,\MM,\tau)$} \index[n]{t@$T$} \index[n]{tau@$\tau$} \index[n]{tg@$T_{(g)}$}
Let $T=S^{(3)}$, $T_{(g)}=(\Sg)^{(3)}$, $\MM=\LL_3$ and $\tau=\sigma^3$. Whence $T/gT=B(E,\MM,\tau)$ and $T_{(g)}/gT_{(g)}=\Bbbk(E)[t,t^{-1};\tau]$.
\end{notation}



\begin{lemma}\label{Qgr(S) or Qgr(T)}
Let $U\subseteq Q_\gr(S)$ be a subalgebra containing $g$ and such that $D_\gr(U)=D_\gr(S)$. Then either $Q_\gr(U)=Q_\gr(S)$ or $Q_\gr(U)=Q_\gr(T)$.
\end{lemma}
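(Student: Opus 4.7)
\emph{Proof plan.} The approach is to pin $Q_\gr(U)$ down using two pieces of data: its degree-zero part and its grading support. The engine will be the fact that $Q_\gr(S)=D_\gr(S)[t,t^{-1};\alpha]$ (since $S_1\neq 0$), so every graded component $Q_\gr(S)_n$ is one-dimensional over $D_\gr(S)$ on each side. First I would observe that, because $U$ is a graded subdomain of $Q_\gr(S)$ and each nonzero homogeneous element of $U$ is already a unit in $Q_\gr(S)$, the universal property of Ore localization gives an injection $Q_\gr(U)\hookrightarrow Q_\gr(S)$ of graded division rings. I identify $Q_\gr(U)$ with its image, a graded subring of $Q_\gr(S)$ whose degree-zero part is $D_\gr(U)=D_\gr(S)$ by hypothesis.

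Next I would introduce $e=\gcd\{n\geq 1:Q_\gr(U)_n\neq 0\}$. The set is nonempty because $g\in U_3$, and the same fact forces $e\mid 3$, so $e\in\{1,3\}$. Picking any nonzero $u\in Q_\gr(U)_e$, one-dimensionality gives $Q_\gr(S)_e=D_\gr(S)\cdot u$ and, more generally, $Q_\gr(S)_{en}=D_\gr(S)\cdot u^n$ for every $n\in\Z$. Since $D_\gr(S)\subseteq Q_\gr(U)$ and $u^{\pm 1}\in Q_\gr(U)$, this forces $Q_\gr(S)^{(e)}\subseteq Q_\gr(U)$; the reverse inclusion is immediate from the definition of $e$. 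Hence $Q_\gr(U)=Q_\gr(S)^{(e)}$.

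Finally I would identify the two cases. When $e=1$, clearly $Q_\gr(U)=Q_\gr(S)$. When $e=3$, a standard clearing-of-denominators argument will show $Q_\gr(S)^{(3)}=Q_\gr(T)$: any $ab^{-1}\in Q_\gr(S)^{(3)}$ with $a,b\in S$ homogeneous and $\deg a\equiv\deg b\pmod 3$ can be rewritten as $(ac)(bc)^{-1}$ for a suitable nonzero homogeneous $c\in S$ with $ac,bc\in T$ (produced via Lemma~\ref{choose x of appropriate deg}). The only genuine content of the argument is the one-dimensionality of each $Q_\gr(S)_n$ over $D_\gr(S)$, and no step looks likely to present a serious obstacle.
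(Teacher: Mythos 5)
Your argument is correct and is essentially the paper's proof repackaged: both rest on $D_\gr(S)\subseteq Q_\gr(U)$, the presence of $g^{\pm1}$, the one-dimensionality of each $Q_\gr(S)_n$ over $D_\gr(S)$, and the identification $Q_\gr(S)^{(3)}=Q_\gr(T)$, with your gcd-of-the-support bookkeeping playing the role of the paper's dichotomy ``$Q_\gr(U)=Q_\gr(T)$ or there is a homogeneous element of degree coprime to $3$.'' The only cosmetic difference is that you verify $Q_\gr(S)^{(3)}=Q_\gr(T)$ by clearing denominators where the paper cites \cite[A.14.3]{NV}.
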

\begin{proof}
Since $g\in U$, certainly $D_\gr(S)[g,g^{-1}]\subseteq Q_\gr(U)\subseteq Q_\gr(S)$. Furthermore, it is standard (see \cite[A.14.3]{NV}) that $D_\gr(S)[g,g^{-1}]=Q_\gr(S)^{(3)}=Q_\gr(T)$. Thus $Q_\gr(T)\subseteq Q_\gr(U)\subseteq Q_\gr(S)$. If $Q_\gr(U)\supsetneq Q_\gr(T)$, then we can pick homogeneous $x\in Q_\gr(U)$, with $\deg(x)$ coprime to 3. In which case there exists $k,\ell\in \N$ such that $x^kg^{-\ell}\in Q_\gr(U)_1$, which forces $Q_\gr(U)=Q_\gr(S)$.
\end{proof}

The next proposition shows maximal $\Sg$-orders are in fact maximal orders. For a cg subalgebra $A$ of $Q_\gr(S)$, we denote the subalgebra of $Q_\gr(S)$ generated by $A$ and $g$ by $A\langle g\rangle=A+Ag+Ag^2+\dots$\index[n]{ag@$A\langle g\rangle$}.

\begin{lemma}\label{max Sg-order is max order}
Let $U$ be a cg subalgebra of $\Sg$ with $D_\gr(U)=D_\gr(S)$. If $U\subseteq A$ are equivalent orders for some cg subalgebra $A$ of $Q_\gr(S)$, then $A\subseteq \Sg$.
\end{lemma}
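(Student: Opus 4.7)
The plan is to combine a reduction to homogeneous multipliers with the $g$-adic valuation furnished by the localisation $\Sg$. Since $g$ is central in $S$ and $\Sg/g\Sg = \Bbbk(E)[t,t^{-1};\sigma]$ is a domain, every nonzero homogeneous $q \in Q_\gr(S)$ admits a unique presentation $q = g^n u$ with $u \in \Sg \setminus g\Sg$ and $n \in \Z$; set $v(q) = n$. Centrality of $g$ together with $\Sg/g\Sg$ being a domain make $v$ multiplicative on homogeneous elements (from $p = g^m u$, $q = g^n w$ one gets $pq = g^{m+n} uw$ with $\ovl{uw} = \ovl{u}\,\ovl{w} \neq 0$), and $q \in \Sg$ iff $v(q) \geq 0$. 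This is the tool to exploit.

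First I would upgrade the equivalent orders hypothesis to \emph{homogeneous} multipliers. Pick nonzero $x_1, x_2 \in Q_\gr(S)$ with $x_1 A x_2 \subseteq U$ and decompose $x_r = \sum_i x_{r,i}$ into homogeneous components; let $d_r$ be the minimal degree with $x_{r,d_r} \neq 0$. For any homogeneous $a \in A$, the expansion
\[
x_1 a x_2 \;=\; \sum_{i,j} x_{1,i}\, a\, x_{2,j}
\]
has a unique summand of minimum degree $d_1 + \deg a + d_2$, namely $x_{1,d_1}\, a\, x_{2,d_2}$, because every other pair $(i,j)$ contributes with $i + j > d_1 + d_2$. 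Since $U$ is $\N$-graded, this minimum-degree homogeneous component of $x_1 a x_2$ lies in $U$. Writing $y_r := x_{r,d_r}$ gives homogeneous nonzero $y_1, y_2$ with $y_1 A y_2 \subseteq U$.

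Finally, given any nonzero homogeneous $a \in A$, iterate: $a^k \in A$ for all $k \geq 1$, so $y_1 a^k y_2 \in U \subseteq \Sg$. Applying $v$ and using multiplicativity,
\[
v(y_1) + k\, v(a) + v(y_2) \;=\; v(y_1 a^k y_2) \;\geq\; 0
\]
for every $k \geq 1$. With $v(y_1), v(y_2)$ fixed, this forces $v(a) \geq 0$, i.e.\ $a \in \Sg$. Since $A$ is $\N$-graded, $A \subseteq \Sg$.

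The only real subtleties are setting up $v$ as a multiplicative function on homogeneous elements of $Q_\gr(S)$ (which relies on $g$ being central and $\Sg/g\Sg$ being a domain, both already in place) and the minimum-degree trick that replaces arbitrary $x_1, x_2$ by homogeneous ones (which uses that $U$ is graded so that the extremal homogeneous component of an element of $U$ is again in $U$). Once these are in hand the conclusion is a one-line growth estimate in the exponent $k$; no appeal to the Veronese or to the main results of \cite{RSS} is required.
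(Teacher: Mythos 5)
Your proof is correct and is essentially the paper's argument: both rest on the fact that every nonzero homogeneous element of $Q_\gr(S)$ is $g^n$ times a homogeneous unit of $\Sg$, so that powers of an element with $n<0$ eventually overwhelm the fixed homogeneous multipliers $y_1,y_2$ and force $y_1a^ky_2\notin\Sg$ — exactly the contradiction $g^{k+\ell-nm}\in\Sg$ that the paper derives. Your packaging via an explicit valuation spells out the "standard argument" for homogenising $x_1,x_2$ and lets you bypass the paper's detour through $A\langle g\rangle$ and the case split between $Q_\gr(S)$ and $Q_\gr(T)$ (indeed you never use $D_\gr(U)=D_\gr(S)$), but the underlying mechanism is the same.
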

\begin{proof}
In this proof we give $T$, $T_{(g)}$ and $Q_\gr(T)$ the grading induced from $Q_\gr(S)$; so for example $T_n=T\cap S_n$. \par
For a contradiction assume that $A\not\subseteq \Sg$. Say that $xAy\subseteq U$ for nonzero $x,y\in Q_\gr(S)$. A standard argument shows that we may assume $x,y\in  U$ (in particular $x,y\in\Sg$) and that they are homogeneous. Set $C=A\langle g\rangle$ and $V=U\langle g\rangle$; clearly $V\subseteq C$ whilst $C\not\subseteq \Sg$. By Lemma~\ref{Qgr(S) or Qgr(T)}, there are two cases:
\begin{enumerate}[(1)]
\item $Q_\gr(C)=Q_\gr(V)=Q_\gr(S)$;
\item $Q_\gr(C)=Q_\gr(V)=Q_\gr(T)$.
\end{enumerate}\par
(1). Take $c\in C\setminus \Sg$. Then certainly $c\in Q_\gr(S)$. Now note that $Q_\gr(S)$ equals $\Sg$ localised at the Ore set $\{1,g^{-1},g^{-2},\dots \}$. So, cancelling $g$'s if necessary, we can write $c=sg^{-n}$ for some $s\in\Sg\setminus g\Sg$ and $n\geq 1$. Write $x=x_1g^k$ and $y=y_1g^\ell$ where $x_1,y_1\in \Sg\setminus g\Sg$ and $k,\ell\geq 0$. Choose an integer $m$ such that $nm>k+\ell$ and consider $xc^my$. On the one hand $c^m\in C$, and therefore $xc^my\in xCy\subseteq V\subseteq \Sg$, while on the other $xc^my=x_1s^my_1g^{k+\ell-nm}$. Now $x_1,y_1\notin g\Sg$, while also $s^m\notin g\Sg$ as $g\Sg$ is a completely prime ideal of $\Sg$. Thus all of $x_1^{-1},y_1^{-1},s^{-m}\in \Sg$. But then $g^{k+\ell-nm}=y_1^{-1}s^{-m}x_1^{-1}xc^my \in\Sg.$
Since $k+\ell-nm<0$, this is our desired contradiction. \par
(2) follows in the same way with $T$ in place of $S$.
\end{proof}

The importance of Lemma~\ref{max Sg-order is max order} is that it allows us to concentrate on rings inside $\Sg$ where we are more comfortable.

\begin{lemma}\label{U'<g>}
Let $d\geq 1$ and $U$ be a $g$-divisible cg subalgebra of $\Sg$ such that $D_\gr(U)=D_\gr(S)$.
\begin{enumerate}[(1)]
\item Suppose that $Q_\gr(U)=Q_\gr(S)$ and $d$ is coprime to 3. Set $U'=U^{(d)}$, then $\widehat{U'\langle g\rangle}=U$.
\item Suppose that $Q_\gr(U)=Q_\gr(T)$. Set $U'=U^{(d)}$, then $\widehat{U'\langle g\rangle}=U$.
\end{enumerate}
\end{lemma}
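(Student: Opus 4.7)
The plan is to show $U=\widehat{U'\langle g\rangle}$ by sandwiching, proving both inclusions with essentially the same argument in both cases.

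The inclusion $\widehat{U'\langle g\rangle}\subseteq U$ is handled uniformly: since $g\in U$ and $U'=U^{(d)}\subseteq U$, we have $U'\langle g\rangle\subseteq U$. Taking $g$-divisible hulls inside $\Sg$ and using that $U$ is $g$-divisible (so $\widehat{U}=U$) gives $\widehat{U'\langle g\rangle}\subseteq\widehat{U}=U$.

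For the reverse inclusion $U\subseteq\widehat{U'\langle g\rangle}$, the key reduction is: for every homogeneous $u\in U_n$, it is enough to find some $m\geq 0$ with $ug^m\in U'$. Indeed, such an $m$ gives $ug^m\in U'\subseteq U'\langle g\rangle$, and then by definition of the $g$-divisible hull, $u\in\widehat{U'\langle g\rangle}$. Because $g\in U$ the element $ug^m$ automatically lies in $U$, and $U'=U^{(d)}$ consists of precisely those homogeneous elements of $U$ whose degree is divisible by $d$. Therefore the task boils down to the purely numerical question of choosing $m\geq 0$ so that $n+3m\equiv 0\pmod{d}$.

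In case (1) the hypothesis $\gcd(3,d)=1$ makes $3$ invertible modulo $d$, so the congruence $3m\equiv -n\pmod{d}$ has a solution (and a nonnegative representative). In case (2) the hypothesis $Q_\gr(U)=Q_\gr(T)=Q_\gr(S)^{(3)}$ forces every homogeneous degree occurring in $U$ to be divisible by $3$, so $3\mid n$; hence $\gcd(3,d)$ divides $n$ and the congruence $3m\equiv -n\pmod{d}$ is solvable for any $d$. In either scenario the desired $m$ exists, and the proof is complete.

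I do not expect a serious obstacle: the argument is essentially divisibility bookkeeping combined with $g$-divisibility of $U$. The two separate hypotheses in (1) and (2) are precisely what is needed to guarantee that the degree $n$ of an arbitrary homogeneous element of $U$ can be driven into a multiple of $d$ by multiplying by a suitable power of $g$, which is the only place where the two cases meaningfully differ.
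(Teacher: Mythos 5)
Your proof is correct and follows essentially the same route as the paper's: the containment $\widehat{U'\langle g\rangle}\subseteq U$ from $g$-divisibility of $U$, and the reverse containment by multiplying a homogeneous $u\in U$ by a power of the central element $g$ to land in $U\cap S^{(d)}=U'$, which is exactly the paper's argument. The only cosmetic difference is in case (2), where the paper regrades $T$ so that $\deg(g)=1$ while you keep the $S$-grading and observe that $3\mid n$ makes the congruence solvable; these are equivalent pieces of bookkeeping.
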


\begin{proof}
(1). Put $V=\widehat{U'\langle g\rangle}$. Since $U$ is $g$-divisible, $g\in U$, and so $V\subseteq U$. Conversely let $u\in U$. Since $d$ is coprime to $3=\deg(g)$, there exists an integer $n\geq 0$ such that $g^nu\in U\cap S^{(d)}=U'$. In which case $u\in \widehat{U'\langle g\rangle}=V$.\par
(2). This is the same as (1) but with the phrase ``$d$ is coprime to $3$" replaced by ``$\deg(g)=1$".
\end{proof}

\begin{prop}\label{g-div max orders up n down}
Let $d\geq 1$ and let $U$ be a $g$-divisible cg subalgebra of $\Sg$ with $D_\gr(U)=D_\gr(S)$.
\begin{enumerate}[(1)]
\item  Suppose that $Q_\gr(U)=Q_\gr(S)$ and $d$ is coprime to 3, and let $U'=U^{(d)}$. Then:
\begin{enumerate}[(a)]
\item $U$ is a maximal order if and only if $U'$ is a maximal order;
\item $U$ is a maximal $S$-order if and only if $U'$ is a maximal $S^{(d)}$-order.
\end{enumerate}
\item Suppose that $Q_\gr(U)=Q_\gr(T)$, and let $U'=U^{(d)}$. Then:
\begin{enumerate}[(a)]
\item $U$ is a maximal order if and only if $U'$ is a maximal order;
\item $U$ is a maximal $T$-order if and only if $U'$ is a maximal $T^{(d)}$-order.
\end{enumerate}
\end{enumerate}
\end{prop}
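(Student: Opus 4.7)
The plan is to prove all four equivalences by a single symmetric argument that shuttles equivalent-order extensions between $U$ and $U'=U^{(d)}$ via the $g$-divisible hull. The basic machinery is Lemma~\ref{U'<g>}, which recovers $U=\widehat{U'\langle g\rangle}$, together with the remark that since $g$ is central, the $g$-divisible hull of any graded subring $B$ of $\Sg$ containing $g$ is again a graded subring, and is cg because multiplication by $g^k$ injects each $\widehat{B}_n$ into a finite-dimensional $B_m$.

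For the forward direction, suppose $U$ is a maximal (resp.\ $S$-)order and $U'\subsetneq A'$ is a cg equivalent-order extension, with $A'\subseteq S^{(d)}$ in the $S$-order case. I would set $A=\widehat{A'\langle g\rangle}\subseteq\Sg$; then $A\supseteq\widehat{U'\langle g\rangle}=U$ is cg, and the elements $x_i,y_i$ witnessing $x_1U'x_2\subseteq A'$ and $y_1A'y_2\subseteq U'$ transfer to witnesses for $x_1Ux_2\subseteq A$ and $y_1Ay_2\subseteq U$: given $a\in A$, some $g^na$ lies in $A'\langle g\rangle$, and centrality of $g$ gives $g^n(y_1ay_2)\in U'\langle g\rangle\subseteq U$, whence $g$-divisibility of $U$ yields $y_1ay_2\in U$ (and symmetrically). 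For the $S$-order variants one must further check $A\subseteq S$; this follows from $\widehat{S}=S$ inside $\Sg$, a short induction on the clearing power of $g$ using that $S/gS$ is a domain. Maximality of $U$ now forces $A=U$, and passing to $d$-Veroneses gives $A'\subseteq A^{(d)}=U^{(d)}=U'$, contradicting $U'\subsetneq A'$.

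For the reverse direction, suppose $U'$ is a maximal (resp.\ $S^{(d)}$-)order and $U\subseteq A$ is a cg equivalent-order extension; by Lemma~\ref{max Sg-order is max order} (or directly in case (b)) we have $A\subseteq\Sg$. Set $V=\widehat{A}\subseteq\Sg$: this is cg, $g$-divisible, contains $A\supseteq U$, and the equivalent-order relations between $U$ and $A$ extend to $U$ and $V$ by exactly the centrality-plus-$g$-divisibility trick used above. Verifying the key interchange $\widehat{A}^{(d)}=\widehat{A^{(d)}}$ shows $V^{(d)}$ is $g$-divisible and sits as a cg equivalent-order extension of $U'$ in $Q_\gr(S)^{(d)}$ via Lemma~\ref{equiv orders go up}. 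Maximality of $U'$ forces $V^{(d)}=U'=U^{(d)}$, and then applying Lemma~\ref{U'<g>} to both $V$ and $U$ gives
$$V=\widehat{V^{(d)}\langle g\rangle}=\widehat{U^{(d)}\langle g\rangle}=U,$$
so $A\subseteq V=U$ as required. Part~(2) runs verbatim with $(T,T_{(g)},Q_\gr(T))$ replacing $(S,\Sg,Q_\gr(S))$; the coprimality $\gcd(d,3)=1$ is not required there because in the $T$-grading $g$ has degree $1$, so Lemma~\ref{U'<g>}(2) applies unconditionally.

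The main obstacle is the interchange $\widehat{A^{(d)}}=\widehat{A}^{(d)}$ in the reverse direction of case (1): given $v\in\widehat{A}^{(d)}$ one must choose a clearing power $g^n$ whose application lands in $A^{(d)}$ rather than merely in $A$, which forces $3n\equiv 0\pmod d$ and so requires $\gcd(d,3)=1$. Without this identity, the equality $V^{(d)}=U^{(d)}$ would not upgrade to $V=U$ through Lemma~\ref{U'<g>}. Everything else amounts to careful tracking of $g$-divisible closures; the centrality of $g$ is what allows the equivalent-order constants to commute past the clearing powers throughout the argument.
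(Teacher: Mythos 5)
Your argument follows the paper's proof in all essentials: both directions hinge on Lemma~\ref{U'<g>} and on $g$-divisible hulls of rings adjoining $g$, the forward direction is nearly verbatim the paper's, and your reverse direction is only a cosmetic variant (you pass to $V=\widehat{A}$ and compare $V^{(d)}$ with $U'$, where the paper compares $A^{(d)}$ with $U'$ directly and then shows $A\subseteq\widehat{A^{(d)}\langle g\rangle}$). Two points are worth flagging. First, in the forward direction for unrestricted maximal orders you still need Lemma~\ref{max Sg-order is max order}, applied to $U'\subseteq A'$, to place $A'$ inside $\Sg$; without it $A'\subseteq\widehat{A'\langle g\rangle}=A$ fails, and with it the final step $A'\subseteq A\cap(\Sg)^{(d)}=U'$ goes through — you invoke the lemma only in the reverse direction. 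Second, your ``key interchange'' $\widehat{A}^{(d)}=\widehat{A^{(d)}}$ is false as stated: for $d$ coprime to $3$ with $d>1$ the hull $\widehat{A^{(d)}}$ contains, e.g., $g^{d-1}$ of degree $3(d-1)\not\equiv 0\pmod d$, so it is not contained in $(\Sg)^{(d)}$ and cannot equal $\widehat{A}^{(d)}$. Fortunately your proof never uses it: the fact that $U'$ and $V^{(d)}$ are equivalent orders comes straight from Lemma~\ref{equiv orders go up}, and the coprimality hypothesis enters exactly where the paper puts it, namely in Lemma~\ref{U'<g>}(1) applied to $V$ and to $U$ in your closing chain $V=\widehat{V^{(d)}\langle g\rangle}=\widehat{U^{(d)}\langle g\rangle}=U$. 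Deleting the spurious identity and adding the missing citation leaves a correct proof that is, in substance, the paper's.
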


\begin{proof}
($1a$) ($\Rightarrow$) Suppose that $U$ is a maximal order. Assume that $U'\subseteq A$ for some graded subalgebra $A$ of $Q_\gr(S)$ such that $xAy\subseteq U'$ for some nonzero homogeneous $x,y\in A$. By Lemma~\ref{max Sg-order is max order}  $A\subseteq \Sg$. Set $C=\widehat{A\langle g\rangle}$. Since $xAy\subseteq U'\subseteq U$ and $g\in U$, $x(Ag^i)y=(xAy)g^i\subseteq U$ for all $i\geq 0$. Thus $xA\langle g\rangle y \subseteq U$. Now let $c\in C$, then $cg^n \in A\langle g\rangle$ for some $n\geq 0$, and hence $x(cg^n)y=(xcy)g^n\in U$. Since $x,y\in A\subseteq \Sg$, $xcy\in\Sg$. Hence we get $xcy\in U$ because $U$ is $g$-divisible. This shows $xCy\subseteq U$. On the other hand, because $A\supseteq U'$, $C=\widehat{A\langle g\rangle}\supseteq \widehat{U'\langle g\rangle}=U$ by Lemma~\ref{U'<g>}. By assumption $U$ is a maximal order and hence $C=U$. It then follows $A\subseteq C\cap (\Sg)^{(d)}=U\cap (\Sg)^{(d)}=U'$, and so $A=U'$.\par

($\Leftarrow$) Suppose now that $U'$ is a maximal order, and assume that $U\subseteq A$ are equivalent orders, for some graded subalgebra $A$ of $Q_\gr(S)$. By Lemma~\ref{max Sg-order is max order}, we know that in fact $A\subseteq \Sg$. By Lemma~\ref{equiv orders go up}, $U'\subseteq A'=A^{(d)}$ are equivalent orders, and so by hypothesis $U'=A'$. By Lemma~\ref{U'<g>}, $\widehat{A'\langle g\rangle}=\widehat{U'\langle g\rangle}= U$, and so it is enough to prove $A\subseteq \widehat{A'\langle g\rangle}$. Let $a\in A$. Note that since $U\subseteq A$, $g\in A$. Because $d$ and $3=\deg(g)$ are coprime there exists $n\geq 0$ such that $ag^n\in A\cap S^{(d)}=A'$. Thus $a\in \widehat{A'\langle g\rangle}$. \par

Part ($1b$) is proved in the same way as (1a). This time uses of Lemma~\ref{max Sg-order is max order} are redundant since $A\subseteq S$ by assumption. \par

(2). This follows in the same as part (1) with ``$d$ is coprime to $3$" replaced with ``$\deg(g)=1$".
\end{proof}

\section{The noncommutative blowups $S(\bfd)$}\label{The rings S(d)}

In the classification maximal $S^{(3)}$-orders, other maximal orders are built as endomorphism rings over the blowup subalgebras $T(\bfd)$ as defined in \cite[Section~1]{Ro} (denoted $R(D)$ there). To hope to apply a similar strategy we must ask what are the subalgebras of $S$ are the appropriate analogue of the $T(\bfd)$. This section is dedicated answering exactly this. We will define and study the noncommutative blowups $T(\bfd)$ and their generalisations, $S(\bfd)$, that were defined in introduction. In particular, we will study the 2-point blowup of $S$ in considerable detail. \par
As with the $T(\bfd)$ in \cite[Theorem~1.1]{Ro}, we should expect these $S$-analogues to satisfy many desirable properties. A couple such properties, not yet defined and important for our work, are the Auslander-Gorenstein and Cohen-Macaulay conditions.

\begin{definition}\label{homological defs}
Let $A$ be a ring and $M$ a right $A$-module. The \textit{grade of $M$}\index{grade of a module} is defined as
$$j(M_A)=\inf\{i\,|\;\Ext_A^i(M,A)\neq 0\},$$\index[n]{jm@$j(M)$}
where we allow $j(M_A)=\infty$ if no such $i$ exists. We say $M$ satisfies the \textit{Auslander condition}\index{Auslander condition} if for every $i\geq 0$ and all left submodules $N\subseteq \Ext^i(M,A)$ we have $j(_AN)\geq i$. The grade and the Auslander condition have analogous definition for left $A$-modules. \par
The ring $A$ is \textit{Auslander-Gorenstein}\index{Auslander-Gorenstein} if the modules $A_A$ and $_AA$ have the same finite injective dimension, and every finitely generated left and right $A$-module satisfies the Auslander condition. Suppose that in addition that $\GK(A)\in\Z$. Then we say $A$ is \textit{Cohen-Macaulay}\index{Cohen-Macaulay} if
$$\GKdim(M)+j(M)=\GKdim(A)$$
for all finitely generated left and right modules $M$.
\end{definition}

We now define these $T(\bfd)$ which are so crucial to Rogalski, Sierra and Stafford's work in the classifying the maximal $T$-orders in the 3-Veronese $S^{(3)}=T$ of $S$.

\begin{definition}\label{T(d) def}\cite[Section~1]{Ro}
Let $\bfd$ be an effective divisor on $E$ such that $\deg\bfd\leq 7$. Write $ T(\bfd)=\Bbbk\langle T(\bfd)_1\rangle$ where
$T(\bfd)_1=\{ x\in T_1\,|\; \ovl{x}\in H^0(E,\LL_3(-\bfd))\}$\index[n]{td@$T(\bfd)$}.
The subalgebra $T(\bfd)$ of $T$ is called the \textit{blowup of $T$ at $\bfd$}.\index{blowup of $T$ at $\bfd$}
\end{definition}

The rings that will play a similar role in the classification of maximal $S$-orders are the $S(\bfd)$ which were defined in Definition~\ref{S(d) def}. We will prove that these rings are very closely related to the $T(\bfd)$ and satisfy similar abstract properties. \par

When $\deg\bfd=1$, the ring $S(\bfd)$ was first studied in \cite[Section~12]{Ro}, where the author proves it to be the only degree 1 generated maximal order in $S$. We recall the definition here.

\begin{definition}\label{S(p)}\index[n]{sp@$S(p)$}
Let $p\in E$. We define the \textit{blowup of $S$ at $p$}\index{blowup of $S$ at $p$} as the ring $S(p)=\Bbbk \langle V\rangle$ where $V=\{x\in S_1\,|\; \ovl{x}\in H^0(E,\LL(-p))\}$.
\end{definition}

Rogalski proves strong results on $S(p)$.

\begin{prop}\label{Ro 12.2} \cite[Theorem~12.2]{Ro}
Let $p\in E$ and $R=S(p)$. Then:
\begin{enumerate}[(1)]
\item $R^{(3)}=S^{(3)}(p+p^\sigma+p^{\sigma^2})$, $R$ is $g$-divisible with
 $R/gR\cong \ovl{R}=B(E,\LL(-p),\sigma),$
 and has Hilbert series $h_{R}(t)=\frac{t^2+1}{(1-t)^2(1-t^3)}$.
\item $R$ is strongly noetherian. The ring $R$ satisfies $\chi$ on the left and right, has cohomological dimension 2 and (by \cite[Proposition~2.4]{RSS2}) $R$ has a balanced dualizing complex.
\item $R$ is Auslander-Gorenstein and Cohen-Macaulay.
\item $R$ is a maximal order in $Q_\gr(R)=Q_\gr(S)$.\qed
\end{enumerate}
\end{prop}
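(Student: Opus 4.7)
The strategy is to reduce the four claims to Rogalski's analysis of the 3-Veronese blowup $T(\bfe) = S^{(3)}(\bfe)$ from \cite{Ro}, by first identifying $R^{(3)}$ with $T(\bfe)$ for $\bfe = p + \sigma^{-1}(p) + \sigma^{-2}(p)$, and then transferring properties between $R$ and $R^{(3)}$ via the Veronese machinery of Section~\ref{prelims}. The identification itself needs two inputs: (i) $V^3 \subseteq T(\bfe)_1$, which is immediate from the multiplication rule in the TCR $\ovl{S} = B(E,\LL,\sigma)$ since $\ovl{v_1 v_2 v_3} \in H^0(E, \LL_3(-\bfe))$; and (ii) the reverse inclusion, which I would handle by showing $\ovl{V^3} = H^0(E, \LL_3(-\bfe))$ (using that $B(E, \LL(-p), \sigma)$ is generated in degree $1$, a standard fact for line bundles of degree $\geq 2$ on an elliptic curve) together with $g \in V^3$, so that $V^3$ fills the full preimage in $S_3$.

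Granted the identification $R^{(3)} = T(\bfe)$, claim (1) drops out: $g$-divisibility of $R$ follows by tracking the $g$-adic filtration through the Veronese and by transferring $g$-divisibility from $R^{(3)} = T(\bfe)$ (which is known from \cite{Ro}); the TCR description $\ovl{R} \cong B(E, \LL(-p), \sigma)$ is then immediate from the definition of $V$; and the Hilbert series follows from Riemann--Roch on $E$, which gives $\dim \ovl{R}_n = 2n$ for $n \geq 1$ and $\dim \ovl{R}_0 = 1$, combined with $g$-divisibility, so that $h_R(t) = h_{\ovl{R}}(t)/(1-t^3)$ rearranges to the claimed formula. For claim (4), Proposition~\ref{g-div max orders up n down}(2a) applies directly: since $T(\bfe)$ is a maximal order in $Q_\gr(T)$ by \cite{Ro} and $R$ is $g$-divisible with $Q_\gr(R) = Q_\gr(S)$, it follows that $R$ is a maximal order.

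Claim (2) --- strong noetherianity, $\chi$ on both sides, cohomological dimension $2$, and a balanced dualizing complex --- transfers from $T(\bfe)$ to $R$ via Lemma~\ref{noeth up n down} and the fact that $R$ is a finite module on both sides over $R^{(3)}$, with the dualizing complex provided (as cited) by \cite[Proposition~2.4]{RSS2}. Claim (3) (Auslander--Gorenstein and Cohen--Macaulay) is the most delicate transfer, since these properties are not automatically preserved under Veronese constructions; I would argue using the explicit finite extension $R \supseteq R^{(3)}$ on both sides and the tameness of this particular extension. The main obstacle will be the degree-$1$ dimension check for $R^{(3)} = T(\bfe)$: the defining relations of the Sklyanin algebra must be used to confirm $g \in V^3$ and to count $\dim V^3$ precisely, after which everything else reduces to the Veronese-transfer toolkit of Section~\ref{prelims} combined with the results of \cite{Ro}.
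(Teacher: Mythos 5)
The paper does not actually prove this proposition: it is quoted verbatim from \cite[Theorem~12.2]{Ro} and stated without proof, so there is no internal argument to compare yours against. Judged on its own terms, your proposal has a genuine gap at claim (4). You invoke Proposition~\ref{g-div max orders up n down}(2a) to pass from ``$R^{(3)}=T(\bfe)$ is a maximal order'' to ``$R$ is a maximal order'', but neither part of that proposition covers this situation: part (2) assumes $Q_\gr(U)=Q_\gr(T)$, whereas $Q_\gr(S(p))=Q_\gr(S)$ because $S(p)_1\neq 0$; and part (1) assumes $d$ is coprime to $3$, which excludes exactly the case $d=3$ you need. The coprimality hypothesis is not cosmetic: the mechanism behind Proposition~\ref{g-div max orders up n down} is Lemma~\ref{U'<g>}, which recovers $U$ as $\widehat{U^{(d)}\langle g\rangle}$, and this fails for $d=3$ since $ag^n$ can only land in $S^{(3)}$ when $\deg a\equiv 0\pmod 3$ (as $\deg g=3$), so the $g$-divisible hull of $U^{(3)}\langle g\rangle$ never sees the degree $1$ and $2$ parts of $U$. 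Concretely, if $R\subsetneq B$ were an equivalent overorder, maximality of $R^{(3)}$ only yields $B^{(3)}=R^{(3)}$, which does not force $B=R$. This implication is precisely the hard content of \cite[Section~12]{Ro} and cannot be obtained by formal Veronese transfer.

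A similar problem affects claim (3): the Auslander--Gorenstein and Cohen--Macaulay properties do not pass from $A^{(3)}$ to $A$ by any general principle, and the paper itself emphasizes (after Corollary~\ref{blowup properties}, and in Theorem~\ref{S(p-p1+p2)}(3)) that these properties fail for some virtual blowups of $S$ whose $3$-Veroneses are nevertheless blowups of $T$ enjoying both properties. Your phrase ``the tameness of this particular extension'' is a placeholder for what is, in \cite{Ro}, a direct homological computation with $S(p)$ itself. The identification $R^{(3)}=T(\bfe)$ and the derivation of claim (1) from it are plausible and in the spirit of how Corollary~\ref{S(p+q) 3 Veronese} is proved for the two-point blowup, but as written the heart of the proposition --- parts (3) and (4) --- is not established.
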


Proposition~\ref{Ro 12.2}(1) is enough to show that Definition~\ref{S(p)} coincides the definition of $S(p)$ given in Definition~\ref{S(d) def}. We prefer Definition~\ref{S(p)} as it emphasises that $S(p)$ is generated in degree 1.

\subsection*{The two point blowup $S(p+q)$}

When $\bfd=p+q$ for some $p,q\in E$, understanding $S(\bfd)$ becomes much harder. The main hindrance is that $S(p+q)$ is no longer generated in a single degree. We introduce some notation which will help us along the way.

\begin{notation}\label{p^sigma^j}
Let $\rho:E\to E$ be an automorphism.  Given $p\in E$, or more generally a divisor $\bfx$ of $E$ we will write $p^{\rho^j}=\rho^{-j}(p)$\index[n]{psj@$p^{\sigma^j}$} and $\bfx^{\rho^j}=\rho^{-j}(\bfx)$ \index[n]{xsj@$\bfx^{\sigma^j}$} for $j\in\Z$.
\end{notation}

Notation~\ref{p^sigma^j} will be typically applied with $\rho=\sigma$ or $\rho=\sigma^3=\tau$. In contrast Notation~\ref{[d]_n} below is reserved for the automorphism $\sigma$ unless explicitly said otherwise.

\begin{notation}\label{[d]_n}
For $n\geq 1$, we put $[\bfx]_n=\bfx+\bfx^\sigma+\dots+\bfx^{\sigma^{n-1}}$, while we define $[\bfx]_0=0$. \index[n]{xn@$[\bfx]_n$}
\end{notation}

Notation~\ref{p^sigma^j} and Notation~\ref{[d]_n}, along with Notation~\ref{ovlX notation} from the introduction, will be applicable for the rest of this paper. Using Notation~\ref{p^sigma^j} and Notation~\ref{[d]_n} we recall the definition of $S(p+q)$.

\begin{definition}\label{S(p+q)}
Let $p,q\in E$. Put $V_i=\{x\in S_i\,|\; \ovl{x}\in H^0(E,\LL_i(-[p+q]_i))\}$ for $i=1,2,3$. We define the \textit{blowup of $S$ at $p+q$}\index{blowup of $S$ at $p+q$} to be the ring $S(p+q)=\Bbbk\langle V_1, V_2, V_3\rangle.$\index[n]{spq@$S(p+q)$}
\end{definition}

Let $p,q\in E$ and $S(p+q)=\Bbbk\langle V_1,V_2,V_3\rangle$ as in Definition~\ref{S(p+q)}. Clearly $\ovl{V_1}=H^0(E,\LL(-p-q))$, and because $gS\subseteq S_{\geq 3}$ one then sees $\dim_\Bbbk V_1=\dim_\Bbbk \ovl{V_1}$. Hence by the Riemann-Roch Theorem, $\dim_\Bbbk V_1=1$. In which case $\Bbbk\langle V_1\rangle \cong \Bbbk[t]$. In particular $S(p+q)$ cannot be generated in degree~1. In \cite[Example~7.8]{thesis} it is shown that, at least for some choices of $p$ and $q$, $S(p+q)$ is not generated in degrees 1 and 2. For general $p$ and $q$, it is unknown if $S(p+q)$ can be generated in degrees 1 and 2. We include $V_3$ in Definition~\ref{S(p+q)} to be sure that $g\in S(p+q)$. It is implicit in our proof of Theorem~\ref{S(p+q) g div} that $S(p+q)=\Bbbk \langle V_1,V_2,g\rangle$.\par  

Let $p,q\in E$, and $V_1$, $V_2$ and $V_3$ be as in Definition~\ref{S(p+q)}. Set $\NN=\LL(-p-q)$. One has $$\NN_2=\NN\otimes \NN^\sigma=\LL(-p-q)\otimes \LL^\sigma(-p^\sigma-q^\sigma)=\LL_2(-p-q-p^\sigma-q^\sigma)=\LL_2(-[p+q]_2).$$
Similarly $\NN_3=\LL_3(-[p+q]_3)$, and in general
\begin{equation}\label{NN_n=LL_n(-[d]_n)} \NN_n=\LL_n(-[p+q]_n)\;\text{ for all }n\geq 1.\end{equation}
 It is then clear from Definition~\ref{S(p+q)} that
\begin{equation}\label{ovlVi=}
\ovl{V_i}=H^0(E,\NN_i)=B(E,\NN,\sigma)_i\;\text{ for } i=1,2,3.
\end{equation}
\par An immediate consequence of Definition~\ref{S(p+q)} and the above is that the image of $S(p+q)$ inside $S/gS=B(E,\LL,\sigma)$ is easy to understand.

\begin{lemma}\label{S(p+q) bar}
Let $p,q\in E$ and $R=S(p+q)$. Then $\ovl{R}=B(E,\NN,\sigma)\subseteq \ovl{S}$ where $\NN=\LL(-p-q)$. The  Hilbert series of $\ovl{R}$ is given by
$$h_{\ovl{R}}(t)=\frac{t^2-t+1}{(t-1)^2}.$$
\end{lemma}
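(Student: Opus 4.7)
The plan is to prove $\ovl{R} = B(E, \NN, \sigma)$ by a double inclusion, then read off the Hilbert series from Riemann--Roch. The sheaf inclusion $\NN_n = \LL_n(-[p+q]_n) \hookrightarrow \LL_n$ exhibits $B(E, \NN, \sigma)$ as a graded subring of $\ovl{S} = B(E, \LL, \sigma)$: the multiplication restricts correctly because products of sections of $\NN_m$ with $\sigma^m$-twists of sections of $\NN_n$ take values in $\NN_{m+n}$.

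The forward inclusion $\ovl{R} \subseteq B(E, \NN, \sigma)$ is immediate from \eqref{ovlVi=}: each $\ovl{V_i}$ equals $H^0(E, \NN_i) = B(E, \NN, \sigma)_i$ for $i = 1,2,3$, and so the subring they generate lies inside $B(E, \NN, \sigma)$.

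For the reverse inclusion I would induct on degree. The cases $n \le 3$ are immediate from \eqref{ovlVi=}. For $n \ge 4$, it is enough to exhibit a splitting $a + b = n$ with $a,b \ge 1$ for which the multiplication map $H^0(E, \NN_a) \otimes H^0(E, \NN_b^{\sigma^a}) \to H^0(E, \NN_n)$ is surjective. For $n \ge 5$ use $(a,b) = (3, n-3)$: the line bundle $\NN_3$ has degree $2g(E)+1 = 3$, so it is projectively normal, while $\NN_{n-3}^{\sigma^3}$ has degree $\ge 2 = 2g(E)$, hence is base-point free; standard results (Castelnuovo's lemma, or the base-point-free pencil trick) then give surjectivity. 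The critical case is $n = 4$, where only the symmetric split $(a,b) = (2,2)$ has matching dimensions $2 \cdot 2 = 4 = h^0(\NN_4)$; the base-point-free pencil trick reduces surjectivity here to the vanishing $H^1(E, \NN_2 \otimes (\NN_2^{\sigma^2})^{-1}) = 0$, i.e.\ to nontriviality of a specific degree-$0$ line bundle. In the configurations where this fails, one must supplement $\ovl{V_2} \cdot \ovl{V_2}^{\sigma^2}$ with the other available products $\ovl{V_1} \cdot \ovl{V_3}^{\sigma}$ and $\ovl{V_3} \cdot \ovl{V_1}^{\sigma^3}$ to exhaust $H^0(E, \NN_4)$.

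For the Hilbert series, since $\deg \NN_n = 3n - 2n = n$, Riemann--Roch on the genus-one curve $E$ gives $h^0(E, \NN_n) = n$ for all $n \ge 1$ and $h^0(E, \OO_E) = 1$. Therefore
\[ h_{\ovl{R}}(t) \;=\; 1 + \sum_{n \ge 1} n\,t^n \;=\; 1 + \frac{t}{(1-t)^2} \;=\; \frac{t^2 - t + 1}{(t-1)^2}. \]
The main obstacle is the $n=4$ surjectivity; this is the step where the standing infinite-order hypothesis on $\sigma$ plays its most delicate role, since it controls when the pertinent degree-$0$ line bundles on $E$ are nontrivial.
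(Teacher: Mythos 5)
Your overall architecture (forward inclusion from \eqref{ovlVi=}, Hilbert series from Riemann--Roch) matches the paper, but for the reverse inclusion the paper simply observes that $\ovl{R}$ contains $B_1$, $B_2$, $B_3$ and quotes \cite[Lemma~3.1]{Ro} for the fact that $B=B(E,\NN,\sigma)$ is generated in degrees $1$ and $2$; you instead set out to reprove that generation statement by induction on the degree $n$. That is a legitimate alternative route, and your treatment of $n\geq 5$ via the split $(3,n-3)$ is fine, but the argument is left genuinely incomplete at the one place you yourself flag as critical, namely $n=4$. You reduce surjectivity of $H^0(E,\NN_2)\otimes H^0(E,\NN_2^{\sigma^2})\to H^0(E,\NN_4)$ to nontriviality of the degree-zero sheaf $\NN_2\otimes(\NN_2^{\sigma^2})^{-1}$, but you never establish that nontriviality; you only remark that the infinite-order hypothesis ``controls'' it. The verification is short and should be supplied: an infinite-order automorphism of $E$ is necessarily translation by a non-torsion point $t$, and for an invertible sheaf $\mathcal{M}$ of degree $2$ one has $(\sigma^{2})^*\mathcal{M}\cong\mathcal{M}$ only if $4t=0$ in $E$, which is excluded.

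Worse, your stated fallback --- that ``in the configurations where this fails'' one can supplement with $\ovl{V_1}\cdot\ovl{V_3}^{\sigma}$ and $\ovl{V_3}\cdot\ovl{V_1}^{\sigma^3}$ --- is an unproved assertion, and it is not harmless: since $h^0(E,\NN_1)=1$, each of those two products spans a subspace of $H^0(E,\NN_4)$ of dimension at most $3<4$, so it is not at all clear (and you give no argument) that the three products together exhaust $H^0(E,\NN_4)$. As written, the proof therefore has a hole exactly at its load-bearing step; it is fillable by the one-line torsion computation above (or by citing \cite[Lemma~3.1]{Ro}, as the paper does), but it is not filled.
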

\begin{proof}
Set $B=B(E,\NN,\sigma)$. By (\ref{NN_n=LL_n(-[d]_n)}) we have that $\NN_n\subseteq \LL_n$ for all $n\geq 0$; as a consequences $H^0(E,\NN_n)\subseteq H^0(E,\LL_n)$ for all $n\geq 1$ also. This shows that $B\subseteq B(E,\LL,\sigma)=\ovl{S}$. By definition, $\ovl{R}$ is the subring of $\ovl{S}$ generated by $\ovl{V_1}$, $\ovl{V_2}$ and $\ovl{V_3}$; while from (\ref{ovlVi=}) we have that $\ovl{V_1}= B_1$, $\ovl{V_2}=B_2$ and $\ovl{V_3}=B_3$. By \cite[Lemma~3.1]{Ro}, $B$ is generated in degrees 1 and 2. Thus $\ovl{R}=B$. \par
For the Hilbert series, we note that $\ovl{R}_n=H^0(E,\NN_n)$ for $n\geq 0$. So by the Riemann-Roch Theorem, $\dim_\Bbbk\ovl{R}_n=\deg\NN_n=n$ for $n\geq 1$, while $\dim_\Bbbk \ovl{R}_0=1$.
\end{proof}

As sort of a disclaimer, we warn that future uses of Lemma~\ref{S(p+q) bar} may come without reference. The next result we state is the key result which allows us to understand $S(p+q)$. The majority of this section will be dedicated to proving it.

\begin{theorem}\label{S(p+q) g div}
Let $p,q\in E$. Then
\begin{enumerate}[(1)]
\item $S(p+q)$ is $g$-divisible with $S(p+q)/gS(p+q)\cong \ovl{S(p+q)}=B(E,\LL(-p-q),\sigma)$.
\item The Hilbert series of $S(p+q)$ is given by  $h_{S(p+q)}(t)=\frac{t^2-t+1}{(t-1)^2(1-t^3)}$.
\end{enumerate}
\end{theorem}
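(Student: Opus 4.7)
The plan is to prove (1) and (2) together. Set $R = S(p+q)$ and $\NN = \LL(-p-q)$. Lemma~\ref{S(p+q) bar} already provides $\ovl R = B(E,\NN,\sigma)$ with Hilbert series $(t^2-t+1)/(t-1)^2$. Because $g$ is central and regular in $S$, the short exact sequence $0 \to R\cap gS \to R \to \ovl R \to 0$ yields $h_R(t) \geq t^3 h_R(t) + h_{\ovl R}(t)$, with equality if and only if $R\cap gS = gR$, i.e.\ $R$ is $g$-divisible. So (1) and (2) are equivalent to the single identity $h_R(t)(1-t^3) = h_{\ovl R}(t)$, and everything reduces to a Hilbert-series computation.

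My first step is to nail down the multiplicative structure. Because $gS \cap S_n = 0$ for $n < 3$, together with the twisted multiplication rule $H^0(\NN_i)\cdot H^0(\NN_j^{\sigma^i}) \subseteq H^0(\NN_{i+j})$ in $B(E,\LL,\sigma)$, a direct check gives $R_1 = V_1$, $R_2 = V_2$, $R_3 = V_3$, with $\dim V_1 = 1$, $\dim V_2 = 2$ by Riemann-Roch, and $\dim V_3 = \dim \ovl{V_3} + 1 = 4$ (the extra $1$ coming from $g \in V_3 \cap gS$). Since $\ovl R = B(E,\NN,\sigma)$ is generated in degrees $1$ and $2$ (Lemma~\ref{S(p+q) bar}), we have $\ovl{V_3} \subseteq \ovl{V_1}\,\ovl{V_2} + \ovl{V_2}\,\ovl{V_1}$, whence $V_3 \subseteq V_1 V_2 + V_2 V_1 + \Bbbk g$. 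Consequently $R = \Bbbk\langle V_1, V_2, g\rangle$, and for $n \geq 4$,
\[
R_n \;=\; V_1 R_{n-1} + V_2 R_{n-2} + g R_{n-3}.
\]

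Next I would identify the $3$-Veronese with a known blowup of $T = S^{(3)}$. Set $\bfd' = [p+q]_3$, a degree-$6$ effective divisor on $E$. The $\tau$-orbit sum telescopes as $\bfd' + \tau^{-1}(\bfd') + \cdots + \tau^{-(m-1)}(\bfd') = [p+q]_{3m}$ (using $\tau = \sigma^3$), so Definition~\ref{T(d) def} gives $T(\bfd')_m = \{x \in S_{3m} : \ovl x \in H^0(\LL_{3m}(-[p+q]_{3m}))\}$. Every monomial in the generators $V_i$ of total degree $3m$ has image in $H^0(\NN_{3m})$ by the twisted multiplication in $B(E,\LL,\sigma)$, so $R^{(3)}_m \subseteq T(\bfd')_m$. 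Conversely, $T(\bfd')$ is degree-$1$ generated (Rogalski, \cite{Ro}) and $T(\bfd')_1 = V_3 \subseteq R_3 = R^{(3)}_1$, yielding $R^{(3)} = T(\bfd')$. Rogalski's theorem then supplies the $g$-divisibility of $R^{(3)}$ and its Hilbert series.

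The main obstacle is lifting $g$-divisibility from $R^{(3)}$ to $R$ in degrees $n \not\equiv 0 \pmod 3$, where an element of $R_n \cap gS$ need not itself lie in $R^{(3)}$. My approach: given $r \in R_n \cap gS$, write $r = gs$ with $s \in S_{n-3}$, and pick $v \in V_1$ or $V_2$ so that $\deg(rv) \equiv 0 \pmod 3$; then $rv \in R^{(3)} \cap gS = gR^{(3)}$ by the $g$-divisibility already established for $R^{(3)}$, and cancellation of $g$ in the domain $S$ gives $sv \in R^{(3)}$. Varying $v$ over $V_1$ (respectively $V_2$), and combining with the recursion $R_n = V_1 R_{n-1} + V_2 R_{n-2} + g R_{n-3}$ together with the dimension bound $\dim R_n \leq \dim\ovl R_n + \dim R_{n-3}$ coming from the injection $R_n / (R\cap gS)_n \hookrightarrow \ovl R_n$, an induction on $n$ forces $s \in R_{n-3}$ and hence $r \in gR$. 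With $g$-divisibility in hand, the identity $h_R(t)(1-t^3) = h_{\ovl R}(t) = (t^2-t+1)/(t-1)^2$ delivers (2), and the isomorphism $R/gR \cong \ovl R = B(E,\LL(-p-q),\sigma)$ in (1) is then immediate from Lemma~\ref{S(p+q) bar}.
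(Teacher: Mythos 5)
Your strategy is genuinely different from the paper's (which first proves the result for $p,q$ in distinct $\sigma$-orbits via $S(p+q)=S(p)\cap S(q)$, and then removes that hypothesis by a semicontinuity argument over Grassmannians), but it contains a gap that I do not see how to repair within your framework. The problem is the inclusion $R^{(3)}_m\subseteq T(\bfd')_m$. You justify it by computing images in $\ovl{S}$, and you assert that Definition~\ref{T(d) def} gives $T(\bfd')_m=\{x\in S_{3m}\,:\,\ovl{x}\in H^0(E,\LL_{3m}(-[p+q]_{3m}))\}$. That is a misreading: $T(\bfd')$ is by definition generated in degree one, so for $m\geq 2$ the space $T(\bfd')_m$ is the span of $m$-fold products of elements of $T(\bfd')_1$, not the full preimage of $H^0(E,\NN_{3m})$. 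The full preimage contains all of $gS_{3m-3}$ (since $\ovl{gs}=0$ for every $s$), so it is strictly larger: by Rogalski's results $\dim T(\bfd')_m=\dim H^0(E,\NN_{3m})+\dim T(\bfd')_{m-1}$, whereas the preimage has dimension $\dim H^0(E,\NN_{3m})+\dim S_{3m-3}$. Consequently, knowing $\ovl{R^{(3)}_m}\subseteq H^0(E,\NN_{3m})$ says nothing about $R^{(3)}_m\cap gS$, and controlling exactly that intersection is the entire content of the theorem. (For instance $V_2V_2V_2\subseteq R_6$ cannot be regrouped into products of elements of $V_3$, and there is no a priori reason that its intersection with $gS$ lies in $gT(\bfd')_1$.) The paper's Corollary~\ref{S(p+q) 3 Veronese} does prove $R^{(3)}=T([p+q]_3)$, but its proof explicitly invokes the $g$-divisibility of $R$, i.e. the statement you are trying to prove; your argument is therefore circular at this point.

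A second, lesser gap: even granting $R^{(3)}=T(\bfd')$ together with its $g$-divisibility and Hilbert series, your lifting step only establishes $sV_1\subseteq R$ and $sV_2\subseteq R$ for $s=g^{-1}r$, and you then assert that the recursion $R_n=V_1R_{n-1}+V_2R_{n-2}+gR_{n-3}$ plus a dimension count ``forces $s\in R_{n-3}$''. Membership of $s$ in the right ideal quotient $\{s\,:\,sV_1+sV_2\subseteq R\}$ does not imply $s\in R$, and the recursion builds $R_n$ by multiplication on the left while your condition on $s$ is a right-multiplication condition; no actual induction is exhibited. The preliminary reductions (the equivalence of (1) and (2), the values of $\dim V_i$, and $R=\Bbbk\langle V_1,V_2,g\rangle$) are correct and agree with the paper's Lemmas~\ref{1 iff 2 of S(p+q) g div} and~\ref{S(p+q)_i=V_i}, but the two core steps above are where the real difficulty lives and neither is established.
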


The statement and proof of Theorem~\ref{S(p+q) g div} should be compared with those of \cite[Theorem~5.2]{Ro}. Here we are made to work harder to compensate for $S(p+q)$ not being generated in degree one. 

\begin{lemma}\label{1 iff 2 of S(p+q) g div}
Parts (1) and (2) of Theorem~\ref{S(p+q) g div} are equivalent. \qed
\end{lemma}
\begin{proof}
Rogalski proves a version of the Lemma~\ref{1 iff 2 of S(p+q) g div} in the first paragraph of the proof of \cite[Theorem~5.2]{Ro}. The same proof holds here.
\end{proof}

\begin{lemma}\label{S(p+q)_i=V_i}
Let $p,q\in E$ and let $V_1$, $V_2$ and $V_3$ be as in Definition~\ref{S(p+q)}. Then
\begin{enumerate}[(1)]
\item $\dim_\Bbbk V_1=1$, $\dim_\Bbbk V_2=2$ and $\dim_\Bbbk V_3=4$;
\item $S(p+q)_2=V_2$ and $S(p+q)_3=V_3$.
\end{enumerate}
\end{lemma}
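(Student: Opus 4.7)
The plan breaks into two parts. First, compute the dimensions of $V_1, V_2, V_3$ using Riemann-Roch on the elliptic curve $E$; second, verify that every product in $V_1,V_2,V_3$ of total degree $2$ or $3$ already lies inside $V_2$ or $V_3$ respectively. No serious obstacle is anticipated; the whole lemma reduces to divisor bookkeeping in $\ovl S = B(E,\LL,\sigma)$ together with Riemann-Roch.

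For (1), I would exploit that $g \in S_3$, which forces $gS \cap S_n = 0$ for $n \leq 2$ and $gS \cap S_3 = \Bbbk g$; note also that $g \in V_3$, since $\ovl g = 0$ trivially satisfies any vanishing condition. Hence the reduction map $V_i \to \ovl{V_i}$ is a linear isomorphism for $i = 1, 2$, while for $i = 3$ it is surjective with one-dimensional kernel $\Bbbk g$. Surjectivity of $S_i \to \ovl{S}_i$ also yields $\ovl{V_i} = H^0(E, \LL_i(-[\bfd]_i))$, where $\bfd = p+q$. By (\ref{NN_n=LL_n(-[d]_n)}) this sheaf has degree $3i - 2i = i \geq 1$, so Riemann-Roch on the elliptic curve gives $\dim_\Bbbk \ovl{V_i} = i$. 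Consequently $\dim V_1 = 1$, $\dim V_2 = 2$, and $\dim V_3 = 3 + 1 = 4$.

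For (2), the core computation is the multiplication rule in $\ovl S$: for homogeneous $a \in S_m$ and $b \in S_n$ one has $\ovl{ab} = \ovl a \cdot (\sigma^m)^*\ovl b$. If $\ovl a$ vanishes on $[\bfd]_m$ and $\ovl b$ vanishes on $[\bfd]_n$, then $(\sigma^m)^*\ovl b$ vanishes on $[\bfd]_n^{\sigma^m}$, and unpacking Notations~\ref{p^sigma^j} and~\ref{[d]_n} gives
\[ [\bfd]_m + [\bfd]_n^{\sigma^m} = [\bfd]_{m+n}. \]
Therefore $\ovl{ab} \in H^0(E, \LL_{m+n}(-[\bfd]_{m+n}))$, i.e., $ab \in V_{m+n}$. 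Applied with $(m,n) \in \{(1,1), (1,2), (2,1)\}$, this yields $V_1V_1 \subseteq V_2$ and $V_1V_2, V_2V_1, V_1V_1V_1 \subseteq V_3$. Combined with the trivial containments $V_i \subseteq S(p+q)_i$ and the definition $S(p+q) = \Bbbk\langle V_1, V_2, V_3\rangle$, this forces $S(p+q)_2 = V_2$ and $S(p+q)_3 = V_3$. The only point requiring genuine care is keeping the $\sigma^{-1}$ convention from Notation~\ref{p^sigma^j} straight when summing the divisors, but this is bookkeeping rather than a real obstacle.
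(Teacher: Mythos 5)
Your proposal is correct and follows essentially the same route as the paper: Riemann--Roch together with $gS\cap S_{\leq 3}=\Bbbk g$ for the dimension count (with the kernel $\Bbbk g$ appearing only in degree $3$), and the divisor identity $[\bfd]_m+[\bfd]_n^{\sigma^m}=[\bfd]_{m+n}$ for the product containments. The only cosmetic difference is that the paper packages the second step as the statement that $\ovl{S(p+q)}=B(E,\LL(-p-q),\sigma)$ is a ring (Lemma~\ref{S(p+q) bar}, via (\ref{NN_n=LL_n(-[d]_n)})), whereas you unwind the twisted multiplication directly; the underlying computation is identical.
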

\begin{proof}
(1). By the Riemann-Roch Theorem, $\dim_\Bbbk \ovl{V_i}=i$. Since $gS\cap S_{\leq 3} = g\Bbbk$, we have that $\dim_\Bbbk V_i=\dim_\Bbbk\ovl{V_i}=i$ for $i=1,2$, and that $\dim_\Bbbk V_3=\dim_\Bbbk\ovl{V_3}+1=4.$\par
(2). Here we must show $V_1^2\subseteq V_2$ and $V_1^3+V_1V_2+V_2V_1\subseteq V_3$. By Lemma~\ref{S(p+q) bar} $\ovl{V_1^2}\subseteq \ovl{V_2}$. Since $gS\subseteq S_{\geq 3}$, this implies $V_1^2\subseteq V_2$. Set $W=V_1^3+V_1V_2+V_2V_1$. Again by Lemma~\ref{S(p+q) bar}, $\ovl{W}\subseteq H^0(E,\LL_3(-[p+q]_3))$. Then $W\subseteq  \{x\in S_3\,|\;\ovl{x}\in H^0(E,\LL(-[p+q]_3))\}=V_3.$
\end{proof}

A fundamental part of Rogalski’s proof of \cite[Theorem~5.2]{Ro} is a semi-continuity argument that unfortunately does not work for the proof of Theorem~\ref{S(p+q) g div}; instead we use a different approach involving Grassmannians. We write $\Gr(m,V)$\index[n]{grmv@$\Gr(m,V)$}\index{Grassmannian} for the Grassmannian of $m$-dimensional subspaces of a fixed $n$-dimensional $\Bbbk$-vector space $V$. The result required is Lemma~\ref{Grassmannian}; it can be proved by elementary geometric arguments (the reader can also find a detailed argument in the appendix to \cite{thesis}).

\begin{lemma}\label{Grassmannian}
Let $R_2$ and $R_3$ be a $4$-dimensional and $7$-dimensional $\Bbbk$-vector space respectively, and set
$$\Omega_2=\{(W_1,W_2)\in \Gr(3,R_2)^2\,|\; \dim_\Bbbk (W_1\cap W_2)=2\}$$
and
$$
\Omega_3=\{(W_1,W_2,W_3)\in \Gr(6,R_3)^3\,| \dim_\Bbbk (W_1\cap W_2\cap W_3)=4\}.
$$
Then $\Omega_2$ and $\Omega_3$ are Zariski open subsets of $ \Gr(3,R_2)^2$ and $\Gr(6,R_3)^3$ respectively; and the maps $\psi_2:\Omega_2\to \Gr(2,R_2)$ and  $\psi_3:\Omega_3\to \Gr(4,R_3)$, given by
$$\psi_2:(W_1,W_2)\longmapsto W_1\cap W_2 \;\text{ and }\; \psi_3:(W_1,W_2,W_3)\longmapsto W_1\cap W_2\cap W_3$$
are morphisms of varieties. \qed
\end{lemma}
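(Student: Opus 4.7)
The plan is to treat both cases uniformly by identifying each intersection as the kernel of an explicit linear map, and then invoking semicontinuity of rank together with the universal property of the Grassmannian.

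For $\Omega_2$, consider the map
$$\phi_{W_1,W_2}: W_1\oplus W_2\longrightarrow R_2,\quad (w_1,w_2)\longmapsto w_1-w_2.$$
The assignment $w\mapsto(w,w)$ identifies $W_1\cap W_2$ with $\ker\phi_{W_1,W_2}$, while the image equals $W_1+W_2$. Since $\dim_\Bbbk(W_1\oplus W_2)=6$ and $\dim_\Bbbk R_2=4$, rank--nullity gives $\dim_\Bbbk(W_1\cap W_2)\geq 2$, with equality precisely when $\phi_{W_1,W_2}$ is surjective, equivalently $W_1+W_2=R_2$. As $(W_1,W_2)$ varies, these maps glue to a morphism of vector bundles on $\Gr(3,R_2)^2$, namely
$$\mathcal{S}_1\oplus\mathcal{S}_2\longrightarrow R_2\otimes\OO,$$
where $\mathcal{S}_i$ is the pullback of the tautological rank-$3$ subbundle along the $i$-th projection. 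Surjectivity of a bundle map is a Zariski-open condition (cut out locally by the non-vanishing of a maximal minor of a matrix of regular functions), so $\Omega_2$ is open.

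For the morphism $\psi_2$, observe that on $\Omega_2$ the kernel $\mathcal{K}$ of the above bundle map is a locally free subsheaf of $\mathcal{S}_1\oplus\mathcal{S}_2$ of constant rank $2$. Composing its inclusion with the projection $\mathcal{S}_1\oplus\mathcal{S}_2\to\mathcal{S}_1\hookrightarrow R_2\otimes\OO$ yields an injective bundle map $\mathcal{K}\hookrightarrow R_2\otimes\OO$ that recovers $W_1\cap W_2\subseteq R_2$ on every fibre. By the universal property of $\Gr(2,R_2)$, this family of $2$-dimensional subspaces of $R_2$ classifies a morphism $\psi_2:\Omega_2\to\Gr(2,R_2)$. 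The argument for $\Omega_3$ and $\psi_3$ is identical, using
$$\phi_{W_1,W_2,W_3}: W_1\oplus W_2\oplus W_3\longrightarrow R_3\oplus R_3,\quad (w_1,w_2,w_3)\longmapsto (w_1-w_2,\,w_2-w_3),$$
whose kernel identifies with $W_1\cap W_2\cap W_3$ via $w\mapsto(w,w,w)$. Here source dimension is $18$ and target dimension is $14$, so rank--nullity gives $\dim_\Bbbk(W_1\cap W_2\cap W_3)\geq 4$ with equality iff $\phi_{W_1,W_2,W_3}$ is surjective, and openness plus the morphism property follow exactly as before.

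The only real obstacle is bookkeeping: ensuring each construction is carried out globally on the product of Grassmannians rather than merely pointwise. If one wishes to avoid sheaf-theoretic language altogether, one can work inside the standard affine charts of the Grassmannian, where each subspace is presented by an explicit matrix; surjectivity of $\phi$ then becomes the non-vanishing of a determinant polynomial in those matrix entries, and the kernel (hence the intersection) is produced by Cramer-type formulas that are manifestly polynomial in the entries, giving the morphism directly.
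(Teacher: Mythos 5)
Your argument is correct and complete. Note that the paper itself does not prove this lemma: it is stated with an immediate \qed and the reader is referred to ``elementary geometric arguments'' detailed in the appendix of \cite{thesis}, so there is no in-text proof to compare against. Your two-layered argument supplies exactly what is missing: the key observation that, by rank--nullity applied to $(w_1,w_2)\mapsto w_1-w_2$ (resp.\ $(w_1,w_2,w_3)\mapsto(w_1-w_2,w_2-w_3)$), the intersection attains its minimal possible dimension precisely when the associated linear map is surjective, so that $\Omega_2$ and $\Omega_3$ are the loci where a morphism of vector bundles built from the tautological subbundles is surjective --- an open condition --- and on that locus the kernel is a subbundle of the trivial bundle whose classifying map is $\psi_2$ (resp.\ $\psi_3$). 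The fibrewise injectivity of $\mathcal{K}\to R_2\otimes\OO$ is what guarantees a locally free quotient, so the universal property of the Grassmannian does apply; you use this correctly. Your closing chart-based paragraph is essentially the ``elementary geometric argument'' the paper alludes to, and the Cramer-rule description of the kernel makes the morphism property explicit without sheaf language. The only imprecision is cosmetic: surjectivity is the non-vanishing of \emph{at least one} maximal minor (the complement of the common zero locus of all maximal minors), not of a single determinant; this does not affect openness or the rest of the argument.
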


Another concept we require to prove Theorem~\ref{S(p+q) g div} is the geometric quotient of a variety $X$ by a group $G$. Let $G$ be an algebraic group and suppose that $G$ acts upon a variety $X$. If this action $G\times X\to X$ is a morphism of algebraic varieties, then $X$ is called a \textit{$G$-variety} \index{gvar@$G$-variety} \cite[Definition~21.4.1]{TY}. Let $X$ be a $G$-variety; one may then define the \textit{geometric quotient of $X$ by $G$} \cite[Definition~25.3.1]{TY}. The reader is referred to \cite{TY} for the exact definition. Suppose that we have a surjective morphism of varieties $\pi:X\to Y$ such that the fibers of $\pi$ are precisely the $G$-orbits of $X$. Then it is the case that $Y$ is the geometric quotient of $X$ by $G$ provided $Y$ is a normal variety \cite[Proposition~25.34]{TY}.\par

Finally, at a certain point in the proof of Theorem~\ref{S(p+q) g div} we will need the field we are working over to be uncountable. Clearly if $\mathbb{K}\supseteq \Bbbk$ is a field extension, then $\mathbb{K}$ is trivially flat as a $\Bbbk$-module. The following lemma is then immediate.

\begin{lemma}\label{extend field}
Let $\mathbb{K}\supseteq \Bbbk$ be a field extension. Let $V$ a finite dimensional $\Bbbk$-vector space and denote $V_\mathbb{K}=V\otimes_\Bbbk\mathbb{K}$. Then $\dim_\Bbbk V=\dim_\mathbb{K}V_\mathbb{K}$. In particular, for a cg $\Bbbk$-algebra $A$ and $A_\mathbb{K}=A\otimes_\Bbbk\mathbb{K}$, we have the equality of Hilbert series $\sum_{i\in\N} \dim_\Bbbk(A_i)t^i =\sum_{i\in\N}\dim_\mathbb{K}(A_i\otimes_\Bbbk \mathbb{K})t^i$.\qed
\end{lemma}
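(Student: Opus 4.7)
The plan is to prove the statement essentially by unwinding the definition of the tensor product and using that any $\Bbbk$-basis of $V$ tensors up to a $\mathbb{K}$-basis of $V_\mathbb{K}$.

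First I would fix a $\Bbbk$-basis $\{v_1,\dots,v_n\}$ of $V$, where $n=\dim_\Bbbk V$, and show that $\{v_1\otimes 1,\dots,v_n\otimes 1\}$ is a $\mathbb{K}$-basis of $V_\mathbb{K}=V\otimes_\Bbbk\mathbb{K}$. Spanning is immediate from the defining relations of the tensor product: any simple tensor $v\otimes\lambda$ with $v=\sum a_iv_i$ equals $\sum (v_i\otimes 1)a_i\lambda$, and a general element is a finite sum of such. Linear independence is the only thing requiring a small argument. One clean way is to use that $\mathbb{K}$ is free (hence flat) as a $\Bbbk$-module: apply the functor $-\otimes_\Bbbk\mathbb{K}$ to the short exact sequence $0\to\Bbbk^n\xrightarrow{\sim} V\to 0$ (given by the choice of basis) to get an isomorphism $\mathbb{K}^n\cong V_\mathbb{K}$ sending the standard basis of $\mathbb{K}^n$ to $\{v_i\otimes 1\}$. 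Alternatively, a $\mathbb{K}$-linear dependence $\sum \lambda_i(v_i\otimes 1)=0$ can be ruled out directly by applying the $\Bbbk$-linear maps $V\to\Bbbk$ dual to the $v_i$, extended to $V_\mathbb{K}\to\mathbb{K}$.

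For the second statement, I would simply apply the first part degree by degree. Since $A$ is connected graded, each $A_i$ is a finite dimensional $\Bbbk$-vector space, and tensoring with $\mathbb{K}$ commutes with direct sums so $(A_\mathbb{K})_i=A_i\otimes_\Bbbk\mathbb{K}$. The first part then gives $\dim_\mathbb{K}(A_i\otimes_\Bbbk\mathbb{K})=\dim_\Bbbk A_i$ for every $i$, from which the equality of Hilbert series is immediate.

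There is no real obstacle here; the statement is essentially a packaging of the flatness of a field extension together with the compatibility of tensor product with direct sums, recorded for convenient later reference in the proof of Theorem~\ref{S(p+q) g div}.
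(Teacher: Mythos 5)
Your argument is correct and matches the paper's (implicit) reasoning: the paper states the lemma with no proof, noting only that $\mathbb{K}$ is free, hence flat, over $\Bbbk$, which is exactly the observation you elaborate. Your degree-by-degree application to the graded pieces is the intended reading of the second assertion.
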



We are now ready to prove $S(p+q)$ is a $g$-divisible ring.

\begin{proof}[Proof of Theorem~\ref{S(p+q) g div}]
By Lemma~\ref{1 iff 2 of S(p+q) g div}, we can prove both (1) and (2) in tandem. We break the proof up into 3 steps.\par\smallskip

\textit{Step 1}. For now, fix $p,q\in E$ and let $R=S(p+q)$. For Step 1 we prove that $R$ is $g$-divisible with the additional assumption that $p$ and $q$ are in distinct $\sigma$-orbits. We need to assume that $\Bbbk$ is uncountable to be sure such a situation exists; by Lemma~\ref{extend field} we lose no generality in doing so.\par
Put $R'=S(p)\cap S(q)$. Since $S$ is a domain and, $S(p)$ and $S(q)$ are already $g$-divisible from Proposition~\ref{Ro 12.2}, $R'$ is $g$-divisible. We claim that in fact $R'=R$. \par
To prove this fix $i=1,2,3$. Clearly $H^0(E,\LL_i(-[p+q]_i))\subseteq H^0(E,\LL_i(-[p]_i))$, and hence
$$R_i=\{x\in S_i\,|\; \ovl{x}\in H^0(E,\LL_i(-[p+q]_i))\}\subseteq
\{x\in S_i\,|\; \ovl{x}\in H^0(E,\LL(-p)) \}=S(p)_i.$$
Since $R$ is generated by $R_1,R_2,R_3$ we have $R\subseteq S(p)$. Symmetrically we get $R\subseteq S(q)$ also, which forces $R\subseteq R'.$  \par
Fix $n\in\N$. By Lemma~\ref{S(p+q) bar},  $\ovl{R_n}=H^0(E,\LL_n(-[p+q]_n)$. Using the assumption that $p$ and $q$ are in distinct $\sigma$ orbits it follows that $H^0(E,\LL_n(-[p+q]_n))=H^0(E,\LL_n(-[p]_n))\cap H^0(E,\LL_n(-[q]_n))$. Thus $\ovl{R}\subseteq \ovl{R'}\subseteq \ovl{S(p)}\cap\ovl{S(q)}=\ovl{R}$, i.e. $\ovl{R}=\ovl{R'}$. If $n=0,1,2$, then $R_n=R'_n$ follows from  $\ovl{R_n}=\ovl{R'_n}$, so assume that $n\geq 3$. Then by induction we have that $R_n\cap gS\supseteq gR_{n-3}=gR'_{n-3}=R'_{n}\cap gS\supseteq R_n\cap gS.$
This certainly implies $\dim_\Bbbk(R_n\cap gS)=\dim_\Bbbk(R'_n\cap gS)$.  We know $\dim_\Bbbk \ovl{R_n}=\dim_\Bbbk\ovl{R'_n}$ already, and it follows that $R$ and $R'$ have the same Hilbert series. Since $R\subseteq R'$, this forces $R_n=R'_n$. This completes the proof under the additional assumption that $p$ and $q$ are in distinct $\sigma$-orbits.\smallskip\par

\textit{Step 2}. Now fix $p\in E$ and allow $q\in E$ to vary. To get rid of the additional assumption of Step~1 the strategy is to prove that the map $\nu_n: E\to \N, \;\;q\mapsto \dim_\Bbbk S(p+q)_n$ is lower semi-continuous for each $n\geq 0$. In other words, for every $\ell\in\N$, the set $\{q\in E\,|\; \nu_n(q)\leq \ell\}$ is Zariski-closed in $E$. Fix $n\geq 0$. We prove $\nu_n$ is lower semi-continuous in two further sub-steps.
\begin{enumerate}[(2a)]
\item By Lemma~\ref{S(p+q)_i=V_i} there is a map $\theta:E\to \Gr(1,S(p)_1)\times \Gr(2,S(p)_2)\times \Gr(4,S(p)_3)$, given by $\theta: q\mapsto (S(p+q)_1,S(p+q)_2,S(p+q)_3)$. We claim $\theta$ is a morphism of varieties.
\item For each $n\in\N$ consider $\mu_n: \Gr(1,S(p)_1)\times \Gr(2,S(p)_2)\times \Gr(4,S(p)_3)\to \N$, defined as $\mu_n: (Y_1,Y_2,Y_3)\mapsto\dim_\Bbbk(\Bbbk\langle Y_1,Y_2,Y_3\rangle_n)$. We claim $\mu_n$ lower semi-continuous for every $n\in\N$.
\end{enumerate}

Assume for the moment that these claims are true. Since clearly $\nu_n=\mu_n\circ \theta$,
$$\{q\in E\,|\; \nu_n(q)\leq \ell\}=\{q\in E\,|\; \mu_n(\theta(q))\leq \ell\}=\theta^{-1}(C),$$
where $C=\{(Y_1,Y_2,Y_3)\,|\;\mu_n(Y_1,Y_2,Y_3)\leq \ell\}$. If (2b) holds, then $C$ is closed. If also (2a) holds, then $\theta^{-1}(C)$ is closed because then $\theta$ is continuous with respect to the Zariski topology. This would prove $\nu_n$ is lower-semi-continuous completing Step 2.\par\smallskip

\textit{Proof of 2a}. Here, when we say ``morphism" we mean ``morphism of varieties".  Write the map $\theta=(\theta_1,\theta_2,\theta_3),\text{ where }\theta_i: q\mapsto S(p+q)_i$. To contain notation we only prove $\theta_3:E\to\Gr(4,S(p)_3)$ is a morphism. The proof for $\theta_1, \theta_2$ are similar.  We will construct $\theta_3$ as a composite of other morphisms. The first of these is the map $\gamma: E\to E^3,\; q\mapsto (q,q^\sigma,q^{\sigma^2})$, this is clearly are morphism since $\sigma$ is an automorphism. Let $\NN=\LL_3(-[p]_3)$, $W=H^0(E,\NN)=\ovl{S(p)_3}$ (which has $\dim_\Bbbk W=6$) and identify $\Gr(5,W)$ with lines through the dual vector space (i.e. with $\bbP(W^*)$). Next, we have the map $\ovl{\phi}:E\to \Gr(5,W)$ determined by a basis of sections for $\NN$ (see \cite[Theorem~II.7.1]{Ha}). Explicitly this is $\ovl{\phi}:q\mapsto H^0(E,\NN(-q))$. We then extend $\ovl{\phi}$ to obtain a morphism $\phi:E\to\Gr(6,S(p)_3)$ by are post-composing $\ovl{\phi}$ with the morphism $\Gr(5,W)\to \Gr(6,S(p)_3): \;W'\mapsto \{x\in S(p)_3\,|\; \ovl{x}\in W'\}$. Put $\Phi=(\phi,\phi,\phi)$ which maps $E\times E\times E\to \Gr(6,S(p)_3)\times \Gr(6,S(p)_3)\times \Gr(6,S(p)_3)$. We now use Lemma~\ref{Grassmannian} with $R_3=S(p)_3$, and $\Omega_3=\Omega$ and $\psi_3=\psi$  defined as in the lemma. We get the morphism $\psi:\Omega\to \Gr(4,S(p)_3)$. We claim $\theta_3=\psi\circ\Phi\circ \gamma$. \par 
 Given $q\in E$, we have $\Phi(\gamma(q))=(X_{0},X_{1},X_{2})$, where $X_{i}=\{x\in S(p)_3\, |\; \ovl{x}\in H^0(E,\NN_i(-q^{\sigma^{i}}))\}$. Since $\sigma$ has infinite order there are no points of finite order. In particular $p$, $p^\sigma$ and $p^{\sigma^2}$ are all pairwise distinct. It follows that $\ovl{X_{0}}\cap\ovl{X_{1}}\cap \ovl{X_{2}}=H^0(E,\LL_3(-[p+q]_3))=\ovl{S(p+q)_3}$. Certainly $\ovl{X_{0}\cap X_{1}\cap X_{2}}\subseteq \ovl{X_{0}}\cap\ovl{X_{1}}\cap \ovl{X_{2}}$. If $\ovl{y}\in \ovl{X_{0}}\cap\ovl{X_{1}}\cap \ovl{X_{2}}$ for some $y\in S_3$, then there exists $x_i\in X_i$ and $\lambda_i\in S_0=\Bbbk$, such that $y=x_i+g\lambda_i$ for $i=1,2,3$. But by the definition of each $X_i$, $g\lambda_i\in X_i$. Hence $y\in X_{0}\cap X_{1}\cap X_{2}$. This shows $\ovl{X_{0}\cap X_{1}\cap X_{2}}= \ovl{X_{0}}\cap\ovl{X_{1}}\cap \ovl{X_{2}}$. It then follows that $X_0\cap X_1\cap X_2\subseteq \{x\in S_3\,|\;\ovl{x}\in H^0(E,\LL_3(-[p+q]_3))\}=V_3$.
Since $gS_0=g\Bbbk\subseteq X_0\cap X_1\cap X_2$, we in fact have $X_0\cap X_1\cap X_2=V_3$. By Lemma~\ref{S(p+q)_i=V_i}, $V_3=S(p+q)_3$ and $\dim_\Bbbk S(p+q)_3=4$. In particular, $\Phi(\gamma(q))\in \Omega$ and $\psi(\Phi(\gamma(q)))$ is well-defined. Moreover $\psi(\Phi(\gamma(q)))=S(p+q)_3=\theta_3(q).$
Thus $\theta_3$, as a composition of morphisms, is itself a morphism of varieties. \smallskip\par

\textit{Proof of 2b}. Fix bases for $S(p)_1$, $S(p)_2$, and $S(p)_3$. By Proposition~\ref{Ro 12.2}(1), $S(p)_1$, $S(p)_2$ and $S(p)_3$ are respectively 2, 4 and 7 dimensional $\Bbbk$-vector spaces. Identify a vector of $S(p)_1\times S(p)_2\times S(p)_3$ with its coordinates in $\A^2\times\A^4\times \A^7$. Let $\mathcal{U}$ be the collection of $(u,(v_1,v_2),(w_1,\dots,w_4))\in \A^2\times(\A^4)^2\times (\A^7)^4$ such that $u\neq 0$ and both $(v_1,v_2)$ and $(w_1,\dots,w_4)$ are linearly independent collections of vectors in $S(p)_2$ and $S(p)_3$ respectively. Linear independence is an open condition, and so $\mathcal{U}$ is open subset. To ease notation put $\mathcal{H}=\Gr(1,S(p)_1)\times \Gr(2,S(p)_2)\times \Gr(4,S(p)_3)$. We have a natural surjection  
$$\pi: \mathcal{U}\to \mathcal{H}, \;(u,(v_1,v_2),(w_1,\dots,w_4))\mapsto (\mathrm{span}\{u\},\mathrm{span}\{v_1,v_2\},\mathrm{span}\{w_1,\dots,w_4\}).$$
We will show this is nothing but the geometric quotient (in the sense of \cite[Definition~25.3.1]{TY}) of $\mathcal{U}$ by the natural action of $G=\Bbbk^\times\times \Gl_2(\Bbbk)\times \Gl_4(\Bbbk)$
given by left multiplication. To this end, we remark that $\mathcal{U}$ is obviously a irreducible $G$-variety (see \cite[Definition~21.4.1]{TY}) and moreover, given $\mathfrak{h}\in \mathcal{H}$, $\pi^{-1}(\mathfrak{h})$ is a $G$-orbit. By \cite[Proposition~25.3.5]{TY}, it is the case that $\mathcal{H}$ is the desired quotient provided $\mathcal{H}$ is a normal variety. But this is obviously true as $\mathcal{H}$ is a product of Grassmannians, and hence has an open affine cover (see \cite[Lemma~11.15]{Hassett}).\par

Fix $n\in \N$. We now prove $\mu=\mu_n$ is lower semi-continuous. First we lift the map $\mu$ to a map $\mu':\mathcal{U}\to \N$. Given $\mathfrak{u}=(u,(v_1,v_2),(w_1,\dots,w_4))\in \mathcal{U}$ set $$\begin{array}{l}
Y_1(\mathfrak{u})=\mathrm{span}\{u\}\subseteq S(p)_1,\\
Y_2(\mathfrak{u})=\mathrm{span}\{v_1,v_2\}\subseteq S(p)_2, \\
Y_3(\mathfrak{u})=\mathrm{span}\{w_1,w_2,w_3, w_4\}\subseteq S(p)_3,
\end{array}
$$
and define $\mu'(\mathfrak{u})=\dim_\Bbbk(\Bbbk\langle Y_1(\mathfrak{u}), Y_2(\mathfrak{u}), Y_3(\mathfrak{u}) \rangle_n).$
We have the commuting diagram
\[
\xymatrix{
\mathcal{U} \ar[d]_{\pi} \ar[dr]^{\mu'} \\
\mathcal{H} \ar[r]^{\mu} & \N. }
\]
To prove $\mu$ is lower semi-continuous we first show that $\mu'$ is lower semi-continuous.  \par
Let $x_1,\dots, x_m$ be the spanning set of $S(p)_n$ consisting of all degree $n$ products of the original fixed bases of $S(p)_1$, $S(p)_2$ and $S(p)_3$. Take $\mathfrak{u}\in \mathcal{U}$ and write $Y_i=Y_i(\mathfrak{u})$ for $i=1,2,3$. Then $\Bbbk\langle Y_1,Y_2,Y_3\rangle_n$ is spanned by elements of the form $y_i=\sum f_{ij}x_j$, where $f_{ij}$ are polynomials in the coordinates of $\mathfrak{u}$. Take $\ell\in \N$, then $\dim_\Bbbk (\Bbbk\langle Y_1,Y_2,Y_3\rangle_n)\leq \ell$ if and only if no collection of $\ell+1$ of the $y_i$ are linearly independent. This in turn is equivalent to the determinants of all $(\ell+1)\times(\ell+1)$ matrix minors of the matrix $\left(f_{ij}\right)_{i,j}$ vanishing. Thus $\{\mathfrak{u}\in \mathcal{U}\,|\; \mu'(\mathfrak{u})\leq \ell \}$ is closed, proving $\mu'$ is lower semi-continuous. Now set $Z=\{ (Y_1,Y_2,Y_3)\in \mathcal{H}\,|\; \mu(Y_1,Y_2,Y_3)\leq \ell \}$, we must show $Z$ is a closed subset of $\mathcal{H}$. We know $\pi^{-1}(Z)=\{\mathfrak{u}\in \mathcal{U}\,|\; \mu'(\mathfrak{u})\leq \ell \}$ is a closed subset of $\mathcal{U}$ - we have just proved it. Because $\mathcal{H}$ is the geometric quotient of $\mathcal{U}$ by $G$, it indeed follows that $Z=\pi(\pi^{-1}(Z))$ is closed (see \cite[Lemma~25.3.2]{TY}). Thus $\mu$ is lower semi-continuous as claimed.
 \smallskip\par

\textit{Step 3}. Finally we complete the proof. By Lemma~\ref{extend field} we may assume that $\Bbbk$ is uncountable. In which case $E$ has uncountably many points. By Step~1, $S(p+q)$ has the correct Hilbert series when $p$ and $q$ are in different $\sigma$-orbits, say $\dim_\Bbbk S(p+q)_n=\alpha_n$ for $p,q$ in different $\sigma$-orbits. By Step~2, $E_n=\{q\in E\,|\; \dim_\Bbbk S(p+q)_n\leq \alpha_n\}$ is a closed subset of $E$. If $E_n\subsetneq E$, then because $E_n$ is closed, $E_n$ would be finite. But $E_n$ contains the uncountable set $\{q\in E\,|\; q\neq p^{\sigma^i}\,\text{ for all }\, i\in\Z\}$ by Step~1. Hence $E_n=E$. In other words, $\dim_\Bbbk S(p+q)_n\leq \alpha_n$ for all $n\geq 0$ and $p,q\in E$. Thus $h_{S(p+q)}(t)\leq \sum\alpha_nt^n=\frac{t^2-t+1}{(t-1)^2(t^3-1)}$, for any $p,q\in E$. The reverse inequality follows from the fact that $R$ is a domain with $Rg\subseteq R\cap Sg$. By Lemma~\ref{1 iff 2 of S(p+q) g div}, this completes the proof.
\end{proof}


Now we know that the $S(p+q)$'s are $g$-divisible we can quickly obtain the main results of this section. In the following the Veronese subring $T=S^{(3)}$ of $S$ is given the grading induced from $S$, that is $T_n=T\cap S_n$.

\begin{cor}\label{S(p+q) 3 Veronese}
Let $p,q\in E$. Then $S(p+q)^{(3)}=T([p+q]_3)$.
\end{cor}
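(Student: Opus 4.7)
The plan is to sandwich the two algebras by combining an easy inclusion with $g$-divisibility and equality of images modulo $g$. Write $R=S(p+q)$ and $\bfd=[p+q]_3$, so that in the induced grading $T_n=T\cap S_n$, the algebra $T(\bfd)$ is generated by $T(\bfd)_3=\{x\in S_3:\ovl{x}\in H^0(E,\LL_3(-\bfd))\}$. By Lemma~\ref{S(p+q)_i=V_i}(2) this is exactly $V_3=R_3$, and since $R_3\subseteq R^{(3)}$ we immediately obtain the inclusion $T(\bfd)=\Bbbk\langle R_3\rangle\subseteq R^{(3)}$.

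Next I would verify that both algebras are $g$-divisible with the same image in $\ovl{T}=B(E,\MM,\tau)$. For $R^{(3)}$: Theorem~\ref{S(p+q) g div}(1) gives that $R$ is $g$-divisible, so by Lemma~\ref{g div up} $R^{(3)}$ is also $g$-divisible, and its image satisfies $\ovl{R^{(3)}}=(\ovl{R})^{(3)}=B(E,\LL(-p-q),\sigma)^{(3)}$. The standard identity $B(E,\NN,\sigma)^{(3)}=B(E,\NN_3,\sigma^3)$ applied with $\NN=\LL(-p-q)$ then yields $\ovl{R^{(3)}}=B(E,\MM(-\bfd),\tau)$. For $T(\bfd)$, \cite[Theorem~1.1]{Ro} provides both $g$-divisibility and the identification $\ovl{T(\bfd)}=B(E,\MM(-\bfd),\tau)$. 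Thus $\ovl{T(\bfd)}=\ovl{R^{(3)}}$ inside $\ovl{T}$.

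Finally, one closes the gap via the general observation: if $B\subseteq A$ are $g$-divisible graded subalgebras of $T_{(g)}$ with $\ovl{A}=\ovl{B}$, then $A=B$. Indeed, given a homogeneous $a\in A_n$, pick $b\in B_n$ with $\ovl{a}=\ovl{b}$; then $a-b\in A\cap gT=gA$, so $a-b=ga_1$ with $a_1\in A_{n-3}$, and one iterates. Since $a_k\in A_{n-3k}$, the recursion terminates once $n-3k<0$, expressing $a$ as a finite $B$-linear combination of powers of $g$. Applying this to $B=T(\bfd)\subseteq A=R^{(3)}$ gives the desired equality.

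The main subtlety is the image computation in the second paragraph: one must check carefully that the image of a Veronese of a $g$-divisible ring equals the Veronese of the image, and that the relevant Veronese of a twisted homogeneous coordinate ring is again a twisted homogeneous coordinate ring with precisely the predicted parameters $(E,\MM(-\bfd),\tau)$. Neither point is deep, but one has to line up Rogalski's conventions in \cite{Ro} with the grading $T_n=T\cap S_n$ and the divisor $\bfd=[p+q]_3$ before the comparison with $T(\bfd)$ can be made.
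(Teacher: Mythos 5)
Your proposal is correct and follows essentially the same route as the paper: the easy inclusion $T([p+q]_3)=\Bbbk\langle V_3\rangle\subseteq S(p+q)^{(3)}$, then equality forced by the fact that both sides are $g$-divisible with the same image modulo $g$ (via Theorem~\ref{S(p+q) g div}, Lemma~\ref{S(p+q) bar} and Rogalski's results), closed by induction on degree. The paper phrases the final induction as an equality of Hilbert series rather than your element-by-element lifting, but the content is identical.
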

\begin{proof}
Set $R=S(p+q)$, $R'=T([p+q]_3)$ and $\NN=\LL(-p-q)$. Write $R=\Bbbk\langle V_1,V_2,V_3\rangle$ where $V_i=\{ x\in  S_i\,|\; \ovl{x}\in H^0(E,\NN_i)\}$ as in Definition~\ref{S(p+q)}. By Definition~\ref{T(d) def},  $R'=\Bbbk\langle V_3\rangle$, and so we clearly have $R'\subseteq R^{(3)}$. Since we already have this inclusion, to prove equality it is enough to prove that we have an equality of Hilbert series. By Lemma~\ref{S(p+q) bar} and \cite[Theorem~1.1(1)]{Ro}, $\ovl{R_n}=H^0(E,\NN_n)=\ovl{R'_n}$ for each $n\geq 0$ divisible by 3. Thus, for $\dim_\Bbbk R_n=\dim_\Bbbk R'_n$ to hold, it suffices to prove $R_n\cap Sg= R'_n\cap Sg$. But $R_n\cap Sg=R_{n-3}g$  and  $R'_n\cap Sg=R'_{n-3}g$ by Theorem~\ref{S(p+q) g div} and \cite[Theorem~5.2]{Ro}. By induction it is therefore enough to prove $\dim_\Bbbk R_0=\dim_\Bbbk R'_0$, which is obvious.
\end{proof}

\begin{cor}\label{Qgr(S(p+q))}
Let $p,q\in E$ and $R=S(p+q)$. Then $Q_\gr(R)=Q_\gr(S)$.
\end{cor}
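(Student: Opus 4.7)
The containment $Q_\gr(R)\subseteq Q_\gr(S)$ is immediate since $R\subseteq S$, so the task is to produce the reverse inclusion. My plan is to combine Corollary~\ref{S(p+q) 3 Veronese}, Lemma~\ref{Qgr(S) or Qgr(T)}, and the dimension count from Lemma~\ref{S(p+q)_i=V_i}.

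First I would pass to the $3$-Veronese. By Corollary~\ref{S(p+q) 3 Veronese}, $R^{(3)}=T([p+q]_3)$. Since $[p+q]_3$ is an effective divisor of degree $6\leq 7$, Rogalski's results on the $T$-blowups (cf.\ \cite[Theorem~1.1]{Ro}; see also Proposition~\ref{Ro 12.2} for the one-point instance) give $Q_\gr(R^{(3)})=Q_\gr(T)=Q_\gr(S)^{(3)}$. Consequently $D_\gr(R)\supseteq D_\gr(R^{(3)})=D_\gr(T)=D_\gr(S)$, while the reverse inclusion $D_\gr(R)\subseteq D_\gr(S)$ is forced by $R\subseteq S$. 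Hence $D_\gr(R)=D_\gr(S)$.

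Next, note that $g\in V_3\subseteq R$: indeed $\ovl{g}=0$ lies in $H^0(E,\LL_3(-[p+q]_3))$ vacuously, so $g$ satisfies the defining condition of $V_3$ in Definition~\ref{S(p+q)}. Thus $R$ is a subalgebra of $Q_\gr(S)$ containing $g$ with $D_\gr(R)=D_\gr(S)$, and Lemma~\ref{Qgr(S) or Qgr(T)} applies to give the dichotomy
\[
Q_\gr(R)=Q_\gr(S)\quad\text{or}\quad Q_\gr(R)=Q_\gr(T).
\]

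Finally, I would rule out the second possibility by a degree-$1$ witness. Lemma~\ref{S(p+q)_i=V_i}(1) provides a nonzero $v\in V_1=R_1$. Then $v\in Q_\gr(R)_1$ is nonzero, whereas $Q_\gr(T)$ has no elements in degrees coprime to $3$. This forces $Q_\gr(R)=Q_\gr(S)$, as desired. The only subtle point, and likely the main obstacle had it not already been prepared, is establishing $D_\gr(R)=D_\gr(S)$; but Corollary~\ref{S(p+q) 3 Veronese} together with Rogalski's result on $T([p+q]_3)$ makes this step routine.
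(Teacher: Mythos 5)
Your proposal is correct and follows essentially the same route as the paper: pass to the $3$-Veronese via Corollary~\ref{S(p+q) 3 Veronese} to get $D_\gr(R)=D_\gr(S)$, invoke the dichotomy of Lemma~\ref{Qgr(S) or Qgr(T)}, and rule out $Q_\gr(T)$ using $R_1\neq 0$. The only differences are cosmetic (the paper cites \cite[Theorem~5.4(2)]{Ro} for $Q_\gr(T([p+q]_3))=Q_\gr(T)$, and leaves the check $g\in R$ implicit, which you spell out).
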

\begin{proof}
Trivially $Q_\gr(R)\subseteq Q_\gr(S)$. By Corollary~\ref{S(p+q) 3 Veronese}, $R^{(3)}=T([p+q]_3)$, for which we know that $Q_\gr(T([p+q]_3))=Q_\gr(T)$ by \cite[Theorem~5.4(2)]{Ro}. Thus $D_\gr(R)= D_\gr(T)=D_\gr(S)$. By Lemma~\ref{Qgr(S) or Qgr(T)} either $Q_\gr(R)=Q_\gr(T)$ or $Q_\gr(R)=Q_\gr(S)$. Since $R_1\neq 0$, it is must be that $Q_\gr(R)=Q_\gr(S)$.
\end{proof}

We now present our main theorem of this section. For future referencing purposes we incorporate Proposition~\ref{Ro 12.2} into Theorem~\ref{S(d) thm}. The homological terms in Theorem~\ref{S(d) thm}(3) will remain undefined here for they are not used. The definitions can be found in \cite[Chapter~2]{thesis}. Auslander-Gorenstein and Cohen-Macaulay are definied in Definition~\ref{homological defs}.

\begin{theorem}\label{S(d) thm}
Let $\bfd$ be an effective divisor on $E$ with degree $\deg\bfd=d\leq 2$. Set $R=S(\bfd)$. Then:
\begin{enumerate}[(1)]
\item $R$ is $g$-divisible with $R/gR\cong \ovl{R}=B(E,\LL(-\bfd),\sigma)$ and $R^{(3)}=S^{(3)}(\bfd+\bfd^\sigma+\bfd^{\sigma^2})$. The Hilbert series of $R$ is given by
$$h_{R}(t)=\frac{t^2+(1-d)t+1}{(t-1)^2(1-t^3)}.$$
\item $R$ is Auslander-Gorenstein and Cohen-Macaulay.
\item $R$ satisfies $\chi$ on the left and right, has cohomological dimension 2 and possesses a balanced dualizing complex.
\item $R$ is a maximal order in $Q_\gr(R)=Q_\gr(S)$.
\end{enumerate}
\end{theorem}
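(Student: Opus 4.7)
My plan is to reduce to the $d=2$ case: $d=0$ gives $R=S$ and everything is standard, while $d=1$ is exactly Proposition~\ref{Ro 12.2}. For $R = S(p+q)$, part (1) is essentially assembled from what has already been proved: the $g$-divisibility and Hilbert series are Theorem~\ref{S(p+q) g div}, the identity $R^{(3)} = S^{(3)}([\bfd]_3)$ is Corollary~\ref{S(p+q) 3 Veronese}, and $Q_\gr(R)=Q_\gr(S)$ is Corollary~\ref{Qgr(S(p+q))}; combining $g$-divisibility with Lemma~\ref{S(p+q) bar} yields $R/gR \cong \ovl{R} = B(E,\LL(-\bfd),\sigma)$.

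For parts (2) and (3), I would apply Rogalski's \cite[Theorem~1.1]{Ro} to $R^{(3)} = T(\bfe)$ where $\bfe = [\bfd]_3$ has degree $3d \leq 6 \leq 7$; this endows $R^{(3)}$ with all the Auslander-Gorenstein, Cohen-Macaulay, $\chi$, finite cohomological dimension, and balanced dualizing complex properties. By Lemma~\ref{noeth up n down}, $R$ is a finite $R^{(3)}$-module on both sides, so each listed property lifts to $R$ by standard Veronese-ascent arguments: Auslander-Gorenstein and Cohen-Macaulay pass up a finite centralizing-type extension via Levasseur-style reasoning, while the $\chi$-conditions, cohomological dimension and balanced dualizing complex transfer via the equivalence between $\qgr R$ and $\qgr R^{(3)}$. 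The needed compilations are in \cite[Chapter~2]{thesis}, with roots in \cite{AZ.ncps} and \cite{RSS2}.

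Part (4) is the main obstacle. Suppose $R \subsetneq A \subseteq Q_\gr(S)$ are equivalent orders. Lemma~\ref{max Sg-order is max order} forces $A \subseteq \Sg$, so I may replace $A$ by its $g$-divisible hull $B = \widehat{A\langle g\rangle}$ and assume $B$ is $g$-divisible, contains $R$ and $g$, and is equivalent to $R$. By Lemma~\ref{g div up} and Lemma~\ref{equiv orders go up}, $B^{(3)}$ is a $g$-divisible order equivalent to $R^{(3)} = T(\bfe)$ in $Q_\gr(T)$; the maximality of $T(\bfe)$ via \cite[Theorem~1.1]{Ro} forces $B^{(3)} = R^{(3)}$. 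The hard step is to promote this to $B = R$: neither Proposition~\ref{g-div max orders up n down} nor Lemma~\ref{U'<g>} applies, since both require $d$ coprime to $3$ or $Q_\gr = Q_\gr(T)$, whereas here $d=3$ and $Q_\gr(R)=Q_\gr(S)$. I would instead reduce modulo $g$ and analyse $\ovl{B} \subseteq \Bbbk(E)[t,t^{-1};\sigma]$, which contains $\ovl{R} = B(E,\NN,\sigma)$ with $\NN = \LL(-\bfd)$ and has $\ovl{B}^{(3)} = B(E,\NN_3,\tau)$. Writing $\ovl{B}_i = H_i\, t^i$ with $H_i \subseteq \Bbbk(E)$ for $i = 1,2$, the inclusions $\ovl{B}_1\ovl{B}_2,\, \ovl{B}_2\ovl{B}_1,\, \ovl{B}_1^3 \subseteq \ovl{B}_3 = H^0(E,\NN_3)$ become constraints of the form $H_1 \cdot \sigma(H^0(E,\NN_2)) \subseteq H^0(E,\NN_3)$ in the twisted product, and similarly for $H_2$ against suitable degree-$4$ products landing in $\ovl{B}_6 = H^0(E,\NN_6)$. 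Using basepoint-freeness of $\NN_i$ for $i \geq 2$ on the elliptic curve, a divisor-theoretic argument shows any forbidden pole of an element of $H_i$ would produce a forbidden pole in one of these products, forcing $H_i = H^0(E,\NN_i)$ and hence $\ovl{B}_i = \ovl{R}_i$. Then $g$-divisibility and a straightforward induction on degree identify $B = R$, completing the proof.
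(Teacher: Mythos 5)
Your part (1) is assembled exactly as in the paper (Theorem~\ref{S(p+q) g div}, Corollary~\ref{S(p+q) 3 Veronese}, Corollary~\ref{Qgr(S(p+q))}, plus Proposition~\ref{Ro 12.2} for $d\leq 1$), and your part (4) is a correct but genuinely different route: the paper disposes of (2)--(4) in one stroke by citing \cite[Proposition~2.4]{RSS2}, whose hypothesis is precisely the conclusion of part (1) (a $g$-divisible cg domain with $R/gR\cong B(E,\NN,\sigma)$), whereas you give a self-contained maximality proof. Your reduction to $B^{(3)}=R^{(3)}=T([\bfd]_3)$ via Lemma~\ref{max Sg-order is max order}, \cite[Theorem~1.1]{Ro} and the $g$-divisible hull, followed by the basepoint-freeness argument (pairing $\ovl{B}_i$ against $\ovl{R}_j=H^0(E,\NN_j)$ with $j\geq 2$ and $3\mid i+j$, where $\deg\NN_j\geq 2$ forces $H_i\subseteq H^0(E,\NN_i)$) and the degree induction using $g$-divisibility, does go through; it is a reasonable substitute for the black-box citation.

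The genuine gap is in part (2). Auslander--Gorenstein and Cohen--Macaulay do \emph{not} ascend from $R^{(3)}$ to $R$ along the finite extension, and this paper itself contains the counterexample: the virtual blowup $U$ of Theorem~\ref{S(p-p1+p2)} satisfies $U^{(3)}=T(p+p^{\sigma^2}+p^{\sigma^4})$ (Proposition~\ref{3 Veronese of H}), which is Auslander--Gorenstein and Cohen--Macaulay by \cite[Theorem~1.1]{Ro}, yet $U$ is neither and has infinite injective dimension (Theorem~\ref{S(p-p1+p2)}(3), Remark~\ref{S(p-p1+p2) bad homologically}). So no ``Levasseur-style'' ascent can exist at the level of generality you invoke; what actually drives (2) is the full strength of part (1), namely that $\ovl{R}$ \emph{equals} the twisted homogeneous coordinate ring $B(E,\LL(-\bfd),\sigma)$ rather than merely agreeing with one in high degrees --- exactly the distinction that separates the blowups $S(\bfd)$ from general virtual blowups, and exactly the hypothesis of \cite[Proposition~2.4]{RSS2}. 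Your part (3) has a related (milder) directionality problem: the standard transfer \cite[Proposition~8.7(2)]{AZ.ncps}, as used in Corollary~\ref{RSS 8.12}, passes $\chi$ and cohomological dimension from a ring \emph{down} to a Veronese subring over which it is a noetherian module, not upward from $R^{(3)}$ to $R$; the correct route is via $R/gR$ and \cite[Theorem~8.8]{AZ.ncps} (again packaged in \cite[Proposition~2.4]{RSS2}). You should replace the Veronese-ascent argument for (2) and (3) by that citation, or reprove its content from $R/gR\cong B(E,\LL(-\bfd),\sigma)$ directly.
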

\begin{proof}
(1) follows from Proposition~\ref{Ro 12.2}, Corollary~\ref{S(p+q) 3 Veronese},  Corollary~\ref{Qgr(S(p+q))} and Theorem~\ref{S(p+q) g div}. The rest follows from part (1) and \cite[Proposition~2.4]{RSS2}.
\end{proof}

Unsurprisingly, it is the connection between our subalgebras, $S(\bfd)$ of $S$, and the blowup subalgebras, $T(\bfe)$ of $T$, that motivates the name ``blowup subalgebra of $S$". In particular, $S(\bfd)^{(3)}=T([\bfd]_3)$, and $S(\bfd)$ and $T(\bfe)$ satisfy very similar properties (compare Theorem~\ref{S(d) thm} with \cite[Theorem~1.1]{RSS}). Beyond this, we will see in the coming sections that the $S(\bfd)$ play a similar role to that which the $T(\bfe)$ played in \cite{RSS}.

\section{Classifying $g$-divisible maximal orders}\label{g-divisible max orders}

In this section we aim to obtain a classification of $g$-divisible maximal $S$-orders: Theorem~\ref{RSS 7.4} and Proposition~\ref{RSS 7.4(3)}. We prove that any $g$-divisible maximal $S$-order is obtain from a virtual blowup in the sense of Definition~\ref{virtual blowup}. 
Here we follow ideas of \cite{RSS}, expect one strength of our approach is that we are able to bypass many of the more technical aspects of \cite{RSS, RSS2}.

\subsection*{Endomorphism rings over the $S(\bfd)$}

We start by stating some preliminary results from \cite{RSS}.

\begin{lemma}\label{RSS 5.3}\cite[Corollary~5.3]{RSS} Let $E$ be a smooth elliptic curve, $\HH$ an invertible sheaf on $E$ with $\deg\HH\geq 2$, and $\rho:E\to E$ an automorphism of infinite order. Set $B=B(E,\HH,\rho)$. Suppose that $C$ is a cg subalgebra of $B$ with $Q_\gr(C)=Q_\gr(B)
$. Then there exist divisors $\bfx$, $\bfy$ on $E$ with $0\leq\deg\bfx<\deg\HH$, and $k\geq 0$ such that $C_n=H^0(E,\HH(-\bfy-[\bfx]_n))$ for all $n\geq k$. \qed
\end{lemma}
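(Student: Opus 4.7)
The plan is to reconstruct each graded piece $C_n \subseteq B_n = H^0(E,\HH_n)$ from its base locus and then exploit multiplicativity to force a very rigid asymptotic shape on those base loci. To begin, for each $n \geq 0$ I would define $\bfx_n$ to be the maximal effective divisor on $E$ with the property that $C_n \subseteq H^0(E,\HH_n(-\bfx_n))$; equivalently, $\bfx_n$ is the common vanishing divisor (with multiplicities) of the sections in $C_n$. Since sections of $\HH_n$ have no other invariants over a smooth curve, this captures $C_n$ up to the question of whether the inclusion $C_n \subseteq H^0(E,\HH_n(-\bfx_n))$ is an equality.

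Next I would use that $C$ is a subalgebra to extract a subadditivity relation for these divisors. Multiplication in $B$ sends $s \otimes t \in B_m \otimes B_n$ to $s \cdot \rho^{m*}(t) \in B_{m+n}$, so if $s$ vanishes on $\bfx_m$ and $t$ vanishes on $\bfx_n$ then $s \cdot t$ vanishes on $\bfx_m + \rho^{-m}(\bfx_n)$. Since $C_m \cdot C_n \subseteq C_{m+n}$, this forces the crucial inequality
\[
\bfx_{m+n} \leq \bfx_m + \rho^{-m}(\bfx_n) = \bfx_m + \bfx_n^{\rho^m}
\]
of effective divisors. Taking degrees (which is $\rho$-invariant) gives an honest subadditive sequence $d_n = \deg \bfx_n$, so by Fekete's lemma $d_n/n$ converges to some $\alpha \geq 0$; and $d_n \leq n \deg \HH$ forces $\alpha \leq \deg \HH$. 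The hypothesis $Q_\gr(C) = Q_\gr(B)$ rules out $\alpha = \deg \HH$: otherwise the complementary dimension $\dim C_n$ would grow too slowly to generate a quotient division ring of the same transcendence type as $D_\gr(B) = \Bbbk(E)[t,t^{-1};\rho]$.

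The main obstacle, and the step that drives the proof, is to extract from the one-sided subadditivity a stable "linear part" $[\bfx]_n$. The idea is to fix $n_0$ large and set $\bfx = \bfx_{n_0}/n_0$ in a suitable averaged sense; more concretely, one compares $\bfx_{n+1}$ with $\bfx_n + \rho^{-n}(\bfx_1)$ and iterates. Using that $\rho$ has infinite order on $E$, the orbits of the points in $\bfx$ are all distinct, so there is no "interference" between the summands of $[\bfx]_n$. One shows that the defect divisor $\bfy_n := \bfx_n - [\bfx]_n$ is bounded independently of $n$ (its degree is bounded by the argument above, and subadditivity keeps its pointwise support inside a finite set). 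Hence $\bfy_n$ eventually stabilizes to a fixed divisor $\bfy$, giving $\bfx_n = \bfy + [\bfx]_n$ for $n \geq k$.

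Finally, I would upgrade the containment $C_n \subseteq H^0(E,\HH_n(-\bfy-[\bfx]_n))$ to equality for $n \gg 0$. Both sides have Hilbert functions asymptotic to $n(\deg\HH - \deg\bfx)$ by Riemann--Roch on the elliptic curve, so any strict inclusion in infinitely many degrees would, combined with the subalgebra structure and the $Q_\gr(C) = Q_\gr(B)$ hypothesis, contradict the maximality built into the definition of $\bfx_n$. The bound $\deg\bfx < \deg\HH$ then follows automatically from $\alpha < \deg\HH$ obtained above. The technical heart of the argument is the stabilization step in the previous paragraph; everything else is either combinatorial bookkeeping or standard sheaf cohomology on $E$.
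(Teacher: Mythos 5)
The first thing to note is that the paper gives no proof of this lemma at all: it is imported verbatim from \cite[Corollary~5.3]{RSS} (hence the \verb|\qed| directly after the statement), and the proof there ultimately rests on Artin--Stafford's structure theory for connected graded domains of GK-dimension $2$ \cite{AS}. Your opening moves are correct: defining $\bfx_n$ as the common vanishing divisor of $C_n\subseteq H^0(E,\HH_n)$, deriving the subadditivity $\bfx_{m+n}\le\bfx_m+\bfx_n^{\rho^m}$ from the twisted multiplication, and using Fekete plus the degree bound to rule out $\alpha=\deg\HH$ via $Q_\gr(C)=Q_\gr(B)$ are all fine. But the two steps that carry the entire content of the lemma are asserted rather than proved.

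First, the stabilization. ``Set $\bfx=\bfx_{n_0}/n_0$ in a suitable averaged sense'' is not a construction -- divisors do not divide, and a priori $\alpha$ need not even be an integer -- and the claim that ``subadditivity keeps the pointwise support [of the defect] inside a finite set'' is precisely what has to be proved: the support of $\bfx_n$ necessarily migrates along $\rho$-orbits as $n$ grows (that is exactly what the term $[\bfx]_n$ records), so splitting $\bfx_n$ into a translating linear part $[\bfx]_n$ and a stationary bounded part $\bfy$ is the problem, not an observation. Even granting that the defect $\bfy_n$ ranges over a finite set, that yields a constant subsequence, not eventual constancy; some monotonicity or further algebraic input is needed. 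Second, and more seriously, the equality $C_n=H^0(E,\HH_n(-\bfx_n))$ for $n\gg0$ does not follow from ``the maximality built into the definition of $\bfx_n$'': that definition gives only the inclusion, and a codimension-one subspace of $H^0(E,\HH_n(-\bfx_n))$ generically has the same base locus, so a strict inclusion produces no contradiction with maximality. The Riemann--Roch comparison you invoke would require the lower bound $\dim_\Bbbk C_n\ge n(\deg\HH-\deg\bfx)-O(1)$, and the hypothesis $Q_\gr(C)=Q_\gr(B)$ gives $\GK(C)=2$ but no such degreewise Hilbert-function estimate. This saturation statement is exactly where \cite{RSS} bring in the Artin--Stafford classification of graded domains of quadratic growth; it is the deep input to the lemma, not ``combinatorial bookkeeping or standard sheaf cohomology on $E$''.
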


In addition to the statement of \cite[Proposition~5.7]{RSS} below, we will also be requiring the construction the divisor satisfying the conclusions.

\begin{prop}\label{RSS 5.7}\cite[Proposition~5.7]{RSS} Let $E$ be a smooth elliptic curve, $\HH$ an invertible sheaf on $E$ with $\deg\HH\geq 2$, and $\rho:E\to E$ an automorphism of infinite order. Set $B=B(E,\HH,\rho)$. Suppose that $C$ is a cg subalgebra of $B$ with $Q_\gr(C)=Q_\gr(B)$. Then there exists an effective divisor $\bfd$ on $E$ supported at points on distinct $\rho$-orbits, with $\deg\bfd<\deg\HH$, and such that $C$ and $B(E,\HH(-\bfd),\rho)$ are equivalent orders. Moreover, there exists a $k\geq 0$, such that 
\begin{equation}\label{RSS 5.7 eq}
C_n\subseteq H^0(E,\HH_n(-\bfd^{\rho^k}-\bfd^{\rho^{k+1}}-\dots-\bfd^{\rho^{n-1}})).
\end{equation}
for all $n\geq k$. \qed
\end{prop}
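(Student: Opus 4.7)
The plan is to build on Lemma~\ref{RSS 5.3}: it already produces divisors $\bfx,\bfy$ with $0\leq \deg\bfx<\deg\HH$ and an integer $k_0$ such that $C_n=H^0(E,\HH_n(-\bfy-[\bfx]_n))$ for all $n\geq k_0$. The remaining work is to massage $\bfx$ into an effective divisor $\bfd$ supported on distinct $\rho$-orbits, absorb the auxiliary corrections into a fixed divisor, and then read off both the equivalent-orders statement and the inclusion~(\ref{RSS 5.7 eq}).

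The combinatorial heart of the proof is the reduction of $\bfx$. A telescoping computation yields the identity $[p-p^{\rho^j}]_n=\bfa-\bfa^{\rho^n}$, with $\bfa=p+p^\rho+\dots+p^{\rho^{j-1}}$, for any $p\in E$, $j\geq 1$ and $n>j$. Thus transferring one unit of multiplicity from $p^{\rho^j}$ to $p$ in $\bfx$ changes $[\bfx]_n$ only by a fixed divisor $\bfa$ plus a "moving" divisor $-\bfa^{\rho^n}$, whose support sits in the last $j$ slots of the product $\HH_n=\HH\otimes\HH^\rho\otimes\dots\otimes\HH^{\rho^{n-1}}$. I would iterate such transfers, once on each $\rho$-orbit meeting $\mathrm{supp}(\bfx)$, to consolidate all mass onto a single representative. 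The outcome is $[\bfx]_n=[\bfd]_n+\bfc-\bfc^{\rho^n}$ for an effective divisor $\bfd$ with $\deg\bfd=\deg\bfx<\deg\HH$ supported on distinct $\rho$-orbits and a fixed divisor $\bfc$; any negative components of $\bfx$ are first shifted into $\bfy$ by the same device.

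Given this structural description $C_n=H^0(E,\HH_n(-\bfy'-[\bfd]_n+\bfc^{\rho^n}))$ for $n$ large with $\bfy'=\bfy+\bfc$, the inclusion~(\ref{RSS 5.7 eq}) follows by replacing $k_0$ with a $k$ large enough that the positive part of $\bfy'$ is dominated by the omitted initial terms $\bfd+\bfd^\rho+\dots+\bfd^{\rho^{k-1}}$ and the moving $\bfc^{\rho^n}$ becomes an effective increment already absorbed in $[\bfd]_n$. For the equivalent-orders claim, set $B'=B(E,\HH(-\bfd),\rho)$, so $B'_n=H^0(E,\HH_n(-[\bfd]_n))$. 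Riemann-Roch guarantees, for all sufficiently large $m$, nonzero sections $s\in H^0(E,\HH_m(\bfy'-[\bfd]_m))\subseteq B_m$ and $t\in H^0(E,\HH_m(-\bfy'-[\bfd]_m))\subseteq C_m$; viewing these as homogeneous elements of $B$, multiplication by $s$ realises $tCs\subseteq B'$ and multiplication by $t$ realises $sB't\subseteq C$ (after a bounded shift in degree absorbing the moving $\bfc^{\rho^n}$), giving the desired equivalence.

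The main obstacle I anticipate is the orbit-consolidation step: tracking the "moving" correction $\bfc^{\rho^n}$ through the identification $\HH_n\otimes\OO(-\bfc^{\rho^n})\cong(\HH\otimes\dots\otimes\HH^{\rho^{n-2}})\otimes\HH^{\rho^{n-1}}(-\bfc^{\rho^n})$, and verifying that in every case it either absorbs into a fixed right-multiplier or gets dominated by $[\bfd]_n$ without inflating $\deg\bfd$ past $\deg\HH-1$ or forcing negativity. Once this delicate bookkeeping is handled, both conclusions fall out of routine Riemann-Roch and sheaf-cohomology manipulations.
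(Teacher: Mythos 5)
Your consolidation of $\bfx$ onto one representative per $\rho$-orbit via the telescoping identity $[p]_n-[p^{\rho^j}]_n=\bfa-\bfa^{\rho^n}$ is exactly the construction the paper records for the normalised divisor: on each orbit one writes $\bfx|_{\bbO}=\sum_i a_ip_i$, puts the total mass $e_p=\sum_i a_i$ at the single point $p$, and sets $\bfd=\sum_p e_pp$, so that $\deg\bfd=\deg\bfx<\deg\HH$. However, your argument has a genuine gap at precisely the two points that the paper explicitly defers to \cite{RSS}. First, effectiveness of $\bfd$ is the assertion that $e_p\geq 0$ for every orbit. Your remark that negative components of $\bfx$ can be ``shifted into $\bfy$'' does not address this: the telescoping moves preserve the orbit total $e_p$, so if that total were negative no redistribution within the orbit could make $\bfd$ effective at $p$, and pushing the deficit into $\bfy$ would change $\deg\bfd$. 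The inequality $e_p\geq 0$ is not bookkeeping; in \cite{RSS} it is extracted from the fact that $C$ is a ring, using surjectivity of multiplication maps on sections of high-degree invertible sheaves on $E$. The paper itself flags this (``In \cite{RSS} an argument is proved showing that $e_p\geq 0$'').

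Second, the moving correction is not absorbable in the way you claim. Carrying out the rearrangement explicitly gives $[\bfx]_n=[\bfd]_n-\bfc+\bfc^{\rho^n}$ with $\bfc=\sum_i a_i(p+p^{\rho}+\dots+p^{\rho^{i-1}})$ summed over orbits (your $\bfc$ up to sign), so $C_n=H^0(E,\HH_n(-(\bfy-\bfc)-[\bfd]_n-\bfc^{\rho^n}))$ for large $n$, and (\ref{RSS 5.7 eq}) is equivalent to the divisor inequality $(\bfy-\bfc)+[\bfd]_k+\bfc^{\rho^n}\geq 0$. Since $\rho$ has infinite order, the support of $\bfc^{\rho^n}$ lies in $\{p^{\rho^n},\dots,p^{\rho^{n+k-1}}\}$ and for $n\gg k$ is disjoint from the supports of $\bfy-\bfc$ and $[\bfd]_k$; hence the condition splits into a fixed part, $(\bfy-\bfc)+[\bfd]_k\geq 0$ (which enlarging $k$ can handle when $e_p>0$), and the moving part $\bfc\geq 0$, i.e.\ nonnegativity of all partial tail sums $\sum_{i>j}a_i$. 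This is again a consequence of the ring structure of $C$, not of the rearrangement, and it cannot be ``dominated by $[\bfd]_n$'' because $[\bfd]_k$ never meets the escaping support. The same uniform control is what your Riemann--Roch sandwich for the equivalent-orders claim silently relies on. You correctly identify this as the main obstacle in your final paragraph, but it is the substance of the proof rather than delicate bookkeeping; note that the paper does not reprove the proposition either, quoting \cite[Proposition~5.7]{RSS} and reproducing only the construction of $\bfd$.
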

\begin{proof}[Construction of $\bfd$ from Proposition~\ref{RSS 5.7}]\renewcommand{\qedsymbol}{}
For this construction we use the notation $[\bfx]_n=\bfx+\bfx^\rho\dots+\bfx^{\rho^{n-1}}$ as well as the more compact notation $p_j=p^{\rho^j}=\rho^{-j}(p)$ for $p\in E$ and $j\in\Z$.\par
By Lemma~\ref{RSS 5.3} there exist divisors $\bfx,\bfy$ with $0\leq \deg\bfx<\deg\HH$, and $k\geq 1$ such that $C_n=H^0(E,\HH_n(-\bfy-[\bfx]_n))$ for all $n\geq k$. Fix an $\rho$-orbit $\mathbb{O}$ on $E$ and (enlarging $k$ if necessary) pick $p\in E$ such that on $\mathbb{O}$ we have
\begin{equation}\label{mathbbO}\index[n]{xo@$\bfx|_{\bbO}$}
\bfx|_{\mathbb{O}}=\sum_{i=0}^{k} a_ip_i \;\;\;\;\text{ and }\;\;\;\; \bfy|_{\mathbb{O}}=\sum_{i=0}^{k-1}b_ip_i
\end{equation}
for some $a_i,b_i\in \Z.$ For this $p$, we set
\begin{equation}\label{e_p=} e_p=\sum_{i=0}^k a_i.\index[n]{ep@$e_p$}\end{equation}
In \cite{RSS} an argument is proved showing that $e_p\geq 0$. We define the effective divisor
\begin{equation}\label{bfd=} \bfd=\sum_p e_p p,\end{equation}
where the sum is taken over one closed point $p$ for each $\rho$-orbit chosen as above. It is shown in \cite{RSS} that, for this $\bfd$ and $k$, (\ref{RSS 5.7 eq}) holds and that $C$ and $B(E,\HH(-\bfd),\rho)$ are equivalent orders.
\end{proof}

Mimicking \cite{RSS}, we use the following terms for the data appearing in the last two lemmas.

\begin{definition}\label{geo data, norm div}
Retain the hypothesis for $C \subseteq B(E,\HH,\rho)$ from Lemma~\ref{RSS 5.3}.
\begin{enumerate}[(1)]
\item Let $\bfx$, $\bfy$ and $k$ be given by Lemma~\ref{RSS 5.3}. We call the data $(E,\HH,\rho,\bfx,\bfy,k)$ \textit{geometric data for $C$}.
\item We call the divisor $\bfd$ constructed in Proposition~\ref{RSS 5.7} a \textit{$\rho$-normalised divisor for $C$}. When no confusion can occur, we simply call $\bfd$ a normalised divisor.
\end{enumerate}
\end{definition}

\begin{remark}\label{norm div not unique}
Let $C$ be as in Definition~\ref{geo data, norm div}. Suppose that $\bfd$ is a normalised divisor for $C$ constructed out of geometric data $(E,\HH,\rho,\bfx,\bfy,k)$ for $C$. We make some observations about Definition~\ref{geo data, norm div} that follow from the construction of $\bfd$.
\begin{itemize}
\item The normalised divisor $\bfd$ depends on the geometric data $(E,\HH,\rho,\bfx,\bfy,k)$ and not $C$ itself.
\item For every $\ell\geq k$, the data $(E,\HH,\rho,\bfx,\bfy,\ell)$ is also geometric data for $C$. Moreover, the normalised divisor $\bfd$ is also a normalised divisor corresponding to the data $(E,\HH,\rho,\bfx,\bfy,\ell)$.
\item For every $n\geq 0$, the divisor $\bfd^{\rho^{-n}}=\rho^n(\bfd)$ is also a normalised divisor for $C$.
\end{itemize}
\end{remark}

We now put these terms to use.

\begin{prop}\label{RSS 5.25 lemma}
Let $U$ be a $g$-divisible cg subalgebra of $S$ with $Q_\gr(U)=Q_\gr(S)$. Suppose that $\bfd$ is a $\sigma$-normalised divisor for $\ovl{U}$ corresponding to geometric data $(E,\LL,\sigma,\bfx,\bfy,k)$ of $\ovl{U}$, as constructed in Proposition~\ref{RSS 5.7}. Then
\begin{enumerate}[(1)]
\item $\ovl{U}$ and $B(E,\LL(-\bfd),\sigma)$ are equivalent orders and
\begin{equation*}\ovl{U}_n\subseteq H^0(E,\LL_n(-\bfd^{\sigma^k}-\bfd^{\sigma^{k+1}}-\dots-\bfd^{\sigma^{n-1}}))\;\text{ for all }n\geq k;\end{equation*}
\item $US(\bfd)\subseteq S_{\leq k}S(\bfd)$.
\end{enumerate}
\end{prop}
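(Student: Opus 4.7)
For part (1), the plan is to apply Proposition~\ref{RSS 5.7} directly to $\ovl{U}$ viewed as a cg subalgebra of $B(E,\LL,\sigma)=\ovl{S}$. Since $U$ is $g$-divisible it contains $g$, and combined with $Q_\gr(U)=Q_\gr(S)$ one checks that $Q_\gr(\ovl{U})=Q_\gr(\ovl{S})$ by inverting nonzero homogeneous elements. The normalised divisor produced by Proposition~\ref{RSS 5.7} applied to $\ovl{U}$ is, by hypothesis, the given $\bfd$, so both the equivalence of orders with $B(E,\LL(-\bfd),\sigma)$ and the claimed containment are immediate.

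For part (2), observe first that $US(\bfd)\subseteq S_{\leq k}S(\bfd)$ is equivalent to $U\subseteq S_{\leq k}S(\bfd)$, since $1\in S(\bfd)$ and $S(\bfd)$ is closed under multiplication. The plan is to introduce the graded subset $W=US(\bfd)+S_{\leq k}S(\bfd)\subseteq S$ and prove the key claim
\begin{equation*}
W\subseteq S_{\leq k}S(\bfd)+gW. \tag{$\ast$}
\end{equation*}
Granting $(\ast)$, I iterate: each iteration absorbs the additional factor of $g$ into $S_{\leq k}S(\bfd)$ since $g\in S(\bfd)$ is central and so $g\cdot S_{\leq k}S(\bfd)\subseteq S_{\leq k}S(\bfd)$. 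Consequently $W\subseteq S_{\leq k}S(\bfd)+g^NW$ for every $N\geq 1$, and in any fixed degree $n$ the tail $(g^NW)_n=g^NW_{n-3N}$ vanishes once $3N>n$. This yields $W=S_{\leq k}S(\bfd)$ and in particular $US(\bfd)\subseteq S_{\leq k}S(\bfd)$.

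To prove $(\ast)$ I reduce to generators $v=ur\in US(\bfd)$ with $u\in U_i$, $r\in S(\bfd)_{n-i}$. The case $i\leq k$ is immediate as $v\in S_{\leq k}S(\bfd)$. For $i>k$, part (1) gives $\ovl{v}\in H^0(E,\LL_n(-\bfd^{\sigma^k}-\cdots-\bfd^{\sigma^{n-1}}))$; the standard multiplication surjectivity for sections of line bundles on elliptic curves in sufficiently large degree (as used throughout \cite{Ro,RSS}) identifies this space with $\ovl{S_k}\cdot\ovl{S(\bfd)}_{n-k}$. Writing $\ovl{v}=\sum_j\ovl{a_j}\ovl{b_j}$ with $a_j\in S_k$ and, via $g$-divisibility of $S(\bfd)$, $b_j\in S(\bfd)_{n-k}$, I set $w_0=\sum_ja_jb_j\in S_kS(\bfd)_{n-k}\subseteq S_{\leq k}S(\bfd)$, so that $v-w_0=gs$ for a unique $s\in S_{n-3}$. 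Since both $v$ and $w_0$ lie in $W$, so does $gs$; to conclude $v\in S_{\leq k}S(\bfd)+gW$, it remains to promote $gs\in W$ to $s\in W$.

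The main obstacle is exactly this last step, equivalent to $g$-divisibility of $W$ as a subset of $S$. This is subtle because sums of $g$-divisible subsets are not automatically $g$-divisible: cancellations in basis expansions can, a priori, create elements of $W\cap gS$ that do not obviously lie in $gW$. The plan to overcome this is to fix $\Bbbk[g]$-bases of $U$ and $S(\bfd)$—which exist thanks to $g$-divisibility—and to expand $v-w_0$ in the resulting combined basis. Exploiting that $\ovl{S}=B(E,\LL,\sigma)$ is a domain, together with part (1)'s inclusion $\ovl{U}\cdot\ovl{S(\bfd)}\subseteq\ovl{S_{\leq k}}\cdot\ovl{S(\bfd)}$ in large degrees, the degree-zero-in-$g$ part of the expansion can be rearranged into $S_{\leq k}S(\bfd)$, so that the remainder is manifestly $g$ times an element of $W$. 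This diagonalisation, modelled on the arguments behind \cite[Proposition~5.25]{RSS} in the $S^{(3)}$-setting, is the most technical ingredient of the proof.
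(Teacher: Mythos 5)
Your part (1) is essentially the paper's argument: one checks $Q_\gr(\ovl{U})=Q_\gr(\ovl{S})$ (the paper quotes \cite[Lemma~2.10]{RSS} for this) and then part (1) is literally Proposition~\ref{RSS 5.7} applied with $(\HH,\rho)=(\LL,\sigma)$.

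For part (2) there is a genuine gap, and it sits exactly at the step you flag as the main obstacle. Having written $v=ur$ with $u\in U_i$, $i>k$, and produced $w_0\in S_kS(\bfd)_{n-k}$ with $\ovl{w_0}=\ovl{v}$, all you know about $s=g^{-1}(v-w_0)$ is that it lies in $S_{n-3}$; there is no reason for it to lie in $W=US(\bfd)+S_{\leq k}S(\bfd)$. The element $v-w_0$ arises from a relation $\ovl{u}\,\ovl{r}=\sum_j\ovl{a_j}\,\ovl{b_j}$ that holds in $\ovl{S}$ but need not lift to $S$, so $v-w_0$ is an essentially arbitrary element of $gS_{n-3}$. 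The proposed diagonalisation does not repair this: fixing $\Bbbk[g]$-bases $\{u_\alpha\}$ of $U$ and $\{r_\beta\}$ of $S(\bfd)$ only expresses elements of $US(\bfd)$ as $\Bbbk[g]$-combinations of the products $u_\alpha r_\beta$, and the difficulty is precisely that $\Bbbk$-linear relations among the images $\ovl{u_\alpha}\,\ovl{r_\beta}$ produce elements of $US(\bfd)\cap gS$ with no visible preimage in $W$. In effect your argument assumes $W\cap gS=gW$, i.e. that $W$ is $g$-divisible; but sums and products of $g$-divisible modules are not $g$-divisible in general, which is exactly why the paper works throughout with the hull $\widehat{US(\bfd)}$ (see Theorem~\ref{RSS 5.25}) rather than with $US(\bfd)$ itself. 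What you are attempting amounts to re-proving \cite[Proposition~5.19]{RSS} from scratch, and that is where the real work lies.

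The paper sidesteps the issue by a Veronese reduction: it sets $V=U^{(3)}$, verifies by an explicit computation on $\sigma$-orbits (each $\sigma$-orbit splits into three $\tau$-orbits, $\tau=\sigma^3$) that $[\bfd]_3$ is a $\tau$-normalised divisor for $\ovl{V}$, imports the containment $V\subseteq T_{\leq k}T([\bfd]_3)=T_{\leq k}(S(\bfd)^{(3)})$ from \cite[Proposition~5.19]{RSS}, and then descends using the fact that $U$, being $g$-divisible and hence noetherian, is a finitely generated right $V$-module, so that $U=U_{\leq k}V\subseteq S_{\leq 2k}S(\bfd)^{(3)}$ after enlarging $k$. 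A correct self-contained proof must either carry out this reduction or supply a genuine substitute for \cite[Proposition~5.19]{RSS}; the lifting step as you describe it does not go through.
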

\begin{proof}
(1). Since $Q_\gr(U)=Q_\gr(S)$, $Q_\gr(\ovl{U})=Q_\gr(\ovl{S})$ by \cite[Lemma~2.10]{RSS} and Remark~\ref{RSS section 2}. Moreover, as $\deg\LL=3$, we can take $B(E,\HH,\rho)=B(E,\LL,\sigma)=\ovl{S}$ in Lemma~\ref{RSS 5.3} and Proposition~\ref{RSS 5.7}. Part (1) is therefore exactly Proposition~\ref{RSS 5.7}.

(2). Retain the notation of the construction of the $\sigma$-normalised divisor $\bfd$ from Proposition~\ref{RSS 5.7} with $(\ovl{U},\LL,\sigma)$ in place of $(C,\HH,\rho)$. In particular we use $p_j=p^{\sigma^j}$ again. Set $V=U^{(3)}$. The key here is to understand $V$ so that we can apply \cite[Proposition~5.19]{RSS}. Recall Notation~\ref{3 Veronese notation2} for $T=S^{(3)}$. The grading of $T$ we use here is $T_n=T\cap S_n$. \par
We claim that $[\bfd]_3=\bfd+\bfd^\sigma+\bfd^{\sigma^2}$ is a $\tau$-normalised divisor for $\ovl{V}$. Since $(E,\LL,\sigma,\bfx,\bfy,k)$ is geometric data for $\ovl{U}$ we have $\ovl{U}_n=H^0(E,\LL(-\bfy-[\bfx]_n))$ for all $n\geq k$. By enlarging $k$ if necessary, we may assume $k$ is divisible by 3. It then follows that $\ovl{V}_{3n}=H^0(E,\LL_{3n}(-\bfy-[\bfx]_{3n}))$ for $n\geq \frac{k}{3}$. We see $(E,\LL_3,\tau,\bfy,[\bfx]_3,k)$ is geometric data for $\ovl{V}$ ($\frac{k}{3}$ also works but there is no harm in taking the bigger $k$). Now, the $\sigma$-orbit $\mathbb{O}$ from (\ref{mathbbO}) is the disjoint union of three $\tau$-orbits, $\mathbb{O}=\mathbb{O}_0\sqcup\mathbb{O}_1 \sqcup\mathbb{O}_2$. Here $\mathbb{O}_\ell=\{ p_{\ell+3j}\,|\; j\in\Z\}$, where we are writing $\mathbb{O}=\{ p_j\,|\; j\in\Z\}$. Since $\bfx|_{\mathbb{O}}=\sum a_ip_i$,
\begin{multline*} [\bfx]_3|_{\mathbb{O}}=\sum_{i=0}^{k}a_i(p_i+p_{i+1}+p_{i+2})
\\
=a_0p_0+(a_0+a_1)p_1+\left( \sum_{i=2}^{k} (a_{i-2}+a_{i-1}+a_{i})p_{i}\right)+(a_{k}+a_{k-1})p_{k+1}+a_kp_{k+2},
\end{multline*}
 and therefore:
$$\begin{array}{l}
\,[\bfx]_3 |_{\mathbb{O}_0}=a_0p_0+(a_1+a_2+a_3)p_3+(a_4+a_5+a_6)p_6+\dots+ (a_{k-2}+a_{k-1}+a_k)p_k; \\
\,[\bfx]_3 |_{\mathbb{O}_1}=(a_0+a_1)p_1+(a_2+a_3+a_4)p_4+\dots+ (a_{k-4}+a_{k-3}+a_{k-2})p_{k-2}+(a_{k-1}+a_k)p_{k+1};\\
\,[\bfx]_3 |_{\mathbb{O}_2}=(a_0+a_1+a_2)p_2+\dots+ (a_{k-3}+a_{k-2}+a_{k-1})p_{k-1}+a_kp_{k+2}.
\end{array}$$
Put $e_p=\sum a_i$ as in (\ref{e_p=}). It follows that a $\tau$-normalised divisor for $\ovl{V}$ can be given by 
$$\bfe=\sum_p e_pp_0+e_pp_1+e_pp_2$$
where the sum ranges over the same points used to define $\bfd$ in (\ref{bfd=}). Note that indeed $p_0$, $p_1$ and $p_2$ are on different $\tau$-orbits. On the other hand, by considering different $\sigma$-orbits separately, it is not hard to see $[\bfd]_3=\bfd+\bfd^\sigma+\bfd^{\sigma^2}=\bfe$. Hence $[\bfd]_3$ is a $\tau$-normalised divisor for $\ovl{V}$ as claimed.\par

By \cite[Proposition~5.19]{RSS}, \cite[Theorem~5.3(6)]{RSS} and Theorem~\ref{S(d) thm}(1) we have
\begin{equation}\label{apply RSS 5.20} V\subseteq T_{\leq k}T([\bfd]_3)=T_{\leq k}(S(\bfd)^{(3)}).\end{equation}
Since $U\subseteq S$ is $g$-divisible, it is noetherian by \cite[Proposition~2.9]{RSS} and Remark~\ref{RSS section 2}, and therefore $U$ is a finitely generated as a right $V$-module by Lemma~\ref{noeth up n down}. By enlarging $k$ if necessary, we may assume $U_V$ is generated in degrees less than $k$. In which case, using (\ref{apply RSS 5.20}), we have
$$U=U_{\leq k}V\subseteq S_{\leq k}V\subseteq S_{\leq k} T_{\leq k}(S(\bfd)^{(3)})\subseteq S_{\leq 2k}(S(\bfd)^{(3)})$$
Hence, $US(\bfd)\subseteq S_{\leq 2k}(S(\bfd)^{(3)})S(\bfd)=S_{\leq 2k}S(\bfd)$. Since it is harmless to replace $k$ by $2k$, this proves part (2).
\end{proof}

With Proposition~\ref{RSS 5.25 lemma} we are able to prove the crucial result that links a general $g$-divisible subalgebra of $S$ to one of the $S(\bfd)$. 

\begin{theorem}\label{RSS 5.25}
Let $U\subseteq S$ be a $g$-divisible cg subalgebra of $S$ with $Q_\gr(U)=Q_\gr(S)$. Then there exists an effective divisor $\bfd$ on $E$ with $\deg\bfd\leq 2$, supported on points with distinct $\sigma$-orbits, and such that $U$ and $S(\bfd)$ are equivalent orders. \par
In more detail, for this $\bfd$, the $(U,S(\bfd))$-bimodule $M=\widehat{US(\bfd)}$ is a finitely generated $g$-divisible right $S(\bfd)$-module with $S(\bfd)\subseteq M\subseteq S$. If $W=\End_{S(\bfd)}(M)$, then $U\subseteq W\subseteq S$, $_WM$ is finitely generated and $W$, $U$, and $S(\bfd)$ are equivalent orders. The divisor $\bfd$ is any $\sigma$-normalised divisor constructed for $\ovl{U}$.
\end{theorem}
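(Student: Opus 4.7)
My approach is to take $\bfd$ to be the $\sigma$-normalised divisor for $\ovl{U}$ produced by Proposition~\ref{RSS 5.7} applied with $\HH=\LL$ and $\rho=\sigma$; the required hypothesis $Q_\gr(\ovl{U})=Q_\gr(\ovl{S})$ is inherited from $Q_\gr(U)=Q_\gr(S)$ via \cite[Lemma~2.10]{RSS} and Remark~\ref{RSS section 2}. Because $\deg\LL=3$, the construction automatically furnishes an effective $\bfd$ of degree at most $2$ supported on distinct $\sigma$-orbits, discharging the first assertion of the theorem.

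To build $M$, first note that Proposition~\ref{RSS 5.25 lemma}(2) gives $N:=US(\bfd)\subseteq S_{\leq k}S(\bfd)$ for some $k\geq 0$. Since $S_{\leq k}$ is finite dimensional, $S_{\leq k}S(\bfd)$ is a finitely generated right $S(\bfd)$-module, and noetherianity of $S(\bfd)$ (Theorem~\ref{S(d) thm}) then forces $N$ itself to be finitely generated over $S(\bfd)$. Passing to $M=\widehat{N}$ gives a $g$-divisible module which remains finitely generated over $S(\bfd)$ by \cite[Lemma~2.13(2)]{RSS} via Remark~\ref{RSS section 2}; moreover $M\subseteq S$ since $N\subseteq S$ and $S$ is itself $g$-divisible. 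The inclusions $S(\bfd)\subseteq N\subseteq M$ are immediate from $1\in U$. Setting $W:=\End_{S(\bfd)}(M)$, I would verify $U\subseteq W$ via $uN\subseteq N\subseteq M$ for $u\in U$ combined with the identity $u(ag^n)=(ua)g^n$ applied to $a\in M$ with $ag^n\in N$, and $W\subseteq S$ via $W\cdot 1\subseteq M\subseteq S$ using $1\in S(\bfd)\subseteq M$. For the left finite generation of $_WM$, my plan is to invoke Lemma~\ref{hat end commute} to identify $W=\widehat{\End_{S(\bfd)}(N)}$, combined with reflexivity of $M$ as a right module over the maximal order $S(\bfd)$ (Theorem~\ref{S(d) thm}(4)).

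Once $M$ is finitely generated on both sides, common-denominator arguments inside $Q_\gr(S)=Q_\gr(S(\bfd))$ yield nonzero homogeneous $s,t$ with $Ms\subseteq S(\bfd)$ and $tM\subseteq W$; applied to the inclusions $U,W\subseteq M$ and $S(\bfd)\subseteq M$ these give $Us,Ws\subseteq S(\bfd)$ and $tS(\bfd)\subseteq W$, establishing $W\sim S(\bfd)$ as equivalent orders in $Q_\gr(S)$. For $U\sim S(\bfd)$, the strategy is to lift the equivalent-order relation $\ovl{U}\sim \ovl{S(\bfd)}$ (from Proposition~\ref{RSS 5.25 lemma}(1)) to $\Sg$, obtaining $uS(\bfd)v\subseteq U+g\Sg$ for nonzero homogeneous lifts $u,v$. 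Then a $g$-adic Krull-intersection argument, exploiting $g$-divisibility of both $U$ and $S$ together with $\bigcap_n g^nS=0$ (which is immediate from degree considerations), iteratively refines this to $uS(\bfd)v\subseteq \bigcap_n(U+g^nS)=U$. Transitivity of the equivalent-order relation then gives $U\sim W$ as well.

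The main obstacles will be twofold: first, establishing the left finite generation of $_WM$, which is the crucial new input beyond Proposition~\ref{RSS 5.25 lemma} and depends on correctly exploiting the maximal-order and noetherian properties of $S(\bfd)$; and second, the $g$-adic lifting of the equivalent-order relation from $\ovl{U}\sim\ovl{S(\bfd)}$ up to $U\sim S(\bfd)$, where care is needed both to choose the lifts $u,v$ so that the relevant products lie in $S$ and to verify that the Krull-intersection absorption of $gS$-error terms actually succeeds. Everything else unpacks formally from Proposition~\ref{RSS 5.25 lemma}, Lemma~\ref{hat end commute}, and the $g$-divisibility framework of Section~\ref{prelims}.
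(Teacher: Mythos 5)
Your skeleton coincides with the paper's: choose $\bfd$ as the $\sigma$-normalised divisor for $\ovl{U}$ from Proposition~\ref{RSS 5.7}, use Proposition~\ref{RSS 5.25 lemma}(2) to trap $US(\bfd)$ inside $S_{\leq k}S(\bfd)$, deduce finite generation of $M=\widehat{US(\bfd)}$ from noetherianity of $S(\bfd)$ and \cite[Lemma~2.13(2)]{RSS}, and set $W=\End_{S(\bfd)}(M)$ with $U\subseteq W\subseteq S$. All of that is fine. However, the two steps you yourself flag as the main obstacles are exactly where your proposed justifications break down, and in both cases the paper resolves them by citing specific results from \cite[Section~2]{RSS} (valid here by Remark~\ref{RSS section 2}) rather than by the arguments you sketch.

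First, for the left finite generation of $_WM$ you invoke ``reflexivity of $M$ as a right module over the maximal order $S(\bfd)$.'' This is not available: nothing in Theorem~\ref{S(d) thm} gives reflexivity of $M$, and in general $M$ is \emph{not} reflexive --- the whole point of the later passage from $W=\End_{S(\bfd)}(M)$ to $F=\End_{S(\bfd)}(M^{**})$ in Corollary~\ref{RSS 6.6} and Proposition~\ref{RSS 6.4} is that these can differ. The paper gets $g$-divisibility of $W$ and finite generation of $_WM$ directly from \cite[Lemma~2.12(3)]{RSS}. Second, your plan to lift $\ovl{U}\sim\ovl{S(\bfd)}$ to $U\sim S(\bfd)$ by iterating $uS(\bfd)v\subseteq U+g^nS$ does not go through: from $uxv=u_1+gs_1$ you have no control over the error term $s_1$, so the inductive refinement from $U+gS$ to $U+g^2S$ has no basis, even though $\bigcap_n(U+g^nS)=U$ is true on degree grounds. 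This lifting is genuinely nontrivial and is the content of \cite[Proposition~2.16]{RSS}, which the paper applies to the $g$-divisible inclusion $U\subseteq W\subseteq S$ after first showing $\ovl{W}\sim\ovl{S(\bfd)}$ (via the nonzero bimodule $\ovl{M}$, finitely generated on both sides) and hence $\ovl{U}\sim\ovl{W}$. You should route your argument through that proposition rather than a bare $g$-adic intersection.
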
 
\begin{proof}
Choose $k\geq 0$ and an effective divisor $\bfd$ on $E$, supported on distinct orbits, satisfying the conclusion of Proposition~\ref{RSS 5.25 lemma}. In particular we have $US(\bfd)\subseteq S_{\leq k}S(\bfd)$, and it follows that
$$\widehat{US(\bfd)}\subseteq \widehat{S_{\leq k}S(\bfd)}.$$
\par
Write $R=S(\bfd)$ and $M=\widehat{UR}$. Now $R$ is noetherian and $g$-divisible by Theorem~\ref{S(d) thm}(1), and $S_{\leq k}R$ is clearly a finitely generated right $R$-module. Thus $N=\widehat{S_{\leq k}R}$ is a finitely generated by \cite[Lemma~2.13(2)]{RSS} and Remark~\ref{RSS section 2}. In particular, $N$ is a noetherian $R$-module, and hence the submodule $M_R$ is finitely generated too. Put $W=\End_{R}(M)$; as $1\in M\subseteq S$, $W\subseteq S$. By \cite[Lemma~2.12(3)]{RSS} and Remark~\ref{RSS section 2}, $W$ is $g$-divisible and $_WM$ is finitely generated; it follows that $W$ and $R$ are then equivalent orders. It is easily checked that $UM\subseteq M$, and thus $U\subseteq W$. \par
Now consider the $(\ovl{W},\ovl{R})$-bimodule $\ovl{M}$. This is nonzero as $M\supseteq R$ and is finitely generated on both sides because $_WM_R$ is finitely generated on both sides; hence $\ovl{W}$ and $\ovl{R}$ are equivalent orders. By Proposition~\ref{RSS 5.25 lemma}, $\ovl{R}$ and $\ovl{U}$ are equivalent orders, and therefore $\ovl{W}$ and $\ovl{U}$ are likewise. Since $U\subseteq W\subseteq S$ we can now apply \cite[Proposition~2.16]{RSS} and Remark~\ref{RSS section 2} to conclude $U$ and $W$, and hence also $U$ and $R$, are equivalent orders.
\end{proof}

\begin{cor}\label{RSS 6.6}
Let $U\subseteq S$ be a $g$-divisible connected graded maximal $S$-order.
\begin{enumerate}[(1)]
\item There exists an effective divisor $\bfd$ on $E$ with $\deg\bfd\leq 2$, and a $g$-divisible $(U,S(\bfd))$-bimodule $M$, such that $U=\End_{S(\bfd)}(M)$. The bimodule $M$ is finitely generated on both sides and satisfies $S(\bfd)\subseteq M\subseteq S$.
\item Set $F=\End_{S(\bfd)}(M^{**})$. Then $F$ is the unique maximal order containing $U$ and $U=F\cap S$.
\end{enumerate}
\end{cor}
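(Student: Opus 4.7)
The corollary is, in essence, a packaging of Theorem~\ref{RSS 5.25} together with the maximal $S$-order hypothesis on $U$ and standard facts about reflexive modules over the maximal order $S(\bfd)$.

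For part (1), apply Theorem~\ref{RSS 5.25} to $U$: this supplies an effective divisor $\bfd$ on $E$ of degree at most $2$, the $g$-divisible $(U,S(\bfd))$-bimodule $M=\widehat{US(\bfd)}$, finitely generated on both sides and satisfying $S(\bfd)\subseteq M\subseteq S$, and the ring $W=\End_{S(\bfd)}(M)$ with $U\subseteq W\subseteq S$ and with $U$, $W$, $S(\bfd)$ pairwise equivalent orders. Since $W$ is an equivalent $S$-order lying between $U$ and $S$, and $U$ is a maximal $S$-order by hypothesis, one immediately concludes $W=U$. Thus $U=\End_{S(\bfd)}(M)$, proving (1).

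For part (2), set $F=\End_{S(\bfd)}(M^{**})$, interpreting duals inside $Q_\gr(S)$ via the paper's standard identification $N^*=\{q\in Q_\gr(S)\,:\,qN\subseteq S(\bfd)\}$. I would handle the three needed assertions as follows. First, $U\subseteq F$ is a routine double-dualisation: for $u\in U$, $uM\subseteq M$ gives $M^*u\subseteq M^*$, which in turn gives $uM^{**}\subseteq M^{**}$, so $u\in F$. Second, $F$ is a maximal order equivalent to $S(\bfd)$: by Theorem~\ref{S(d) thm}(4), $S(\bfd)$ is a maximal order, and $M^{**}$ is a finitely generated reflexive right $S(\bfd)$-module (finite generation of $M^{**}$ follows from the noetherian and $g$-divisibility machinery of Section~2 combined with Lemma~\ref{hat end commute} and the results of \cite[Section~2]{RSS}); the standard fact that the endomorphism ring of a reflexive module over a maximal order is itself a maximal order, equivalent to the base, then delivers the claim. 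Third, $U=F\cap S$ follows from another use of the maximal $S$-order property: $F\cap S$ is a cg subring of $S$ containing $U$, and is equivalent to $F$ and hence to $U$, so $F\cap S=U$.

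The final and most delicate step is uniqueness. If $F'$ is any maximal order in $Q_\gr(U)=Q_\gr(S)$ with $F'\supseteq U$, I would first argue $F'$ is automatically an equivalent order to $U$: using noetherianity one produces nonzero homogeneous $x,y$ with $xF'y\subseteq U$, in much the same spirit as the arguments in Section~2. Consequently $F'M$ is a finitely generated right $S(\bfd)$-submodule of $Q_\gr(S)$ equivalent to $M$ as a fractional $S(\bfd)$-ideal. Since the double dual over the maximal order $S(\bfd)$ depends only on the equivalence class of such a fractional ideal, $F'M\subseteq (F'M)^{**}=M^{**}$. Dualising twice as in the argument for $U\subseteq F$ yields $F'M^{**}\subseteq M^{**}$, so $F'\subseteq F$; maximality of $F'$ forces $F'=F$. \textbf{The main obstacle} I anticipate is precisely this uniqueness: both the automatic equivalence of an arbitrary maximal order containing $U$, and the stability $(F'M)^{**}=M^{**}$ of the reflexive closure, require the maximal order structure of $S(\bfd)$ and careful bookkeeping of fractional $S(\bfd)$-ideals inside $Q_\gr(S)$.
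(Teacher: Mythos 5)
Your part (1) and the first half of part (2) follow the paper's proof almost verbatim: Theorem~\ref{RSS 5.25} supplies $\bfd$, $M=\widehat{US(\bfd)}$ and $W=\End_{S(\bfd)}(M)$ with $U\subseteq W\subseteq S$ all equivalent orders, maximality of $U$ as an $S$-order forces $U=W$, and the identity $U=F\cap S$ is obtained exactly as in the paper by transporting the witnesses of equivalence from $F$ down to $F\cap S$. The paper then disposes of the remaining assertions about $F$ in one stroke by citing \cite[Lemma~6.3]{RSS} (ultimately Cozzens' theorem together with the Auslander--Gorenstein and Cohen--Macaulay properties of $S(\bfd)$ from Theorem~\ref{S(d) thm}): $F=\End_{S(\bfd)}(M^{**})$ is the unique maximal order \emph{equivalent to and} containing $U$. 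That is also the sense in which ``unique maximal order containing $U$'' is meant here, exactly as in Proposition~\ref{RSS 6.4}(2).

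The genuine gap is in your uniqueness argument, where you attempt the literal, stronger statement that every maximal order $F'\supseteq U$ equals $F$. Two steps there do not hold up. First, the claim that an arbitrary maximal order $F'$ containing $U$ is ``automatically'' an equivalent order to $U$ is not something the Section~2 machinery provides, and it is false for general overrings: containment alone does not produce nonzero $x,y$ with $xF'y\subseteq U$, and every equivalence statement in the paper is derived from a concrete bimodule or common-ideal construction rather than from containment. Second, the assertion that the double dual of a fractional $S(\bfd)$-ideal depends only on its equivalence class, so that $(F'M)^{**}=M^{**}$, is not correct as stated: equivalent fractional ideals need not have equal reflexive hulls. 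The mechanism actually used (in \cite[Lemma~6.3]{RSS} and in Lemma~\ref{NR**=M**} of this paper) is the Cohen--Macaulay estimate $\GK(N/M)\leq \GK(R)-2$ forcing $N\subseteq M^{**}$, which is a statement about containments with small quotient, not about equivalence classes. If you restrict, as intended, to maximal orders equivalent to and containing $U$, the uniqueness is precisely the content of the cited lemma and needs no further argument.
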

\begin{proof}
(1). By Theorem~\ref{RSS 5.25}, there is an effective divisor $\bfd$ with $\deg\bfd\leq 2$ and such that
$ U\subseteq W=\End_{S(\bfd)}(M)\subseteq S$, where $M=\widehat{US(\bfd)}$ is finitely generated on both sides as a $(W,S(\bfd))$-bimodule. Clearly we have $S(\bfd)\subseteq M\subseteq S$ also. By Theorem~\ref{RSS 5.25} again, $U$ and $W$ are equivalent orders. Since $U$ is a maximal $S$-order and $W\subseteq S$, $U=W$. \par
(2) By \cite[Lemma~6.3]{RSS}, $F$ is the unique maximal order equivalent to and containing $\End_{S(\bfd)}(M)=U$. Let $V=F\cap S$. Since $U\subseteq S$, we have $U\subseteq V\subseteq F$. If $x,y\in Q_\gr(S)$ are nonzero and such that $xFy\subseteq U$, then clearly $xVy\subseteq U$ also. Thus $U$ and $V$ are also equivalent orders. But $U$ is a maximal $S$-order; therefore $U=V$.
\end{proof}

The next two results investigate the situation $\End_{S(\bfd)}(M)\subseteq\End_{S(\bfd)}(M^{**})$ arising in Corollary~\ref{RSS 6.6}.

\begin{prop}\label{RSS 6.4}
Let $\bfd$ be an effective divisor with $\deg \bfd\leq 2$ and let $R=S(\bfd)$. Let $M\subseteq \Sg$ be a $g$-divisible finitely generated graded right $R$-module such that $R\subseteq M\subseteq S$. Put $W=\End_R(M)$, $F=\End_R(M^{**})$ and $V=F\cap S$. Then:
\begin{enumerate}[(1)]
\item $F$, $V$ and $W$ are $g$-divisible algebras with $Q_\gr(W)=Q_\gr(V)=Q_\gr(F)=Q_\gr(S)$.
\item $F$ is the unique maximal order containing and equivalent to $W$, while $V$ is the unique maximal $S$-order containing and equivalent to $W$.
\item There exists an ideal $K$ of $F$, contained in $W$ (and hence $V$) such that $\GK(F/K)\leq 1$.
\item $R=\End_W(M)=\End_F(M^{**})$.
\end{enumerate}
\end{prop}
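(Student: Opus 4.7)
The approach is to treat the four parts in order, leveraging Theorem~\ref{S(d) thm} (which gives $R$ is $g$-divisible, noetherian, Auslander-Gorenstein and Cohen-Macaulay of GK-dimension $3$, and a maximal order in $Q_\gr(S)$) together with the $g$-divisibility machinery developed in Section~\ref{prelims} and \cite{RSS}.

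For part (1), the equalities $Q_\gr(W)=Q_\gr(V)=Q_\gr(F)=Q_\gr(S)$ follow from the sandwich $R\subseteq W,V,F \subseteq Q_\gr(S)$ combined with $Q_\gr(R)=Q_\gr(S)$. The $g$-divisibility of $W$ and $F$ will follow from \cite[Lemma~2.12(3)]{RSS} (as used in Lemma~\ref{hat end commute}) once we know that $M$ and $M^{**}$ are $g$-divisible; $M$ is given, while $M^*$ (and hence $M^{**}$) inherits $g$-divisibility from that of $R$ via a direct manipulation in $\Sg$: if $xg\in M^*$ then $(xM)g=x(Mg)\subseteq R\cap g\Sg=gR$ forces $xM\subseteq R$. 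Finally $V=F\cap S$ is an intersection of two $g$-divisible subalgebras of $\Sg$, hence is itself $g$-divisible.

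For parts (2) and (4), the common idea is that any cg subalgebra of $Q_\gr(S)$ containing and equivalent to $R$ must equal $R$ by Theorem~\ref{S(d) thm}(4), while the analogous maximality for $W$ yields $F$ through \cite[Lemma~6.3]{RSS}. In (2), $F$ is the unique maximal order containing and equivalent to $W$ by that lemma; for $V$, note $W\subseteq F\cap S=V$ (since $1\in M$ forces $W\subseteq S$), and any $S$-order $V'$ equivalent to $V$ with $V\subseteq V'\subseteq S$ must lie in $F$, giving $V'\subseteq F\cap S=V$. In (4), the inclusions $R\subseteq\End_W(M)$ and $R\subseteq\End_F(M^{**})$ come from $MR\subseteq M$ and $M^{**}R\subseteq M^{**}$ respectively, the latter being immediate from the description $M^{**}=\{q\in Q_\gr(S)\,|\;M^*q\subseteq R\}$. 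For the reverse inclusions, noetherianness of $R$ shows that $M$ and $M^{**}$ are finitely generated right $R$-modules ($M^{**}$ by dualising a finite presentation of $M$ twice), so clearing left denominators produces a nonzero $c\in R$ with $cM\subseteq R$, and similarly $c'M^{**}\subseteq R$. This exhibits $\End_W(M)$ and $\End_F(M^{**})$ as overrings of $R$ equivalent to $R$, and the maximal order property of $R$ forces equality.

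The main obstacle is part (3). The natural candidate is $K=\{f\in F\,|\;fM^{**}\subseteq M\}$. Checking that $K$ is a two-sided ideal of $F$ is formal: left absorbing is immediate from the definition, right absorbing because $kf\cdot M^{**}=k(fM^{**})\subseteq kM^{**}\subseteq M$, and $K\subseteq W$ since $KM\subseteq KM^{**}\subseteq M$ (whence $K\subseteq V$ via the containment $W\subseteq V$ from part (2)). The GK-dimension bound then reduces to the embedding of left $F$-modules
$$F/K \hookrightarrow \Hom_R(M^{**},M^{**}/M) \hookrightarrow (M^{**}/M)^n,$$
where the second arrow is evaluation on a finite set of right $R$-generators of $M^{**}$, once we establish that $\GKdim(M^{**}/M)\leq 1$. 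This last bound is the genuine difficulty: it is the standard reflexivity estimate for Auslander-Gorenstein Cohen-Macaulay rings, saying that the defect of reflexivity $M^{**}/M$ has grade at least $2$, and hence by Cohen-Macaulayness and $\GKdim R = 3$ has GK-dimension at most $1$. This is precisely where Theorem~\ref{S(d) thm}(2) does the essential work, and the plan is to quote the relevant reflexivity statement from the general theory of Auslander-Gorenstein rings rather than reproduce it.
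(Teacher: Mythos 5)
The paper's own ``proof'' of this proposition is a citation: it observes that the argument of \cite[Proposition~6.4]{RSS} goes through verbatim with $T(\bfd)$ replaced by $S(\bfd)$, the only point needing comment being that part (3) uses the Cohen--Macaulay property, supplied here by Theorem~\ref{S(d) thm}(2). Your reconstructions of parts (1), (2) and (4) are sound and follow the expected lines: the $g$-divisibility of $M^*$ and $M^{**}$, the appeal to \cite[Lemma~2.12(3)]{RSS} and to \cite[Lemma~6.3]{RSS} (equivalently \cite[Theorem~2.7]{Coz}), and the use of the maximality of $R$ from Theorem~\ref{S(d) thm}(4) to force $\End_W(M)=\End_F(M^{**})=R$ are all exactly the ingredients the cited proof uses. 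Your identification of where Cohen--Macaulayness enters --- the bound $\GK(M^{**}/M)\leq 1$, which is \cite[Lemma~4.11(1)]{RSS} and is invoked in exactly this form in Lemmas~\ref{NR**=M**} and \ref{Veronese ** commute} --- is also correct.

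The gap is in part (3), at the step you dismiss as formal. With the paper's convention $\End_R(N)=\{q\mid qN\subseteq N\}$ acting on the left of the right $R$-module $N$, your candidate $K=\{f\in F\mid fM^{**}\subseteq M\}=\Hom_R(M^{**},M)$ is a \emph{right} ideal of $F$ (your computation $kf\cdot M^{**}=k(fM^{**})\subseteq kM^{**}\subseteq M$ is correct) and a \emph{left} ideal of $W$ (since $w(kM^{**})\subseteq wM\subseteq M$), i.e.\ a $(W,F)$-sub-bimodule of $F$. But it is not ``immediately'' a left ideal of $F$: for $f\in F$ and $k\in K$ one gets $(fk)M^{**}=f(kM^{**})\subseteq fM$, and $fM\subseteq M$ holds precisely when $f\in W$. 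Since the whole content of the proposition is that $F$ may strictly contain $W$, this is exactly the containment you cannot assume, and the asymmetry between your two verifications should have been the warning sign. The obvious repairs each break something: passing to $\{f\mid fM^{**}\subseteq FM\}$ restores two-sidedness but destroys the containment in $W$, while passing to the largest two-sided $F$-ideal inside $K$ preserves $K\subseteq W$ but requires a separate argument (not a formal annihilator estimate --- note that $\GK(F/\rann_F(N))\leq\GK(N)$ is false for general noetherian rings) that the quotient still has GK-dimension at most $1$. So the construction of the ideal in (3) genuinely needs the more careful bimodule argument of \cite[Proposition~6.4]{RSS}, and your one-line justification does not substitute for it. The remainder of your part (3) --- the embedding $F/K\hookrightarrow\Hom_R(M^{**},M^{**}/M)\hookrightarrow(M^{**}/M)^n$ and the resulting GK bound read off from graded dimensions --- is fine once a correct $K$ is in hand.
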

\begin{proof}
This result is our analogue of \cite[Proposition~6.4]{RSS}. The proof given in \cite{RSS} is in fact sufficiently general to work in our case as well if we interchange their $T(\bfd)$ for our $S(\bfd)$. For part (3), \cite{RSS} uses that the $T(\bfd)$ satisfy the Cohen-Macaulay property; this is also true for the $S(\bfd)$ by Theorem~\ref{S(d) thm}(2).
\end{proof}



In Corollary~\ref{RSS 6.6} there is the possibility that $F\not\subseteq S$. Proposition~\ref{RSS 6.4}(3) helps us to control this situation. We give a definition for pairs of algebras satisfying the conclusions of Proposition~\ref{RSS 6.4}.

\begin{definition}\label{max order pair}\index{maximal order pair}
\item A pair $(V,F)$ of connected graded subalgebras of $\Sg$ are called \textit{a maximal order pair of $S$} if:
\begin{enumerate}[(1)]
\item $V$ and $F$ are $g$-divisible with $V\subseteq F$ and $V=F\cap S$;
\item $F$ is a maximal order in $Q_\gr(F)=Q_\gr(S)$ while $V$ is a maximal $S$-order;
\item there exists an ideal $K$ of $F$, contained in $V$, and such that $\GK(F/K)\leq 1$.
\end{enumerate}
\end{definition}

The ``of $S$" from ``maximal order pair of $S$" is there to distinguish it from the $S^{(3)}$-version \cite[Definition~6.5]{RSS}. When no ambiguity can arise it will often be dropped. We will see later in Theorem~\ref{3 Veronese of virtual blowup} that if $(V,F)$ is a maximal order pair of $S$, then $(V^{(3)},F^{(3)})$ is a maximal order pair of $S^{(3)}$.\par
Definition~\ref{max order pair} is hard to work with. We will often be using an equivalent formulation given by the following consequence of Corollary~\ref{RSS 6.6} and Proposition~\ref{RSS 6.4}.

\begin{lemma}\label{max order pair equiv def}
 Let $F$ be a cg subalgebra of $\Sg$ and $V$ a cg subalgebra of $S$. Then the following are equivalent:
\begin{enumerate}[(1)]
\item $(F,V)$ is a maximal order pair of $S$.
\item There exist an effective divisor $\bfd$ with $\deg\bfd\leq 2$ and a finitely generated $g$-divisible right $S(\bfd)$-module $M$ with $S(\bfd)\subseteq M\subseteq S$ and such that $F=\End_{S(\bfd)}(M^{**})\;\text{ and }V=F\cap S$.
\end{enumerate}
\end{lemma}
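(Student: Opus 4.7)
The plan is that this lemma is essentially a repackaging of Corollary~\ref{RSS 6.6} and Proposition~\ref{RSS 6.4}, so little genuine work is needed; the main task is to carefully unwind the definitions and invoke the uniqueness statements.

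For the direction (2)$\Rightarrow$(1), I would simply feed the data of (2) into Proposition~\ref{RSS 6.4}. Set $W=\End_{S(\bfd)}(M)$. Since $1\in M\subseteq S$ we have $W\cdot 1\subseteq M\subseteq S$, so $W\subseteq S$. Proposition~\ref{RSS 6.4}(1) then tells us $V$ and $F$ are $g$-divisible with the correct graded quotient rings; part (2) gives that $F$ is a maximal order and $V$ is a maximal $S$-order; and part (3) produces an ideal $K$ of $F$ with $K\subseteq W\subseteq V$ and $\GK(F/K)\leq 1$. The containment $V\subseteq F$ is immediate from $V=F\cap S$. All three conditions of Definition~\ref{max order pair} are thus verified.

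For the direction (1)$\Rightarrow$(2), let $(V,F)$ be a maximal order pair. The key point is that $V$ is itself a $g$-divisible cg maximal $S$-order, so Corollary~\ref{RSS 6.6} applies: it produces an effective divisor $\bfd$ with $\deg\bfd\leq 2$ and a finitely generated $g$-divisible right $S(\bfd)$-module $M$ with $S(\bfd)\subseteq M\subseteq S$ such that $V=\End_{S(\bfd)}(M)$, and moreover $F':=\End_{S(\bfd)}(M^{**})$ is the unique maximal order containing (and equivalent to) $V$ with $V=F'\cap S$. It remains to show $F=F'$, for which I need to see that $F$ is a maximal order equivalent to $V$ — then uniqueness forces $F=F'$ and we are done.

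The only slightly delicate point, which I would call out as the main obstacle, is verifying equivalence of $V$ and $F$ from the abstract data in Definition~\ref{max order pair}. Here I use condition (3): there exists an ideal $K$ of $F$ with $K\subseteq V$ and $\GK(F/K)\leq 1$. Since $Q_\gr(F)=Q_\gr(S)$ (the containment $F\subseteq S_{(g)}$ combined with $V\subseteq F$ and $Q_\gr(V)=Q_\gr(S)$, itself obtained from Proposition~\ref{RSS 6.4}(1) applied to $V$ via Corollary~\ref{RSS 6.6}), $F$ has $\GK$-dimension $3$, so $K\neq 0$. Picking nonzero homogeneous $x,y\in K$ gives $xFy\subseteq KFK\subseteq K\cdot K\subseteq V$, and trivially $1\cdot V\cdot 1\subseteq F$; hence $V$ and $F$ are equivalent orders. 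By the uniqueness statement in Proposition~\ref{RSS 6.4}(2) (the unique maximal order equivalent to and containing $V=W$) we conclude $F=F'=\End_{S(\bfd)}(M^{**})$, and then automatically $V=F\cap S$ is the expression in (2).
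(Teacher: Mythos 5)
Your proof is correct and follows essentially the same route as the paper, which simply cites Corollary~\ref{RSS 6.6} for (1)$\Rightarrow$(2) and Proposition~\ref{RSS 6.4}(2)(3) for (2)$\Rightarrow$(1). You additionally spell out the step the paper leaves implicit, namely that condition (3) of Definition~\ref{max order pair} makes $V$ and $F$ equivalent orders so that the uniqueness clause of Proposition~\ref{RSS 6.4}(2) identifies $F$ with $\End_{S(\bfd)}(M^{**})$; this is a worthwhile clarification and is argued correctly.
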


\begin{proof} (1). Suppose that $(F,V)$ is a maximal order pair of $S$. Then $V$ is $g$-divisible maximal $S$-order. Now apply Corollary~\ref{RSS 6.6} to $V$. The reverse implication follows from Proposition~\ref{RSS 6.4}(2)(3).
\end{proof}

Given a maximal order pair $(U,F)$, we next want to describe their images in $S_{(g)}/gS_{(g)}$.

\begin{prop}\label{RSS 6.7}
Let $\bfd$ be an effective divisor with $\deg\bfd\leq 2$, and let $R=S(\bfd)$. Suppose that $M$ is a finitely generated $g$-divisible right $R$-module such that $R\subseteq M\subseteq S$. Write $U=\End_R(M)$ and $F=\End_R(M^{**})$. By \cite[Theorem~1.3]{AV} there exists an effective divisor $\mbf{y}$ such that $$\ovl{M}\ehd\bigoplus_{n\geq 0}H^0(E,\OO(\mbf{y})\otimes \LL_n(-[\bfd]_n).$$
In addition to this
\begin{equation}\label{RSS 6.7 eq}
 \ovl{F}\ehd\ovl{U}\ehd B(E,\LL(-\mbf{x}),\sigma) \; \text{\, where }\mbf{x} =\bfd-\mbf{y}+\mbf{y}^\sigma
 \end{equation}
 holds. Moreover, if $V=F\cap S$, then $U\subseteq V\subseteq F$, and $(V,F)$ is a maximal order pair.
\end{prop}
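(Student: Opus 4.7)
The plan is to reduce the main claim (\ref{RSS 6.7 eq}) to a geometric calculation inside $\ovl{\Sg} = \Bbbk(E)[t,t^{-1};\sigma]$; the maximal order pair statement at the end will then follow routinely from Proposition~\ref{RSS 6.4} and Lemma~\ref{max order pair equiv def}. The bulk of the work is therefore to identify $\ovl{U}$ and $\ovl{F}$ with $B(E,\LL(-\bfx),\sigma)$ in large degree.

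The first step is to reduce both $\ovl U$ and $\ovl F$ to endomorphism rings of $\ovl{M}$ and $\ovl{M^{**}}$ over $\ovl{R}$. Since $R$ and $M$ are $g$-divisible, Lemma~\ref{hat end commute} gives that $U = \End_R(M)$ is $g$-divisible, and a parallel argument handles $F = \End_R(M^{**})$. Applying $\Hom_R(M,-)$ to $0 \to M \xrightarrow{\cdot g} M \to \ovl{M} \to 0$ and using the identification $\Hom_R(M,\ovl{M}) = \Hom_{\ovl{R}}(\ovl{M},\ovl{M})$ yields an embedding $\ovl{U} \hookrightarrow \End_{\ovl{R}}(\ovl{M})$ whose cokernel sits inside $\Ext^1_R(M,M)[g]$. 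The $\chi$ and finite cohomological dimension properties of $R$ from Theorem~\ref{S(d) thm}(3) force this Ext term to be bounded in degree, so $\ovl{U} \ehd \End_{\ovl{R}}(\ovl{M})$; applying the same argument to $M^{**}$, and noting that $M^{**}/M$ is bounded as a graded module (it is torsion for the reflexive hull and $g$-torsion-free), gives $\ovl{F} \ehd \End_{\ovl{R}}(\ovl{M})$ as well.

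The core step is then the explicit identification of $\End_{\ovl{R}}(\ovl{M})$. Fix a nonzero rational section $s_0$ of $\LL$, and set $s_0^{(n)} = s_0 \cdot s_0^\sigma \cdots s_0^{\sigma^{n-1}}$; this embeds $Q_\gr(\ovl{S}) \cong \Bbbk(E)[t,t^{-1};\sigma]$ with $t \leftrightarrow s_0$, and identifies a degree $n$ element $ft^n$ with the rational section $f\, s_0^{(n)}$ of $\LL_n$. For $m \gg 0$ the hypothesis gives $\ovl{M}_m = H^0(E,\LL_m(-[\bfd]_m + \bfy))$ exactly; writing a section as $\beta = g\, s_0^{(m)}$ and using the multiplication rule $(ft^n)(gt^m) = f \sigma^n(g)\, t^{n+m}$, the containment $ft^n \cdot \ovl{M}_m \subseteq \ovl{M}_{n+m}$ translates into a divisor inequality on $f$. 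Since $\deg \bfd \leq 2 < 3 = \deg \LL$, the line bundle $\LL_m(-[\bfd]_m + \bfy)$ is very ample for $m \gg 0$, so varying $\beta$ realises the pointwise sharpest constraint, collapsing the system to
\begin{equation*}
\mathrm{div}(f) + \mathrm{div}(s_0^{(n)}) \;\geq\; [\bfd]_n - \bfy + \bfy^{\sigma^n}.
\end{equation*}
A short telescoping computation shows $[\bfx]_n = [\bfd]_n - \bfy + \bfy^{\sigma^n}$ for $\bfx = \bfd - \bfy + \bfy^\sigma$, so the inequality is precisely the statement that $ft^n \in H^0(E,\LL_n(-[\bfx]_n)) = B(E,\LL(-\bfx),\sigma)_n$. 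This gives (\ref{RSS 6.7 eq}).

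The final sentence is then immediate. By Proposition~\ref{RSS 6.4}(2), $V = F \cap S$ is the unique maximal $S$-order equivalent to and containing $U$, which gives $U \subseteq V \subseteq F$; and by Lemma~\ref{max order pair equiv def} applied to the $g$-divisible $R$-module $M^{**}$, the pair $(V,F)$ is a maximal order pair. The principal difficulty I expect is the divisor bookkeeping of the geometric step: one has to be careful about how $\sigma$ acts on functions versus on divisors and to verify cleanly that the sharpest pointwise choice of $\beta$ produces exactly $\bfx = \bfd - \bfy + \bfy^\sigma$, rather than some twisted variant.
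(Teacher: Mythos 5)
There is a genuine gap at the heart of your argument, namely the step where you pass from $\ovl{U}=\ovl{\End_R(M)}$ and $\ovl{F}=\ovl{\End_R(M^{**})}$ to $\End_{\ovl{R}}(\ovl{M})$ in large degrees. The long exact sequence does give an embedding $\ovl{U}\hookrightarrow\End_{\ovl{R}}(\ovl{M})$ with cokernel inside $\Ext^1_R(M,M)[g]$, but your claim that the $\chi$-condition and finite cohomological dimension from Theorem~\ref{S(d) thm}(3) force this $g$-torsion to be right-bounded does not follow: $\chi$ controls $\Ext^i_R(\Bbbk,-)$ and cohomological dimension controls the section functor on $\qgr$, and neither bounds the $g$-torsion of $\Ext^1_R(M,M)$, which is not even finitely generated over $\Bbbk[g]$. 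Similarly, your parenthetical claim that $M^{**}/M$ is ``bounded as a graded module'' is false in general; what one actually has (via \cite[Lemma~4.11(1)]{RSS} and the Cohen--Macaulay property) is $\GK(M^{**}/M)\leq 1$, and for these rings $\GK\leq 1$ does not imply right-boundedness --- compare the sporadic ideals of Definition~\ref{min sporadic ideal}, where $R/K$ has $\GK$-dimension $1$ but is infinite-dimensional. So the two boundedness assertions on which your reduction rests are unproved, and one of them is stated in a form that is simply not true.

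The paper's own proof defers entirely to the argument of \cite[Theorem~6.7]{RSS}, and the point it explicitly flags is that the input making that argument work is Theorem~\ref{S(d) thm}(2) --- the Auslander--Gorenstein and Cohen--Macaulay properties of $S(\bfd)$ --- not part (3). Those homological properties are what control the discrepancies $M^{**}/M$, $\ovl{M^*}$ versus $(\ovl{M})^*$, and $\ovl{F}/\ovl{U}$ (the latter via the ideal $K\subseteq U$ of $F$ with $\GK(F/K)\leq 1$ from Proposition~\ref{RSS 6.4}(3), combined with the fact that $\ovl{F}$ is an order in $\Bbbk(E)[t,t^{-1};\sigma]$ so that a $\GK\leq 1$ bimodule quotient is actually finite-dimensional). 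Your geometric computation of $\End_{\ovl{R}}(\ovl{M})$ and the telescoping identity $[\bfx]_n=[\bfd]_n-\bfy+\bfy^{\sigma^n}$ are fine (this is \cite[Lemma~6.14(1)]{RSS}), and the final paragraph deducing the maximal order pair from Proposition~\ref{RSS 6.4} is essentially correct (though Lemma~\ref{max order pair equiv def}(2) should be applied to $M$ itself, which satisfies $S(\bfd)\subseteq M\subseteq S$, rather than to $M^{**}$). But without a correct bridge between $\ovl{\End_R(-)}$ and $\End_{\ovl{R}}(\ovl{\,\cdot\,})$ the identification (\ref{RSS 6.7 eq}) is not established.
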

\begin{proof}
This is proved in the same way as \cite[Theorem~6.7]{RSS}. For this to work, one must bear in mind $S(\bfd)$ is Auslander-Gorenstien and Cohen-Macaulay by Theorem~\ref{S(d) thm}(2), and that the relevant results \cite[Section~2]{RSS} hold with $S$ and $T$ interchanged. The reader can find a detailed write out of this in \cite[5.14-5.17]{thesis}.
\end{proof}

\subsection*{Blowups and virtual blowups}

The algebra $F$ from Proposition~\ref{RSS 6.7} has many similar properties with the $S(\bfd)$. Most notable is that $\ovl{F}\cong F/gF$ is equal in high degrees to a twisted homogeneous coordinate ring. We will be calling such $F$ \textit{virtual blowups} (Definition~\ref{virtual blowup}).\par
We now investigate precisely what divisors $\bfx$ arise in Proposition~\ref{RSS 6.7}. These will be the divisors at which we can ``blow up".

\begin{definition}\label{virtually effective}\cite[Definition~7.1]{RSS}.\index{virtually effective divisor}
Let $\rho:E\to E$ be an automorphism of infinite order. A divisor $\bfx$ on $E$ with $\deg\bfx\geq 0$ is called \textit{$\rho$-virtually effective} if there exists $n\geq 0$ such that $\bfx+\bfx^\rho+\dots+\bfx^{\rho^{n-1}}$ is an effective divisor on $E$.
\end{definition}

When the automorphism $\rho:E\to E$ is clear from context, we often drop the $\rho$ from $\rho$-virtually effective.

\begin{lemma}\label{RSS 7.3(1)}
The divisor $\mbf{x}$ in (\ref{RSS 6.7 eq}) is $\sigma$-virtually effective.
\end{lemma}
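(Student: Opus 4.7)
The plan is to show $[\bfx]_n$ is effective for all $n \gg 0$ directly, using the explicit form $\mbf{x} = \bfd - \mbf{y} + \mbf{y}^\sigma$ produced by Proposition~\ref{RSS 6.7}. A telescoping computation gives
\[
[\bfx]_n = \sum_{i=0}^{n-1}\bigl(\bfd^{\sigma^i} - \mbf{y}^{\sigma^i} + \mbf{y}^{\sigma^{i+1}}\bigr) = [\bfd]_n - \mbf{y} + \mbf{y}^{\sigma^n}.
\]
Since $\mbf{y}^{\sigma^n}$ is effective, it is enough to show $[\bfd]_n \geq \mbf{y}$ for all $n \gg 0$.

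To deduce this, I would compare two computations of $\dim_\Bbbk \ovl{M}_n$. The identification $\ovl{M}_n \ehd H^0(E, \OO(\mbf{y}) \otimes \LL_n(-[\bfd]_n))$ from Proposition~\ref{RSS 6.7}, together with Riemann-Roch on $E$ (applicable since $\deg\bfd \leq 2 < 3$ makes the relevant line bundle of positive degree for large $n$), gives $\dim_\Bbbk \ovl{M}_n = n(3-\deg\bfd) + \deg\mbf{y}$ for $n \gg 0$. On the other hand, since $M \subseteq S$ we have $\ovl{M}_n \subseteq \ovl{S}_n = H^0(E, \LL_n)$, and the identification realises elements of $\ovl{M}_n$ as rational sections of $\LL_n$ with $\div \geq [\bfd]_n - \mbf{y}$; those which are genuinely regular must then satisfy $\div \geq ([\bfd]_n - \mbf{y})^+$. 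Hence $\ovl{M}_n \subseteq H^0(E, \LL_n(-([\bfd]_n - \mbf{y})^+))$, a subspace of dimension $3n - \deg ([\bfd]_n - \mbf{y})^+$ by another application of Riemann-Roch.

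Comparing the two numbers yields $\deg([\bfd]_n - \mbf{y})^+ \leq n\deg\bfd - \deg\mbf{y} = \deg([\bfd]_n - \mbf{y})$; since the reverse inequality is automatic, $\deg([\bfd]_n - \mbf{y})^- = 0$ and hence $([\bfd]_n - \mbf{y})^- = 0$. Thus $[\bfd]_n \geq \mbf{y}$ for all $n \gg 0$, and combining with the telescoping identity shows that $[\bfx]_n$ is a sum of effective divisors, so effective, for all $n \gg 0$; this is exactly $\sigma$-virtual effectivity. The main technical obstacle is the compatibility step: justifying that the abstract identification of Proposition~\ref{RSS 6.7} places $\ovl{M}_n$ inside the specific subspace of rational sections of $\LL_n$ bounded below by $[\bfd]_n - \mbf{y}$, so that the inclusion $\ovl{M}_n \subseteq \ovl{S}_n$ indeed forces the negative part of $[\bfd]_n - \mbf{y}$ to vanish.
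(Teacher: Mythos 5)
Your proof is correct, but it takes a genuinely different route from the paper's. The paper ignores the module $M$ and the divisor $\mbf{y}$ entirely: it uses only the conclusion $\ovl{U}\ehd\ovl{F}\ehd B(E,\LL(-\bfx),\sigma)$ of Proposition~\ref{RSS 6.7} together with $\ovl{U}\subseteq\ovl{S}=B(E,\LL,\sigma)$. For $n\gg 0$ this gives $H^0(E,\NN_n)\subseteq H^0(E,\LL_n)$ with $\NN_n=\LL_n(-[\bfx]_n)$; both sheaves have large degree, hence are generated by their global sections, so the inclusion of section spaces forces the inclusion of sheaves $\LL_n(-[\bfx]_n)\subseteq\LL_n$, i.e.\ $[\bfx]_n\geq 0$. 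You instead exploit the explicit shape $\bfx=\bfd-\mbf{y}+\mbf{y}^\sigma$: the telescoping identity $[\bfx]_n=[\bfd]_n-\mbf{y}+\mbf{y}^{\sigma^n}$ reduces the claim to $[\bfd]_n\geq\mbf{y}$, which you extract from a dimension count on $\ovl{M}_n$. Both arguments are sound. The compatibility step you flag is genuinely fine here, because the paper's $\ehd$ is an equality of subspaces of $\Bbbk(E)t^n$ (the \cite[Theorem~1.3]{AV} classification is of submodules of $\Bbbk(E)[t,t^{-1};\sigma]$, not of abstract modules), so an element of $\ovl{M}_n\cap\ovl{S}_n$ really is a global section of $\LL_n$ whose divisor also dominates $[\bfd]_n-\mbf{y}$, and the lattice property of divisors then gives domination of the positive part. (One small point worth recording: to quote Riemann--Roch for $h^0(\LL_n(-([\bfd]_n-\mbf{y})^+))=3n-\deg([\bfd]_n-\mbf{y})^+$ you need this degree to be positive, which follows a posteriori since that space contains $\ovl{M}_n$, of dimension at least $n$.) What the paper's argument buys is brevity and independence from the auxiliary data $(M,\mbf{y})$; what yours buys is the extra conclusion $[\bfd]_n\geq\mbf{y}$ for $n\gg0$, i.e.\ essentially the normal form $\bfx=\mbf{u}-\mbf{v}+\mbf{v}^\sigma$ with $\mbf{v}\leq[\mbf{u}]_k$ that the paper later imports from \cite[Proposition~7.3]{RSS}.
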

\begin{proof}
Let $\mbf{x}$, $U$, $F$ be as in Proposition~\ref{RSS 6.7}. Put $B(E,\NN,\sigma)$ where $\NN=\LL(-\bfx)$. By Proposition~\ref{RSS 6.7}, $\ovl{U}\ehd \ovl{F}\ehd B$. Let $n\geq 0$ be such that  $\ovl{U}_n=B_n=H^0(E,\NN_n)$. Then because $\ovl{U}\subseteq \ovl{S}=B(E,\LL,\sigma)$, we have that $H^0(E,\NN_n)\subseteq H^0(E,\LL_n)$. Enlarging $n$ if necessary, we can assume both $\deg\LL_n\geq 2$ and $\deg\NN_n>2$. Then \cite[Corollary~IV.3.2]{Ha} implies $\LL_n$ and $\NN_n$ are generated by their global sections. Therefore we have $\NN_n=\LL_n(-[\mbf{x}]_n)\subseteq \LL_n$, and hence $[\mbf{x}]_n$ is effective.
\end{proof}

\begin{remark}
Our definition of a virtually effective divisor differs from \cite[Definition~7.1]{RSS}. It is implicit in \cite[Proposition~7.3(2)]{RSS} that they are equivalent definitions. We prefer Definition~\ref{virtually effective} for it is simpler to state and better motivates the name ``virtually effective".
\end{remark}

To avoid confusion, we now reserve the notation $[\bfx]_n=\bfx+\bfx^\sigma+\dots+\bfx^{\sigma^{n-1}}$ from Notation~\ref{[d]_n} for the automorphism $\sigma$ from Hypothesis~\ref{standing assumption intro}. This is particularly relevant in the next lemma.

\begin{lemma}\label{[x]_3 is v effective}
Let $\bfx$ be a $\sigma$-virtually effective divisor. Then $[\bfx]_3=\bfx+\bfx^\sigma+\bfx^{\sigma^2}$ is $\tau$-virtually effective, where $\tau=\sigma^3$.
\end{lemma}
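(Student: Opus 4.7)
The plan is a direct computation unpacking both $\tau=\sigma^3$ and the definition of $[\,\cdot\,]_n$. First I would verify that, for any divisor $\bfy$ on $E$ and any $m\geq 0$,
$$\sum_{j=0}^{m-1}\bfy^{\tau^j}\;=\;\sum_{j=0}^{m-1}\sigma^{-3j}(\bfy),$$
which is just the definition of $\bfy^{\tau^j}$. Applying this with $\bfy=[\bfx]_3=\bfx+\bfx^\sigma+\bfx^{\sigma^2}$ and expanding each summand $\sigma^{-3j}(\bfx+\sigma^{-1}\bfx+\sigma^{-2}\bfx)=\bfx^{\sigma^{3j}}+\bfx^{\sigma^{3j+1}}+\bfx^{\sigma^{3j+2}}$ collapses the $\tau$-sum into the $\sigma$-sum, giving the key identity
$$[\,[\bfx]_3\,]_m^{(\tau)}\;:=\;\sum_{j=0}^{m-1}([\bfx]_3)^{\tau^j}\;=\;[\bfx]_{3m},$$
where on the right we use the $\sigma$-convention from Notation~\ref{[d]_n}.

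Next I would use the hypothesis that $\bfx$ is $\sigma$-virtually effective to pick some $n\geq 0$ with $[\bfx]_n$ effective, and then observe the analogous splitting
$$[\bfx]_{3n}\;=\;[\bfx]_n+\sigma^{-n}([\bfx]_n)+\sigma^{-2n}([\bfx]_n).$$
Since $\sigma$ is an automorphism of $E$, pulling back an effective divisor under $\sigma^{-n}$ again yields an effective divisor, so $[\bfx]_{3n}$ is a sum of three effective divisors and is itself effective. Combining with the identity above gives $\sum_{j=0}^{n-1}([\bfx]_3)^{\tau^j}=[\bfx]_{3n}\geq 0$, which is exactly what is required by Definition~\ref{virtually effective} (with $\rho=\tau$) to conclude that $[\bfx]_3$ is $\tau$-virtually effective. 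The degree condition $\deg[\bfx]_3=3\deg\bfx\geq 0$ is immediate.

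There is no real obstacle here beyond careful notational bookkeeping: the entire statement amounts to the observation that re-indexing a $\tau$-telescoping sum in blocks of three recovers a $\sigma$-telescoping sum, together with the fact that effectivity is preserved by automorphism pullback and by addition.
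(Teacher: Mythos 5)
Your argument is correct and is essentially the paper's proof: both rest on the observation that the $\tau$-sum of $[\bfx]_3$ in blocks of three telescopes back into the $\sigma$-sum $[\bfx]_{3m}$. If anything, your justification that one may pass from an effective $[\bfx]_n$ to an effective $[\bfx]_{3n}$ (by splitting it as $[\bfx]_n+\sigma^{-n}([\bfx]_n)+\sigma^{-2n}([\bfx]_n)$) is slightly more explicit than the paper's ``enlarge $n$ so that $n$ is divisible by $3$.''
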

\begin{proof}
Suppose that $[\bfx]_n$ is effective. If necessary, enlarge $n$ so that $n$ is divisible by 3, say $n=3m$. We then have
\begin{multline*}
[\bfx]_3+[\bfx]_3^\tau+\dots+[\bfx]_3^{\tau^{m-1}}=(\mbf{x}+\mbf{x}^\sigma+ \mbf{x}^{\sigma^2})+(\mbf{x}+\mbf{x}^\sigma+\mbf{x}^{\sigma^2})^{\tau}+
\dots+(\mbf{x}+\mbf{x}^\sigma+\mbf{x}^{\sigma^2})^{\tau^{m-1}}\\
=\bfx+\bfx^\sigma+\dots+\bfx^{\sigma^{n-1}}=[\mbf{x}]_n.
\end{multline*}
This is effective by assumption; hence $[\bfx]_3$ is $\tau$-virtually effective.
\end{proof}

We now turn our attention away from divisors and back to subalgebras of $S$. We start with the definition of a virtual blowup. 

\begin{definition}\label{virtual blowup}\index{virtual blowup of $S$}
Let $\bfx$ be a $\sigma$-virtually effective divisor on $E$ with $\deg\bfx\leq 2$. We say that a cg subalgebra $F$ of $\Sg$ with $Q_\gr(F)=Q_\gr(S)$ is a \textit{virtual blowup of $S$ at $\bfx$} if:
\begin{enumerate}[(1)]
\item $F$ is a part of a maximal order pair $(F\cap S,F)$ of $S$.
\item $\ovl{F}\ehd B(E,\LL(-\bfx),\sigma)$.
\end{enumerate}
\end{definition}

\begin{remark}\label{why vblowup}
Like with the $S(\bfd)$, our main motivation for the name ``virtual blowup" is by analogy with \cite{RSS}. On top of this we will also prove in Theorem~\ref{3 Veronese of virtual blowup} that if $F$ is a virtual blowup of $S$, then $F^{(3)}$ is a virtual blowup of $T=S^{(3)}$. Justification for their case is given in \cite[Remark~7.5]{RSS}.
\end{remark}

With our new language we can give our main result of this section.

\begin{theorem}\label{RSS 7.4}
\begin{enumerate}[(1)]
\item Let $V\subseteq S$ be a $g$-divisible cg maximal $S$-order. Then:
\begin{enumerate}[(a)]
\item there is a maximal order $F\supseteq V$ such that $(V,F)$ is a maximal order pair;
\item $F$ is a virtual blowup of $S$ at a virtually effective divisor $\mbf{x}$ with $\deg\mbf{x}\leq 2$;
\item $\ovl{V}\ehd\ovl{F}\ehd B(E,\LL(-\mbf{x}),\sigma)$.
\end{enumerate}
\item If $U\subseteq S$ is any $g$-divisible cg subalgebra with $Q_\gr(U)=Q_\gr(S)$, then there exists a maximal order pair $(V,F)$ as in (1), such that $U$ is contained in, and equivalent, to $V$ and $F$.
\end{enumerate}
\end{theorem}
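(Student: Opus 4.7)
For part~(1), the strategy is essentially an assembly of results already in place; the genuinely new step is showing the divisor produced by Proposition~\ref{RSS 6.7} meets the requirements of Definition~\ref{virtual blowup}. I would begin by feeding $V$ into Corollary~\ref{RSS 6.6}: this produces an effective divisor $\bfd$ with $\deg\bfd\leq 2$, a finitely generated $g$-divisible right $S(\bfd)$-module $M$ with $S(\bfd)\subseteq M\subseteq S$, and the identification $V=\End_{S(\bfd)}(M)$. Setting $F=\End_{S(\bfd)}(M^{**})$, Corollary~\ref{RSS 6.6}(2) already tells us $F$ is the unique maximal order containing $V$, and $V=F\cap S$. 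To conclude that $(V,F)$ is a maximal order pair in the sense of Definition~\ref{max order pair}, I would simply cite Proposition~\ref{RSS 6.4}: $g$-divisibility of both $F$ and $V$ together with $Q_\gr(F)=Q_\gr(S)$ come from part~(1), the maximality statements from part~(2), and the existence of an ideal $K\subseteq V$ of $F$ with $\GK(F/K)\leq 1$ from part~(3). This establishes~(1a).

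For (1b) and (1c) I would apply Proposition~\ref{RSS 6.7} to the same data $(M,S(\bfd))$. It produces an effective divisor $\bfy$ and, writing $\bfx=\bfd-\bfy+\bfy^\sigma$, gives the chain
\[
\ovl F \;\ehd\; \ovl V \;\ehd\; B(E,\LL(-\bfx),\sigma),
\]
which is precisely~(1c). Since $\sigma$ preserves degrees, $\deg\bfx=\deg\bfd\leq 2$, and Lemma~\ref{RSS 7.3(1)} ensures $\bfx$ is $\sigma$-virtually effective. These two facts, combined with the maximal order pair property from (1a), show that $F$ satisfies both clauses of Definition~\ref{virtual blowup} and is hence a virtual blowup of $S$ at $\bfx$, proving~(1b).

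For part~(2), given a $g$-divisible cg subalgebra $U\subseteq S$ with $Q_\gr(U)=Q_\gr(S)$, I would first invoke Theorem~\ref{RSS 5.25}. This yields an effective divisor $\bfd$ with $\deg\bfd\leq 2$, the $g$-divisible bimodule $M=\widehat{US(\bfd)}$ with $S(\bfd)\subseteq M\subseteq S$, and the algebra $W=\End_{S(\bfd)}(M)$, where $U\subseteq W\subseteq S$ and all of $U,W,S(\bfd)$ are pairwise equivalent orders. I then apply the construction of part~(1) to $M$: set $F=\End_{S(\bfd)}(M^{**})$ and $V=F\cap S$. By Proposition~\ref{RSS 6.4}(2), $V$ is the unique maximal $S$-order containing and equivalent to $W$, and $F$ is the unique maximal order containing and equivalent to $W$; consequently $U\subseteq W\subseteq V\subseteq F$ with all four pairwise equivalent. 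Since $V$ is itself a $g$-divisible cg maximal $S$-order, applying~(1) to $V$ produces a maximal order pair $(V,F')$ with $F'$ a virtual blowup, and uniqueness of the maximal order in a given equivalence class (Proposition~\ref{RSS 6.4}(2) once more) forces $F'=F$. Hence $(V,F)$ is the required pair.

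The main obstacle is not any individual step but the bookkeeping: the same triple $(\bfd,M,F)$ must simultaneously realise $V$ as an endomorphism ring, identify $F$ as the maximal envelope of $V$, package $(V,F)$ into a maximal order pair, and exhibit $\ovl F$ as equal in high degrees to a twisted coordinate ring with virtually effective defining divisor. All of this is already packaged in Corollary~\ref{RSS 6.6}, Proposition~\ref{RSS 6.4}, Proposition~\ref{RSS 6.7}, and Lemma~\ref{RSS 7.3(1)}, so the work lies entirely in choosing and fixing a single pair $(\bfd,M)$ and verifying that each cited result is applied consistently to it.
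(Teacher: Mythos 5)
Your proposal is correct and follows essentially the same route as the paper: part (1) assembles Corollary~\ref{RSS 6.6}, Proposition~\ref{RSS 6.4}, Proposition~\ref{RSS 6.7} and Lemma~\ref{RSS 7.3(1)} (the paper packages the first two as Lemma~\ref{max order pair equiv def}), and part (2) combines Theorem~\ref{RSS 5.25} with Proposition~\ref{RSS 6.4}. Your version merely spells out some of the bookkeeping the paper leaves implicit.
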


\begin{proof}
(1). By definition $Q_\gr(V)=Q_\gr(S)$. By Corollary~\ref{RSS 6.6}(2) and Lemma~\ref{max order pair equiv def}, $V$ is part of a maximal order pair $(V,F)$. This proves part (a). By Lemma~\ref{max order pair equiv def}, $F=\End_{S(\bfd)}(M^{**})$ for some effective divisor $\bfd$ with $\deg\bfd\leq 2$, and a finitely generated $g$-divisible right $S(\bfd)$-module $M$ satisfying $S(\bfd)\subseteq M\subseteq S$. By Proposition~\ref{RSS 6.7} and Lemma~\ref{RSS 7.3(1)}, $F$ is a virtual blowup of $S$ at a virtually effective divisor $\bfx$. Part ($c$) follows directly from part ($b$).\par

(2). By Theorem~\ref{RSS 5.25}, $U$ is contained in and equivalent to some $\End_{S(\bfd)}(M)$, where $\bfd$ is an effective divisor of $\deg\bfd\leq 2$ and $M=\widehat{US(\bfd)}$. Clearly $S(\bfd)\subseteq M\subseteq S$ and so we can apply Proposition~\ref{RSS 6.4}.
\end{proof}

Just as in \cite{RSS}, in addition to a complete description of $g$-divisible maximal $S$-orders we are able to obtain a description of any $g$-divisible subalgebra. 

\begin{cor}\label{RSS 7.6}
Let $U\subseteq S$ be a $g$-divisible subalgebra with $Q_\gr(U)=Q_\gr(S)$. Then $U$ is an iterated sub-idealiser inside a virtual blowup of $S$. More precisely the following holds.
\begin{enumerate}[(1)]
\item There exists a virtually effective divisor $\mbf{x}=\mbf{u}-\mbf{v}+\mbf{v^\sigma}$ with $\deg\mbf{x}\leq 2$, and a blowup $F$ of $S$ at $\mbf{x}$, such that $V=F\cap S$ contains, and is equivalent to, $U$. The pair $(V,F)$ is a maximal order pair.
\item There is a $g$-divisible algebra $W$ with $U\subseteq W\subseteq V$ and such that $U$ is a right sub-idealiser inside $W$, and $W$ is a left sub-idealiser inside $V$. In more detail:
    \begin{enumerate}[(a)]
    \item There exists a $g$-divisible left ideal $L$ of $V$ such that either $L=V$ or $V/L$ is $2$-pure. There exists a $g$-divisible ideal $K$ of $X=\I_V(L)$ such that $K\subseteq W\subseteq X$ and $\GK_X(X/K)\leq 1$.
    \item $V$ is a finitely generated left $W$-module, while $X/K$ is a finitely generated $\Bbbk[g]$-module. In particular $X$ is finitely generated over $W$ on both sides;
    \item The properties given for $W\subseteq V$ also hold true for the pair $U\subseteq W$ with left and right interchanged. 
    \end{enumerate}
\end{enumerate}
\end{cor}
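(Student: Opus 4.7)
\textbf{Proof plan for Corollary~\ref{RSS 7.6}.}

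Part (1) should follow directly from Theorem~\ref{RSS 7.4}(2): that theorem produces a maximal order pair $(V, F)$ containing and equivalent to $U$, and by part (1b) of Theorem~\ref{RSS 7.4} the upper algebra $F$ is a virtual blowup at some virtually effective divisor $\mbf{x}$ with $\deg\mbf{x}\leq 2$; by the construction in Proposition~\ref{RSS 6.7} the divisor has the form $\mbf{x}=\bfd-\bfy+\bfy^\sigma$ for some $\bfd$ and $\bfy$, which is exactly the shape claimed in (1). Nothing further is needed here.

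For part (2), the plan is to construct $W$ by a standard two-step conductor/sub-idealiser construction, mirroring the argument in the proof of \cite[Corollary~7.6]{RSS} but phrased inside $S$ rather than $T=S^{(3)}$. Starting with the pair $U \subseteq V$ of equivalent $g$-divisible orders, I would set $L = \{v \in V \mid Vv \subseteq U\}$, a nonzero left ideal of $V$ since $U$ and $V$ are equivalent orders. Replacing $L$ by its $g$-divisible hull $\widehat{L}$ (which is still a left ideal of $V$ by Lemma~\ref{hat end commute} applied suitably, and by the $g$-divisibility arguments following \cite[Lemma~2.12-2.13]{RSS} as invoked in Remark~\ref{RSS section 2}) I can assume $L$ is $g$-divisible, and then $V/L$ is either zero or $g$-torsionfree and hence controlled by its image $\ovl{V/L}$ in $\ovl{V}$. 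Using the Cohen-Macaulay property of $V$ (inherited from the virtual blowup side via Proposition~\ref{RSS 6.4}(3) and Theorem~\ref{S(d) thm}(2), modulo the ideal $K_0 \subseteq V$ there with $\GK(F/K_0)\leq 1$), one can pass to a submodule $L' \subseteq L$ such that either $L' = V$ or $V/L'$ is $2$-pure. Renaming $L'$ as $L$, I would then take $X = \I_V(L) = \{x \in V \mid xL \subseteq L\}$, which is the ring inside which $L$ is a two-sided ideal.

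Now comes the heart of the proof: to produce the intermediate algebra $W$, one observes that $U \subseteq X$ because $Uv \in V$ and $UVv \subseteq U$ for every $v \in L$ by definition of $L$. Applying the same idea now on the other side to the pair $U \subseteq X$ I would define a $g$-divisible ideal $K$ of $X$ contained in $W := U + K \subseteq X$, and use a GK-dimension computation (as in Lemma~\ref{sporadics up n down} combined with Cohen-Macaulayness) to show $\GK_X(X/K)\leq 1$. This gives the $W \subseteq X$ part of the chain. The fact that $X/K$ is a finitely generated $\Bbbk[g]$-module then follows from $\GK_X(X/K) \leq 1$ together with $g$-divisibility and the fact that such modules are finite over the central subring $\Bbbk[g]$ (as in \cite[Lemma~2.14]{RSS} invoked through Remark~\ref{RSS section 2}). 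Finite generation of $V$ as a left $W$-module follows because $V/L$ has GK-dimension at most $2$ and is controlled inside $\ovl{V}$, which is finitely generated over the analogous image of $W$.

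The main obstacle will be the bookkeeping between left and right versions of sub-idealisers, and in particular showing the symmetric assertion in (2c) that the pair $U \subseteq W$ behaves like $W \subseteq V$ with left and right interchanged. This amounts to repeating the whole construction inside $W$ rather than $V$, and verifying that the relevant ideals $L$ and $K$ one obtains remain $g$-divisible and of controlled GK-dimension; the key input here is once more the Cohen-Macaulay property of $S(\bfd)$ (Theorem~\ref{S(d) thm}(2)) together with Proposition~\ref{RSS 6.4}(3), which provides the ``large'' ideal $K$ of $F$ inside $V$ and hence inside $W$. With these ingredients in place the proof is essentially a line-by-line transcription of \cite[Corollary~7.6]{RSS} into our setting, the only genuine novelty being the justification, via the results of Sections~2 and~3 of this paper, that the $S(\bfd)$ play the same role inside $S$ as the $T(\bfe)$ play inside $T$.
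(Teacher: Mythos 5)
Your proposal takes the same route as the paper: the paper's entire proof of this corollary is the single line ``the proof of \cite[Corollary~7.6]{RSS} works here,'' and your plan --- deduce (1) from Theorem~\ref{RSS 7.4} and Proposition~\ref{RSS 6.7}, then transcribe the conductor/idealiser construction of \cite[Corollary~7.6]{RSS} into $S$ using the $S(\bfd)$ machinery (Theorem~\ref{S(d) thm}(2), Proposition~\ref{RSS 6.4}(3), and the results of \cite[Section~2]{RSS} via Remark~\ref{RSS section 2}) in place of the $T(\bfd)$ inputs --- is exactly that. The only blemish is the circular phrasing ``a $g$-divisible ideal $K$ of $X$ contained in $W:=U+K$,'' which should be read as: first construct $K$ as a (hull of a) conductor ideal of $X$ into $U$, and then set $W=U+K$.
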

\begin{proof}
The proof of \cite[Corollary~7.6]{RSS} works here.
\end{proof}

Theorem~\ref{RSS 7.4} shows any $g$-divisible maximal $S$-order is a virtual blowup at some virtually effective divisor. Conversely, we now show that given any virtually effective divisor $\bfx$ with $\deg\bfx\leq 2$, there exists a virtual blowup $F$ of $S$ at $\bfx$, where $V=F\cap S$ is necessarily a maximal $S$-order. This completes the classification of $g$-divisible maximal $S$-orders. For the proof we need to recall the rings $T(\bfd)$ from Definition~\ref{T(d) def}.

\begin{lemma}\label{RSS2 5.10}
Let $\mbf{u}$ and $\mbf{v}$ be effective divisors on $E$ such that $\deg\mbf{u}\leq 2$ and $\mbf{v}\leq [\mbf{u}]_k$ for some $k\geq 1$. Then there exists a $g$-divisible right $S(\mbf{u})$-module $M$ with $S(\mbf{u})\subseteq M\subseteq S$ and such that
\begin{equation}\label{ehd 1 RSS2 5.10} \ovl{M}\ehd\bigoplus_{n\geq 0}H^0(E,\LL_n(-[\mbf{u}]_n+\mbf{v})).\end{equation}
\end{lemma}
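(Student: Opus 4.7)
The plan is to construct $M$ as the $g$-divisible hull of an explicitly generated right $R=S(\mbf{u})$-module. As a first step, observe that
$$\ovl{N} := \bigoplus_{n\ge 0} H^0(E,\LL_n(-[\mbf{u}]_n+\mbf{v})) \subseteq \ovl{\Sg} = \Bbbk(E)[t,t^{-1};\sigma]$$
is a right $\ovl{R}=B(E,\LL(-\mbf{u}),\sigma)$-submodule containing $\ovl{R}$. The inclusion is clear since $\mbf{v}\ge 0$, while the multiplication formula
$$H^0(E,\LL_n(-\bfa))\cdot H^0(E,\LL_\ell(-\bfb)) \subseteq H^0(E,\LL_{n+\ell}(-\bfa-\sigma^{-n}(\bfb))),$$
applied with $\bfa=[\mbf{u}]_n-\mbf{v}$ and $\bfb=[\mbf{u}]_\ell$, gives $\ovl{N}\cdot\ovl{R}\subseteq\ovl{N}$. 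Under the Artin--Van den Bergh equivalence $\qgr(\ovl{R})\simeq\coh(E)$ of \cite[Theorem~1.3]{AV}, $\ovl{N}$ corresponds to the coherent sheaf $\OO_E(\mbf{v})$, so $\ovl{N}$ is finitely generated over $\ovl{R}$.

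Now choose $m\ge k$ large enough so that (i) $[\mbf{u}]_n-\mbf{v}$ is effective for every $n\ge m$ (this is where the hypothesis $\mbf{v}\le[\mbf{u}]_k$ is essential, as it ensures $\ovl{N}_n\subseteq H^0(E,\LL_n)=\ovl{S}_n$ in high degrees, and hence forces $M\subseteq S$ after lifting); and (ii) the multiplication map
$$H^0(E,\LL_m(-[\mbf{u}]_m+\mbf{v}))\otimes H^0(E,(\LL_n(-[\mbf{u}]_n))^{\sigma^m}) \twoheadrightarrow H^0(E,\LL_{m+n}(-[\mbf{u}]_{m+n}+\mbf{v}))$$
is surjective for all $n\ge N_0$, by standard surjectivity results for products of global sections of line bundles of sufficiently large degree on an elliptic curve. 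Pick lifts $f_1,\dots,f_r\in S_m$ of a basis of $\ovl{N}_m\subseteq\ovl{S}_m$, and put
$$M_0 := R + \sum_{i=1}^r f_i R \subseteq S,$$
a finitely generated right $R$-submodule. The computation in (ii) together with the formula in the first paragraph yields $\ovl{M_0}_n=\ovl{N}_n$ for all $n\ge m+N_0$.

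Finally, set $M := \widehat{M_0}\subseteq\Sg$. Since $S$ is $g$-divisible and $M_0\subseteq S$, one has $M\subseteq S$. By \cite[Lemmas~2.12(3) and 2.13(2)]{RSS} together with Remark~\ref{RSS section 2}, $M$ is a finitely generated $g$-divisible right $R$-module; clearly $R\subseteq M_0\subseteq M$. The inclusion $\ovl{M}_n\supseteq\ovl{N}_n$ for $n\ge m+N_0$ follows directly from $\ovl{M_0}\subseteq\ovl{M}$.

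The main obstacle, and the technical heart of the proof, is the reverse inclusion $\ovl{M}_n\subseteq\ovl{N}_n$ for $n\gg 0$ --- that is, showing the $g$-divisible hull does not introduce spurious sections in the bar image. The approach is as follows: any $x\in \widehat{M_0}_n$ satisfies $xg^j\in M_0$ for some $j\ge 0$, and writing $xg^j=r_0+\sum f_i r_i$ with $r_0,r_i\in R$ yields the bar identity $\ovl{r_0}+\sum\ovl{f_i}\,\ovl{r_i}=0$ inside $\ovl{S}_{n+3j}$. One then exploits the surjectivity in (ii) and the structure of $\ovl{R}$ as a subdomain of $\ovl{S}$ (itself a domain, since $\deg\LL(-\mbf{u})\ge 1$) to bound the $\ovl{R}$-module quotient $\ovl{M}/\ovl{M_0}$ via its Artin--Van den Bergh image on $E$: both $\ovl{M_0}$ and $\ovl{M}$ correspond up to torsion to the same sheaf $\OO_E(\mbf{v})$, which forces the asymptotic equality $\ovl{M}_n=\ovl{N}_n$ for $n\gg 0$.
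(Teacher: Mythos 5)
Your construction and the first two-thirds of the argument are fine: $\ovl{N}$ is indeed a finitely generated right $\ovl{R}$-submodule of $\Bbbk(E)[t,t^{-1};\sigma]$ containing $\ovl{R}$, the hypothesis $\mbf{v}\leq[\mbf{u}]_k$ does force $\ovl{N}_n\subseteq\ovl{S}_n$ in high degrees, and the surjectivity of section multiplication gives $\ovl{M_0}_n=\ovl{N}_n$ for $n\gg0$. The problem is the final step, which you correctly identify as the technical heart but do not actually prove. The assertion that ``both $\ovl{M_0}$ and $\ovl{M}$ correspond up to torsion to the same sheaf $\OO_E(\mbf{v})$'' is precisely the claim at issue, not a consequence of anything you have established. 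Applying \cite[Theorem~1.3]{AV} to the finitely generated $\ovl{R}$-module $\ovl{M}$ only yields $\ovl{M}_n=H^0(E,\OO(\mbf{w})\otimes\LL(-\mbf{u})_n)$ for some divisor $\mbf{w}$ with $\mbf{w}\geq\mbf{v}$; nothing in your sketch bounds $\mbf{w}$ from above. The bar identity $\ovl{r_0}+\sum\ovl{f_i}\,\ovl{r_i}=0$ extracted from $xg^j\in M_0$ carries no information about $\ovl{x}$ itself without an inductive division-by-$g$ argument, and that argument is exactly where the difficulty lies. That the $g$-divisible hull can strictly and even drastically enlarge the bar image is not a hypothetical worry: in Example~\ref{original S(p-p1+p2)} of this paper one has $\widehat{U'}=S$ even though $\ovl{U'}$ is a proper subalgebra of $\ovl{S}$ in every positive degree. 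So the inclusion $\ovl{\widehat{M_0}}_n\subseteq\ovl{N}_n$ for $n\gg0$ cannot be waved through.

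For comparison, the paper avoids this entirely by working from the $3$-Veronese upward rather than from scratch: it invokes \cite[Lemma~5.10]{RSS2} to obtain a $g$-divisible right $T([\mbf{u}]_3)$-module $N$ with the prescribed bar image in degrees divisible by $3$, sets $M=\widehat{NR}$, and observes that $M^{(3)}=N$ exactly (here $g$-divisibility of $N$ prevents any enlargement in the Veronese direction). Then \cite[Theorem~1.3]{AV} pins $\ovl{M}$ down to $H^0(E,\LL_n(-[\mbf{u}]_n+\mbf{w}))$ for some $\mbf{w}$, and the known equality $\ovl{M}^{(3)}=\ovl{N}$ together with global generation of $\LL_{3m}(-[\mbf{u}]_{3m}+\mbf{v})$ for $m\gg0$ forces $\mbf{w}=\mbf{v}$. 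If you want to salvage your direct construction, you would need to supply an analogous mechanism that bounds $\ovl{\widehat{M_0}}$ from above; as written, the proof has a genuine gap at its decisive step.
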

\begin{proof}
In this proof $T$ is graded via $T_n=T\cap S_n$. By Theorem~\ref{S(d) thm}(1) with $\mbf{u}=\bfd$ we have $S(\mbf{u})^{(3)}=T([\mbf{u}]_3)$. By  \cite[Lemma~5.10]{RSS2} there exists a $g$-divisible right $T([\mbf{u}]_3)$-module $N$ with $T([\mbf{u}]_3)\subseteq N\subseteq T$ and such that
\begin{equation}\label{RSS 7.4(3) eq1} \ovl{N}\ehd
\bigoplus_{m\geq0}H^0(E,\LL_{3m}(-[\mbf{u}]_{3m}+\mbf{v})).\end{equation}
Set $R=S(\mbf{u})$ and $M=\widehat{NR}$. Then $M$ is a $g$-divisible right $R$-module. As $N$ is a finitely generated right $R^{(3)}$-module, $NR$ is a finitely generated right $R$-module. By \cite[Lemma~2.13(2)]{RSS} and Remark~\ref{RSS section 2}, $M_R$ is then finitely generated also. Further, as $1\in N\subseteq NT=T$, we have $R\subseteq M\subseteq S$. This leaves (\ref{ehd 1 RSS2 5.10}) to be proven.\par
Now because $N$ is $g$-divisible and clearly $(NR)^{(3)}=N$, it is easy to see $M^{(3)}=N$. On the other hand, since $\ovl{R}=B(E,\LL(-\mbf{u}),\sigma)$, for all $n\gg 0$, $\ovl{M}_n=H^0(E,\LL_{n}(-[\mbf{u}]_{n}+\mbf{w}))_n$ for some divisor $\mbf{w}$ by \cite[Theorem~1.3]{AV}. Equation (\ref{RSS 7.4(3) eq1}) and $M^{(3)}=N$ then implies
\begin{equation}\label{gb secs RSS 5.10} H^0(E,\LL_{3m}(-[\mbf{u}]_{3m}+\mbf{v}))=H^0(E,\LL_{3m}(-[\mbf{u}]_{3m}+\mbf{w}))\;
\text{ for }m\gg0.\end{equation}
Enlarging $m$ if necessary, we can assume $\LL_{3m}(-[\mbf{u}]_{3m}+\mbf{v}))$ and $\LL_{3m}(-[\mbf{u}]_{3m}+\mbf{w}))$ are generated by their global sections. Thus (\ref{gb secs RSS 5.10}) implies that $\LL_{3m}(-[\mbf{u}]_{3m}+\mbf{v}))=\LL_{3m}(-[\mbf{u}]_{3m}+\mbf{w}))$. Hence we get $\mbf{v}=\mbf{w}$, proving (\ref{ehd 1 RSS2 5.10}).
\end{proof}

\begin{prop}\label{RSS 7.4(3)}
Let $\mbf{x}$ be a $\sigma$-virtually effective divisor with $\deg\mbf{x}\leq 2$. Then there exists a blowup $F$ of $S$ at $\mbf{x}$.
\end{prop}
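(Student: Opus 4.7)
The goal is to realise $\bfx$ as the divisor associated to the endomorphism ring of a suitable $g$-divisible $S(\bfd)$-module. The plan is as follows: first, decompose $\bfx=\bfd-\bfy+\bfy^\sigma$ with $\bfd,\bfy$ effective, $\deg\bfd\le 2$, and $\bfy\le [\bfd]_k$ for some $k\ge 1$; then apply Lemma~\ref{RSS2 5.10} to produce a $g$-divisible right $S(\bfd)$-module $M$ with $S(\bfd)\subseteq M\subseteq S$ whose image $\ovl M$ in high degrees is controlled by $\bfy$; and finally invoke Proposition~\ref{RSS 6.7} to obtain the virtual blowup as $F=\End_{S(\bfd)}(M^{**})$, matching the virtually effective divisor that reappears there with our $\bfx$.

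The combinatorial decomposition is the main obstacle; the rest is assembly. Reducing to a single $\sigma$-orbit $\bbO$, write $\bfx|_{\bbO}=\sum_{i=L}^U a_i p_i$. For $n\gg 0$, the coefficients of $[\bfx]_n|_{\bbO}$ run through precisely the prefix sums $\sum_{k=L}^j a_k$ and suffix sums $\sum_{k=j}^U a_k$, so $\sigma$-virtual effectivity is equivalent, orbit-by-orbit, to all such prefix and suffix sums being non-negative. I would define $\bfy|_{\bbO}=\sum b_i p_i$ by the forward recursion $b_{L-1}=0$, $b_i=\max(0,b_{i-1}-a_i)$, and then set $d_i=a_i+b_i-b_{i-1}\ge 0$, which gives $\bfx|_{\bbO}=\bfd|_{\bbO}-\bfy|_{\bbO}+\bfy^\sigma|_{\bbO}$ tautologically. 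Unrolling the recursion gives the closed form $b_i=-\min_{L\le r\le i+1}\sum_{k=r}^i a_k$; the suffix-sum condition forces $b_U=0$, hence $b_i=0$ for all $i\ge U$, so $\bfy$ has finite support. A telescoping calculation yields $\deg\bfd|_{\bbO}=\sum d_i=\sum a_i=\deg\bfx|_{\bbO}$, and summing over orbits gives $\deg\bfd=\deg\bfx\le 2$. Finally, the identity $\sum_{j=L}^i d_j=\sum_{j=L}^i a_j+b_i\ge b_i$, valid by the prefix-sum condition, shows $\bfy\le [\bfd]_k$ for any $k$ larger than the length of the combined support, as required.

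With the decomposition in hand, applying Lemma~\ref{RSS2 5.10} with $\mbf{u}=\bfd$ and $\mbf{v}=\bfy$ produces a $g$-divisible, finitely generated right $S(\bfd)$-module $M$ with $S(\bfd)\subseteq M\subseteq S$ and
$$\ovl{M}\;\ehd\;\bigoplus_{n\ge 0}H^0\!\bigl(E,\LL_n(-[\bfd]_n+\bfy)\bigr)\;=\;\bigoplus_{n\ge 0}H^0\!\bigl(E,\OO(\bfy)\otimes\LL_n(-[\bfd]_n)\bigr).$$
Setting $F=\End_{S(\bfd)}(M^{**})$ and $V=F\cap S$, Proposition~\ref{RSS 6.7} now does all the remaining work: it shows $(V,F)$ is a maximal order pair with $Q_\gr(F)=Q_\gr(S)$, and (by uniqueness of the effective divisor extracted from the shape of $\ovl M$) that $\ovl{F}\,\ehd\,B(E,\LL(-\bfx'),\sigma)$ with $\bfx'=\bfd-\bfy+\bfy^\sigma=\bfx$. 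Thus $F$ satisfies both conditions of Definition~\ref{virtual blowup} and is a virtual blowup of $S$ at $\bfx$, completing the proof.
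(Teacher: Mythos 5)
Your proof is correct and follows the same route as the paper: decompose $\bfx=\bfd-\bfy+\bfy^\sigma$ with $\bfd,\bfy$ effective, $\deg\bfd\leq 2$ and $\bfy\leq[\bfd]_k$, feed this into Lemma~\ref{RSS2 5.10} to build the module $M$, and let Proposition~\ref{RSS 6.7} produce the maximal order pair $(F\cap S,F)$ with $\ovl{F}\ehd B(E,\LL(-\bfx),\sigma)$. The only difference is that the paper imports the decomposition wholesale from \cite[Proposition~7.3]{RSS}, whereas you re-derive it via the prefix/suffix-sum characterisation of virtual effectivity and the recursion $b_i=\max(0,b_{i-1}-a_i)$; your derivation checks out (non-negativity of the $d_i$, vanishing of $b_i$ for $i\geq U$ from the suffix sums, $\deg\bfd=\deg\bfx$ by telescoping, and $\bfy\leq[\bfd]_k$ from the prefix sums) and is a correct, self-contained substitute for that citation.
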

\begin{proof} Using \cite[Proposition~7.3]{RSS}, write $\mbf{x}=\mbf{u}-\mbf{v}+\mbf{v}^\sigma$ where $\mbf{u}$ and $\mbf{v}$ are effective and such that $0\leq\mbf{v}\leq [\mbf{u}]_k$ for some $k\geq 0$. By Lemma~\ref{RSS2 5.10} there exists a $g$-divisible right $S(\bfd)$-module $M$ with $S(\bfd)\subseteq M\subseteq S$ and such that $\ovl{M}_n=H^0(E,\LL_n(-[\mbf{u}]_n+\mbf{v}))$ for all $n\gg 0$. Put $U=\End_R(M)$ and $F=\End_R(M^{**})$. By Proposition~\ref{RSS 6.7}, $\ovl{F}\ehd\ovl{U}\ehd B(E,\LL(-\bfx),\sigma)$. By Lemma~\ref{max order pair equiv def}(1), $(F\cap S,F)$ is a maximal order pair.
\end{proof}

\subsection*{3-Veroneses of virtual blowups}\label{further vblowups}

We end this section by proving that the 3-Veroneses of our virtual blowups are virtual blowups of $S^{(3)}$.

\begin{lemma}\label{in B is Veroneses equal then ehd}
Let $B=B(E,\NN,\sigma)$ for some invertible sheaf $\NN$ on $E$ with $\deg\NN\geq 1$. Let $M,N$ be two finitely generated right $B$-submodules of $Q_\gr(B)=\Bbbk(E)[t,t^{-1};\sigma]$. If $M^{(d)}=N^{(d)}$ for some $d\geq 1$, then $M\ehd N$.
\end{lemma}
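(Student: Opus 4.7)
The plan is to apply the Artin--Van den Bergh structure theorem \cite[Theorem~1.3]{AV} --- the same tool used in Proposition~\ref{RSS 6.7} --- to each of $M$ and $N$ separately. Both are finitely generated graded right $B$-submodules of $Q_\gr(B)=\Bbbk(E)[t,t^{-1};\sigma]$, so the theorem yields invertible sheaves $\OO_E(\bfy)$ and $\OO_E(\bfz)$ on $E$, together with an integer $n_0\geq 0$, such that under the identification $Q_\gr(B)_n=\Bbbk(E)\cdot t^n\cong \Bbbk(E)$ we have
\[M_n = H^0(E,\OO_E(\bfy)\otimes \NN_n) \quad\text{and}\quad N_n=H^0(E,\OO_E(\bfz)\otimes \NN_n)\quad\text{for all }n\geq n_0,\]
viewed as subspaces of $\Bbbk(E)$ (equivalently, as subsheaves in $\Bbbk(E)$ thought of as the constant sheaf).

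Next I would exploit the hypothesis. By $M^{(d)}=N^{(d)}$ we have $M_{dn}=N_{dn}$ for every $n\geq 0$, and so for all $n$ large enough that $dn\geq n_0$,
\[H^0\bigl(E,\OO_E(\bfy)\otimes \NN_{dn}\bigr)=H^0\bigl(E,\OO_E(\bfz)\otimes \NN_{dn}\bigr)\]
as subspaces of $\Bbbk(E)$. Since $\deg\NN\geq 1$, for $n\gg 0$ the degrees of $\OO_E(\bfy)\otimes \NN_{dn}$ and $\OO_E(\bfz)\otimes \NN_{dn}$ both exceed $2g_E=2$, so each is globally generated by \cite[Corollary~IV.3.2]{Ha}. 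A globally generated invertible subsheaf of the constant sheaf $\Bbbk(E)$ is determined by its space of global sections (it equals the $\OO_E$-submodule of $\Bbbk(E)$ generated by those sections). Therefore the two sheaves coincide inside $\Bbbk(E)$, and cancelling $\NN_{dn}$ gives $\OO_E(\bfy)=\OO_E(\bfz)$, i.e.\ $\bfy=\bfz$ as divisors on $E$.

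Once $\bfy=\bfz$, the two descriptions above coincide degree by degree, so $M_n=N_n$ for all $n\geq n_0$, which is precisely $M\ehd N$. The only nontrivial input is the Artin--Van den Bergh structure theorem for finitely generated modules over $B(E,\NN,\sigma)$; the descent from $M_{dn}=N_{dn}$ to equality of the underlying invertible sheaves is then immediate from globally generated linear series on an elliptic curve, so I do not expect any serious obstacle.
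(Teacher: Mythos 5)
Your proposal is correct and follows essentially the same route as the paper's own proof: apply \cite[Theorem~1.3]{AV} to each of $M$ and $N$, use $M_{dn}=N_{dn}$ together with global generation (via \cite[Corollary~IV.3.2]{Ha}) in large degree to identify the two twisting divisors, and conclude $M\ehd N$. Your extra care in treating the sheaves as subsheaves of the constant sheaf $\Bbbk(E)$ (so that equality of sheaves really gives equality of divisors, not just linear equivalence) is a welcome precision but not a different argument.
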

\begin{proof}
By \cite[Theorem~1.3]{AV}, there exists divisors $\bfx$ and $\bfy$ such that $M_n=H^0(E,\OO(\bfx)\otimes\NN_n)$ and $N_nH^0(E,\OO(\bfy)\otimes\NN_n)$ for $n\gg0$. Let $n\geq0$. By assumption $M_{dn}=N_{dn}$, thus $H^0(E,\OO(\bfx)\otimes\NN_{dn})=H^0(E,\OO(\bfy)\otimes\NN_{dn})$. By enlarging $n$ if necessary, we can assume that both $\deg(\OO(\bfx)\otimes\NN_{dn})\geq 2$ and $\deg(\OO(\bfy)\otimes\NN_{dn})\geq 2$. In which case both sheaves are generated by their global sections by \cite[Corollary IV.3.2]{Ha}. Hence $\OO(\bfx)\otimes\NN_{dn}=\OO(\bfy)\otimes\NN_{dn}$. It follows that $\bfx=\bfy$, and then $M\ehd N$.
\end{proof}

\begin{lemma}\label{NR**=M**}
Let $R=S(\bfd)$ for some effective divisor of $\deg\bfd\leq 2$, and $M\subseteq\Sg$ be a finitely generated $g$-divisible right $R$-module. Let $d\geq 1$ and write $N=M^{(d)}$. Then $(NR)^{**}=M^{**}.$
\end{lemma}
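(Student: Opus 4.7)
The inclusion $(NR)^{**} \subseteq M^{**}$ is immediate from $NR \subseteq M$ and contravariance of $(-)^{*}$. For the reverse, apply $\Hom_R(-,R)$ to the short exact sequence
\[
0 \to NR \to M \to C \to 0, \qquad C = M/NR,
\]
obtaining
\[
0 \to \Hom_R(C,R) \to M^{*} \to (NR)^{*} \to \Ext_R^{1}(C,R) \to \cdots .
\]
Thus if $j_R(C) \geq 2$ then $M^{*} = (NR)^{*}$, and dualising once more gives $M^{**} = (NR)^{**}$. Since $R = S(\bfd)$ is Auslander--Gorenstein and Cohen--Macaulay with $\GK(R) = 3$ by Theorem~\ref{S(d) thm}(2), the grade bound $j_R(C) \geq 2$ is equivalent to $\GKdim(C) \leq 1$, which is what I shall establish.

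To bound $\GKdim(C)$, I would pass modulo $g$. Because $g$ is central,
\[
C/Cg \;=\; M/(NR + Mg) \;=\; \ovl{M}/\ovl{NR},
\]
and a direct computation shows $\ovl{NR} = \ovl{N}\cdot\ovl{R} = \ovl{M}^{(d)}\cdot\ovl{R}$. Now $\ovl{M}$ is a finitely generated right $\ovl{R}$-module (since $M$ is finitely generated over $R$), and $\ovl{R} = B(E,\LL(-\bfd),\sigma)$ is a twisted homogeneous coordinate ring whose invertible sheaf $\LL(-\bfd)$ has degree $3 - \deg\bfd \geq 1$ and hence is $\sigma$-ample (as $\sigma$ has infinite order). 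By the standard ampleness/Serre-type statement for finitely generated modules over a TCR, the multiplication map $\ovl{M}_k \otimes \ovl{R}_m \to \ovl{M}_{k+m}$ is surjective once $k \gg 0$. Taking $k = di$ with $i$ large, $\ovl{M}^{(d)}\ovl{R}$ agrees with $\ovl{M}$ in all sufficiently high degrees, so $\ovl{M}/\ovl{NR}$ is finite dimensional.

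It remains to deduce $\GKdim(C) \leq 1$ from the finite-dimensionality of $C/Cg$. Let $T \subseteq C$ be the $g$-torsion submodule; $T$ is finitely generated and killed by some $g^{N}$, and $C/T$ is $g$-torsion-free. The sequence $0 \to T \to C \to C/T \to 0$ therefore remains exact after reduction mod $g$ (the Tor term vanishes because $C/T$ is $g$-torsion-free), giving
\[
0 \to \ovl{T} \to \ovl{C} \to \ovl{C/T} \to 0,
\]
and both outer terms inherit finite-dimensionality from $\ovl{C}$. For $C/T$, the Hilbert-series identity $h_{C/T}(t)(1-t^{3}) = h_{\ovl{C/T}}(t)$ is a polynomial, so $\dim (C/T)_n$ is bounded and $\GKdim(C/T) \leq 1$. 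For $T$, the descending chain $T \supseteq Tg \supseteq \cdots \supseteq Tg^{N} = 0$ has successive quotients $Tg^{i}/Tg^{i+1}$ which are each surjective images of $\ovl{T}$ (via $x \mapsto xg^{i}$), hence finite dimensional; consequently $T$ itself is finite dimensional. Therefore $\GKdim(C) \leq 1$, as required.

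The main obstacle is the ampleness step in the second paragraph: one must know that finitely generated modules over $\ovl{R}$ satisfy the ``multiplication surjective in high degrees'' property even when $\deg \LL(-\bfd) = 1$. This is where the assumption $\deg \bfd \leq 2$ enters critically, ensuring $\deg \LL(-\bfd) \geq 1$ and thus $\sigma$-ampleness; granted this geometric input, the Ext chase and the graded Nakayama-style passage from $C/Cg$ to $C$ are routine.
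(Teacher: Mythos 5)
Your proof is correct in substance and its skeleton is the same as the paper's: reduce everything to showing $\GK(M/NR)\leq 1$ by working modulo $g$, then use the Auslander--Gorenstein/Cohen--Macaulay properties of $S(\bfd)$ to force the reflexive hulls to coincide. The differences are in how three sub-steps are executed. First, the paper obtains $\ovl{NR}\ehd\ovl{M}$ by citing \cite[Theorem~1.3]{AV} for both modules and matching the resulting divisors (its Lemma~\ref{in B is Veroneses equal then ehd}), whereas you argue via surjectivity of the multiplication maps $\ovl{M}_k\otimes\ovl{R}_m\to\ovl{M}_{k+m}$. Here your claim is too strong as stated: for $\deg\bfd=2$ and $m=1$ the sheaf $\LL(-\bfd)_1$ has degree $1$, its unique section vanishes at a point, and the multiplication map is \emph{not} surjective for any $k$. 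The conclusion survives because you are free to choose the complementary degree: writing $n=di+m$ with $m$ in a fixed window of length $d$ chosen so that $m(3-\deg\bfd)\geq 2$ (and $i\gg 0$), the standard surjectivity criterion for line bundles of degrees $\geq 2$ and $\geq 3$ on an elliptic curve applies; so this is a one-line patch rather than a genuine gap. Second, where the paper simply cites \cite[Lemma~4.11(1)]{RSS} to pass from $\GK(M/NR)\leq\GK(R)-2$ to $M\subseteq (NR)^{**}$, you re-derive that lemma via the $\Ext$ long exact sequence and the grade identity $j(C)=3-\GK(C)$; this is exactly the content of the cited lemma and is fine. Third, your passage from $\dim_\Bbbk C/Cg<\infty$ to $\GK(C)\leq 1$ via the torsion/torsion-free splitting is correct but more elaborate than necessary: the paper observes directly that $X_{n+3}=X_ng$ for $n\gg 0$, so $\dim_\Bbbk X_{n+3}\leq\dim_\Bbbk X_n$ and the dimensions are eventually bounded. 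Net effect: your route buys self-containedness at the cost of the extra care needed in the ampleness step; the paper's route outsources both the $\ehd$ comparison and the reflexivity step to quotable lemmas.
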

\begin{proof} Now since $(NR)^{(d)}=N=M^{(d)}$, $(\ovl{NR})^{(d)}=\ovl{M}^{(d)}$. So by Lemma~\ref{in B is Veroneses equal then ehd} $\ovl{NR}\ehd\ovl{M}$. 
Set $X=M/(NR)$. Since $M$ is $g$-divisible, $X/gX \cong \ovl{M}/\ovl{NR}$, and thus $\dim_\Bbbk(X/gX)<\infty$. This forces $\dim_\Bbbk X_n \geq \dim_\Bbbk X_{n+3}$ for all $n \gg0$, and therefore $\GK(M/(NR))\leq 1$. By Theorem~\ref{S(d) thm}(2), $R$ is Cohen-Macaulay. Therefore, as $\GK(M/NR)\leq 1 =\GK(R)-2$, we have that $M\subseteq (NR)^{**}$ by \cite[Lemma~4.11(1)]{RSS}. It then follows $M^{**}\subseteq (NR)^{**}$. Since $NR\subseteq M$, we also have the reverse inclusion $(NR)^{**}\subseteq M^{**}$.
\end{proof}

\begin{lemma}\label{Veronese ** commute}
Let $\bfd$ be an effective divisor with $\deg\bfd\leq 2$ and let $R=S(\bfd)$. Suppose that $R\subseteq M\subseteq S$ is a finitely generated $g$-divisible right $R$-module. Then $(M^{**})^{(3)}=(M^{(3)})^{**}$.
\end{lemma}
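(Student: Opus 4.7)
Write $R' = R^{(3)} = T([\bfd]_3)$ by Theorem~\ref{S(d) thm}(1). The crucial input is that both $R$ and $R'$ are Cohen--Macaulay of GK dimension 3: the former by Theorem~\ref{S(d) thm}(2) and the latter by \cite[Theorem~1.1]{Ro}. This allows us to invoke the two standard facts from \cite[Lemma~4.11]{RSS} over each ring: (i) $\GK(N^{**}/N) \leq \GK(R)-2 = 1$ for any finitely generated $g$-divisible $N$, and (ii) if $N \subseteq N'$ with $\GK(N'/N) \leq 1$, then $N' \subseteq N^{**}$. All four modules $M^{**}$, $M^{(3)}$, $(M^{**})^{(3)}$, $(M^{(3)})^{**}$ are finitely generated (using Lemma~\ref{noeth up n down}(2) on the $R'$-side, and noetherianness of $R$ on the $R$-side) and live inside $Q_\gr(S)$, so both double duals make sense.

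For $(M^{**})^{(3)} \subseteq (M^{(3)})^{**}$, fact (i) over $R$ gives $\GK_R(M^{**}/M) \leq 1$. Since $R$ is finitely generated on both sides over $R'$ (Lemma~\ref{noeth up n down}(2)), this GK bound is preserved on restriction to $R'$ by \cite[Proposition~5.5]{KL}, and hence the $R'$-submodule $(M^{**}/M)^{(3)} = (M^{**})^{(3)}/M^{(3)}$ also has GK dimension $\leq 1$. Fact (ii) applied over $R'$ then forces $(M^{**})^{(3)} \subseteq (M^{(3)})^{**}$. For the reverse inclusion I would bridge the two module categories by introducing the auxiliary right $R$-module $X := (M^{(3)})^{**} \cdot R \subseteq Q_\gr(S)$. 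There is a surjection of right $R$-modules $[(M^{(3)})^{**}/M^{(3)}] \otimes_{R'} R \twoheadrightarrow X / M^{(3)}R$. Fact (i) over $R'$ bounds the GK dimension of the left side, and the argument of Lemma~\ref{sporadics up n down}(2a) (that is, \cite[Proposition~5.6]{KL} together with $R$ being f.g.~over $R'$) then gives $\GK_R(X/M^{(3)}R) \leq 1$. By Lemma~\ref{NR**=M**} applied with $d=3$, $(M^{(3)}R)^{**} = M^{**}$; so fact (ii) over $R$ places $X \subseteq M^{**}$. Because $(M^{(3)})^{**}$ is concentrated in degrees divisible by 3, the Veronese of this containment yields $(M^{(3)})^{**} = ((M^{(3)})^{**})^{(3)} \subseteq X^{(3)} \subseteq (M^{**})^{(3)}$.

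The main technical obstacle, and the reason for introducing $X$, is that the two double duals live in different module categories, so the identity cannot be proved by a purely formal commutation. The real work is shuttling GK-dimension estimates through the two operations $(-)^{(3)}$ (straightforward from the Veronese lemmas of Section~2) and $(-) \otimes_{R'} R$ (where one needs $R$ to be finitely generated over $R'$ on both sides), together with the crucial identification $(M^{(3)}R)^{**} = M^{**}$ from Lemma~\ref{NR**=M**} that supplies the missing bridge between the $R$- and $R'$-sides.
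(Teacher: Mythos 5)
Your proposal is correct and follows essentially the same route as the paper: the forward inclusion via $\GK_R(M^{**}/M)\leq 1$ restricted to $R'$, and the reverse inclusion via the auxiliary module $(M^{(3)})^{**}R$ (the paper's $N^{**}R$), the surjection from $(N^{**}/N)\otimes_{R'}R$, \cite[Proposition~5.6]{KL}, \cite[Lemma~4.11(1)]{RSS}, and Lemma~\ref{NR**=M**}. No substantive differences.
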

\begin{proof}
Let $R'=R^{(3)}$ and $N=M^{(3)}$. By Theorem~\ref{S(d) thm}(2), $R$ is Auslander-Gorenstein and Cohen-Macaulay. Hence by \cite[Lemma~4.11(1)]{RSS}, $\GK_{R}(M^{**}/M)\leq 1$. Lemma~\ref{noeth up n down}(2) implies that $M^{**}/M$ is finitely generated as a right $R'$-module, hence $\GK_{R'}(M^{**}/M)=\GK_{R}(M^{**}/M)$. As right $R'$-modules clearly $(M^{**})^{(3)}/N=(M^{**}/M)^{(3)}\subseteq M^{**}/M$, and so we have that
\begin{equation}\label{GK(M**(3)/N)} \GK_{R'}((M^{**})^{(3)}/N)\leq \GK_{R'}(M^{**}/M)\leq 1.\end{equation}
Now $R'=T([\bfd]_3)$ by Theorem~\ref{S(d) thm}(1), so $R'$ is both Auslander-Gorenstein and Cohen-Macaulay by \cite[Theorem~1.1]{Ro}. Hence by \cite[Lemma~4.11(1)]{RSS} applied to $R'$, $N$, and by (\ref{GK(M**(3)/N)}), $(M^{**})^{(3)}\subseteq N^{**}$.\par

Conversely, consider $L=N^{**}R/NR$ as a right $R'$-module. Clearly $L_{R'}$ is finitely generated and is a homomorphic image of $(N^{**}/N)\otimes_{R'} R$. We therefore have
$$\GK_R(L)=\GK_{R'}(L)\leq \GK_{R'}(N^{**}/N\otimes_{R'} R)\leq \GK_{R'}(N^{**}/N)\leq 1,$$
where the second inequality follows from \cite[Proposition~5.6]{KL}, and the third from \cite[Lemma~4.11(1)]{RSS}. By \cite[Lemma~4.11(1)]{RSS} applied to $L$ and $R$, and by Lemma~\ref{NR**=M**}, $N^{**}R\subseteq (NR)^{**}=M^{**}$. Taking the 3rd Veronese gives the reverse inclusion $N^{**}\subseteq (M^{**})^{(3)}$.
\end{proof}

For the next result we recall Notation~\ref{3 Veronese notation2} for $T=S^{(3)}$.

\begin{prop}\label{3 Veronese of virtual blowup}
Let $F$ be a virtual blowup of $S$ at a $\sigma$-virtually effective divisor $\bfx$ and set $F'=F^{(3)}$. Then $F'$ is a virtual blowup of $T$ at the $\tau$-virtually effective divisor $\bfy=\bfx+\bfx^\sigma+\bfx^{\sigma^2}$.
\end{prop}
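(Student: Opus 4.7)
The strategy is to descend the problem to the 3-Veronese and apply the analogous results for $T = S^{(3)}$ established in \cite{RSS}. First, invoke Lemma~\ref{max order pair equiv def} to express $F = \End_R(M^{**})$ where $R = S(\bfd)$ for some effective divisor $\bfd$ with $\deg\bfd \leq 2$ and $M$ is a finitely generated $g$-divisible right $R$-module with $R \subseteq M \subseteq S$. Tracking the proof of Proposition~\ref{RSS 6.7} further supplies an effective divisor $\bfv$ with $\ovl{M} \ehd \bigoplus_{n \geq 0} H^0(E, \OO(\bfv) \otimes \LL_n(-[\bfd]_n))$ and $\bfx = \bfd - \bfv + \bfv^\sigma$.

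Next, set $N = M^{(3)}$ and $R' = R^{(3)}$. By Theorem~\ref{S(d) thm}(1), $R' = T([\bfd]_3)$. Lemma~\ref{noeth up n down}(2) and Lemma~\ref{g div up} imply that $N$ is a finitely generated $g$-divisible right $R'$-module with $R' \subseteq N \subseteq T$. Using the identity $[\bfd]_{3m} = [[\bfd]_3]_m$ (the inner bracket taken with respect to $\sigma$, the outer with respect to $\tau$), one can rewrite $\ovl{N} \ehd \bigoplus_{m \geq 0} H^0(E, \OO(\bfv) \otimes \MM_m(-[[\bfd]_3]_m))$, presenting $N$ and $R'$ as data in the form to which the $T$-analogue of Proposition~\ref{RSS 6.7} applies. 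An analogous direct calculation gives $[\bfx]_3 = [\bfd]_3 - \bfv + \bfv^\tau$, and Lemma~\ref{[x]_3 is v effective} shows that $\bfy = [\bfx]_3$ is $\tau$-virtually effective.

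The main technical step is to identify $F^{(3)}$ with $G := \End_{R'}(N^{**})$. By Lemma~\ref{Veronese ** commute}, $N^{**} = (M^{**})^{(3)}$. The inclusion $F^{(3)} \subseteq G$ is straightforward: any $q \in F^{(3)}$ has degree divisible by $3$ and satisfies $qM^{**} \subseteq M^{**}$, whence $qN^{**} = q(M^{**})^{(3)} \subseteq (M^{**})^{(3)} = N^{**}$. The reverse inclusion is harder, and is the main obstacle: given $q \in G \subseteq Q_\gr(T) \subseteq Q_\gr(S)$, we must show $qM^{**} \subseteq M^{**}$. I would leverage the $T$-analogue of Proposition~\ref{RSS 6.4}(4), which gives $R' = \End_G(N^{**})$, together with the bimodule structure on $N^{**}$ and Cohen-Macaulayness of $R$ from Theorem~\ref{S(d) thm}(2). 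Specifically, consider the right $R$-module $(qM^{**} + M^{**})/M^{**}$; its $3$-Veronese part vanishes (since $qN^{**} \subseteq N^{**}$), so by Lemma~\ref{sporadics up n down} and a GK-dimension bound using the ideal structure of the maximal order pair $(V,F)$ (Definition~\ref{max order pair}(3)), one concludes this quotient is zero.

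Assuming the identification $F^{(3)} = G$, apply the $T$-analogue of Proposition~\ref{RSS 6.7} to the data $(R', N, \bfv)$: this immediately yields that $(F^{(3)} \cap T, F^{(3)})$ is a maximal order pair of $T$ and $\ovl{F^{(3)}} \ehd B(E, \MM(-\bfy), \tau)$. Combined with the $\tau$-virtual effectivity of $\bfy$, both conditions of Definition~\ref{virtual blowup} are verified for $F^{(3)}$ at $\bfy$, so $F' = F^{(3)}$ is a virtual blowup of $T$ at $\bfy$. The essential difficulty in this plan is Step 3: without the identification $F^{(3)} = G$, one would only know that $F^{(3)}$ is contained in a maximal order of $Q_\gr(T)$, not that $F^{(3)}$ is itself one.
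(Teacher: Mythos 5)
Your overall architecture matches the paper's: reduce via Lemma~\ref{max order pair equiv def} to $F=\End_R(M^{**})$ with $R=S(\bfd)$, pass to $N=M^{(3)}$ and $R'=T([\bfd]_3)$, identify $F^{(3)}$ with $G=\End_{R'}(N^{**})$ using $N^{**}=(M^{**})^{(3)}$ (Lemma~\ref{Veronese ** commute}), and then invoke the $T$-version of the maximal-order-pair machinery. You also correctly isolate the inclusion $G\subseteq F^{(3)}$ as the crux. The problem is that your proposed resolution of that crux does not work as stated. Setting $X=(qM^{**}+M^{**})/M^{**}$, the observation $X^{(3)}=0$ is correct, but the inference ``by Lemma~\ref{sporadics up n down} and a GK-dimension bound \dots one concludes this quotient is zero'' is not valid: Lemma~\ref{sporadics up n down} concerns two-sided ideals, not one-sided modules; a GK-dimension bound never forces a nonzero module to vanish; and $X^{(3)}=0$ does not by itself control the growth of the components $X^{(1\;\mathrm{mod}\,3)}$ and $X^{(2\;\mathrm{mod}\,3)}$, so you do not even obtain $\GK(X)\leq 1$ for free. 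What would convert a GK bound into the desired inclusion is reflexivity: if one knew $\GK(X)\leq 1=\GK(R)-2$, then the Cohen--Macaulay property of $R$ and \cite[Lemma~4.11(1)]{RSS} give $qM^{**}+M^{**}\subseteq (M^{**})^{**}=M^{**}$. Your sketch never invokes this mechanism, and it is precisely the missing step.

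The paper avoids the element-by-element argument entirely. It introduces $W=\End_R(N^{**}R)$, shows $G=W^{(3)}$ (both inclusions are formal, since $GN^{**}R\subseteq N^{**}R$), and then computes $(N^{**}R)^{**}=((M^{**})^{(3)}R)^{**}=M^{**}$ using Lemma~\ref{NR**=M**} and Lemma~\ref{Veronese ** commute}. By \cite[Theorem~2.7]{Coz}, $\End_R((N^{**}R)^{**})=F$ is the \emph{unique} maximal order containing and equivalent to $W$, so $W\subseteq F$ and hence $G=W^{(3)}\subseteq F^{(3)}$. If you want to salvage your route, you should either adopt this uniqueness argument or supply (i) a genuine proof that $\GK(X)\leq 1$ and (ii) the appeal to \cite[Lemma~4.11(1)]{RSS} and reflexivity of $M^{**}$; as written, the step ``the quotient is zero'' is a gap, not a proof.
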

\begin{proof}
First we note that $F$ indeed exists by Proposition~\ref{RSS 7.4(3)}, while $\bfy$ is $\tau$-virtually effective by Corollary~\ref{[x]_3 is v effective}. \par
On one hand $\ovl{F}\ehd B(E,\LL(-\bfx),\sigma)$ by definition; while on the other,
\begin{multline*} B(E,\LL(-\bfx),\sigma)_{3n}=H^0(E,\LL(-\bfx)_{3n})= H^0(E,\LL_{3n}(-\bfx-\bfx^\sigma-\dots-\bfx^{\sigma^{3n-1}}))\\
=H^0(E,\MM_{n}(-\bfy-\bfy^\tau-\dots-\bfy^{\tau^{n-1}}))= H^0(E,\MM(-\bfy)_n)=B(E,\MM(-\bfy),\tau)_n.
\end{multline*}
for all $n\geq 1$. Thus $\ovl{F}^{(3)}\ehd B(E,\MM(-\bfy),\tau)$. It is hence left to prove that $(F'\cap T,F')$ is a maximal order pair of $T$. \par
Apply Lemma~\ref{max order pair equiv def}(1) to the maximal order pair $(F\cap S,F)$ of $S$. We get an effective divisor $\bfd$ with $\deg\bfd\leq 2$, and a finitely generated $g$-divisible right $S(\bfd)$-module $M$ such that $S(\bfd)\subseteq M\subseteq S$ and $F=\End_{S(\bfd)}(M^{**})$. Let $R=S(\bfd)$, $R'=R^{(3)}$ and $N=M^{(3)}$. Since $M_R$ is $g$-divisible and finitely generated, $N_{R'}$ is also by Lemma~\ref{g div up} and Lemma~\ref{noeth up n down}(2). In addition, $R'\subseteq N\subseteq T$ easily follows from $R\subseteq M\subseteq S$. By Theorem~\ref{S(d) thm}(1) we know that $R'=T(\bfe)$ where $\bfe=[\bfd]_3$ has $\deg\bfe=3\deg\bfd\leq 6$. We claim $F'$, $R'$, $N$ satisfy $F'=\End_{R'}(N^{**})$.\par
Let $G=\End_{R'}(N^{**})$. Note that since $N^{**}=(M^{**})^{(3)}$ by Lemma~\ref{Veronese ** commute},
\begin{equation}\label{F' subset G}F'N^{**}\subseteq FM^{**}\cap T_{(g)}=M^{**}\cap T_{(g)}=N^{**}.
\end{equation}
Hence $F'\subseteq G$. Now consider $W=\End_R(N^{**}R)$ and $W'=W^{(3)}$. Similar to (\ref{F' subset G}) (with $N^{**}R$ in place of $M^{**}$) one can show $W'\subseteq G$. Clearly also $GN^{**}R\subseteq N^{**}R$. This shows $G\subseteq W$, and hence $G=W'$. Now by \cite[Theorem~2.7]{Coz}, $\End_R((N^{**}R)^{**})$ is the unique maximal order equivalent to and containing $W$. But by Lemma~\ref{Veronese ** commute} and Lemma~\ref{NR**=M**} (with $d=3$ and $M$ replaced by $M^{**}$),
$$(N^{**}R)^{**}=((M^{**})^{(3)}R)^{**}=(M^{**})^{**}=M^{**}.$$
So we have $W\subseteq F$, and thus $W'=G\subseteq F'$. So $F'=\End_{R'}(N^{**})$. By a $S^{(3)}$-version of Lemma~\ref{max order pair equiv def} (essentially \cite[Corollary~6.6(1)]{RSS} and \cite[Proposition~6.4]{RSS}) $(F'\cap T,F')$ is a maximal order pair of $T$.
\end{proof}

\section{The main classification}

Here we present our main classification of maximal $S$-orders (Theorem~\ref{RSS 8.11}). It turns out that we already know of all the maximal $S$-orders $U$ (such that $\ovl{U}\neq \Bbbk$). We will show that they are all $g$-divisible, and hence fit into our classification of $g$-divisible maximal $S$-orders (Theorem~\ref{RSS 7.4}).

\begin{prop}\label{C to C hat}
Let $C$ be a cg subalgebra of $S$ satisfying $Q_\gr(C)=Q_\gr(S)$ and $\ovl{C}\neq\Bbbk$. Suppose that $g\in C$. Then for all $m\gg 0$, $C\cap Sg^m=g^m\widehat{C}$. Moreover, if $C$ is noetherian, then $\widehat{C}$ is also finitely generated on both sides as a $C$-module. \qed
\end{prop}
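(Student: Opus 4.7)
The first step is to dispense with the easier inclusion $C\cap Sg^m\subseteq g^m\widehat C$: for $c\in C\cap Sg^m$, the $g$-divisibility of $S$ and centrality of $g$ let me write $c=g^ms$ with $s\in S$, and then $g^ms=c\in C$ forces $s\in\widehat C$ by definition. So the real content of the first claim is to produce $m$ with $g^m\widehat C\subseteq C$, while the second claim asks for finite generation of $\widehat C$ over $C$ on both sides under the noetherian hypothesis.

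My plan is to reduce to the 3-Veronese, where the analogous proposition for $T=S^{(3)}$ is the $\deg(g)=1$ setting worked out in \cite{RSS} (and transferred to $\deg(g)=3$ via Remark~\ref{RSS section 2}). Set $V=C^{(3)}\subseteq T$; a direct unpacking of the definitions gives $\widehat V=\widehat C^{(3)}$ inside $T_{(g)}$. I will check that the hypotheses pass to $V$: $g\in V$ is clear; $Q_\gr(V)=Q_\gr(T)$ comes from clearing denominators in $C$ using $Q_\gr(C)=Q_\gr(S)$ and then adjusting the denominator to have degree divisible by $3$ via Lemma~\ref{choose x of appropriate deg}; and $\ovl V\neq\Bbbk$ follows by cubing any non-scalar homogeneous element of $\ovl C$ (which is a domain). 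If $C$ is noetherian then Lemma~\ref{noeth up n down}(1) makes $V$ noetherian. The $T$-analog will then produce $M_1\geq 0$ with $g^{M_1}\widehat V\subseteq V$, together with (in the noetherian case) $\widehat V$ finitely generated over $V$ on both sides.

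To bootstrap from $\widehat V$ back to $\widehat C$: the algebra $\widehat C$ is $g$-divisible with $Q_\gr(\widehat C)=Q_\gr(S)$, so by \cite[Proposition~2.9]{RSS} and Remark~\ref{RSS section 2} it is noetherian, and Lemma~\ref{noeth up n down}(2) (applied on either side) decomposes
\begin{equation*}
\widehat C=\widehat C^{(0\;\mathrm{mod}\,3)}\oplus\widehat C^{(1\;\mathrm{mod}\,3)}\oplus\widehat C^{(2\;\mathrm{mod}\,3)}
\end{equation*}
as a finitely generated $\widehat V$-module. I fix homogeneous right $\widehat V$-generators $b_j$ for the two off-diagonal pieces $k=1,2$; each $b_j\in\widehat C$ admits some $n_j$ with $g^{n_j}b_j\in C$, so set $M_2=\max_j n_j$ and $M=M_1+M_2$. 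Then for any $x=\sum_j b_j r_j\in\widehat C^{(k\;\mathrm{mod}\,3)}$ with $r_j\in\widehat V$, the centrality of $g$ gives
\begin{equation*}
g^Mx=\sum_j (g^{M_2}b_j)(g^{M_1}r_j)\in C\cdot V\subseteq C,
\end{equation*}
while the $k=0$ piece equals $\widehat V$ itself and is handled directly by $g^{M_1}\widehat V\subseteq V\subseteq C$. This establishes $g^M\widehat C\subseteq C$, completing the first claim.

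The second claim is then immediate: when $C$ is noetherian, $\widehat V$ is finitely generated over $V$ by the $T$-analog and $\widehat C$ is finitely generated over $\widehat V$ by Lemma~\ref{noeth up n down}(2), giving $\widehat C$ finitely generated over $V\subseteq C$ on both sides. The only non-mechanical input is the $T$-analog of the proposition itself, which is exactly the regime handled in \cite{RSS}; everything else is routine deployment of the Veronese machinery from Section~\ref{prelims}.
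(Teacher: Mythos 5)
Your argument is correct, but it reaches the result by a genuinely different route. The paper's proof is a one-line citation: it asserts that the proof of \cite[Proposition~8.7]{RSS} is general enough to run verbatim with $S$ in place of $T$ (i.e.\ with $\deg g=3$). You instead use the $T$-version as a black box: you pass to $V=C^{(3)}$, verify that the hypotheses descend (your three checks are all sound, and $\widehat{V}=\widehat{C}^{(3)}$ is correct precisely because $\deg g=3$), and then bootstrap $g^{M_1}\widehat{V}\subseteq V$ up to $g^{M}\widehat{C}\subseteq C$ via the congruence-class decomposition of Lemma~\ref{noeth up n down}(2). Two ingredients deserve explicit citation. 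First, your extraction of finitely many generators $b_j$ needs $\widehat{C}$ to be noetherian \emph{without} any noetherian hypothesis on $C$; this holds because $\widehat{C}$ is $g$-divisible with $Q_\gr(\widehat{C})=Q_\gr(S)$, so \cite[Proposition~2.9]{RSS} applies as in Remark~\ref{sporadics up n down applies} --- say so, since the first assertion of the proposition carries no noetherian hypothesis. Second, the $T$-version lives in \cite[Section~8]{RSS} under \cite[Assumption~8.2]{RSS}, whose validity for $T$ is the content of Remark~\ref{RSS assumption 8.2} and rests on \cite{RSS2}. With those citations in place the proof is complete. What your approach buys is that no one has to re-inspect the internals of the \cite{RSS} argument to confirm it survives $\deg g=3$; what it costs is a dependence on the Veronese transfer machinery of Section~\ref{prelims} and on \cite{RSS2}. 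Both routes are legitimate, and yours is arguably more in the spirit of how the rest of the paper exploits $T=S^{(3)}$.
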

\begin{proof}
The proof for \cite[Proposition~8.7]{RSS} is general enough to prove this statement also.
\end{proof}

Clearly Proposition~\ref{C to C hat} proves that $C$ and $\widehat{C}$ are equivalent orders. We now aim to prove $U$ and $U\langle g\rangle$ are equivalent orders. Key to this are sporadic and minimal sporadic ideals.

\begin{definition}\label{min sporadic ideal}\index{sporadic ideal}\index{minimal sporadic ideal}
Let $R$ be a cg $\Bbbk$-subalgebra of $\Sg$.
\begin{enumerate}[(1)]
\item  A homogeneous ideal $I$ of $R$ will be called \textit{sporadic} if $\GK(R/I)\leq 1$.
\item A sporadic ideal $K$ of $R$ is called \textit{minimal sporadic} if for all sporadic ideals $I$ of $R$, there exists an $n\geq 0$ such that $K_{\geq n}\subseteq I$.
\end{enumerate}
\end{definition}

We warn that Definition~\ref{min sporadic ideal}(1) does not completely agree with the definition given in \cite{RSS}. There, an ideal $I$ of $R$ was called sporadic when $\GK(R/I)=1$. It is Definition~\ref{min sporadic ideal}(1) that is most convenient for us.

\begin{remark}\label{RSS assumption 8.2}
In \cite[Section~8]{RSS} the authors work under increased assumptions designed to ensure the existence of minimal sporadic ideals \cite[Assumption~8.2]{RSS}. It is not immediately obvious that $S^{(3)}$ satisfies these assumptions: this follows from \cite[Theorem~8.8 and Proposition~8.7]{RSS2}.
\end{remark}

The coming few results show that the algebras we consider do indeed have minimal sporadic ideals.

\begin{prop}\label{RSS 8.4}
Let $R$ be a cg $g$-divisible subalgebra of $S$ with $Q_\gr(R)=Q_\gr(S)$. Then $R$ has a minimal sporadic ideal.
\end{prop}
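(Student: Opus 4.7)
The plan is to deduce the existence of a minimal sporadic ideal of $R$ from the corresponding result for its 3-Veronese. Set $R' = R^{(3)}$. By Lemma~\ref{g div up}, $R'$ is $g$-divisible, and combining Lemma~\ref{Qgr(S) or Qgr(T)} with the inclusion $R' \subseteq T$ forces $Q_\gr(R') = Q_\gr(T)$. Remark~\ref{RSS assumption 8.2} tells us $T$ satisfies \cite[Assumption~8.2]{RSS}, and so the $T$-analogue of the present proposition, namely \cite[Proposition~8.4]{RSS}, produces a minimal sporadic ideal $J$ of $R'$ in the sense of \cite{RSS}, i.e.\ with $\GK(R'/J) = 1$. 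I would first verify that $J$ is equally a minimal sporadic ideal in the sense of Definition~\ref{min sporadic ideal}: given any ideal $I'$ of $R'$ with $\GK(R'/I') \leq 1$, either $\GK(R'/I') = 1$ (and the RSS-minimality of $J$ applies directly) or $\GK(R'/I') = 0$, in which case $R'/I'$ is finite-dimensional and $J_{\geq m} \subseteq R'_{\geq m} \subseteq I'$ for $m \gg 0$.

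Next, I would take as candidate $K = RJR$, the two-sided ideal of $R$ generated by $J$. That $K$ is sporadic in $R$ is immediate from Lemma~\ref{sporadics up n down}(2a), which yields $\GK(R/K) \leq \GK(R'/J) \leq 1$.

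For the minimality claim, pick any sporadic ideal $I$ of $R$ and set $I' = I^{(3)}$. Lemma~\ref{sporadics up n down}(1) shows $\GK(R'/I') = \GK(R/I) \leq 1$, so $I'$ is sporadic in $R'$ and the minimality of $J$ furnishes some $m \geq 0$ with $J_{\geq m} \subseteq I' \subseteq I$. By Remark~\ref{sporadics up n down applies} the ring $R$ is noetherian, so Lemma~\ref{noeth up n down}(2) lets me fix finite homogeneous generating sets $\{x_a\}$ of $R$ as a right $R'$-module and $\{y_b\}$ as a left $R'$-module; let $E$ bound their degrees. Since $J$ is a two-sided ideal of $R'$, one rewrites $K = RJR = \sum_{a,b} y_b J x_a$. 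Any homogeneous element of $K_n$ then decomposes as $\sum y_b j_{ba} x_a$ with each $j_{ba}$ homogeneous of degree $n - \deg y_b - \deg x_a$, and for $n \geq m + 2E$ every nonzero such $j_{ba}$ lies in $J_{\geq m} \subseteq I$. Hence $K_{\geq m + 2E} \subseteq RIR = I$, as required.

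The main obstacle I anticipate is precisely this last bookkeeping step: translating containment at the level of 3-Veronese ideals into a degreewise containment in $R$ covering all residues modulo~$3$. This is also what motivates the choice $K = RJR$ over the alternative $K = \mathrm{r.ann}_R(R/JR)$: the product form decomposes cleanly under two-sided finite generation of $R$ over $R'$, whereas the annihilator description would give only direct control on its $3$-Veronese.
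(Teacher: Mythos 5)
Your proposal is correct and follows essentially the same route as the paper: pass to $R'=R^{(3)}$, invoke \cite[Proposition~8.4]{RSS} via Remark~\ref{RSS assumption 8.2} to get a minimal sporadic ideal $J$ of $R'$, take $K=RJR$, and use Lemma~\ref{sporadics up n down} in both directions. The only differences are cosmetic: you spell out the degree bookkeeping (via two-sided finite generation of $R$ over $R'$) that the paper compresses into the assertion $RL_{\geq m}R\ehd RLR$, and you explicitly reconcile the $\GK=1$ versus $\GK\leq 1$ definitions of ``sporadic,'' a point the paper flags in the surrounding discussion rather than in the proof itself; note only the trivial labeling slip that with your conventions $RJR=\sum_{a,b}x_aJy_b$ rather than $\sum_{a,b}y_bJx_a$.
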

\begin{proof}
Set $R'=R^{(3)}$. By Lemma~\ref{g div up}, $R'$ is $g$-divisible subalgebra of $T=S^{(3)}$. By Remark~\ref{RSS assumption 8.2} and \cite[Proposition~8.4]{RSS}, $R'$ has a minimal sporadic ideal, $L$ say. Set $K=RLR$. Now $R$ is noetherian by \cite[Proposition~2.9]{RSS} and Remark~\ref{RSS section 2}, hence Lemma~\ref{sporadics up n down}(2) applies to show that $\GK(R/K)\leq 1$. We claim that $K$ is in fact a minimal sporadic ideal of $R$.\par

Suppose that $I$ is any other sporadic ideal of $R$. Then by Lemma~\ref{sporadics up n down}(1), $J=I^{(3)}$ is a sporadic ideal of $R'$. Since $L$ is a minimal sporadic ideal of $R$, there exists $m\geq 0$ such that $L_{\geq m}\subseteq J$. We then have $RL_{\geq m}R\subseteq RJR\subseteq I$. But  $RL_{\geq m}R\ehd RLR = K$, hence there exists $n\geq 0$ such that $K_{\geq n}\subseteq RL_{\geq m}R\subseteq I$ as required.
\end{proof}

Proposition~\ref{RSS 8.4} clearly implies that the $S(\bfd)$ have minimal sporadic ideals. It can also be proved that virtual blowups of $S$ have minmal sporadic ideal \cite[Corollary~6.6]{thesis}, although we do not require it here. Corollary~\ref{RSS 8.8} below is the closest we can currently get to proving every algebra that contains $g$ has a minimal sporadic ideal.

\begin{cor}\label{RSS 8.8} Let $C$ be a cg subalgebra of $S$ with $Q_\gr(C)=Q_\gr(S)$. Assume that $g\in C$ and that $\ovl{C}\neq\Bbbk$. By Proposition~\ref{RSS 8.4}, $\widehat{C}$ has a minimal sporadic ideal $J$. Then $K=C\cap \widehat{J}$ is sporadic ideal minimal among sporadic ideals $I$ such that $C/I$ is $g$-torsionfree (i.e if $c\in C$ is such that $cg^n\in I$ then $c=0$).
\end{cor}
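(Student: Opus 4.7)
The plan is to lift everything up to $\widehat{C}$, exploit the minimality of $J$ there, and descend back. The key tool is Proposition~\ref{C to C hat} which supplies $g^m\widehat{C}\subseteq C$ for all $m\gg 0$; the centrality of $g$ in $S$ is used repeatedly without comment.

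First I would verify that $\widehat{J}$, although a priori a subset of $S_{(g)}$, actually lies in $\widehat{C}$ (by $g$-divisibility of $\widehat{C}$) and is an ideal there (by centrality of $g$); consequently $K=C\cap\widehat{J}$ is automatically an ideal of $C$, and $C/K$ embeds into $\widehat{C}/\widehat{J}$. It is $g$-torsionfree because if $cg^n\in K\subseteq\widehat{J}$ for $c\in C$, then $c\in\widehat{J}$ by $g$-divisibility of $\widehat{J}$, whence $c\in C\cap\widehat{J}=K$. Moreover $\widehat{C}/\widehat{J}$ is a quotient of $\widehat{C}/J$, and this has $\GK\leq 1$ by the assumed minimality of $J$; hence $\GK(C/K)\leq\GK(\widehat{C}/\widehat{J})\leq 1$, so $K$ is sporadic.

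For the minimality claim, let $I\lhd C$ be any sporadic ideal with $C/I$ $g$-torsionfree. Form $\widehat{I}\subseteq S_{(g)}$; by the same reasoning as above it is a $g$-divisible ideal of $\widehat{C}$, and the $g$-torsionfreeness of $C/I$ forces $\widehat{I}\cap C=I$, so $C/I\hookrightarrow\widehat{C}/\widehat{I}$. The main technical step, and the part I expect to be hardest, is to show that $\widehat{I}$ is itself sporadic in $\widehat{C}$. Here I would use Proposition~\ref{C to C hat} to pick $m\gg 0$ with $g^m\widehat{C}\subseteq C$, whence $g^m\widehat{I}\subseteq C\cap\widehat{I}=I$. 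Because $\widehat{C}/\widehat{I}$ is $g$-torsionfree (immediate from $g$-divisibility of $\widehat{I}$), multiplication by $g^m$ induces an injection $(\widehat{C}/\widehat{I})_n\hookrightarrow(\widehat{C}/\widehat{I})_{n+3m}$ whose image lies inside the subspace $(C/I)_{n+3m}$ of the target. A degree-by-degree dimension count then yields $\GK(\widehat{C}/\widehat{I})\leq\GK(C/I)\leq 1$.

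With $\widehat{I}$ now known to be sporadic, the minimality of $J$ in $\widehat{C}$ supplies an $M\geq 0$ with $J_{\geq M}\subseteq\widehat{I}$. To transport the containment to $\widehat{J}$, observe that $\widehat{J}/J$ is a finitely generated $\widehat{C}$-module (using noetherianity of $\widehat{C}$ from \cite[Proposition~2.9]{RSS} and Remark~\ref{RSS section 2}) every generator of which is annihilated by some power of the central element $g$; taking a uniform such power yields $k\geq 0$ with $g^k\widehat{J}\subseteq J$. Consequently $g^k\widehat{J}_{\geq M-3k}\subseteq J_{\geq M}\subseteq\widehat{I}$, and the $g$-divisibility of $\widehat{I}$ forces $\widehat{J}_{\geq M-3k}\subseteq\widehat{I}$. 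Intersecting with $C$ gives $K_{\geq M-3k}\subseteq\widehat{I}\cap C=I$, establishing the required minimality.
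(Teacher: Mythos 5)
Your proposal is correct and is essentially the argument the paper intends: the paper's own proof is a one-line deferral to the analogous result in \cite{RSS}, and your write-up supplies exactly the underlying transfer argument --- pass to the $g$-divisible hull via Proposition~\ref{C to C hat}, check that $\widehat{I}$ is a sporadic ideal of $\widehat{C}$ by the $g^m\widehat{C}\subseteq C$ dimension count (this is precisely where the $g$-torsionfreeness of $C/I$ enters, via $\widehat{I}\cap C=I$), invoke minimality of $J$ upstairs, and descend using $g^k\widehat{J}\subseteq J$. I see no gaps.
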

\begin{proof}
This follows in the same way as \cite[Corollary~8.8]{RSS}, with Proposition~\ref{RSS 8.4} in place of \cite[Proposition~8.4]{RSS}.
\end{proof}


Finally we give the key result that shows that $U$ and $U\langle g\rangle$ are equivalent orders.

\begin{prop}\label{RSS 8.10}
Let $U$ be a cg subalgebra of $S$ with $\ovl{U}\neq \Bbbk$ and $D_\gr(U)=D_\gr(S)$. Then there exists a nonzero ideal of $C=U\langle g\rangle$ that is finitely generated both as a right and left $U$-module.
\end{prop}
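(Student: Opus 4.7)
The plan is to split into cases based on the graded quotient ring of $C$. By Lemma~\ref{Qgr(S) or Qgr(T)} applied to $C$ (which contains $g$ and satisfies $D_\gr(C)=D_\gr(U)=D_\gr(S)$), either $Q_\gr(C)=Q_\gr(T)$ or $Q_\gr(C)=Q_\gr(S)$. In the former case $C$ is a cg subalgebra of $T=S^{(3)}$, and \cite[Proposition~8.10]{RSS}, applied to the pair $U\subseteq C$ inside $T$, directly produces the desired ideal. I therefore focus on the main case $Q_\gr(C)=Q_\gr(S)$.

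In this case I would work with the $g$-divisible hull $\widehat{C}\subseteq\Sg$ of $C$. It lies inside $S$, is $g$-divisible with $Q_\gr(\widehat{C})=Q_\gr(S)$, and is noetherian by \cite[Proposition~2.9]{RSS} and Remark~\ref{RSS section 2}. By Proposition~\ref{RSS 8.4} it has a nonzero minimal sporadic ideal $J$ which, by noetherianity, is finitely generated as a two-sided ideal of $\widehat{C}$. By Proposition~\ref{C to C hat} applied to $C$, we have $g^m\widehat{C}\subseteq C$ for all $m\gg 0$; fixing such an $m$, the set $I:=g^mJ$ is a nonzero ideal of $\widehat{C}$ (since $g$ is central and $J$ is a $\widehat{C}$-ideal), contained in $g^m\widehat{C}\subseteq C$, and hence an ideal of $C$.

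The main obstacle is to show that $I$ is finitely generated on both sides as a $U$-module. The strategy is to take a finite two-sided generating set $\{j_1,\dots,j_k\}$ for $J$ over $\widehat{C}$; after multiplication by $g^m$ the elements $g^mj_i$ lie in $C$ and generate $I$ over $\widehat{C}$. To reduce $\widehat{C}$-generation to $U$-generation one invokes the minimality of $J$ as a sporadic ideal of $\widehat{C}$: a $U$-submodule of $I$ failing to exhaust $I$ would, after taking $\widehat{C}$-spans and $g$-divisible hulls, produce a proper sporadic sub-ideal of $\widehat{C}$, contradicting the minimality of $J$. Making this heuristic rigorous---precisely comparing the $U$-module structure with the sporadic/GK-dimension structure on $\widehat{C}$ and using Lemma~\ref{choose x of appropriate deg} to supply degree-compensating elements of $U$ at each step---is where the real work lies, and mirrors the analogous step in the proof of \cite[Proposition~8.10]{RSS}, which we expect to adapt with only minor modifications to accommodate $\deg g=3$ in place of $\deg g=1$.
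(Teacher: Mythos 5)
Your opening moves match the paper exactly: the same case split via Lemma~\ref{Qgr(S) or Qgr(T)}, the same disposal of the case $Q_\gr(C)=Q_\gr(T)$ by citing \cite[Proposition~8.10]{RSS}, and the same intention to adapt that proof in the case $Q_\gr(C)=Q_\gr(S)$ (the paper's own proof is precisely such a deferral). The gap is in the specific ideal you construct. Taking $I=g^mJ$ for $J$ a minimal sporadic ideal of $\widehat{C}$ cannot work, because ``minimal sporadic'' in the sense of Definition~\ref{min sporadic ideal} does not mean $J$ is small: it only means every sporadic ideal contains some $J_{\geq n}$, and a minimal sporadic ideal is typically very large. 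Indeed $\GK(\widehat{C}/J)\leq 1<\GK(\widehat{C}/g\widehat{C})$ forces $J\not\subseteq g\widehat{C}$, so $\ovl{J}$ is a nonzero ideal of $\ovl{\widehat{C}}$ and hence $\dim_\Bbbk\ovl{J}_n\geq\dim_\Bbbk\ovl{\widehat{C}}_{n-d}$ for any $0\neq x\in\ovl{J}_d$. Now test your construction on the algebra $U'$ of Example~\ref{original S(p-p1+p2)}: there $g\in U'$, so $C=U'$, $Q_\gr(C)=Q_\gr(S)$, and $\widehat{C}=S$. Any minimal sporadic ideal $J$ of $S$ then satisfies $\dim_\Bbbk\ovl{J}_n\geq 3n-O(1)$, whereas a finitely generated graded torsion-free rank-one right $\ovl{U'}$-module has Hilbert function $2n+O(1)$ by \cite[Theorem~1.3]{AV}, since $\ovl{U'}\ehd B(E,\LL(-\bfx),\sigma)$ with $\deg\LL(-\bfx)=2$. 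So $\ovl{J}\cong J/(J\cap gS)$, and hence $J$, and hence $g^mJ\cong J$, is not finitely generated as a right $U'$-module --- this is exactly the Hilbert-series computation the paper uses in Example~\ref{original S(p-p1+p2)}(3) to prove $U'$ is not noetherian. Your ideal therefore fails in precisely the kind of situation the proposition must handle.

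The mechanism you propose for the key step also does not close: a proper $U$-submodule of $I$ whose $\widehat{C}$-span is a proper sporadic sub-ideal of $J$ produces no contradiction, because minimality of $J$ does not exclude proper sporadic sub-ideals (any $J_{\geq n}$ is one, and in fact any sporadic ideal contained in $J$ is again minimal sporadic). The ideal one actually needs must be anchored in $U$ rather than in $\widehat{C}$ --- in the example above the ideal that works is $U'_{\geq 1}=X_1'U'+X_2'U'+X_3'U'$, far smaller than $g^mJ$ --- and the minimal sporadic ideal enters (via Corollary~\ref{RSS 8.8}) to control the two-sided ideal of $C$ generated by elements of $U$, not to serve as the answer itself. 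As written, the main case of your argument needs its central construction replaced, not just made rigorous.
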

\begin{proof}
By Lemma~\ref{Qgr(S) or Qgr(T)} either $Q_\gr(C)=Q_\gr(S)$ or $Q_\gr(C)=Q_\gr(S^{(3)})$. The case $Q_\gr(C)=Q_\gr(S^{(3)})$, then this is exactly \cite[Proposition~8.10]{RSS}. So we assume that $Q_\gr(C)=Q_\gr(S)$. Despite this, the proof is almost identical to that of \cite[Proposition~8.10]{RSS}. Of course one must replace their preliminary results that they referenced with the appropriate results from this paper. The details can be found in \cite[Proposition~6.9]{thesis}.
\end{proof}

Proposition~\ref{RSS 8.10} shows that $U$ and $U\langle g\rangle$ are equivalent orders; the details can be found in the proof of the next theorem. We are now ready to state and prove our main result. In the coming results, $T=S^{(3)}$ is regraded so that $T^{(d)}=S^{(3d)}$. For completeness, we incorporate\cite[Theorem~8.11]{RSS} below. 

\begin{theorem}\label{RSS 8.11}
Let $d\geq 1$, and suppose that $U$ is a cg maximal $S^{(d)}$-order satisfying $\ovl{U}\neq \Bbbk$.
\begin{enumerate}[(1)]
\item If $d$ is coprime to 3, then there exists a $\sigma$-virtually effective divisor $\bfx$ satisfying $\deg\bfx\leq 2$, and a virtual blowup $F$ of $S$ at $\bfx$, such that $U=(F\cap S)^{(d)}$.
\item \cite[Theorem~8.11]{RSS} If $d$ is divisible by $3$, say $d=3e$, then exists a $\sigma^3$-virtually effective divisor $\bfx$ with $\deg\bfx\leq 8$, and a virtual blowup $F$ of $T$ at $\bfx$, such that $U=(F\cap T)^{(e)}$.
\end{enumerate}
\end{theorem}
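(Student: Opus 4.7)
The strategy is to lift $U$ to a $g$-divisible maximal $S$-order and then invoke Theorem~\ref{RSS 7.4}(1) and Proposition~\ref{g-div max orders up n down}. Note that only part (1) needs proof, as part (2) is exactly \cite[Theorem~8.11]{RSS}.

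First I would set $C = U\langle g\rangle$ and $W = \widehat{C} \subseteq \Sg$. Since $U\subseteq S^{(d)}\subseteq S$ and $g\in S$, we have $C\subseteq S$; because $S$ is itself $g$-divisible in $\Sg$, iterating $S\cap g\Sg = gS$ gives $W = \widehat{C}\subseteq \widehat{S} = S$. Proposition~\ref{C to C hat} then says $W$ is finitely generated on both sides over $C$. Since $\ovl{U}\neq\Bbbk$, the algebra $U$ contains homogeneous elements of some positive degree (necessarily a multiple of $d$), and $g\in W$ has degree $3$. As $\gcd(d,3)=1$, appropriate products produce elements of $Q_\gr(W)$ in every degree, so $Q_\gr(W) = Q_\gr(S)$, and in particular $D_\gr(W) = D_\gr(S)$; also $W$ is clearly a cg $g$-divisible subalgebra of $\Sg$.

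The crux is to show $W^{(d)} = U$. The inclusion $U\subseteq W^{(d)}$ is immediate from $U\subseteq W\cap S^{(d)}$. For the reverse, I would apply Proposition~\ref{RSS 8.10} to obtain a nonzero ideal $I$ of $C$ that is finitely generated on both sides over $U$, giving nonzero homogeneous $a,b\in I$ with $aCb\subseteq U$. Intersecting with the $d$-th Veronese and using Lemma~\ref{choose x of appropriate deg} to replace $a,b$ by appropriate left/right multiples of degrees divisible by $d$, one lands nonzero homogeneous elements $x,y\in Q_\gr(S^{(d)})$ with $xW^{(d)}y\subseteq U$ (since $W^{(d)}\subseteq C$ inside $\Sg$, and $aCb\subseteq U$ restricts to the appropriate Veronese). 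Combined with $U\subseteq W^{(d)}\subseteq S^{(d)}$, this makes $W^{(d)}$ and $U$ equivalent orders in $Q_\gr(S^{(d)})$; the hypothesis that $U$ is a maximal $S^{(d)}$-order then forces the equality $U = W^{(d)}$.

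With this identification in hand, Proposition~\ref{g-div max orders up n down}(1b) applies to $W$: as $W$ is a $g$-divisible cg subalgebra of $\Sg$ with $Q_\gr(W) = Q_\gr(S)$ and $d$ coprime to $3$, the fact that $W^{(d)} = U$ is a maximal $S^{(d)}$-order implies $W$ is itself a maximal $S$-order. Applying Theorem~\ref{RSS 7.4}(1) to this $g$-divisible maximal $S$-order produces a maximal order pair $(W,F)$ where $F$ is a virtual blowup of $S$ at some $\sigma$-virtually effective divisor $\bfx$ with $\deg\bfx\leq 2$, and $W = F\cap S$ by Definition~\ref{max order pair}. Therefore $U = W^{(d)} = (F\cap S)^{(d)}$, as required.

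The hard part will be the middle paragraph: rigorously transporting the ideal-theoretic relationship between $U$ and $C$ (coming from Proposition~\ref{RSS 8.10}) into a bona fide equivalence of orders in the common graded quotient $Q_\gr(S^{(d)})$. The subtlety is that $Q_\gr(W) = Q_\gr(S)$ while $Q_\gr(U) = Q_\gr(S)^{(d)}$, so one must carefully arrange the degree bookkeeping so that the multipliers witnessing equivalence live in $S^{(d)}$; once this is done, maximality of $U$ as an $S^{(d)}$-order collapses everything to the desired equality, and the rest of the argument is essentially a mechanical application of earlier structural results.
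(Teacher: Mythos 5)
Your overall route is the paper's own: form $C=U\langle g\rangle$, pass to the $g$-divisible hull, show $U$ is the $d$-th Veronese of something covered by the $g$-divisible classification, and let maximality of $U$ collapse the containments. The only genuine divergence is at the end: you first prove $U=\widehat{C}^{(d)}$ and lift maximality to $\widehat{C}$ via Proposition~\ref{g-div max orders up n down}(1b) before invoking Theorem~\ref{RSS 7.4}(1), whereas the paper applies Theorem~\ref{RSS 7.4}(2) directly to $\widehat{C}$ (which needs no maximality hypothesis) and only uses maximality of $U$ at the very last step, concluding $U=C^{(d)}=V^{(d)}$. Both endings work.

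However, two assertions in your middle paragraph are false as written and need the paper's fix. First, no nonzero homogeneous $a,b$ can satisfy $aCb\subseteq U$ when $d>1$: since $g\in C$, the elements $ag^kb$ are nonzero (as $S$ is a domain) of degrees $\deg a+\deg b+3k$, which cannot all be divisible by $d$ unless $d\mid 3$, while $U$ lives only in degrees divisible by $d$. Moreover, finite generation of the ideal $I$ over $U$ does not directly yield such multipliers, because the generators of $I$ need not lie in $Q_\gr(U)=Q_\gr(S)^{(d)}$, so the Ore common-denominator trick is unavailable. The paper's remedy is to split $I$ by degree modulo $d$ and work with the direct summand $J=I^{(d)}$, which is a nonzero ideal of $C^{(d)}$, finitely generated over $U$ on both sides and contained in $Q_\gr(U)$; then $U+J$ is equivalent to $U$ and shares the ideal $J$ with $C^{(d)}$, so $U$ and $C^{(d)}$ are equivalent orders. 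Second, the parenthetical ``since $W^{(d)}\subseteq C$'' is backwards: $W=\widehat{C}\supseteq C$. The bridge you need is the first clause of Proposition~\ref{C to C hat}, namely $C\cap Sg^m=g^m\widehat{C}$ for $m\gg 0$, hence $g^mW\subseteq C$ (note the ``finitely generated over $C$'' clause you cite requires $C$ noetherian, which is not known at that point). Choosing $m$ divisible by $d$, so that $\deg g^m=3m$ is a multiple of $d$, gives $g^mW^{(d)}\subseteq C^{(d)}$ and lets you transport the equivalence of orders from $C^{(d)}$ to $W^{(d)}$. With these repairs your argument goes through.
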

\begin{proof}
(1). By definition, $Q_\gr(U)=Q_\gr(S^{(d)})=Q_\gr(S)^{(d)}$, and so $D_\gr(U)=D_\gr(S)$. Hence Proposition~\ref{RSS 8.10} applies, and we get an ideal $I$ of $C=U\langle g\rangle$ that is finitely generated on both sides as a $U$-module. Now, as both a right and left $U$-module,
$$I\cong I^{(d)}\oplus I^{(1\;\mathrm{mod}\, d)}\oplus \dots\oplus I^{(d-1\;\mathrm{mod}\, d)},$$
where $I^{(i\;\mathrm{mod}\, d)}=\bigoplus_{j\in\N}I_{i+dj}$. Because $J=I^{(d)}$ is a direct summand of $I$, it is also finitely generated on both sides as a $U$-module. Clearly $J$ is also an ideal of $C'=C^{(d)}$.\par
Set $\widetilde{U}=U+J$, then $\widetilde{U}$ is finitely generated on both sides as a $U$-module. It follows that $U$ and $\widetilde{U}$  are equivalent orders. Clearly $\widetilde{U}$ and $C'$ are also (via the common ideal $J$), so $U$ and $C'$ are equivalent orders. \par
Now consider $C$. By Lemma~\ref{Qgr(S) or Qgr(T)}, either $Q_\gr(C)=Q_\gr(S)$ or $Q_\gr(C)=Q_\gr(T)$. In fact $Q_\gr(C)=Q_\gr(T)$ is impossible because $Q_\gr(U)=Q_\gr(S)^{(d)}$ and $d$ is coprime to 3, thus $Q_\gr(C)=Q_\gr(S)$. By Proposition~\ref{C to C hat}, $C$ and $\widehat{C}$ have a common ideal. By Theorem~\ref{RSS 7.4}(2), $\widehat{C}$ is contained in, and an equivalent order to, some virtual blowup $F$ at a virtually effective divisor $\bfx$ with $\deg\bfx\leq 2$. Hence $V=F\cap S$ is a maximal $S$-order containing and equivalent to $C$. By Lemma~\ref{equiv orders go up}, $C'$ (and hence $U$) is contained in and equivalent to $V^{(d)}$ and $F^{(d)}$. As $U$ is a maximal $S^{(d)}$-order we have $U=C'=V^{(d)}$.\par
(2). This is \cite[Theorem~8.11]{RSS} with \cite[Theorem~8.8 and Proposition~8.7]{RSS2} proving that $T$ indeed satisfies the hypotheses of those results.
\end{proof}

In the other direction we obtain Theorem~\ref{main thm converse}. We note that Theorem~\ref{main thm converse}(2)(a-c) is an improvement on \cite{RSS}.

\begin{theorem}\label{main thm converse}
Let $d\geq 1$ and $\bfx$ a divisor on $E$.
\begin{enumerate}[(1)]
\item If $d$ is coprime to 3 and $\bfx$ is $\sigma$-virtually effective with $\deg\bfx\leq 2$, then there exists a blowup $F$ of $S$ at $\mbf{x}$. Moreover:
 \begin{enumerate}[(a)]
 \item $F'=F^{(d)}$ is a maximal order with $Q_\gr(F)=Q_\gr(S^{(d)})$;
 \item $V'=F'\cap S$ is a maximal $S^{(d)}$-order;
 \item there exists an ideal $L$ of $F'$, contained in $V'$, such that $\GK(F'/L)\leq 1$.
 \end{enumerate}
\item If $\bfx$ is $\tau$-virtually effective with $\deg\bfx\leq 8$, then there exists a blowup $F$ of $T$ at $\mbf{x}$. Moreover:
\begin{enumerate}[(a)]
\item $F'=F^{(d)}$ is a maximal order with $Q_\gr(F')=Q_\gr(T^{(d)})$;
\item $V'=F'\cap T$ is a maximal $T^{(d)}$-order;
\item there exists an ideal $L$ of $F'$, contained in $V'$, such that $\GK(F'/L)\leq 1$.
\end{enumerate}
\end{enumerate}
\end{theorem}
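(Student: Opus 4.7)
\emph{Part (1).} The plan is to first produce $F$ and then transfer its properties to $F^{(d)}$. The existence of a virtual blowup $F$ of $S$ at $\bfx$ is handed to us directly by Proposition~\ref{RSS 7.4(3)}. Setting $V = F \cap S$, Definition~\ref{virtual blowup} tells us that $(V,F)$ is a maximal order pair of $S$, and unpacking Definition~\ref{max order pair} supplies the three ingredients we need: $F$ is a $g$-divisible maximal order with $Q_\gr(F) = Q_\gr(S)$, $V$ is a $g$-divisible maximal $S$-order, and there is an ideal $K$ of $F$ contained in $V$ with $\GK(F/K) \leq 1$.

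To deduce (a)--(c) I would invoke Proposition~\ref{g-div max orders up n down}(1), whose hypotheses all hold because $F \subseteq \Sg$ is $g$-divisible, $Q_\gr(F) = Q_\gr(S)$, and $d$ is coprime to $3$. Part (1a) of that proposition gives (a), once we note that $Q_\gr(F^{(d)}) = Q_\gr(F)^{(d)} = Q_\gr(S)^{(d)} = Q_\gr(S^{(d)})$. Applying part (1b) to $V$ yields that $V^{(d)}$ is a maximal $S^{(d)}$-order; the identification $V' = F' \cap S = V^{(d)}$ is routine, since every homogeneous element of $F^{(d)}$ has degree divisible by $d$, forcing $F^{(d)} \cap S = \bigoplus_n (F_{dn} \cap S) = V^{(d)}$. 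For (c), I would take $L = K^{(d)}$: this is an ideal of $F'$ contained in $V^{(d)} = V'$, and Lemma~\ref{sporadics up n down}(1) immediately gives $\GK(F'/L) = \GK(F/K) \leq 1$.

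\emph{Part (2).} Part~(2) will follow the same blueprint with $T$ in place of $S$. The existence of a virtual blowup $F$ of $T$ at $\bfx$ (for $\bfx$ $\tau$-virtually effective with $\deg\bfx \leq 8$) is the $T$-analog of Proposition~\ref{RSS 7.4(3)} and is already established in \cite{RSS, RSS2}; the analog of Definition~\ref{max order pair} for $T$ given in \cite[Definition~6.5]{RSS} again furnishes $V = F \cap T$ together with the sporadic ideal $K$. The only substantive change is to use Proposition~\ref{g-div max orders up n down}(2) in place of part~(1); this version is tailored to the case $Q_\gr(F) = Q_\gr(T)$ and imposes no coprimality condition on $d$, so the remaining verification of (a)--(c) is identical. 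I do not expect any serious obstacle, as all of the real work has been done by Proposition~\ref{g-div max orders up n down}, Lemma~\ref{sporadics up n down}, and the structural results on virtual blowups. The only point I would keep a close eye on is the grading convention on the Veronese, so that the identifications $F^{(d)} \cap S = V^{(d)}$ (respectively $F^{(d)} \cap T = V^{(d)}$) and the membership $L = K^{(d)} \subseteq V^{(d)}$ are drawn using the grading induced from $Q_\gr(S)$ rather than the re-graded Veronese.
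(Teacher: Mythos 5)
Your proposal is correct and follows essentially the same route as the paper: Proposition~\ref{RSS 7.4(3)} for existence, Proposition~\ref{g-div max orders up n down}(1a)/(1b) for parts (a) and (b), and $L=K^{(d)}$ via Lemma~\ref{sporadics up n down}(1) for part (c), with the $T$-analogues handling part (2). The extra care you take over the identification $F^{(d)}\cap S=V^{(d)}$ and the grading convention is a reasonable addition but does not change the argument.
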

\begin{proof}
(1). By Proposition~\ref{RSS 7.4(3)}, a blowup $F$ of $S$ at $\mbf{x}$ exists. In particular $(F,V)$, where $V=F\cap S$, is a maximal order pair of $S$. Since $F$ must be $g$-divisible, Proposition~\ref{g-div max orders up n down}(1a) implies $F'$ is a maximal order in $Q_\gr(S^{(d)})$. Similarly $V'$ is a maximal $S^{(d)}$-order by Proposition~\ref{g-div max orders up n down}(1b). By definition of a maximal order pair (Definition~\ref{max order pair}), there exists an ideal $K$ of $F$ contained in $V$, and such that $\GK(F/K)\leq 1$. By Lemma~\ref{sporadics up n down}(1), $L=K^{(d)}$ is an ideal of $F'$ with $\GK(F'/L)\leq 1$. Clearly $L\subseteq V'$ also holds.\par
(2). By \cite[Theorem~7.4(3)]{RSS} a blowup $F$ of $T$ at $\mbf{x}$ exists. The rest of the proof is the same with the relevant definitions in $S$ replaced by those in $T$, and Proposition~\ref{g-div max orders up n down}(2) used instead of Proposition~\ref{g-div max orders up n down}(1).
\end{proof}

Theorem~\ref{RSS 8.11} and Theorem~\ref{main thm converse} answers \cite[Question 9.4]{RSS} with the additional assumption of $\ovl{U}\neq \Bbbk$.  It is worth emphasising that we allow $n$ to be divisible by 3 in Corollary~\ref{g-div max orders up n down generalised} below. Again we remark that $S$ and $T$ are graded differently with $S^{(3d)}=T^{(d)}$ holding.

\begin{cor}\label{g-div max orders up n down generalised}
Let $U$ be a cg graded subalgebra of $S$ with $Q_\gr(U)=Q_\gr(S)^{(d)}$ for some $d\geq 1$ and such that $\ovl{U}\neq \Bbbk$. Let $n\geq 1$ and set $U'=U^{(n)}$.
\begin{enumerate}[(1)]
\item If $U$ is a maximal $S^{(d)}$-order then $U'$ is a maximal $S^{(nd)}$-order.
\item If $U$ is a maximal order then $U'$ is a maximal order.
\end{enumerate}
\end{cor}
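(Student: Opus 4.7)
The plan is to combine Theorem~\ref{RSS 8.11} with Proposition~\ref{g-div max orders up n down}, using Proposition~\ref{3 Veronese of virtual blowup} to bridge between $S$ and $T$ whenever the relevant Veronese exponent is divisible by~$3$. First I would observe that $U$ is automatically a maximal $S^{(d)}$-order: this is the hypothesis in part~(1), and in part~(2) it follows from $U\subseteq S\cap Q_\gr(S)^{(d)}=S^{(d)}$ together with the maximality of $U$ inside $Q_\gr(U)$. Thus Theorem~\ref{RSS 8.11} applies, producing a virtual blowup $F$ and a $g$-divisible algebra $V$ with $Q_\gr(V)=Q_\gr(F)$ of one of two shapes: either $\gcd(d,3)=1$ with $F$ a virtual blowup of $S$, $V=F\cap S$ and $U=V^{(d)}$; or $d=3e$ with $F$ a virtual blowup of $T$, $V=F\cap T$ and $U=V^{(e)}$.

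For part~(1), Proposition~\ref{g-div max orders up n down} first converts maximality of $U$ back into maximality of $V$ (as a maximal $S$-order in the first case via part~(1b), as a maximal $T$-order in the second via part~(2b)). To show $U^{(n)}$ is a maximal $S^{(nd)}$-order, I would split further. In the first case with $\gcd(n,3)=1$, Proposition~\ref{g-div max orders up n down}(1b) applied to $V$ with exponent $nd$ suffices. In the first case with $3\mid n$, I would work with $V^{(3)}=F^{(3)}\cap T$, which is a maximal $T$-order because $F^{(3)}$ is a virtual blowup of $T$ by Proposition~\ref{3 Veronese of virtual blowup} and thus the small partner of a maximal order pair of $T$; Proposition~\ref{g-div max orders up n down}(2b) with exponent $nd/3$ then yields that $V^{(nd)}=U^{(n)}$ is a maximal $T^{(nd/3)}=S^{(nd)}$-order. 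The second case ($d=3e$) is handled directly by Proposition~\ref{g-div max orders up n down}(2b) applied to $V$ with exponent $ne$, since $T^{(ne)}=S^{(nd)}$.

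For part~(2), the same scheme applies with the (a)-versions of Proposition~\ref{g-div max orders up n down} replacing the (b)-versions. The crucial upgrade is that once $V$ is promoted to a maximal order in $Q_\gr(V)$, the maximal order pair structure $(V,F)$ supplies a nonzero ideal $K\subseteq V$ of $F$ (nonzero since $\GK(F/K)\leq 1<\GK(F)=3$), so $V\subseteq F$ are equivalent orders and maximality of $V$ forces $V=F$. Hence $F\subseteq S$ in the first case and $F\subseteq T$ in the second. Then in the subcase $3\mid n$ of the first case, Proposition~\ref{3 Veronese of virtual blowup} delivers that $F^{(3)}$ is a virtual blowup of $T$, hence a maximal order in $Q_\gr(T)$, so Proposition~\ref{g-div max orders up n down}(2a) with exponent $nd/3$ closes the argument; the remaining subcases go through analogously with parts~(1a) or~(2a).

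The main obstacle is the bookkeeping of gradings: Proposition~\ref{g-div max orders up n down}(1) requires the Veronese exponent to be coprime to~$3$, forcing the detour through $V^{(3)}\subseteq T_{(g)}$ whenever $3$ divides the relevant exponent. The key technical identification enabling this detour is $V^{(3)}=F^{(3)}\cap T$, together with Proposition~\ref{3 Veronese of virtual blowup} recognising $F^{(3)}$ as a virtual blowup of $T$.
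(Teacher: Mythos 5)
Your proof is correct and follows essentially the same route as the paper's: decompose $U$ via Theorem~\ref{RSS 8.11}, ascend through Veroneses via Proposition~\ref{g-div max orders up n down}, and detour through $F^{(3)}$ using Proposition~\ref{3 Veronese of virtual blowup} when $3$ divides the relevant exponent. The only cosmetic difference is that the paper packages the ascent step as Theorem~\ref{main thm converse}, which is itself an immediate consequence of Proposition~\ref{g-div max orders up n down} applied to the maximal order pair, so you have simply inlined that argument.
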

\begin{proof}
(1). Suppose first that $d$ is divisible by 3, say $d=3e$. Then by Theorem~\ref{RSS 8.11}(2), $U=V^{(e)}$ where $V=F\cap T$ for some virtual blowup $F$ of $T$. By Theorem~\ref{main thm converse}(2b), $U'=V^{(ne)}$ is a maximal $T^{(ne)}=S^{(nd)}$-order. Now suppose that $d$ is coprime to 3. By Theorem~\ref{RSS 8.11}(1), $U=V^{(d)}$ where $V=F\cap S$ for some virtual blowup $F$ of $S$. If $n$ is coprime to 3, then $U'=V^{(nd)}$ is a maximal $S^{(nd)}$-order by Theorem~\ref{main thm converse}(1b). Finally assume that $n=3m$ for some $m\geq 1$. By Theorem~\ref{3 Veronese of virtual blowup}, $G=F^{(3)}$ is a virtual blowup of $T$. Set $W=G\cap T$. By Theorem~\ref{main thm converse}(2) again, $W^{(md)}=V^{(nd)}=U'$ is a maximal $T^{(md)}=S^{(nd)}$-order.\par

(2). Since $U\subseteq S^{(d)}$, if it is a maximal order it is certainly a maximal $S^{(d)}$-order.
Hence by Theorem~\ref{RSS 8.11}, $U=V^{(e)}$ where $V=F\cap S$ for some virtual blowup $F$ (of either $S$ or $T$) where either $e=d$ or $e=\frac{d}{3}$. In either case $V$ and $F$ are equivalent orders by definition, and hence by Lemma~\ref{equiv orders go up} $U=V^{(e)}$ and $F^{(e)}$ are equivalent orders. But $U$ is a maximal order, so $U=F^{(e)}$. The proof now follows that of part (1) with $F$ in place of $V$, and with Theorem~\ref{main thm converse}(1a) and Theorem~\ref{main thm converse}(2a) in place of Theorem~\ref{main thm converse}(1b) and Theorem~\ref{main thm converse}(2b).
\end{proof}

As immediate corollaries to this classification, we are able to satisfy the intuition that maximal orders are ``nice" rings. A particularly notably example of this is that maximal orders are automatically noetherian.

\begin{cor}\label{RSS 8.11'}
Let $d\geq 1$, and let $U$ be a cg maximal $S^{(d)}$-order such that $\ovl{U}\neq \Bbbk$.  Equivalently, let $F$ be a virtual blowup of $S$ at $\bfx$, $V=F\cap S$ and $U=V^{(d)}$. Then $U$, $V$ and $F$ are all strongly noetherian; in particular noetherian and finitely generated as a $\Bbbk$-algebra.
\end{cor}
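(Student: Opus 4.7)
The plan is to establish strong noetherianness first for $F$, then descend to $V = F\cap S$ via the maximal order pair structure, and finally to $U = V^{(d)}$ by a Veronese argument; noetherianness and finite generation over $\Bbbk$ will follow as corollaries. Since Theorem~\ref{RSS 8.11}(1) and Theorem~\ref{main thm converse}(1) give the equivalence of the two formulations, I work with the second. For $F$ itself, recall from Definition~\ref{virtual blowup} that $F$ is $g$-divisible with $\ovl{F} \ehd B(E,\LL(-\bfx),\sigma)$. Because $\deg\bfx \leq 2 < 3 = \deg\LL$, the line bundle $\LL(-\bfx)$ has positive degree and is therefore $\sigma$-ample on the elliptic curve $E$; the Artin--Van den Bergh theory \cite{AV} then makes $B(E,\LL(-\bfx),\sigma)$ strongly noetherian and finitely generated as a $\Bbbk$-algebra. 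Agreement in high degrees passes this to $\ovl{F}$, and since $g$ is a central regular element of $F$ with $F/gF = \ovl{F}$ strongly noetherian, the standard $g$-adic lifting argument promotes $F$ to be strongly noetherian as well.

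For $V$, Proposition~\ref{RSS 6.4} provides that $V$ is $g$-divisible, and the maximal-order-pair axioms produce an ideal $K$ of $F$ with $K \subseteq V \subseteq F$ and $\GK(F/K)\leq 1$. I would then show that $F$ is finitely generated as a $V$-module on both sides. The cleanest route is to analyze $F/K$: it is a connected graded noetherian algebra of GK-dimension at most one, with $V/K \subseteq F/K$ a cg subring. The structure theory of such low-GK algebras (roughly, they are module-finite over a polynomial subring in one variable), combined with $\ovl{V}$ and $\ovl{F}$ having the same GK-dimension (forced by $V$ and $F$ being equivalent orders), should force $F/K$ to be a finitely generated $V/K$-module, and hence $F$ is finitely generated over $V$ on each side. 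A standard descent then transfers strong noetherianness: for any commutative noetherian $R$, the inclusion $V\otimes_\Bbbk R \subseteq F\otimes_\Bbbk R$ remains a finite ring extension, so $V\otimes_\Bbbk R$ is noetherian because $F\otimes_\Bbbk R$ is.

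Finally, for $U = V^{(d)}$, the noetherian transfer in Lemma~\ref{noeth up n down}(1), citing \cite[Proposition~5.10]{AZ.ncps}, is compatible with base change: tensoring the inclusion $U \subseteq V$ with any commutative noetherian $\Bbbk$-algebra $R$ yields $U\otimes_\Bbbk R = (V\otimes_\Bbbk R)^{(d)}$, which is noetherian whenever $V\otimes_\Bbbk R$ is. Hence $U$ is strongly noetherian. Noetherianness and $\Bbbk$-algebra finite generation of all three rings follow from strong noetherianness together with the explicit control by $\ovl{F}$ established in the first step.

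The hardest point will be the finite-generation claim for $F$ over $V$ in the second paragraph: passing from the GK-dimension bound $\GK(F/K)\leq 1$ to genuine module-finiteness requires a careful argument about cg noetherian algebras of small GK-dimension, or alternatively a direct Hilbert-series comparison using the explicit description of $\ovl{F}$ and $\ovl{V}$ in high degrees as subrings of $B(E,\LL(-\bfx),\sigma)$. Once this is in place, everything else is a sequence of fairly standard lifts and descents through the central regular element $g$ and through the Veronese.
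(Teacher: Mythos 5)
Your first and third steps are sound in outline and, for $F$, essentially reconstruct the content of the result the paper actually invokes. The paper's own proof is two sentences long: both $F$ and $V$ are $g$-divisible cg subalgebras (this is built into the definition of a maximal order pair, Definition~\ref{max order pair}(1)), so both are strongly noetherian directly by \cite[Proposition~2.9]{RSS} together with Remark~\ref{RSS section 2}; the Veronese $U=V^{(d)}$ is then strongly noetherian by \cite[Proposition~4.9(2)(3)]{ASZ}. In particular the paper never descends from $F$ to $V$: the two rings are treated symmetrically and independently, each by the same citation.

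The genuine gap is your second paragraph. First, the descent is unnecessary: $V$ is itself $g$-divisible and satisfies $\ovl{V}\ehd\ovl{F}\ehd B(E,\LL(-\bfx),\sigma)$ by Theorem~\ref{RSS 7.4}(1c), so whatever argument you run for $F$ (reduce modulo $g$, compare with the twisted homogeneous coordinate ring in high degrees, lift back through the central regular element $g$) applies verbatim to $V$. Second, as written the descent does not go through. The bound $\GK(F/K)\leq 1$ does not force $F/K$ to be a finite module over the particular subring $V/K$: the structure theory of cg noetherian algebras of GK-dimension one makes them module-finite over a \emph{central} polynomial subring, which need not lie in $V/K$, and a Hilbert-series comparison only bounds $\dim_\Bbbk(F/V)_n$, which is not finite generation as a module. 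Worse, even granting that $F$ is a finite $V$-module on both sides, your final step --- ``$V\otimes_\Bbbk R$ is noetherian because $F\otimes_\Bbbk R$ is and the extension is module-finite'' --- is an appeal to the Eakin--Nagata theorem, which is a commutative result; in the noncommutative setting, passing noetherianness down to a subring over which the ring is module-finite requires extra hypotheses (for instance a common ideal with \emph{finite-dimensional} quotient, whereas here $F/K$ only has GK-dimension one). This is precisely the delicacy that the idealizer analysis of Corollary~\ref{RSS 7.6} and \cite[Sections~7--8]{RSS} exists to handle, and it cannot be waved away. The repair is simple: drop the descent and prove the statement for $V$ exactly as you did for $F$, or quote \cite[Proposition~2.9]{RSS} for both rings as the paper does.
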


\begin{proof}
Since $F$ and $V$ are $g$-divisible cg subalgebras of $S$, they are strongly noetherian by \cite[Proposition~2.9]{RSS} and Remark~\ref{RSS section 2}. By \cite[Proposition~4.9(2)(3)]{ASZ}, $U$ is then strongly noetherian.
\end{proof}

Next we complete the proof of Theorem~\ref{blowup properties}. 

\begin{cor}\label{RSS 8.12}\cite[Corollary~8.12]{RSS}.
Let $d\geq 1$ be coprime to 3, and let $U$ be a cg maximal $S^{(d)}$-order satisfying $\ovl{U}\neq\Bbbk$. Equivalently, let $F$ be a virtual blowup of $S$ at $\bfx$, $V=F\cap S$ and $U=V^{(d)}$. Then $U,V, F$ have cohomological dimension at most 2, they have balanced dualizing complexes and all satisfy the Artin-Zhang $\chi$-conditions.
\end{cor}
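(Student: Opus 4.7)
The plan is to reduce to the already-known $T$-version of this statement, \cite[Corollary~8.12]{RSS}, via the 3-Veronese construction, and then to transfer the conclusions back up to the $S$-level via Veronese machinery. By Corollary~\ref{RSS 8.11'} the algebras $U$, $V$ and $F$ are strongly noetherian and finitely generated as $\Bbbk$-algebras, which supplies the finiteness hypotheses needed throughout.

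First I would establish the conclusions for $F$ and $V$. By Proposition~\ref{3 Veronese of virtual blowup}, $F^{(3)}$ is a virtual blowup of $T$ at the $\tau$-virtually effective divisor $[\bfx]_3 = \bfx + \bfx^\sigma + \bfx^{\sigma^2}$. A short computation gives
\[ V^{(3)} = (F\cap S)^{(3)} = F^{(3)}\cap T, \]
so $V^{(3)}$ is exactly the maximal $T$-order associated with the virtual blowup $F^{(3)}$ of $T$. Then \cite[Corollary~8.12]{RSS} applied to $F^{(3)}$ and $V^{(3)}$ yields that both satisfy Artin--Zhang $\chi$ on the left and right, have cohomological dimension at most $2$, and possess balanced dualizing complexes.

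Next I would transfer these properties from $F^{(3)}$ to $F$ and from $V^{(3)}$ to $V$. By Lemma~\ref{noeth up n down}(2) the algebras $F$ and $V$ are finitely generated on both sides as modules over their respective 3-Veroneses, so the standard results from \cite{AZ.ncps} on how $\chi$ and cohomological dimension pass between a connected graded noetherian algebra and its Veronese subrings apply in both directions. Having obtained $\chi$ and finite cohomological dimension for $F$ and $V$, Van den Bergh's existence theorem for balanced dualizing complexes (as invoked in \cite[Proposition~2.4]{RSS2}) supplies the remaining property. Finally, for $U = V^{(d)}$, the same Veronese machinery applies directly: since $V$ now satisfies all three conclusions and $U$ is the $d$-th Veronese of $V$, the properties descend to $U$.

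The main technical point will be verifying that the Artin--Zhang transfer of $\chi$ and cohomological dimension between $R$ and $R^{(n)}$ genuinely applies in each of our configurations; this requires the noetherian, connected graded, finite GK-dimension and finite generation hypotheses to hold at every step, but all of these are guaranteed by Corollary~\ref{RSS 8.11'} together with Lemma~\ref{noeth up n down}. Given that, the proof is essentially a matter of assembling these standard ingredients in the right order.
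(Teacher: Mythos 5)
Your argument is essentially correct, but it takes a genuinely different route from the paper. The paper never passes through the $T$-level result: it uses Proposition~\ref{RSS 6.7} to identify $\ovl{V}$ and $\ovl{F}$ in large degree with twisted homogeneous coordinate rings on $E$, deduces from \cite[Lemma~2.2]{Ro} and \cite[Lemma~8.2(5)]{AZ.ncps} that these satisfy $\chi$ and that their $\qgr$ has cohomological dimension $1$, and then lifts these properties along the central element $g$ via \cite[Theorem~8.8]{AZ.ncps} to get $\chi$ and cohomological dimension at most $2$ for $V$ and $F$ directly; only the final descent to $U=V^{(d)}$ uses the finite-extension transfer \cite[Proposition~8.7(2)]{AZ.ncps}, exactly as you do. Your route instead quotes \cite[Corollary~8.12]{RSS} for $F^{(3)}$ and $V^{(3)}=F^{(3)}\cap T$ (via Proposition~\ref{3 Veronese of virtual blowup}) and then ascends the finite extensions $F^{(3)}\subseteq F$ and $V^{(3)}\subseteq V$. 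This works, but the one place you should be more careful is the phrase ``apply in both directions'': the two directions of the $\chi$/cohomological-dimension transfer for a finite extension $A\subseteq B$ are not symmetric. Ascent ($\chi(A)\Rightarrow\chi(B)$ and $\mathrm{cd}$ does not increase, when $B$ is finitely generated over $A$ on both sides) is the easy direction and is what you need for $F^{(3)}\subseteq F$; descent is the delicate one, is precisely \cite[Proposition~8.7(2)]{AZ.ncps}, and is what you need for $U=V^{(d)}\subseteq V$. Citing these separately (rather than appealing to the Veronese equivalence of $\qgr$'s, which is not what is being used) would close the only soft spot. What the paper's approach buys is independence from the $T$-level statement and from the ascent step altogether, since the structure modulo $g$ is already completely understood; what your approach buys is a shorter argument that recycles the RSS result as a black box together with the $3$-Veronese comparison already established in Proposition~\ref{3 Veronese of virtual blowup}.
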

\begin{proof}
By Theorem~\ref{RSS 6.7}, $\ovl{V}$ and $\ovl{F}$ are equal in high degrees to a twisted homogeneous coordinate ring. By \cite[Lemma~2.2]{Ro} and \cite[Lemma~8.2(5)]{AZ.ncps}, $\qgr(\ovl{V})$ and $\qgr(\ovl{F})$ has cohomological dimension 1 and, $\ovl{V}$ and $\ovl{F}$ satisfy $\chi$. The fact that $V$ and $F$ satisfy $\chi$ and have cohomological dimension at most 2 follows from \cite[Theorem~8.8]{AZ.ncps}. Since $V$ and $U=V^{(d)}$ are noetherian by Corollary~\ref{RSS 8.11'}, $V$ is a noetherian $U$-module. Then $U$ then also has these properties by \cite[Proposition~8.7(2)]{AZ.ncps}. Finally, $V,F$ and $U$ have balanced dualizing complexes by the above and \cite[Theorem~6.3]{VdB.dualizing}.
\end{proof}

Missing from the above corollaries (compared with Theorem~\ref{S(d) thm}) are the properties Auslander-Gorenstein and Cohen-Macaulay. In \cite[Example~10.4]{RSS} an example of a virtual blowup of $S^{(3)}$ that is neither Auslander-Gorenstein nor Cohen-Macaulay is given. This example can be adapted for a virtual blowup of $S$ to show that these conditions do not hold in general (see \cite[Corollary~7.17]{thesis} for details). The assumption $\ovl{U}\neq\Bbbk $ of Theorem~\ref{RSS 8.11} is annoying yet necessary: \cite[Example~10.8]{RSS} is an example of a maximal $T$-order contained in $gT$. A similar example can be replicated in $S$ and is provided in \cite[Example~7.19]{thesis}. 

\section{A virtual blowup example}

We end the paper with an explicit example of a virtual blowup of $S$. We are able to give algebra generators of a virtual blowup (notoriously a hard problem in noncommutative algebra) as well as realising it as an endomorphism ring.

\begin{theorem}[Proposition~\ref{S(p-p1+p2) main prop}, Remark~\ref{S(p-p1+p2) bad homologically} and Lemma~\ref{F is blowup}] \label{S(p-p1+p2)}
Let $p\in E$ and $\mbf{x}=p-p^\sigma+p^{\sigma^2}$. We set
\begin{itemize}
\item $X_1=S(p+p^{\sigma^2})_1=\{ x\in S_1\,|\; \ovl{x}\in H^0(E,\LL(-\bfx))\};$
\item $X_2=S(p)_1S(p^{\sigma^2})_1\subseteq \{ x\in S_2\,|\; \ovl{x}\in H^0(E,\LL_2(-[\bfx]_2))\};$
\item $X_3=\{ x\in S_3\,|\; \ovl{x}\in H^0(E,\LL_3(-[\bfx]_3))\}.$
\end{itemize}
Put $U=\Bbbk\langle X_1, X_2, X_3\rangle$. Then $U$ is a virtual blowup of $S$ at the virtually effective divisor $\bfx$. In particular
\begin{enumerate}[(1)]
\item $U$ is maximal order contained in $S$.
\item $U$ is noetherian and Corollaries~\ref{RSS 8.11'} and \ref{RSS 8.12} hold.
\item $U$ is neither Auslander-Gorenstein nor AS-Gorenstien nor Cohen-Macaulay. Moreover, $U$ has infinite injective dimension. 
\item Set $R=S(p)$ and $M=R+S(p)_1S_1R$ considered as a right $R$-module. Then $U=\End_{R}(M)$.
\end{enumerate}
\end{theorem}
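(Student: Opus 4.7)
The overall plan is to realise $U$ as the endomorphism ring $\End_R(M)$ for $R=S(p)$ and $M=R+S(p)_1S_1R$, then to read off parts~(1),~(2),~(4) from the virtual blowup machinery of Sections~\ref{The rings S(d)} and~\ref{g-divisible max orders}. Part~(3), asserting failure of Auslander--Gorenstein, AS--Gorenstein and Cohen--Macaulay, plus infinite injective dimension, is the main obstacle and requires a separate argument modelled on \cite[Example~10.4]{RSS}. One quickly checks that $\bfx = p-p^\sigma+p^{\sigma^2}$ is $\sigma$-virtually effective, since $[\bfx]_2 = p+p^{\sigma^3}$ is effective.

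For part~(4), the key step is to compute $\ovl{M}$. Since $\ovl{R}=B(E,\LL(-p),\sigma)$ by Theorem~\ref{S(d) thm}(1), and since for an elliptic curve the multiplication map on global sections of sufficiently positive line bundles is surjective, I would show $\ovl{S(p)_1 S_1}=H^0(E,\LL_2(-p))$, and then iterating the right $R$-action gives
$$\ovl{M}_n \;=\; H^0\bigl(E,\LL_n(p^\sigma-[p]_n)\bigr) \qquad\text{for all } n\gg 0.$$
Passing if needed to the $g$-divisible hull $\widehat{M}$ and invoking Lemma~\ref{hat end commute}, Proposition~\ref{RSS 6.7} applies with $\bfd=p$ and $\bfy=p^\sigma$, producing exactly the virtually effective divisor $\bfx = p-\bfy+\bfy^\sigma = p-p^\sigma+p^{\sigma^2}$ and showing that $F:=\End_R(\widehat{M}^{**})$ is a virtual blowup of $S$ at $\bfx$ with $\ovl{F}\ehd B(E,\LL(-\bfx),\sigma)$. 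A $\GK$-dimension estimate, using the Cohen--Macaulay property of $R$ from Theorem~\ref{S(d) thm}(2), forces $\widehat{M}=\widehat{M}^{**}$, so $F=\End_R(\widehat{M})=\widehat{\End_R(M)}$. To finish the identification $U=\End_R(M)$, I check both inclusions: $U\subseteq \End_R(M)$ by verifying $X_iM\subseteq M$ for $i=1,2,3$ (direct products in $S$, checked modulo $g$ inside $B(E,\LL,\sigma)$ via the incidences of the divisors defining the $X_i$); and $\End_R(M)\subseteq U$ by matching Hilbert series, since both $\ovl{U}$ and $\ovl{\End_R(M)}$ coincide with $B(E,\LL(-\bfx),\sigma)$ in high degrees and both rings are $g$-divisible, so a degree-by-degree count forces equality.

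Parts~(1) and~(2) are then formal. For~(1), $R\subseteq M\subseteq S$ and $1\in M$ give $\End_R(M)\subseteq S$, hence $U\subseteq S$; and $U$ is a maximal order in $Q_\gr(S)$ by Proposition~\ref{RSS 6.4}(2) applied to $F=\End_R(M^{**})=\End_R(M)$. For~(2), $U$ being a virtual blowup of $S$ at $\bfx$ places it squarely under Corollary~\ref{RSS 8.11'} (strongly noetherian, finitely generated as a $\Bbbk$-algebra) and Corollary~\ref{RSS 8.12} ($\chi$ on both sides, finite cohomological dimension, balanced dualizing complex).

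The main obstacle is part~(3). The pathology originates in the non-effective coefficient $-p^\sigma$ of $\bfx$, which produces a defect between $\ovl{U}$ and $B(E,\LL(-\bfx),\sigma)$ concentrated in low degrees. My plan is to pass to the 3-Veronese $U^{(3)}$, which by Proposition~\ref{3 Veronese of virtual blowup} is a virtual blowup of $T=S^{(3)}$ at $[\bfx]_3$, and identify it with (a mild variant of) the pathological example \cite[Example~10.4]{RSS}. The failure of Auslander--Gorenstein, AS--Gorenstein and Cohen--Macaulay for $U^{(3)}$ then transfers to $U$ because $U$ is a finitely generated $U^{(3)}$-module on both sides by Lemma~\ref{noeth up n down}, and these properties descend from a Veronese ring in the presence of such finiteness. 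For the infinite injective dimension, I would exhibit a finitely generated cyclic module $N$ whose $\ovl{N}$ is supported at $p^\sigma$, and compute $\Ext_U^i(N,U)$ using the short exact sequence $0\to gU\to U\to\ovl{U}\to 0$ together with the explicit description of $\ovl{U}$; the numerical gap between $\ovl{U}$ and $B(E,\LL(-\bfx),\sigma)$ in low degrees forces $\Ext_U^i(N,U)\neq 0$ for infinitely many $i$. The concrete Ext computations on the $\ovl{U}$ side are where the non-effectivity of $\bfx$ breaks the Cohen--Macaulay identity $\GK(N)+j(N)=\GK(U)$ and produces unbounded Ext.
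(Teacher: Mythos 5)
Your overall architecture (realising $U$ as $\End_R(M)$ for $R=S(p)$ and $M=R+S(p)_1S_1R$, computing $\ovl{M}$, and feeding this into Proposition~\ref{RSS 6.7} to produce the divisor $\bfx=p-p^\sigma+p^{\sigma^2}$) matches the paper, and your verification of $U\subseteq\End_R(M)$ via $X_iM\subseteq M$ is exactly Lemma~\ref{U subset H}. However, the reverse inclusion $\End_R(M)\subseteq U$ cannot be obtained by ``matching Hilbert series, since both $\ovl{U}$ and $\ovl{\End_R(M)}$ coincide with $B:=B(E,\LL(-\bfx),\sigma)$ in high degrees.'' The entire subtlety of this example is concentrated in low degrees: by Lemma~\ref{ovlU}, $\ovl{U}=\Bbbk\oplus\ovl{X_1}\oplus\ovl{X_2}\oplus B_{\geq 3}$ with $\dim_\Bbbk\ovl{X_1}=1<2=\dim_\Bbbk B_1$ and $\dim_\Bbbk\ovl{X_2}=3<4=\dim_\Bbbk B_2$, so agreement with $B$ in high degrees says nothing about degrees $1$ and $2$, where a priori $\ovl{\End_R(M)}$ could equal $B$ and strictly contain $\ovl{U}$. (Nor is $U$ known to be $g$-divisible at that stage; the paper establishes this only at the end of Proposition~\ref{S(p-p1+p2) main prop}, using the full-degree equality $\ovl{U}=\ovl{F}$, so your appeal to $g$-divisibility of both rings is circular.) The paper closes this gap in Lemma~\ref{S(p-p1+p2) with bars} by contradiction: if $\ovl{F}_2=B_2$ then $F$ would contain the ring $U'$ of Example~\ref{original S(p-p1+p2)}, whose $g$-divisible hull is shown --- via the nontrivial computation that $g\in S(p^{\sigma^3})_1S(p^{\sigma^{-1}})_1S(p^{\sigma^3})_1$ --- to be all of $S$, contradicting $\ovl{F}\ehd B$. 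Some argument of this kind is indispensable and is absent from your proposal. (Your auxiliary claim that a GK estimate forces $\widehat{M}=\widehat{M}^{**}$ is likewise unsubstantiated; the paper never proves it, working instead with $\widehat{H}=\End_R(\widehat{M})\subseteq F=\End_R(M^{**})$ and deducing equality only after the low-degree identification.)

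Your plan for part (3) rests on a false premise. By Proposition~\ref{3 Veronese of H}, $U^{(3)}=T(p+p^{\sigma^2}+p^{\sigma^4})$ is an honest blowup of $T$ at an \emph{effective} divisor, and by \cite[Theorem~1.1]{Ro} such blowups are Auslander-Gorenstein and Cohen-Macaulay. So the homological pathology of $U$ is invisible in $U^{(3)}$ and cannot be transferred down from the Veronese (quite apart from the fact that these properties do not pass cleanly between a ring and its Veronese); it lives precisely in the graded pieces of degree $\not\equiv 0 \pmod 3$ where $\ovl{U}$ falls short of $B$. The paper's Remark~\ref{S(p-p1+p2) bad homologically} instead runs the argument of \cite[Example~10.4 and Remark~10.7]{RSS} directly on $U$ itself.
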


What's curious about $U$ from Theorem~\ref{S(p-p1+p2)} is the summand $X_2$. It would be more natural to take $U=\Bbbk\langle X_1, X'_2, X_3\rangle$ where $X'_2=\{ u\in S_2\,|\; \ovl{u}\in H^0(E,\LL_2(-[\bfx]_2))\}$, and hence $\ovl{X_2}=B(E,\LL(-\bfx),\sigma)_2$ - a definition more in keeping with the definition of $S(\bfd)$ (Definition~\ref{S(d) def}). Example~\ref{original S(p-p1+p2)} studies this second ring and shows it is far from being a maximal order. This suggests that finding algebra generators for an arbitrary virtual blowup would be more difficult than Theorem~\ref{S(p-p1+p2)} above.\par



Before proceeding with the proof of Theorem~\ref{S(p-p1+p2)}, we require a few computational lemmas to help us. In the coming computations we will be using Notation~\ref{p^sigma^j} and Notation~\ref{[d]_n} frequently, and will be writing $\ovl{S(q)_1}=H^0(E,\LL(-q))$.

\begin{lemma}\label{Ro3.1 applied} Let $B=\ovl{S}=B(E,\LL,\sigma)$. Then
\begin{enumerate}[(1)]
\item For $q,r\in E$, if $r\neq q^{\sigma^2}$ then $\ovl{S(q)_1}\,\ovl{S(r)_1}=H^0(E,\LL_2(-q-r^\sigma))\subseteq B_2$.
\item Let $n\geq 1$ and $p{(0)},p{(1)},\dots, p{(n)}\in E$. Suppose that $p(1)\neq p(0)^{\sigma^2}$. Then
$$ \prod_{i=0}^{n}\ovl{S(p{(i)})_1}= H^0(E,\LL_{n+1}(-p{(0)}-p{(1)}^\sigma-p{(2)}^{\sigma^2}-\dots-p{(n)}^{\sigma^{n}}))\subseteq B_{n+1}.$$
\end{enumerate}
\end{lemma}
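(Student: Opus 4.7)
The plan is to exploit the multiplication rule in the twisted homogeneous coordinate ring $B=B(E,\LL,\sigma)$ together with a standard multiplication-of-sections result for invertible sheaves on an elliptic curve; the lemma is effectively the output of feeding our sheaves into \cite[Lemma~3.1]{Ro} and inducting.

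First I would handle (1). The inclusion ``$\subseteq$'' is immediate from the definition of multiplication in $B$: for $a\in \ovl{S(q)_1}=H^0(E,\LL(-q))$ and $b\in\ovl{S(r)_1}=H^0(E,\LL(-r))$ the product $a\cdot b$ in $B$ is $a\otimes b^\sigma\in H^0(E,\LL(-q)\otimes \LL^\sigma(-r^\sigma))=H^0(E,\LL_2(-q-r^\sigma))$. For the reverse inclusion I would invoke \cite[Lemma~3.1]{Ro} applied to the two degree-2 invertible sheaves $\LL(-q)$ and $\LL^\sigma(-r^\sigma)$, which gives surjectivity of the multiplication map $H^0(E,\LL(-q))\otimes H^0(E,\LL^\sigma(-r^\sigma))\to H^0(E,\LL_2(-q-r^\sigma))$ precisely under the hypothesis $r\neq q^{\sigma^2}$; this condition pins down the edge case where the two factors could fail to be in sufficiently general position.

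For (2), I would induct on $n$, with $n=1$ being part (1). For the inductive step I would associate as
$$\prod_{i=0}^n\ovl{S(p(i))_1}=\bigl(\ovl{S(p(0))_1}\,\ovl{S(p(1))_1}\bigr)\cdot\prod_{i=2}^n\ovl{S(p(i))_1},$$
apply (1) to the bracket (using the hypothesis $p(1)\neq p(0)^{\sigma^2}$), and then multiply on $\ovl{S(p(i))_1}$ one factor at a time for $i=2,\dots,n$. At each subsequent step the ``accumulated'' sheaf has degree $\geq 4$ while the incoming sheaf has degree $2$, so the appropriate instance of \cite[Lemma~3.1]{Ro} applies with no further restriction on the points, and each multiplication map on global sections is surjective. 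Unwinding the tensor products via $\LL_{n+1}=\LL_k\otimes\LL_{n+1-k}^{\sigma^k}$ and tracking the divisors of vanishing gives the claimed equality.

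The only real obstacle is the base case (1): this is the one place where the degrees of both factors are exactly $2$, so surjectivity of multiplication of sections on the elliptic curve is not automatic and one genuinely needs the non-degeneracy hypothesis $r\neq q^{\sigma^2}$ built into \cite[Lemma~3.1]{Ro}. Once past this step the inductive argument for (2) is essentially formal, since all further applications of the multiplication lemma occur in strictly easier ``higher-degree'' regimes.
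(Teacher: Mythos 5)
Your argument for part (2) is essentially the paper's: associate the first two factors, apply part (1) to the bracket, and then adjoin one factor at a time, where every further application of \cite[Lemma~3.1]{Ro} involves sheaves of degrees $\geq 4$ and $2$ and hence needs no hypothesis on the points; the paper writes out the $n=2$ case and leaves the rest to the same induction. The real difference is in part (1), which the paper does not argue at all but simply quotes as \cite[Lemma~4.1(2)]{Ro}. Your sketch re-derives that result from \cite[Lemma~3.1]{Ro}, and the one step you leave unjustified is the claim that the degree-$(2,2)$ case of that lemma applies ``precisely under the hypothesis $r\neq q^{\sigma^2}$''. What \cite[Lemma~3.1]{Ro} actually requires in the degree-$(2,2)$ case is that the two sheaves $\LL(-q)$ and $\LL^\sigma(-r^\sigma)$ be non-isomorphic; identifying the failure locus $\LL(-q)\cong\LL^\sigma(-r^\sigma)$ with the condition $r=q^{\sigma^2}$ is a genuine (if short) computation with the group law on $E$, using that the $\sigma$ attached to the Sklyanin algebra is a translation --- and that identification is exactly the content of \cite[Lemma~4.1(2)]{Ro}. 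So either carry out that computation explicitly or, as the paper does, cite \cite[Lemma~4.1(2)]{Ro} directly for part (1); with that point supplied, your proof is complete and follows the same route as the paper's.
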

\begin{proof}
(1). This is proved in \cite[Lemma~4.1(2)]{Ro}.\par
(2). The $n=1$ case is part (1). We do the case $n=2$, with the general case following by induction. From part (1), we have that $\ovl{S(p{(0)})_1}\,\ovl{S(p{(1)})_1}=H^0(E,\LL_2(-p(0)-p(1)^\sigma))$. Since $\deg\LL_2(-p(0)-p(1)^\sigma)\neq \deg\LL(-p(2))$, \cite[Lemma~3.1]{Ro} implies the multiplication map
\begin{equation*}
H^0(E,\LL_2(-p(0)-p(1)^\sigma))\otimes H^0(E,\LL(-p(2)))\longrightarrow H^0(E,\LL_3(-p(0)-p(1)^\sigma-p(2)^{\sigma^2}))
 \end{equation*}
is surjective. But the image is exactly $\ovl{S(p{(0)})_1}\,\ovl{S(p{(1)})_1}\,\ovl{S(p{(2)})_1}$.
\end{proof}


Another lemma which we will be using regularly in the following calculations is \cite[Lemma~4.1]{Ro}. Below is an extension of it which is implicit in \cite{Ro}.

\begin{lemma}\label{Ro 4.1}
Let $q,r\in E$. Then
\begin{enumerate}[(1)]
\item $S_1S(q)_1=S(q^\sigma)_1S_1$ with $\ovl{S_1S(q)_1}=\ovl{S(q^\sigma)_1S_1}=H^0(E,\LL_2(-q^\sigma))$;
\item $\dim_\Bbbk S(q)_1S(r)_1=4$ if and only if $r\neq q^{\sigma^2}$, while $\dim_\Bbbk S(q)_1S(q^{\sigma^2})_1=3$;
\item $S(q)_1S(r)_1=S(r^{\sigma})_1S(q^{\sigma^{-1}})_1$ if and only if $r\neq q^{\sigma^2},q^{\sigma^{-4}}$; whilst, $S(q)_1S(q^{\sigma^2})_1\subsetneq S(q^{\sigma^3})_1S(q^{\sigma^{-1}})_1.$ \qed
\end{enumerate}
\end{lemma}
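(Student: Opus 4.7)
Plan. The key observation is that the kernel of $S_n\to\ovl{S_n}=H^0(E,\LL_n)$ equals $gS_{n-3}$, which vanishes for $n\leq 2$; consequently $S_n\hookrightarrow\ovl{S_n}$ for such $n$, so subspaces of $S_2$ are determined by and have the same dimension as their images in $\ovl{S_2}$. Since every product appearing in the statement lies in $S_2$, the entire proof reduces to computations in $B=B(E,\LL,\sigma)$, where the product $s\cdot t$ of sections uses the $\sigma$-twist $s\cdot t^\sigma$ on the right factor.

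For (1), I would compute $\ovl{S_1 S(q)_1}$ and $\ovl{S(q^\sigma)_1 S_1}$ as the images of the maps
\[
H^0(\LL)\otimes H^0(\LL(-q))\longrightarrow H^0(\LL_2(-q^\sigma)) \quad\text{and}\quad H^0(\LL(-q^\sigma))\otimes H^0(\LL)\longrightarrow H^0(\LL_2(-q^\sigma))
\]
in $B$. Both are surjective by \cite[Lemma~3.1]{Ro} (the line bundles involved are not in its degenerate locus). Hence both images equal $H^0(\LL_2(-q^\sigma))$, and the injection $S_2\hookrightarrow\ovl{S_2}$ lifts this to the equality $S_1 S(q)_1=S(q^\sigma)_1 S_1$ in $S_2$. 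This is essentially \cite[Lemma~4.1(1)]{Ro}.

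For (2), when $r\neq q^{\sigma^2}$, Lemma~\ref{Ro3.1 applied}(1) gives $\ovl{S(q)_1 S(r)_1}=H^0(\LL_2(-q-r^\sigma))$, which is $4$-dimensional, so $\dim_\Bbbk S(q)_1 S(r)_1=4$. The substantive case is $r=q^{\sigma^2}$: one must show $\dim_\Bbbk S(q)_1 S(q^{\sigma^2})_1=3$. The effective multiplication is $H^0(\LL(-q))\otimes H^0(\LL^\sigma(-q^{\sigma^3}))\to H^0(\LL_2(-q-q^{\sigma^3}))$ (via the $\sigma$-twist). The two line bundles $\LL(-q)$ and $\LL^\sigma(-q^{\sigma^3})$ are isomorphic in $\mathrm{Pic}^2(E)$; this is precisely the degenerate locus of \cite[Lemma~3.1]{Ro}, reflecting the relation $\LL-\LL^\sigma=[q]-[q^{\sigma^3}]$ in $\mathrm{Pic}^0(E)$ forced by $\sigma$ being a translation. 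Writing $\MM$ for this common bundle, the multiplication $H^0(\MM)\otimes H^0(\MM)\to H^0(\MM^{\otimes 2})$ factors through $\mathrm{Sym}^2 H^0(\MM)$ (products of sections of a line bundle commute), which is $3$-dimensional and injects into $H^0(\MM^{\otimes 2})$ (by the standard $g^1_2$ argument: if $s_1,s_2$ is a basis, then $s_1/s_2$ is a nonconstant rational function on $E$ and so satisfies no quadratic relation). So the image has dimension exactly $3$. This dimension computation is the main obstacle.

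Part (3) then follows from (1) and (2). Both $\ovl{S(q)_1 S(r)_1}$ and $\ovl{S(r^\sigma)_1 S(q^{\sigma^{-1}})_1}$ lie in the common ambient space $H^0(\LL_2(-q-r^\sigma))$. By Lemma~\ref{Ro3.1 applied}(1), the first equals this space when $r\neq q^{\sigma^2}$; applied to $(r^\sigma,q^{\sigma^{-1}})$, the non-degeneracy $q^{\sigma^{-1}}\neq(r^\sigma)^{\sigma^2}=r^{\sigma^3}$ becomes $r\neq q^{\sigma^{-4}}$. Under both conditions the two products have the same $4$-dimensional image in $\ovl{S_2}$ and hence coincide in $S_2$. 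When $r=q^{\sigma^2}$, part (2) gives the left-hand side dimension $3$; for the right-hand side $S(q^{\sigma^3})_1 S(q^{\sigma^{-1}})_1$, the non-degeneracy $q^{\sigma^{-1}}\neq q^{\sigma^5}$ holds (infinite order of $\sigma$), so its dimension is $4$. Both sit in $H^0(\LL_2(-q-q^{\sigma^3}))$, so the containment is strict. The (reversed) strict containment at $r=q^{\sigma^{-4}}$ follows by the analogous dimension swap.
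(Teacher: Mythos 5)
Your proposal is correct. Note, however, that the paper itself offers no proof of this lemma: the statement closes with an immediate tombstone, and the surrounding text merely asserts that the result is an extension of \cite[Lemma~4.1]{Ro} which is ``implicit in \cite{Ro}''. Your argument is a faithful reconstruction of what that citation hides, and it follows the same route as Rogalski's original proof: since $gS\subseteq S_{\geq 3}$, every product in sight lies in $S_2\cong\ovl{S_2}$, so everything reduces to images of multiplication maps $H^0(E,\mathcal{A})\otimes H^0(E,\mathcal{B})\to H^0(E,\mathcal{A}\otimes\mathcal{B})$ for line bundles of degrees $2$ and $3$ or $2$ and $2$ on $E$; these are surjective except when both degrees are $2$ and $\mathcal{A}\cong\mathcal{B}$, in which case the image is the $3$-dimensional image of $\mathrm{Sym}^2H^0(E,\mathcal{A})$, whose injectivity you justify correctly via the $g^1_2$ argument. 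Your identification of the degenerate locus with $r=q^{\sigma^2}$ rests on the identity $\LL\otimes(\LL^{\sigma})^{-1}\cong\OO_E(q-q^{\sigma^3})$, which holds because the automorphism $\sigma$ attached to a Sklyanin algebra is a translation; this fact is not recorded in Hypothesis~\ref{standing assumption intro}, but it is established in \cite{ATV1} and is exactly what Rogalski's own proof of \cite[Lemma~4.1]{Ro} uses, so it is a fair input here. The dimension counts in (2), and the ``common ambient space $H^0(E,\LL_2(-q-r^\sigma))$, compare dimensions'' argument for (3) --- including the two strict inclusions at $r=q^{\sigma^2}$ and $r=q^{\sigma^{-4}}$, which are genuinely distinct cases because $\sigma$ has infinite order --- are all sound.
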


Our first lemma on the ring $U$ from Theorem~\ref{S(p-p1+p2)}, specifically the divisor $\bfx$, is a routine computation.

\begin{lemma}\label{bfx v eff}
Let $\bfx=p-p^\sigma+p^{\sigma^2}$, as in Theorem~\ref{S(p-p1+p2)}. Then $\bfx$ is $\sigma$-virtually effective. Moreover, for all $n\geq 2$, $[\bfx]_n$ is effective with $[\bfx]_n=p+p^{\sigma^2}+\dots+p^{\sigma^{n-1}}+p^{\sigma^{n+1}}$. In particular $[\bfx]_2=p+p^{\sigma^3}$ and $[\bfx]_3=p+p^{\sigma^2}+p^{\sigma^4}$ are effective. \qed
\end{lemma}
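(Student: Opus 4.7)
The plan is entirely a bookkeeping computation with the telescoping sum defining $[\bfx]_n$. First I would expand, using $\bfx^{\sigma^j}=\sigma^{-j}(\bfx)=p^{\sigma^j}-p^{\sigma^{j+1}}+p^{\sigma^{j+2}}$ from Notation~\ref{p^sigma^j},
\[
[\bfx]_n=\sum_{j=0}^{n-1}\bfx^{\sigma^j}=\sum_{j=0}^{n-1}\bigl(p^{\sigma^j}-p^{\sigma^{j+1}}+p^{\sigma^{j+2}}\bigr).
\]
Splitting this as three reindexed sums and tracking the coefficient of each point $p^{\sigma^i}$, the interior terms telescope: $p^{\sigma}$ acquires $+1-1=0$; $p^{\sigma^n}$ acquires $-1+1=0$; for $2\le i\le n-1$ all three sums contribute, giving net coefficient $+1$; while the endpoints $p=p^{\sigma^0}$ and $p^{\sigma^{n+1}}$ each occur in only one of the three sums and so have coefficient $+1$.

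This yields exactly $[\bfx]_n=p+p^{\sigma^2}+p^{\sigma^3}+\dots+p^{\sigma^{n-1}}+p^{\sigma^{n+1}}$ as in the statement, with all coefficients nonnegative, so $[\bfx]_n$ is effective for every $n\ge 2$. The cases $n=2$ and $n=3$ are just direct substitution. Since, say, $[\bfx]_2=p+p^{\sigma^3}$ is effective, $\bfx$ satisfies Definition~\ref{virtually effective} and is therefore $\sigma$-virtually effective. There is no real obstacle; the only mild subtlety is keeping track of the boundary indices $i=1$ and $i=n$ to see that their contributions cancel rather than survive.
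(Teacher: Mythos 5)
Your computation is correct: the coefficient bookkeeping for each $p^{\sigma^i}$ checks out (coefficient $1$ at $i=0$, $0$ at $i=1$ and $i=n$, $1$ for $2\le i\le n-1$ and at $i=n+1$), which is exactly the telescoping the paper has in mind when it calls this lemma ``a routine computation'' and omits the proof. This matches the stated formula for $[\bfx]_n$, and effectiveness plus $\sigma$-virtual effectiveness follow immediately as you say.
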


\begin{notation}\label{p^sigma=p_1} On top of the current notation, we will be using the more compact $p_i=p^{\sigma^i}=\sigma^{-i}(p)$ for $i\in \Z$, for the coming calculations.
\end{notation}

In the next lemma we will be regularly applying Lemma~\ref{Ro 4.1}(3) to two points in the same $\sigma$-orbit. In our new notation this read $S(p_i)_1S(p_j)_1=S(p_{j+1})_1S(p_{i-1})_1\; \text{ if }j\neq i+2,i-4$.

\begin{lemma}\label{ovlU}
Let $U$ and $\bfx$ be as in Theorem~\ref{S(p-p1+p2)}. Set $B=B(E,\LL(-\bfx),\sigma)$. Then
$$\ovl{U}=\Bbbk\oplus\ovl{X_1}\oplus \ovl{X_2}\oplus B_{\geq 3}.$$
\end{lemma}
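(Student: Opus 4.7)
The plan is to verify the claimed graded decomposition of $\ovl{U}$ degree by degree. Since $\ovl{U} = \Bbbk\langle\ovl{X_1},\ovl{X_2},\ovl{X_3}\rangle$, degrees $0$ and $1$ are immediate, and in degree $2$ the only substantive point is $\ovl{X_1}^2 \subseteq \ovl{X_2}$. This follows from $X_1 \subseteq S(p)_1 \cap S(p^{\sigma^2})_1$ (by the definition of $X_1$): inside $S$ one has $X_1\cdot X_1 \subseteq S(p)_1\cdot S(p^{\sigma^2})_1 = X_2$, and reducing modulo $g$ gives the claim.

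For $n \geq 3$ I would prove the two inclusions $\ovl{U}_n \subseteq B_n$ and $\ovl{U}_n \supseteq B_n$ separately. For the first, it suffices to check that $\Bbbk \oplus \ovl{X_1} \oplus \ovl{X_2} \oplus B_{\geq 3}$ is a graded subalgebra of $\ovl{S}$; it trivially contains the generators $\ovl{X_1}, \ovl{X_2}, \ovl{X_3}$ on using that $\ovl{X_3} = B_3$ by the very definition of $X_3$. Closure under multiplication reduces to tracking vanishing divisors in $\ovl{S} = B(E,\LL,\sigma)$: for $a \in H^0(E,\LL_m)$ and $b \in H^0(E,\LL_n)$ with vanishing divisors $\bfy$ and $\bfz$ respectively, the product $a\cdot b = a\otimes b^{\sigma^m}$ vanishes at $\bfy + \bfz^{\sigma^m}$. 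Given the data $\ovl{X_1} \subseteq H^0(\LL(-p-p^{\sigma^2}))$, $\ovl{X_2} \subseteq H^0(\LL_2(-p-p^{\sigma^3}))$ and $\ovl{X_3} = H^0(\LL_3(-p-p^{\sigma^2}-p^{\sigma^4}))$, together with the explicit formula $[\bfx]_n = p + p^{\sigma^2} + p^{\sigma^3} + \cdots + p^{\sigma^{n-1}} + p^{\sigma^{n+1}}$ from Lemma~\ref{bfx v eff}, each closure condition is a routine divisor comparison.

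The main obstacle is the reverse inclusion $B_n \subseteq \ovl{U}_n$ for $n \geq 3$, and here I would realize $B_n$ as an explicit product of the generators. Taking $p(0) = p$, $p(1) = \cdots = p(n-2) = p^\sigma$ and $p(n-1) = p^{\sigma^2}$, the hypothesis $p(1) \neq p(0)^{\sigma^2}$ of Lemma~\ref{Ro3.1 applied}(2) becomes $p^\sigma \neq p^{\sigma^2}$, which holds because $\sigma$ has infinite order, and the lemma yields
\[
B_n \;=\; \ovl{S(p)_1}\,\ovl{S(p^\sigma)_1}^{\,n-2}\,\ovl{S(p^{\sigma^2})_1}.
\]
The key additional ingredient is the ``merging'' identity $\ovl{S(p^{\sigma^2})_1}\cdot\ovl{S(p)_1} = \ovl{S(p^\sigma)_1}^{\,2}$, which is Lemma~\ref{Ro 4.1}(3) applied with $(q,r) = (p^{\sigma^2}, p)$. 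Recalling $\ovl{X_2} = \ovl{S(p)_1}\ovl{S(p^{\sigma^2})_1}$ and $\ovl{X_3} = \ovl{S(p)_1}\ovl{S(p^\sigma)_1}\ovl{S(p^{\sigma^2})_1}$, I would write $n = 2a + 3b$ with $a,b \geq 0$ (always possible for $n \geq 2$). For $n = 3$ one takes $b = 1, a = 0$ and gets $\ovl{X_3} = B_3$ directly. For $n \geq 4$, any concatenation of $a$ copies of $\ovl{X_2}$ and $b$ copies of $\ovl{X_3}$ begins with $\ovl{S(p)_1}$ and ends with $\ovl{S(p^{\sigma^2})_1}$, with exactly $a + b - 1$ interior junctions of the form $\ovl{S(p^{\sigma^2})_1}\ovl{S(p)_1}$; collapsing each via the merging identity contributes $2(a+b-1)$ copies of $\ovl{S(p^\sigma)_1}$, and together with the $b$ middle copies from the $\ovl{X_3}$ factors one obtains $2(a+b-1) + b = 2a + 3b - 2 = n-2$ interior copies, so the product equals $\ovl{S(p)_1}\ovl{S(p^\sigma)_1}^{n-2}\ovl{S(p^{\sigma^2})_1} = B_n$, as required.
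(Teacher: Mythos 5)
Your proof is correct and follows essentially the same route as the paper's: both directions rest on the same three ingredients, namely Lemma~\ref{Ro3.1 applied}(2) to write $B_n$ as a product of the spaces $\ovl{S(p^{\sigma^i})_1}$, the merging identity $\ovl{S(p^{\sigma^2})_1}\,\ovl{S(p)_1}=\ovl{S(p^{\sigma})_1}^2$ from Lemma~\ref{Ro 4.1}(3), and the formula for $[\bfx]_n$ from Lemma~\ref{bfx v eff}. The only cosmetic differences are that the paper obtains $\ovl{U}\subseteq B$ directly from $\ovl{X_1},\ovl{X_2}\subseteq B$ and $\ovl{X_3}=B_3$ together with $B$ being a ring (rather than your explicit subalgebra check), and for the reverse inclusion it computes $B_3,B_4,B_5$ as the specific products $\ovl{X_3}$, $\ovl{X_2}^2$, $\ovl{X_2}\,\ovl{X_3}$ and writes $n=3a+4b+5c$, whereas you merge a general word $\ovl{X_2}^a\ovl{X_3}^b$ with $n=2a+3b$.
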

\begin{proof}
Write $V=\ovl{U}$. By definition $V=\Bbbk\langle \ovl{X_1},\ovl{X_2},\ovl{X_3}\rangle$, and clearly $V_{\leq 1}=\Bbbk\oplus \ovl{X_1}$. Since $p\neq p_2$, it is easy to prove $\ovl{X_1}=\ovl{S(p)_1}\cap \ovl{S(p_2)_1}$. Then
\begin{equation}\label{ovlU1^2} \ovl{X_1}^2\subseteq \ovl{S(p)_1}\ovl{S(p_2)_1}=\ovl{X_2},\end{equation}
which shows $V_2=\ovl{X_2}$. By Lemma~\ref{bfx v eff}, $[\bfx]_3$ is effective, and so $H^0(E,\LL_3(-[\bfx]_3))\subseteq \ovl{S}_3$. It is then obvious from the definition that $\ovl{X_3}=B_3$. Since $\ovl{X_1},\ovl{X_2}\subseteq B$ we have
 \begin{equation}\label{ovlU1U2+ovlU2U1} \ovl{X_1X_2}+\ovl{X_2X_1}\subseteq B_3=\ovl{X_3}.\end{equation}
It follows $V_3=\ovl{X_3}.$ Since $V$ is generated in degrees less than or equal to 3, we have also proved $V\subseteq B$.\par

Now using Lemma~\ref{Ro 4.1}(3) for equality 1 below, Lemma~\ref{Ro3.1 applied}(2) for 2, and Lemma~\ref{bfx v eff} for 3, we have
\begin{multline}\label{V4}
V_4\supseteq \ovl{X_2}^2=\ovl{S(p)_1}(\ovl{S(p_2)_1}\ovl{S(p)_1})\ovl{S(p_2)_1}\,\overset{1}=\,
\ovl{S(p)_1}(\ovl{S(p_1)_1}\ovl{S(p_1)_1})\ovl{S(p_2)_1}\\ \overset{2}=H^0(E,\LL(-p-p_2-p_3-p_5))\,\overset{3}=\,H^0(E,\LL(-[\bfx]_4))=B_4.
\end{multline}
Also, using Lemma~\ref{Ro3.1 applied}(2), we can write
\begin{equation}\label{V3} \ovl{X_3}=\ovl{S(p)_1}\ovl{S(p_1)_1}\ovl{S(p_2)_1}.\end{equation}
So a similar calculation using Lemma~\ref{Ro 4.1}(3), Lemma~\ref{Ro3.1 applied}(2) and Lemma~\ref{bfx v eff} obtains
\begin{multline}\label{V5} V_5\supseteq \ovl{X_2}\ovl{X_3}= (\ovl{S(p)_1}\ovl{S(p_2)_1})(\ovl{S(p)_1}\ovl{S(p_1)_1}\ovl{S(p_2)_1}) =\ovl{S(p)_1}(\ovl{S(p_1)_1}\ovl{S(p_1)_1})\ovl{S(p_1)_1}\ovl{S(p_2)_1}\\
=H^0(E,\LL_5(-p_0-p_2-p_3-p_4-p_6))=H^0(E,\LL_5(-[\bfx]_5))=B_5\end{multline}
Now for $n\geq 6$, it is easily seen that we can write $n=3a+4b+5c$ with $a,b,c$ nonnegative integers. In which case
$$V_n\supseteq V_3^{a}V_4^{b}V_5^{c}=B_3^aB_4^bB_5^c=B_{3a+4b+5c}=B_n$$
by equations (\ref{V4}), (\ref{V3}) and (\ref{V5}), Lemma~\ref{Ro3.1 applied}(2) and Lemma~\ref{bfx v eff}.
\end{proof}

At this point we note that (\ref{ovlU1^2}) can be lifted to $S$ since $gS\subseteq S_{\geq 3}$. Similarly, as $X_3\cap gS=g\Bbbk= S_3\cap gS$, (\ref{ovlU1U2+ovlU2U1}) can also be lifted to $S$. This proves that
\begin{equation}\label{X_i=U_i}
X_1,\, X_2,\text{ and }X_3\text{ are the homogeneous elements of }U\text{ of degree 1, 2, and 3}.
\end{equation}

Next we look to realise $U$ as an endomorphism ring.

\begin{notation}\label{virtual blowup notation}
Let $R=S(p)$, $M=R+S(p)_1S_1R$ and $H=\End_{R}(M)$. We also put $F=\End_R(M^{**})$ and $V=F\cap S$.
\end{notation}

\begin{lemma}\label{U subset H}
Retain the notation of Theorem~\ref{S(p-p1+p2)} and Notation~\ref{virtual blowup notation}. Then
$U\subseteq H.$
\end{lemma}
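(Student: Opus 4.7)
The plan is to use the identification $H = \End_R(M) = \{q \in Q_\gr(S) : qM \subseteq M\}$, reducing $U \subseteq H$ to the check that $X_iM \subseteq M$ for each $i = 1, 2, 3$. Since $M = R + S(p)_1 S_1 \cdot R$ is generated as a right $R$-module by $\{1\}$ together with $S(p)_1 S_1$, it further suffices to verify
\begin{enumerate}[(a)]
\item $X_i \subseteq M$; and
\item $X_i \cdot S(p)_1 S_1 \subseteq M$
\end{enumerate}
for each $i = 1, 2, 3$.

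For (a), the cases $i = 1, 2$ are immediate, since $X_1 \subseteq S(p)_1 = R_1 \subseteq R$ and $X_2 = S(p)_1 S(p_2)_1 \subseteq S(p)_1 S_1 \cdot R$. For $i = 3$ I would first compute $\ovl{M}_3$: expanding $M_3 = R_3 + S(p)_1 S_1 S(p)_1$, using Lemma~\ref{Ro 4.1}(1) to rewrite $\ovl{S}_1 \ovl{S(p)_1} = \ovl{S(p_1)_1}\, \ovl{S}_1$, and then applying Lemma~\ref{Ro3.1 applied}(1) together with \cite[Lemma~3.1]{Ro} yields $\ovl{M}_3 = H^0(E, \LL_3(-p - p_2))$, which contains $\ovl{X_3} = H^0(E, \LL_3(-p - p_2 - p_4))$. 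The containment $X_3 \subseteq M$ then follows because the kernel $gS_0 = \Bbbk g$ of the reduction $S_3 \twoheadrightarrow \ovl{S}_3$ lies inside $R \subseteq M$.

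For (b), the case $i = 1$ is handled by an analogous divisor-level calculation showing $\ovl{X_1 \cdot S(p)_1 S_1} \subseteq H^0(E, \LL_3(-p-p_1-p_2)) \subseteq \ovl{M}_3$, combined with the same lift via $g \in M$. The cases $i = 2, 3$ are purely algebraic rearrangements using the commutation relations of Lemma~\ref{Ro 4.1}, principally $S(q)_1 S_1 = S_1 S(q^{\sigma^{-1}})_1$ and $S(p_i)_1 S(p_j)_1 = S(p_{j+1})_1 S(p_{i-1})_1$ (valid when $j \neq i + 2, i - 4$). For $X_2$, a short chain of these identities collapses $X_2 \cdot S(p)_1 S_1$ to $S(p)_1 S_1 \cdot S(p)_1^2 \subseteq S(p)_1 S_1 R$. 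For $X_3$, writing $X_3 = S(p)_1 S(p_1)_1 S(p_2)_1 + \Bbbk g$ (with the first summand realising $\ovl{X_3}$ by Lemma~\ref{Ro3.1 applied}(2)), the scalar summand is absorbed via centrality of $g$, while in the main summand one iteratively moves $S_1$'s leftward past the $S(p_i)_1$'s until the trailing block collapses to $S(p_1)_1 S(p_{-1})_1 = S(p)_1^2$, yielding $S(p)_1 S_1 \cdot S(p)_1^3 \subseteq S(p)_1 S_1 R$.

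The main obstacle is the combinatorial bookkeeping for these rearrangements in part (b) for $i = 3$; however, all index pairs arising in the chain differ by $\pm 1$ or $\pm 2$, so the exceptional cases $j = i + 2, i - 4$ of Lemma~\ref{Ro 4.1}(3) are never encountered, and the rewrites proceed without obstruction.
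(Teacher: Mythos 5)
Your proposal is correct and follows essentially the same route as the paper: reduce to $X_iM\subseteq M$, treat the products with $X_1$ by a global-sections computation lifted modulo $\Bbbk g$, and treat $X_2$, $X_3$ by the commutation identities of Lemma~\ref{Ro 4.1} (the paper handles $X_3R\subseteq M$ purely algebraically via $S(p)_1S(p_1)_1S(p_2)_1\subseteq S(p)_1S_1S(p)_1\subseteq S(p)_1S_1R$ rather than by computing $\ovl{M}_3$, but this is immaterial). One small wording slip: the exceptional case $r=q^{\sigma^2}$ of Lemma~\ref{Ro 4.1}(3) is itself an index difference of $2$, so your closing justification should instead observe that the only pair to which part (3) is actually applied is $S(p_1)_1S(p_{-1})_1$, where $j=i-2$ avoids both excluded values.
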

\begin{proof}
Here we again use Notation~\ref{p^sigma=p_1}. We warn that because $gS\subseteq S_{\geq 3}$, we can and will regularly identify $S_{\leq 2}=\ovl{S_{\leq 2}}$. Since $U$ is generated by $X_1,X_2,X_3$ we need to show $X_1,X_2,X_3\subseteq H$.  As $H=\{x\in Q_\gr(S)\,|\; xM\subseteq M\}$, we must prove $X_iM\subseteq M$ for each $i$. \par

\textit{Proof of} $X_1M\subseteq M$. Since $X_1\subseteq S(p)_1=R_1$, we clearly have $X_1R\subseteq R\subseteq M$. So it is
\begin{equation}\label{U1M subset M}   X_1S(p)_1S_1R\subseteq M\end{equation}
that we need to prove. First we show that $\ovl{X_1}\,\ovl{S(p)_1S_1}\subseteq \ovl{R_3}=H^0(E,\LL_3(-p-p_1-p_2))$. For this, we have $\ovl{X_1}=H^0(E,\LL(-p-p_2))$, and $\ovl{S(p)_1S_1}=H^0(E,\LL_2(-p))$ by Lemma~\ref{Ro 4.1}(1). Thus $\ovl{X_1}\,\ovl{S(p)_1S_1}$ is the image of the natural map $H^0(E,\LL(-p-p_2))\otimes H^0(E,\LL_2(-p)) \longrightarrow H^0(E,\LL(-p-p_2)\otimes \LL_2(-p)^\sigma)$. Further, $\LL(-p-p_2)\otimes \LL_2(-p)^\sigma=\LL_3(-p-p_1-p_2)$. Hence indeed $\ovl{X_1}\,\ovl{S(p)_1S_1}\subseteq \ovl{R_3}$. This implies $X_1S(p)_1S_1\subseteq R_3+(S_3\cap Sg)$. But $\deg(g)=3$, and so $S_3\cap Sg=S_0g=\Bbbk g=R_0g=R_3\cap Sg$.
Hence $X_1S(p)_1S_1\subseteq R_3$ and (\ref{U1M subset M}) follows. \par

\textit{Proof of} $X_2M\subseteq M$. First, clearly
\begin{equation}\label{U2M subset H 1} X_2R=S(p)_1S(p_2)_1R\subseteq S(p)_1S_1R\subseteq M.\end{equation}
Next, using Lemma~\ref{Ro 4.1}(1) for equality 1 and Lemma~\ref{Ro 4.1}(3) for equality 2, we also have
\begin{multline}\label{U2M subset H 2} X_2S(p)_1S_1R=(S(p)_1S(p_2)_1)(S(p)_1S_1R)\overset{1}=(S(p)_1S_1)(S(p_1)S(p_{-1}))R\\
\overset{2}=(S(p)_1S_1)S(p)_1^2R=(S(p)_1S_1)R_1^2R\subseteq S(p)_1S_1R\subseteq M. \end{multline}
Equations (\ref{U2M subset H 1}) and (\ref{U2M subset H 2}) together show $X_2M=X_2(R+S(p)_1S_1R)\subseteq M$.\par

\textit{Proof of} $X_3M\subseteq M$. From (\ref{V3}) we have $\ovl{X_3}=\ovl{S(p)_1}\ovl{S(p_1)_1}\ovl{S(p_2)_1}$. Since $\deg(g)=3$ and $gS\cap S_3=\Bbbk g$, this implies $X_3=S(p)_1S(p_1)_1S(p_2)_1+\Bbbk g$ (in fact one can prove $g\in S(p)_1S(p_1)_1S(p_2)_1$ but this is not necessary for us). As $g\in R$ and is central, we clearly we have $gM=Mg\subseteq M$. Therefore what we need to prove is
\begin{equation}\label{U3 in H} S(p)_1S(p_1)_1S(p_2)_1M=S(p)_1S(p_1)_1S(p_2)_1(R+S(p)_1S_1R)\subseteq M.\end{equation}
First, using Lemma~\ref{Ro 4.1}(1) for equality 1 below, we have
\begin{multline}\label{U3M subset H 1}
(S(p)_1S(p_1)_1S(p_2)_1)R\subseteq (S(p)_1S(p_1)_1S_1)R\overset{1}=(S(p)_1S_1S(p)_1)R\\
=(S(p)_1S_1)R_1R \subseteq S(p)_1S_1R\subseteq M.
\end{multline}
Secondly, with multiple uses of  Lemma~\ref{Ro 4.1},
\begin{multline}\label{U3M subset H 2}(S(p)_1S(p_1)_1S(p_2)_1)(S(p)_1S_1)R=(S(p)_1S_1)(S(p)_1S(p_1)_1S(p_{-1}))R \\
=(S(p)_1S_1)S(p)_1^3R=(S(p)_1S_1)R_3R\subseteq(S(p)_1S_1)R\subseteq M.
\end{multline}
Equations (\ref{U3M subset H 1}) and (\ref{U3M subset H 2}) together give (\ref{U3 in H}).
\end{proof}

\begin{lemma}\label{M H with bars}
Retain the notation of Theorem~\ref{S(p-p1+p2)} and Notation~\ref{virtual blowup notation}. Then
\begin{enumerate}[(1)]
\item $\ovl{M}\ehd \bigoplus_{n\geq 0} H^0(E,\OO_E(p^\sigma)\otimes \LL(-p)_n)=\bigoplus_{n\geq 0} H^0(E,\LL_n(p-p^{\sigma^2}-\dots-p^{\sigma^{n-1}}));$
\item $\ovl{H}\ehd B(E,\LL(-\bfx),\sigma)\ehd\ovl{U}$.
\end{enumerate}
\end{lemma}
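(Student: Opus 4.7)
For (1), I would compute $\ovl{M}_n$ explicitly. Since $R=\Bbbk\langle S(p)_1\rangle$ by Proposition~\ref{Ro 12.2}(1), one has $\ovl{M}_n=\ovl{R}_n+\ovl{S(p)_1 S_1}\cdot\ovl{R}_{n-2}$ for $n\geq 2$. Lemma~\ref{Ro 4.1}(1) (with $q=p^{\sigma^{-1}}$) yields $\ovl{S(p)_1S_1}=H^0(E,\LL_2(-p))$, while Lemma~\ref{Ro3.1 applied}(2) applied to the constant sequence $p(i)=p$ gives $\ovl{R}_{n-2}=H^0(E,\LL_{n-2}(-[p]_{n-2}))$ for $n\geq 4$. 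Multiplication in $B(E,\LL,\sigma)$ then places the product inside $H^0(E,\LL_n(-p-p^{\sigma^2}-\dots-p^{\sigma^{n-1}}))$; surjectivity for $n\gg 0$ follows from \cite[Lemma~3.1]{Ro} since the two factors have distinct degrees once $n$ is large. As $\ovl{R}_n=H^0(E,\LL_n(-[p]_n))$ is already contained in this target, $\ovl{M}_n$ equals it. The stated identity is then a direct calculation: cancelling the $-p^\sigma$ factor inside $\LL(-p)_n=\LL_n(-[p]_n)$ gives $\OO_E(p^\sigma)\otimes\LL(-p)_n=\LL_n(-p-p^{\sigma^2}-\dots-p^{\sigma^{n-1}})$.

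For (2), the strategy is to sandwich $\ovl{H}$ between copies of $B:=B(E,\LL(-\bfx),\sigma)$ in high degree. From Lemma~\ref{U subset H} and Lemma~\ref{ovlU} we already have $B\ehd\ovl{U}\subseteq\ovl{H}$, providing the lower bound. For the reverse inclusion, I would apply Proposition~\ref{RSS 6.7} with $\bfd=p$, $R=S(p)$ and the module $M$; should $M$ fail to be $g$-divisible, I would work with its $g$-divisible hull $\widehat{M}$ instead, using Lemma~\ref{hat end commute} to identify $\End_R(\widehat{M})=\widehat{H}$ and the inclusion $\ovl{H}\subseteq\ovl{\widehat{H}}$. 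Part~(1) identifies the divisor $\mbf{y}$ delivered by Proposition~\ref{RSS 6.7} as $p^\sigma$, so the resulting $\bfx'=\bfd-\mbf{y}+\mbf{y}^\sigma=p-p^\sigma+p^{\sigma^2}$ agrees with our $\bfx$. The proposition then yields $\ovl{\widehat{H}}\ehd B$, and combined with the lower bound, $\ovl{H}\ehd B\ehd\ovl{U}$ as required.

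The main obstacle is confirming that applying Proposition~\ref{RSS 6.7} to $\widehat{M}$ returns the same divisor $\mbf{y}$ as applying it to $M$, equivalently that $\ovl{\widehat{M}}\ehd\ovl{M}$. The defect $\widehat{M}/M$ is $g$-torsion and finitely generated over $R$; using Theorem~\ref{S(d) thm}(2) (the Auslander--Gorenstein and Cohen--Macaulay properties of $R$) together with the explicit form of $\ovl{M}$ from part~(1), one bounds the defect and deduces it vanishes modulo $g$ in sufficiently high degree. Alternatively, one may verify $g$-divisibility of $M$ directly by comparing Hilbert series, using the rewriting $S(p)_1 S_1 R_{n-2}=S(p)_1S(p^\sigma)_1^{n-2}S_1$ obtained from Lemma~\ref{Ro 4.1}(1).
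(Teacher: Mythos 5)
Your part (1) is essentially the paper's own argument: write $\ovl{M}_n$ as the image of $H^0(E,\LL_2(-p))\otimes H^0(E,\LL(-p)_{n-2})$ via Lemma~\ref{Ro 4.1}(1), identify the target sheaf as $\OO_E(p^\sigma)\otimes \LL(-p)_n$, and get surjectivity for $n\gg 0$ from \cite[Lemma~3.1]{Ro}. That part is fine.

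Part (2) is where you diverge, and where there is a genuine gap. The paper never leaves $\ovl{S}$: it observes $\ovl{U}\subseteq\ovl{H}\subseteq\End_{\ovl{R}}(\ovl{M})$ using \cite[Lemma~2.12(3)]{RSS}, computes $\End_{\ovl{R}}(\ovl{M})\ehd B(E,\LL(-\bfx),\sigma)$ directly from part (1) together with \cite[Lemma~6.14(1)]{RSS}, and then sandwiches $\ovl{H}$ using Lemma~\ref{ovlU}. You instead invoke Proposition~\ref{RSS 6.7}, which requires a $g$-divisible module, and propose to pass to $\widehat{M}$; your identification of the divisor $\mbf{y}$ as $p^\sigma$ then hinges on $\ovl{\widehat{M}}\ehd\ovl{M}$, which is precisely the step you leave open. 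This is not a formality. The hypotheses $R\subseteq M\subseteq S$ with $M$ finitely generated do not force it: for instance $M=R+gaR$ with $a\in S_1\setminus S(p)_1$ satisfies all of them, yet $\ovl{M}=\ovl{R}$ while $\ovl{\widehat{M}}\supseteq\ovl{R}+\ovl{a}\,\ovl{R}$ differs from $\ovl{M}$ in every large degree. Your first suggested repair does not close the gap either: the Auslander--Gorenstein and Cohen--Macaulay machinery yields $\widehat{M}\subseteq M^{**}$ and hence $\GK(\widehat{M}/M)\leq 1$, which only bounds $\dim_\Bbbk(\ovl{\widehat{M}}_n/\ovl{M}_n)$; by \cite[Theorem~1.3]{AV} that discrepancy is eventually the constant $\deg\mbf{y}'-1$ for the divisor $\mbf{y}'\geq p^\sigma$ attached to $\ovl{\widehat{M}}$, so ``bounded'' is exactly what you cannot settle for. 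The second repair (proving $M$ itself is $g$-divisible by a Hilbert series count) is a nontrivial computation you have not carried out, and the paper conspicuously avoids asserting it, working with $\widehat{M}$ and $\widehat{H}$ in Lemma~\ref{F is blowup} and pinning down the relevant divisor there by a different mechanism ($H^{(3)}=T([\bfx]_3)$ plus global generation). If you want to keep your route, note that it can be closed without computing $\mbf{y}'$: whatever divisor $\bfx'=p-\mbf{y}'+(\mbf{y}')^\sigma$ Proposition~\ref{RSS 6.7} produces has $\deg\bfx'=1=\deg\bfx$, and the inclusion $\ovl{U}\subseteq\ovl{\widehat{H}}$ forces $[\bfx']_n\leq[\bfx]_n$ for $n\gg 0$ with equal degrees, whence $\bfx'=\bfx$. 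As written, though, the identification step is missing.
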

\begin{proof}
(1). Let $n\gg 0$. As $R$ is generated in degree 1, $R_n\subseteq R_1S_1R_{n-2}=S(p)_1S_1R_{n-2}$, and so $\ovl{M}_n=\ovl{S(p)_1}\ovl{S_1}\ovl{S(p)_{n-2}}$. By Lemma~\ref{Ro 4.1}(1), $\ovl{S(p)_1}\ovl{S_1}=H^0(E,\LL_2(-p))$. Hence $\ovl{M}_n$ is the image of the map $H^0(E,\LL_2(-p))\otimes H^0(E,\LL(-p)_{n-2})\overset{\mu}\longrightarrow H^0(E,\LL_2(-p)\otimes\LL(-p)_{n-2}^{\sigma^2})$.
On the right hand side $\LL_2(-p)\otimes\LL(-p)_{n-2}^{\sigma^2}=\LL_n(-p-p^{\sigma^2}-\dots-p^{\sigma^{n-1}})= \OO_E(p^\sigma)\otimes \LL(-p)_n$.
So provided $\deg( \OO_E(p^\sigma)\otimes \LL(-p)_n)=2n+1>2$, the map $\mu$ is surjective by \cite[Lemma~3.1]{Ro}.\par
(2). By Lemma~\ref{U subset H}, \cite[Lemma~2.12(3)]{RSS} and Remark~\ref{RSS section 2} , $\ovl{U}\subseteq \ovl{H}\subseteq \End_{\ovl{R}}(\ovl{M})$. On the other hand $\End_{\ovl{R}}(\ovl{M})\ehd B(E,\LL(-\bfx),\sigma)$ by part (1) and \cite[Lemma~6.14(1)]{RSS}. Then by Lemma~\ref{ovlU}, $\ovl{U}\ehd B(E,\LL(-\bfx),\sigma)\ehd \ovl{H}$.
\end{proof}

Now we study the 3-Veronese of $U$. For the next proposition we must recall Notation~\ref{3 Veronese notation2} and Definition~\ref{T(d) def} for the blowup subalgebras $T(\bfd)$ of $T$.

\begin{prop}\label{3 Veronese of H}
Retain the notation of Theorem~\ref{S(p-p1+p2)} and Notation~\ref{virtual blowup notation}. Then
$$U^{(3)}=H^{(3)}=T(p+p^{\sigma^2}+p^{\sigma^4}),$$
where $T(p+p^{\sigma^2}+p^{\sigma^4})$ is the blowup of $T$ from Definition~\ref{T(d) def}. In particular, $U^{(3)}$ is a maximal order.
\end{prop}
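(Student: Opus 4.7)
The plan is to prove the chain $T(\bfe) \subseteq U^{(3)} \subseteq H^{(3)} \subseteq T(\bfe)$, where $\bfe := [\bfx]_3 = p + p^{\sigma^2} + p^{\sigma^4}$ by Lemma~\ref{bfx v eff}. Collapsing the chain to equalities gives $U^{(3)} = H^{(3)} = T(\bfe)$, and the maximality of $U^{(3)}$ then follows from Theorem~\ref{S(d) thm}(4). The first inclusion $T(\bfe) \subseteq U^{(3)}$ is immediate from Definition~\ref{T(d) def}: under $T_1 = S_3$ and $\bfe = [\bfx]_3$, the degree-$1$ generators of $T(\bfe)$ are exactly $T(\bfe)_1 = X_3 \subseteq U_3 = U^{(3)}_1$, and $T(\bfe)$ is generated in $T$-degree one. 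The second inclusion $U^{(3)} \subseteq H^{(3)}$ follows from Lemma~\ref{U subset H}.

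For the key inclusion $H^{(3)} \subseteq T(\bfe)$, first observe $H \subseteq S$: since $1 \in R \subseteq M$, any $h \in H$ satisfies $h = h\cdot 1 \in M \subseteq S$, whence $H^{(3)} \subseteq T$. Setting $F := \End_R(M^{**})$, the inclusion $H \subseteq F$ (which holds because $qM \subseteq M$ forces $qM^{**} \subseteq M^{**}$ via the identification $M^{**} = \{q\in Q_\gr(S) : f q \in R\text{ for all }f\in M^*\}$) gives $H^{(3)} \subseteq F^{(3)} \cap T$. Reading off from Lemma~\ref{M H with bars}(1), Proposition~\ref{RSS 6.7} applied with $\bfd = p$ and $\mbf{y}=p^\sigma$ identifies $F$ as a virtual blowup of $S$ at $\bfx = p - p^\sigma + p^{\sigma^2}$. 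Then Proposition~\ref{3 Veronese of virtual blowup} shows $F^{(3)}$ is a virtual blowup of $T$ at $\bfe$, and $(F^{(3)} \cap T,\, F^{(3)})$ is a maximal order pair of $T$; in particular $\ovl{F^{(3)}\cap T} \ehd B(E,\MM(-\bfe),\tau)$.

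To close the chain, Theorem~\ref{S(d) thm}(4) tells us $T(\bfe)$ is a maximal order, hence in particular a maximal $T$-order. Both $T(\bfe)$ and $V' := F^{(3)} \cap T$ are $g$-divisible cg subalgebras of $T$ with $T(\bfe) \subseteq V'$, and their images modulo $g$ both equal $B(E, \MM(-\bfe), \tau) = \ovl{T(\bfe)}$ in high degrees. By \cite[Proposition~2.16]{RSS}, applicable in the $T$-setting via Remark~\ref{RSS section 2}, $T(\bfe)$ and $V'$ are equivalent orders in $Q_\gr(T)$, and maximality of $T(\bfe)$ forces $T(\bfe) = V'$. This collapses the chain to $T(\bfe) = U^{(3)} = H^{(3)}$. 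The main obstacle is verifying the hypotheses of Proposition~\ref{RSS 6.7}—most notably that $M = R + R_1 S_1 R$ is a $g$-divisible finitely generated right $R$-module, which I would establish by direct degree-by-degree computation using the $g$-divisibility of $R = S(p)$ and Lemma~\ref{Ro 4.1}—and then carefully tracking the divisor identity $[\bfx]_{3m} = [\bfe]_m^\tau$ of Lemma~\ref{bfx v eff} in order to identify $B(E, \LL(-\bfx),\sigma)^{(3)}$ with $B(E,\MM(-\bfe),\tau) = \ovl{T(\bfe)}$.
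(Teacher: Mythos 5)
Your overall architecture (the sandwich $T(\bfe)\subseteq U^{(3)}\subseteq H^{(3)}\subseteq T(\bfe)$ with $\bfe=[\bfx]_3=p+p^{\sigma^2}+p^{\sigma^4}$, the first two inclusions being exactly as in the paper) is sound, but the route you take to the closing inclusion $H^{(3)}\subseteq T(\bfe)$ has a genuine gap. You invoke Proposition~\ref{RSS 6.7} for the module $M=R+S(p)_1S_1R$ of Notation~\ref{virtual blowup notation}, but that proposition requires $M$ to be \emph{$g$-divisible}, and you only promise to check this ``by direct degree-by-degree computation'' without doing so. This is not a formality: the paper itself never asserts that $M$ is $g$-divisible, and wherever it needs the hypotheses of Propositions~\ref{RSS 6.4} and~\ref{RSS 6.7} it deliberately passes to $\widehat{M}$ (see Lemma~\ref{F is blowup}, which works with $\widehat{H}=\End_R(\widehat{M})$ and with $M^{**}=(\widehat{M})^{**}$). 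If you instead apply Proposition~\ref{RSS 6.7} to $\widehat{M}$, you must read off the divisor from $\ovl{\widehat{M}}$ rather than from $\ovl{M}$, and Lemma~\ref{M H with bars}(1) only computes the latter; the identity $\ovl{\widehat{M}}\ehd\ovl{M}$ is an additional unproved claim. Note also that in the paper the identification of the divisor of the virtual blowup $F$ (Lemma~\ref{F is blowup}) is itself deduced \emph{from} Proposition~\ref{3 Veronese of H}, so your inversion of that dependency stands or falls precisely on resolving this point. (The remaining steps of your argument --- $H\subseteq S$, $H\subseteq F$, Proposition~\ref{3 Veronese of virtual blowup}, and the use of \cite[Proposition~2.16]{RSS} plus maximality of $T(\bfe)$ to force $F^{(3)}\cap T=T(\bfe)$ --- are fine and involve no circularity.)

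The paper's own proof avoids all of this machinery. It shows directly that $H$ and $R=S(p)$ are equivalent orders: choose a nonzero homogeneous $x\in R$ with $xM\subseteq R$ and any nonzero $y\in M$; then $xHy\subseteq xM\subseteq R$, while $(Mx)M=M(xM)\subseteq MR\subseteq M$ gives $Mx\subseteq H$ and hence $yRx\subseteq Mx\subseteq H$. It then passes to $3$-Veroneses via Lemma~\ref{equiv orders go up}, uses $R^{(3)}=T(p+p^\sigma+p^{\sigma^2})$ from Theorem~\ref{S(d) thm}(1) together with \cite[Corollary~5.27]{RSS} to conclude that $H^{(3)}$ and $T(\bfe)$ are equivalent orders, and finishes with the maximality of $T(\bfe)$ from \cite[Theorem~1.1]{Ro}. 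To repair your argument, either substitute this elementary equivalent-orders step for the appeal to Proposition~\ref{RSS 6.7}, or genuinely establish the $g$-divisibility of $M$ (or the relation $\ovl{\widehat{M}}\ehd\ovl{M}$); none of the results you cite supplies this.
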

\begin{proof}
Let $\bfy=p+p^{\sigma^2}+p^{\sigma^4}$. By Lemma~\ref{bfx v eff}, $\bfy=[\bfx]_3$. So by definition, $T(\bfy)=\Bbbk\langle X_3\rangle$. In particular, with Lemma~\ref{U subset H}, we already have $T(\bfy)\subseteq U^{(3)}\subseteq H^{(3)}$. It is hence enough to prove $H^{(3)}= T(\bfy)$. We do this by showing they are equivalent orders. \par
Since $M_R$ is finitely generated we can find a nonzero $x\in R$ such that $xM\subseteq R$. Choose any nonzero $y\in M$, then clearly $xHy\subseteq xM\subseteq R$. Now notice $(Mx)M\subseteq MR\subseteq M$, meaning $Mx\subseteq H$ also. As $yR\subseteq M$, we then have $yRx\subseteq H$. So $H$ and $R$ are equivalent orders. By Lemma~\ref{equiv orders go up}, $H^{(3)}$ and $R^{(3)}$ are equivalent orders. Now $R^{(3)}=T(p+p^{\sigma}+p^{\sigma^2})$ by Theorem~\ref{S(d) thm}(1) and is an equivalent order with $T(p+p^{\sigma^2}+p^{\sigma^4})=T(\bfy)$ by \cite[Corollary~5.27]{RSS}. Thus $H^{(3)}$ and $T(\bfy)$ are equivalent orders. However $T(\bfy)$ is a maximal order by \cite[Theorem~1.1]{Ro}, so $H^{(3)}=T(\bfy)$.
\end{proof}

\begin{lemma}\label{F is blowup}
Let $F$, $V$ and $\bfx$ be as in Notation~\ref{virtual blowup notation} and Theorem~\ref{S(p-p1+p2)}. Then $F$ is a blowup of $S$ at $\bfx$. In other words,
\begin{enumerate}[(1)]
\item $(F,F\cap S)$ is a maximal order pair of $S$ in the sense of Definition~\ref{max order pair}.
\item $\ovl{F}\ehd B(E,\LL(-\bfx),\sigma)$.
\end{enumerate}
Moreover $U\subseteq H\subseteq \widehat{H}\subseteq F$ holds.
\end{lemma}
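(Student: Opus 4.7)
The plan is to recognise $F=\End_R(M^{**})$, with $R=S(p)=S(\bfd)$ for $\bfd=p$, as precisely the output of Proposition~\ref{RSS 6.7}. That proposition simultaneously delivers both (1) and (2); the chain $U\subseteq H\subseteq \widehat{H}\subseteq F$ then falls out of Lemma~\ref{U subset H} plus a short dualisation argument.

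The one mild obstacle is that Proposition~\ref{RSS 6.7} demands a $g$-divisible input module, whereas $M=R+S(p)_1 S_1 R$ is not manifestly so. I would remove this by passing to the $g$-divisible hull $\widehat{M}$: by \cite[Lemma~2.13(2)]{RSS} and Remark~\ref{RSS section 2}, $\widehat{M}$ is still finitely generated as a right $R$-module, and since $S$ itself is $g$-divisible one has $R\subseteq M\subseteq \widehat{M}\subseteq \widehat{S}=S$. A short check using the $g$-divisibility of $R$ (Theorem~\ref{S(d) thm}(1)) gives $M^{*}=\widehat{M}^{*}$, so $M^{**}=\widehat{M}^{**}$ and the algebra $F=\End_R(\widehat{M}^{**})$ is unchanged by the substitution. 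Lemma~\ref{M H with bars}(1) identifies $\ovl{\widehat{M}}$ (which agrees with $\ovl{M}$ in high degree) as $\bigoplus_{n} H^0(E,\OO_E(p^\sigma)\otimes \LL(-p)_n)$; matching this against the prescribed shape $\bigoplus_{n} H^0(E,\OO(\bfy)\otimes \LL_n(-[\bfd]_n))$ in Proposition~\ref{RSS 6.7} forces $\bfy=p^\sigma$, and the proposition then outputs the divisor $\bfd-\bfy+\bfy^\sigma=p-p^\sigma+p^{\sigma^2}=\bfx$. This yields $\ovl{F}\ehd B(E,\LL(-\bfx),\sigma)$, which is (2), and asserts that $(F\cap S, F)$ is a maximal order pair of $S$, which is (1).

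For the final chain, $U\subseteq H$ is Lemma~\ref{U subset H} and $H\subseteq \widehat{H}$ is tautological. For $\widehat{H}\subseteq F$, if $h\in H$ then $hM\subseteq M$, so for every $\phi\in M^{*}$ we have $\phi h M\subseteq \phi M\subseteq R$, i.e.\ $M^{*}h\subseteq M^{*}$; dualising once more gives $h M^{**}\subseteq M^{**}$, whence $h\in F$. Thus $H\subseteq F$, and since $F$ is $g$-divisible by Proposition~\ref{RSS 6.4}(1) we conclude $\widehat{H}\subseteq \widehat{F}=F$. The only real bookkeeping subtlety throughout is aligning the $\bfy$ from Lemma~\ref{M H with bars}(1) with the $\bfy$-normalisation of Proposition~\ref{RSS 6.7}; once that arithmetic is in place, the rest is immediate.
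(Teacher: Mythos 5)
Your overall architecture is the same as the paper's: replace $M$ by $\widehat{M}$, use $M^{**}=(\widehat{M})^{**}$, feed $\widehat{M}$ into Proposition~\ref{RSS 6.4}/\ref{RSS 6.7} to get the maximal order pair, and deduce the chain $U\subseteq H\subseteq\widehat{H}\subseteq F$ (your dualisation argument for $H\subseteq F$ and the passage to $\widehat{H}$ via $g$-divisibility of $F$ are both fine). The problem is the step where you pin down the divisor. Proposition~\ref{RSS 6.7}, applied to the $g$-divisible module $\widehat{M}$, expresses its output divisor as $\bfd-\bfy+\bfy^\sigma$ where $\bfy$ is read off from $\ovl{\widehat{M}}$, not from $\ovl{M}$. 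Lemma~\ref{M H with bars}(1) only computes $\ovl{M}$, and your parenthetical claim that $\ovl{\widehat{M}}$ ``agrees with $\ovl{M}$ in high degree'' is exactly the non-trivial content you would need to justify. A priori all one gets for free is $\ovl{M}\subseteq\ovl{\widehat{M}}$, whence (by global generation) $\bfy\geq p^\sigma$; nothing in the cited lemmas rules out $\bfy$ being strictly larger. Neither the GK bound $\GK(\widehat{M}/M)\le 1$ nor the relation $g^k\widehat{M}\subseteq M$ forces the two images to agree in high degrees, since a constant discrepancy $\dim\ovl{\widehat{M}}_n-\dim\ovl{M}_n=\deg\bfy-1$ is compatible with both. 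So as written the identification $\bfy=p^\sigma$, and hence conclusion (2), is unproved.

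This is precisely the point where the paper does genuinely different work: it first concedes that Proposition~\ref{RSS 6.7} only yields $\ovl{F}\ehd B(E,\LL(-\bfx'),\sigma)$ for \emph{some} virtually effective $\bfx'$, and then proves $\bfx'=\bfx$ by a Veronese argument --- Proposition~\ref{3 Veronese of H} shows $H^{(3)}=T([\bfx]_3)$, which is already a maximal order by \cite[Theorem~1.1]{Ro}, forcing $F^{(3)}=T([\bfx]_3)$; comparing $\ovl{F}^{(3)}=B(E,\LL_3(-[\bfx]_3),\tau)$ with $B(E,\LL(-\bfx'),\sigma)^{(3)}$ and using global generation then gives $[\bfx]_{3n}=[\bfx']_{3n}$ and hence $\bfx=\bfx'$. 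To salvage your route without the Veronese detour you could instead invoke Lemma~\ref{M H with bars}(2): since $\ovl{H}\ehd B(E,\LL(-\bfx),\sigma)$ and $\ovl{H}\subseteq\ovl{F}\ehd B(E,\LL(-\bfx'),\sigma)$ with $\deg\bfx=\deg\bfx'=1$, global generation gives $[\bfx']_n\leq[\bfx]_n$ with equal degrees, hence equality; but some such argument must be supplied --- the divisor matching cannot simply be read off from Lemma~\ref{M H with bars}(1).
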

\begin{proof}
By Lemma~\ref{hat end commute}, $\widehat{H}=\End_R(\widehat{M})$, while $M^{**}=(\widehat{M})^{**}$ by \cite[Lemma~2.13(3)]{RSS} and Remark~\ref{RSS section 2}. By Proposition~\ref{RSS 6.4}, $(F,F\cap S)$ is then the unique maximal order pair containing and equivalent to $\widehat{H}$. This and Lemma~\ref{U subset H} show $U\subseteq H\subseteq \widehat{H}\subseteq F$. By Proposition~\ref{RSS 6.7}, $\ovl{F}\ehd \ovl{\widehat{H}}\ehd B(E,\LL(-\bfx'),\sigma)$ for some $\sigma$-virtually effective $\bfx'$. We must prove $\bfx=\bfx'$. Now by Proposition~\ref{C to C hat}, $H$ is an  equivalent order with $\widehat{H}$, and hence $H$ is contained in and equivalent to $F$ as well. By Lemma~\ref{equiv orders go up}, $H^{(3)}$ is then contained in and equivalent to $F^{(3)}$. But $H^{(3)}=T(p+p^{\sigma^2}+p^{\sigma^4})$ by Proposition~\ref{3 Veronese of H}; and furthermore $p+p^{\sigma^2}+p^{\sigma^4}=[\bfx]_3$ by Lemma~\ref{bfx v eff}. Now $T([\bfx]_3)$ is a maximal order by \cite[Theorem~1.1]{Ro}, and thus $F^{(3)}=T([\bfx]_3)$. In particular $\ovl{F}^{(3)}=\ovl{T([\bfx]_3)}=B(E,\LL_3(-[\bfx]_3),\sigma)$. On the other hand $\ovl{F}^{(3)}\ehd B(E,\LL(-\bfx'),\sigma)^{(3)}$; therefore $ H^0(E,\LL_{3n}(-[\bfx]_{3n}))= B(E,\LL(-\bfx),\sigma)_{3n}= B(E,\LL(-\bfx'),\sigma)_{3n}=H^0(E,\LL_{3n}(-[\bfx']_{3n}))$ for $n\gg 0$.
Since for $n\gg0$, $\LL_{3n}(-[\bfx]_{3n})$ and $\LL_{3n}(-[\bfx']_{3n})$ are generated by their global sections, it follows that $[\bfx]_{3n}=[\bfx']_{3n}$. From this it follows $\mbf{x}=\mbf{x'}$.
\end{proof}

We now look to improve $U\subseteq H\subseteq \widehat{H}\subseteq F$ to equalities. This is first achieved in $\ovl{S}=B(E,\LL,\sigma)$.

\begin{lemma}\label{S(p-p1+p2) with bars}
Retain the notation  from Theorem~\ref{S(p-p1+p2)} and Notation~\ref{virtual blowup notation}. Then
$\ovl{U}=\ovl{H}=\ovl{\widehat{H}}=\ovl{F}.$
\end{lemma}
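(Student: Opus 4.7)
Having obtained the chain of inclusions $\ovl{U}\subseteq\ovl{H}\subseteq\ovl{\widehat{H}}\subseteq\ovl{F}$ from Lemma~\ref{F is blowup}, the task is to prove the reverse inclusion $\ovl{F}\subseteq\ovl{U}$. By Lemma~\ref{ovlU} we have the explicit description $\ovl{U}=\Bbbk\oplus\ovl{X_1}\oplus\ovl{X_2}\oplus B_{\geq 3}$ where $B=B(E,\LL(-\bfx),\sigma)$, and by Lemma~\ref{F is blowup}(2), $\ovl{F}\ehd B$, so $\ovl{F}_n=\ovl{U}_n$ for all $n\gg 0$. This reduces the problem to verifying equality in each of finitely many low degrees.

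I would first handle the degrees divisible by $3$. Since $F$ is $g$-divisible (being part of the maximal order pair from Lemma~\ref{F is blowup}(1)) and $F^{(3)}=T([\bfx]_3)$ (established inside the proof of Lemma~\ref{F is blowup}), Theorem~\ref{S(d) thm}(1) yields
\[
\ovl{F}^{(3)}=\ovl{F^{(3)}}=\ovl{T([\bfx]_3)}=B(E,\MM(-[\bfx]_3),\tau),
\]
and evaluating on graded piece $n$ gives $\ovl{F}_{3n}=H^0(E,\LL_{3n}(-[\bfx]_{3n}))=B_{3n}=\ovl{U}_{3n}$ for all $n\geq 0$ (matching $\ovl{U}_{3n}$ from Lemma~\ref{ovlU}).

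For the remaining degrees $n\not\equiv 0\pmod 3$, my plan is to first establish the containment $\ovl{F}\subseteq\ovl{S}$. One route is to prove $F\subseteq S$ by showing $M^{**}\subseteq S$: since $S$ is finitely generated as a right $R=S(p)$-module and reflexive there (using the Cohen-Macaulay property of $R$ from Theorem~\ref{S(d) thm}(2) together with the fact that $S/R$ has controlled Gelfand-Kirillov dimension), the inclusion $M\subseteq S$ passes to reflexive hulls, giving $M^{**}\subseteq S^{**}=S$. Then $F=\End_R(M^{**})$ acts on $1\in M^{**}_0$, whence $F=F\cdot 1\subseteq M^{**}\subseteq S$. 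With $\ovl{F}\subseteq\ovl{S}$ in hand, the ring-structure constraint $\ovl{F}_n\cdot\ovl{F}_{3k-n}\subseteq\ovl{F}_{3k}=B_{3k}$ for $k\gg 0$, combined with $\ovl{F}_{3k-n}\supseteq\ovl{U}_{3k-n}=B_{3k-n}$, forces $\ovl{F}_n\subseteq B_n$ by divisor bookkeeping inside $\Bbbk(E)[t,t^{-1};\sigma]$. Intersecting, $\ovl{F}_n\subseteq\ovl{S}_n\cap B_n$: for $n\geq 2$ the divisor $[\bfx]_n$ is effective so $B_n\subseteq\ovl{S}_n$ and $\ovl{F}_n=B_n=\ovl{U}_n$, while for $n=1$ a direct computation yields $\ovl{S}_1\cap B_1=H^0(E,\LL)\cap H^0(E,\LL(-\bfx))=H^0(E,\LL(-p-p^{\sigma^2}))=\ovl{X_1}=\ovl{U}_1$.

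The main obstacle is precisely the containment $M^{**}\subseteq S$ (equivalently $F\subseteq S$). Without it, the ring-theoretic constraints only pin $\ovl{F}_1$ down to $B_1=H^0(E,\LL(-\bfx))$, which strictly contains the one-dimensional $\ovl{X_1}$ because the non-effective divisor $-\bfx=-p+p^\sigma-p^{\sigma^2}$ permits sections with a simple pole at $p^\sigma$. Ruling out such poles requires genuinely exploiting the specific form $F=\End_R(M^{**})$ rather than just the abstract properties $\ovl{F}\ehd B$ and $\ovl{F}^{(3)}=B^{(3)}$; this is where the argument becomes most technical and where the reflexivity input plays the decisive role.
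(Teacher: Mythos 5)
There is a genuine gap, and it occurs at the crux of the lemma: degree $2$. You assert that for $n\geq 2$ the effectivity of $[\bfx]_n$ gives $\ovl{F}_n=B_n=\ovl{U}_n$, but by Lemma~\ref{ovlU} the degree-two piece of $\ovl{U}$ is $\ovl{X_2}=\ovl{S(p)_1}\,\ovl{S(p^{\sigma^2})_1}$, which by Lemma~\ref{Ro 4.1}(2) is only $3$-dimensional, whereas $B_2=H^0(E,\LL_2(-p-p^{\sigma^3}))$ is $4$-dimensional by Riemann--Roch. So $\ovl{U}_2\subsetneq B_2$, and every containment you establish ($\ovl{F}_2\subseteq B_2$, $\ovl{F}_2\subseteq\ovl{S}_2$, and their intersection) still leaves room for $\ovl{F}_2$ to be strictly larger than $\ovl{U}_2$. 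This is exactly the subtlety the example is built around (see the discussion after Theorem~\ref{S(p-p1+p2)}): the ``natural'' candidate $X_2'$ with $\ovl{X_2'}=B_2$ gives the ring $U'$ of Example~\ref{original S(p-p1+p2)}, which is far from a maximal order. The paper closes this gap by a contradiction argument you do not have: if $\ovl{F}_2=B_2$ then $F\supseteq U'$, hence $F\supseteq\widehat{U'}=S$ by $g$-divisibility and Example~\ref{original S(p-p1+p2)}(2), contradicting $\ovl{F}\ehd B(E,\LL(-\bfx),\sigma)$. Degree $1$ is then deduced from degree $2$ via $B_1^2=B_2$, rather than by intersecting with $\ovl{S}_1$.

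A secondary problem is your route to $\ovl{F}\subseteq\ovl{S}$. The claim that $M^{**}\subseteq S$ because $S$ is reflexive over $R=S(p)$ ``using the fact that $S/R$ has controlled Gelfand--Kirillov dimension'' is unsupported: comparing Hilbert series, $\GK(S/R)=3=\GK(R)$, so the GK-dimension of $S/R$ is not small and the reflexivity criterion of \cite[Lemma~4.11]{RSS} does not apply to the pair $R\subseteq S$. Indeed $F\subseteq S$ is only established at the very end (Proposition~\ref{S(p-p1+p2) main prop}), as a consequence of this lemma, not as an input to it. The paper sidesteps the issue by instead proving $\ovl{F}\subseteq B$ directly, using that $B=B(E,\LL(-\bfx),\sigma)$ itself is Auslander--Gorenstein and Cohen--Macaulay (\cite[Lemma~2.2(3)]{Ro}) together with $\ovl{F}\ehd B$. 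Your treatment of the degrees divisible by $3$ and of degree $1$ (granting $\ovl{F}\subseteq\ovl{S}$) is fine, but without the degree-$2$ argument the proof does not go through.
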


\begin{proof}
Set $B=B(E,\LL(-\bfx),\sigma)$ considered as a subalgebra of $Q_\gr(\ovl{S})=\Bbbk(E)[t,t^{-1};\sigma]$. By Lemma~\ref{F is blowup}, $\ovl{F}\ehd B$. Since $B$ is Auslander-Gorenstein and Cohen-Macaulay by \cite[Lemma~2.2(3)]{Ro}, we have that $\ovl{F}\subseteq B$. Thus with Lemma~\ref{F is blowup}, $\ovl{U}\subseteq \ovl{H}\subseteq \ovl{\widehat{H}}\subseteq \ovl{F}\subseteq B$. So proving $\ovl{U}=\ovl{F}$ is enough. By Lemma~\ref{ovlU}, $\ovl{U}_{\geq 3}=\ovl{F}_{\geq 3}$. \par
To prove $\ovl{U}_2=\ovl{F}_2$ assume otherwise that $\ovl{U}_2\subsetneq \ovl{F}_2$. Because $gS\subseteq S_{\geq 3}$, we can and will identify $S_{\leq 2}=\ovl{S_{\leq 2}}$. Also we note that since $[\bfx]_2=p+p^{\sigma^3}$ is effective, $B_2\subseteq \ovl{S}_2$. Now, by the Riemann-Roch Theorem $\dim_\Bbbk B_2=4$, whilst $\dim_\Bbbk \ovl{U}_2=3$ by Lemma~\ref{Ro 4.1} and (\ref{X_i=U_i}). Thus it must be the case that $\ovl{F_2}=B_2$. In other words, $F_2=\{ x\in S_2\,|\; \ovl{x}\in B_2\}$. In which case $F$ contains the ring $U'=\Bbbk\langle X_1, F_2, X_3\rangle$. But this is exactly $U'$ from Example~\ref{original S(p-p1+p2)}. Since $F$ is $g$-divisible we would then have $F\supseteq \widehat{U'}$. However, in Example~\ref{original S(p-p1+p2)}(2) we will show that $\widehat{U'}=S$. This would mean $F\supseteq S$, which certainly contradicts Lemma~\ref{F is blowup}(2). Thus we must have $F_2=U_2$, or equivalently $\ovl{F}_2=\ovl{U}_2$. \par
Lastly we show $\ovl{U}_1=\ovl{F}_1$. Again, since $\dim_\Bbbk B_1=2$ and $\dim_\Bbbk\ovl{U}_1=1$, if $\ovl{U}_1\neq \ovl{F}_1$, then $\ovl{F}_1=B_1$. But in this case, $\ovl{F}_2\supseteq B_1^2=B_2$ by \cite[Lemma~3.1]{RSS}. This would contradict $\ovl{U}_2=\ovl{F}_2\subsetneq B_2$, hence $\ovl{U}_1=\ovl{F}_1$.
\end{proof}

Finally we can conclude $U$ is a virtual blowup.

\begin{prop}\label{S(p-p1+p2) main prop}
Let $U$ and $\bfx$ be as in Theorem~\ref{S(p-p1+p2)}. Then $U$ is a virtual blowup of $S$ at $\bfx$. More specifically, retaining notation from Notation \ref{virtual blowup notation}, $U=H=\widehat{H}=F$.
\end{prop}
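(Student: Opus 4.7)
The plan is to upgrade the inclusions $U\subseteq H\subseteq \widehat{H}\subseteq F$ (given by Lemma~\ref{F is blowup}) together with the equalities $\ovl{U}=\ovl{H}=\ovl{\widehat{H}}=\ovl{F}$ (Lemma~\ref{S(p-p1+p2) with bars}) to the single equality $U=F$. All four terms will then coincide, and since $F$ is already known to be a virtual blowup of $S$ at $\bfx$ by Lemma~\ref{F is blowup}, this will finish the proof.

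The main tool is the $g$-divisibility of $F$, which holds because $(F,F\cap S)$ is a maximal order pair. The first observation is that $g\in U$: indeed, $\ovl{g}=0\in H^0(E,\LL_3(-[\bfx]_3))$ and $g\in S_3$, so $g\in X_3\subseteq U$. Thus the standard lift-through-$g$ argument is available inside $U$. I would then prove $U_n=F_n$ by induction on $n$. For $n=0,1,2$ the intersection $F_n\cap gS$ is zero (as $g\in S_3$), so $F_n$ embeds into $\ovl{F}_n=\ovl{U}_n$; combined with $U_n\subseteq F_n$ and a dimension count this forces $U_n=F_n$. For $n\geq 3$, given $x\in F_n$, the equality $\ovl{F}_n=\ovl{U}_n$ supplies $u\in U_n$ with $\ovl{x}=\ovl{u}$, so $x-u\in F\cap gS=gF=gF_{n-3}$; by the inductive hypothesis $F_{n-3}=U_{n-3}$, so $x-u\in gU\subseteq U$ (using $g\in U$), whence $x\in U$.

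This yields $U=F$, which collapses the chain to $U=H=\widehat{H}=F$. Since $F$ is a virtual blowup of $S$ at $\bfx$ by Lemma~\ref{F is blowup}, so is $U$.

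I do not expect any serious obstacle here: all the hard work has been done in establishing the chain of inclusions (Lemma~\ref{F is blowup}) and the equality of images in $\ovl{S}$ (Lemma~\ref{S(p-p1+p2) with bars}). The only subtlety is keeping track of where $g$ lives (degree $3$), which is exactly what makes the base cases $n=0,1,2$ trivial and the inductive step $n\geq 3$ work cleanly.
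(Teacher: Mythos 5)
Your argument is correct and takes essentially the same route as the paper: both proofs rest on the chain $U\subseteq H\subseteq\widehat{H}\subseteq F$ from Lemma~\ref{F is blowup} and the equality $\ovl{U}=\ovl{F}$ from Lemma~\ref{S(p-p1+p2) with bars}, and then run an induction on degree that uses the $g$-divisibility of $F$ (and $g\in U$) to lift the equality modulo $g$ to $U=F$. The paper merely packages the induction differently --- first showing $U=\widehat{U}$ and then comparing Hilbert series of the two $g$-divisible rings --- which is an organizational rather than a substantive difference.
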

\begin{proof}
By Lemma~\ref{F is blowup}, $U\subseteq H\subseteq\widehat{H}\subseteq F$. So we need to show $U=F$. We first prove $U$ is $g$-divisible. Set $W=\widehat{U}$. Since $U\subseteq F$ and $F$ is $g$-divisible by Lemma~\ref{F is blowup}, $U\subseteq W\subseteq F$. Therefore $\ovl{U}=\ovl{W}=\ovl{F}$ by Lemma~\ref{S(p-p1+p2) with bars}. As $gS\subseteq S_{\geq 3}$, this shows $U_{\leq 2}=W_{\leq 2}$. Proceeding by induction, let $n\geq3$ and assume $W_{<n}=U_{<n}$. Let $w\in W_n$, say with $wg^k\in U$. Since $\ovl{U}=\ovl{W}$, certainly $W\subseteq U+gS$, and so there exists $u\in U_n$ and $s\in S_{n-3}$ such that $w=u+sg$. We then have $sg^{k+1}+ug^k=wg^k\in U$, and so $sg^{k+1}=wg^k-ug^k \in U$ also. Hence $s\in W$. Moreover, since $\deg(w)=\deg(sg)$, we have $\deg(s)<\deg(w)=n$. By induction, $s\in U$. Therefore $w=u+gs\in U$ also, which proves $U=W$. \par

Now we have two $g$-divisible rings $U\subseteq F$, with $\ovl{U}=\ovl{F}$. Comparing Hilbert series we have $h_U(t)=h_{\ovl{U}}(t)/(1-t^3)=h_{\ovl{F}}(t)/(1-t^3)=h_F(t)$. Since $U\subseteq F$, this forces $U=F$.
\end{proof}

As mentioned after Theorem~\ref{S(p-p1+p2)}, the definition of $X_2$ is surprising. We now study a ring, that initially (at least to the author) seemed a more natural definition for the virtual blowup $U$. We also note that Example~\ref{original S(p-p1+p2)} completes the missing step in Lemma~\ref{S(p-p1+p2) with bars}.

\begin{example}\label{original S(p-p1+p2)} Let $U$ and $\bfx$ be as in Theorem~\ref{S(p-p1+p2)}.
Set $U'=\Bbbk\langle X'_1,X'_2,X'_3\rangle$, where this time $X'_i=\{x\in S_i\,|\;\ovl{x}\in H^0(E,\LL_i(-[\bfx]_i))\}$ for $i=1,2,3$. Then
\begin{enumerate}[(1)]
\item $U\subseteq U'$;
\item $\widehat{U'}=S$;
\item $U'$ is neither left nor right noetherian.
\end{enumerate}
\end{example}
\begin{proof}
(1). This is obvious from the definitions of $U$ and $U'$.\par
(2). Here we use Notation~\ref{p^sigma=p_1}. Identifying $\ovl{S_{\leq 2}}$ and $S_{\leq 2}$; $X'_2=S(p_3)_1S(p_{-1})_1$ by Lemma~\ref{Ro3.1 applied}(1). By the Riemann-Roch Theorem $\dim_\Bbbk X'_2 = 4$. To start we prove that $g\in S(p_3)_1S(p_{-1})_1S(p_3)_1$.
Let $Y=S(p_{-1})_1S(p_3)_1$. Write $S(p_3)_1=v_1\Bbbk+v_2\Bbbk$. Then $S(p_3)_1Y=v_1Y+v_2Y$, and therefore
\begin{equation}\label{Ro 4.6 trick}
 \dim_\Bbbk S(p_3)_1Y=2\dim_\Bbbk Y-\dim_\Bbbk(v_1Y\cap v_2Y).
\end{equation}
Now, applying \cite[Lemma~4.2]{Ro} with $q=p_3=\sigma^{-3}(p)$, we can choose a basis of $S(p_5)_1=w_1\Bbbk+w_2\Bbbk$ such that $v_1w_1+v_2w_2=0$ and $v_1S\cap v_2S=v_1w_1S=v_2w_2S$. In which case
\begin{equation}\label{Ro 4.6 Z}
v_1Y\cap v_2Y=\{v_1w_1s\,|\; w_1s,w_2s\in Y\}=v_1w_1Z,\; \text{ where } Z=\{s\in S_1\;|\,S(p_5)_1s\subseteq Y\}.
\end{equation}
In particular $\dim_\Bbbk(v_1Y\cap v_2Y)=\dim_\Bbbk Z$. We compute $\ovl{Z}$ which, as $gS\subseteq S_{\geq 3}$, can be identified with $Z$. Take a section $\ovl{s}\in\ovl{S}_1=H^0(E,\LL)$, say $\ovl{s}$ vanishes at the effective divisor $\bfd$. Then $\ovl{S(p_5)}_1\ovl{s}$ will consist of global sections of $\LL_2$ vanishing at $p_5+\bfd^\sigma$. On the other hand $\ovl{Y}$ consists of global section of $\LL_2$ vanishing at $p_{-1}+p_4$ by Lemma~\ref{Ro3.1 applied}(1). In which case $\ovl{s}\in Z$ if and only if $\ovl{S(p_5)_1}\ovl{s}$ consists of global sections of $\LL_2$ vanishing at $p_{-1}+p_4$; that is, if and only if $p_5+\bfd^\sigma\geq p_{-1}+p_4$; or equivalently $p_{-2}+p_3\leq \bfd$. Thus $\ovl{Z}=H^0(E,\LL(-p_{-2}-p_3))$. In particular, $\dim_\Bbbk Z=\dim_\Bbbk\ovl{Z}=1$ by the Riemann-Roch Theorem. This means $\dim_\Bbbk S(p_3)_1Y=7$ by (\ref{Ro 4.6 trick}) and Lemma~\ref{Ro 4.1}. On the other hand $\ovl{S(p_3)_1Y}=H^0(E,\LL_3(-p-p_3-p_5))$, which is 6 dimensional by the Riemann-Roch Theorem. We hence must have $g\in S(p_3)Y=S(p_3)_1S(p_{-1})_1S(p_3)_1$ as claimed.

A similar argument shows that $g\in S(p_{-1})_1S(p_3)_1S(p_{-1})_1$. It follows that $(X'_2)^2\supseteq gS(p_{-1})_1$ and $gS(p_3)_1\subseteq (X'_2)^2$. Hence $(X'_2)^2\supseteq g(S(p_{-1})_1+S(p_3))_1=gS_1$. This shows $S_1\subseteq \widehat{U'}$. But $S=\Bbbk\langle S_1\rangle$; therefore $\widehat{U'}=S$. \par

(3). By (2) and Proposition~\ref{C to C hat}, there exists an $k\geq 0$ such that $g^kS\subseteq U'$. Hence, if $U'$ is right noetherian, then $S_{U'}$ would be finitely generated. In which case, $\ovl{S}_{\ovl{U'}}$ would also be finitely generated. Put $\NN=\LL(-\bfx)$, by part (1) and Lemma~\ref{ovlU} we have $\ovl{U}\subseteq\ovl{U'}\subseteq B(E,\NN,\sigma)\ehd \ovl{U}.$ Hence also $\ovl{U'}\ehd B(E,\NN,\sigma)$. Thus by \cite[Theorem 1.3]{AV}, $\ovl{S}_n=H^0(E,\OO_E(\mbf{u})\otimes \NN_n)$ for some divisor $\mbf{u}$ and for all $n\gg 0$. It would then follow from the Riemann-Roch Theorem that $\dim_\Bbbk \ovl{S}_n=n+\deg(\mbf{u})$. However $\ovl{S}_n=H^0(E,\LL_n)$, and hence $\dim_\Bbbk\ovl{S}_n=3n$ for all $n$. This would give a contradiction, and therefore $U'$ cannot be right noetherian. A symmetric argument shows $U'$ is not left noetherian.
\end{proof}

\begin{remark}\label{S(p-p1+p2) bad homologically}
The final piece of Theorem~\ref{S(p-p1+p2)} is to show that we cannot expect nice homological properties to hold for virtual blowups. Theorem~\ref{S(p-p1+p2)}(3) can be proved in the exact manner as \cite[Example~10.4 and Remark~10.7]{RSS}.   
\end{remark}

\newpage

\bibliographystyle{plain}
\bibliography{Bib}

\end{document}